\newcommand{\eps}{\varepsilon}
\newcommand{\R}{\mathbb{R}}
\newcommand{\Q}{\mathbb{Q}}
\newcommand{\F}{\mathbb{F}}
\newcommand{\C}{\mathbb{C}}
\newcommand{\N}{\mathbb{N}}
\newcommand{\Z}{\mathbb{Z}}
\newcommand{\es}[1]{\begin{equation}\begin{split}#1\end{split}\end{equation}}
\newcommand{\est}[1]{\begin{equation*}\begin{split}#1\end{split}\end{equation*}}
\newcommand{\PP}{\mathbb{P}}
\newcommand{\LL}{\mathcal{L}}
\newcommand{\tn}[1]{\textnormal{#1}}
\newcommand{\leg}[2]{\left(\frac{#1}{#2}\right)}
\renewcommand{\mod}[1]{~\pr{\textnormal{mod}~#1}}
\newtheorem*{theo*}{Theorem}
\newtheorem{theo}{Theorem}
\newtheorem{theorem}[theo]{Theorem}
\newtheorem{prop}{Proposition}
\newtheorem{conj}{Conjecture}
\newtheorem{defin}[prop]{Definition}
\newtheorem{lemma}[prop]{Lemma}
\newtheorem{corol}[prop]{Corollary}
\newtheorem{remark}{Remark}
\newtheorem*{rem*}{Remark}
\newcommand{\pr}[1]{\left( #1\right)}
\newcommand{\sgn}{\operatorname{sgn}}
\DeclareMathOperator{\Av}{Av}
\DeclareMathOperator{\Gal}{Gal}
\DeclareMathOperator{\Sel}{Sel}
\DeclareMathOperator{\tr}{tr}
\newcommand{\Res}{\operatorname*{Res}}
\newcommand{\sign}{\operatorname*{sign}}
\newcommand{\rank}{\operatorname*{rank}}
\renewcommand{\r}{s}
\newcommand{\hr}{a}
\newcommand{\n}{m}
\newcommand{\rt}{\eps}
\newcommand{\rf}{{r_{\GFF}}}
\newcommand{\rootf}{\rt_{\GFF}}
\newcommand{\Avf}{\Av_\Z({\rootf})}
\newcommand{\Wa}{\mathcal W_{a}}
\newcommand{\was}{\mathcal{W}}
\newcommand{\Mw}{E_{\Wa}}
\newcommand{\Mh}{E_{\Hold}}
\newcommand{\Hold}{\mathcal{V}_{\hr}}
\newcommand{\NF}{{\mathcal{W}^\dagger}}
\newcommand{\NFa}{{\mathcal{W}_a^\dagger}}
\newcommand{\GFF}{\mathcal{F}}
\newcommand{\PFF}{\mathcal{F}}
\newcommand{\GG}{\mathcal{G}}
\newcommand{\II}{\mathcal{I}}
\newcommand{\HH}{\mathcal{H}}
\newcommand{\JJ}{\mathcal{J}}
\newcommand{\EE}{\mathcal{E}}
\newcommand{\lra}{\leftrightarrow}
\let\originalleft\left
\let\originalright\right
\renewcommand{\left}{\mathopen{}\mathclose\bgroup\originalleft}
\renewcommand{\right}{\aftergroup\egroup\originalright}
\newcommand{\kommentar}[1]{}
\definecolor{pink}{rgb}{1,.2,.6}
\definecolor{orange}{rgb}{0.7,0.3,0}
\definecolor{blue}{rgb}{.2,.6,.75}
\definecolor{green}{rgb}{.4,.7,.4}
\newcommand{\expl}[1]{
}
\newcommand{\suppress}[1]{}
\font\tencyr=wncyr10 \def\russe{\tencyr\cyracc}
\def\Sha{\text{\russe{Sh}}}
\numberwithin{equation}{section}
\title[Biased Families of Elliptic Curves]{Non-isotrivial elliptic surfaces with non-zero average root number}
\author{Sandro Bettin}
\address{DIMA - Dipartimento di Matematica, Via Dodecaneso, 35, 16146 Genova - ITALY}
\email{bettin@dima.unige.it}
\author{Chantal David}
\address{Department of Mathematics and Statistics Concordia University, 1455 de Maisonneuve West Montr\'eal, Qu\'ebec Canada H3G 1M8}
\email{cdavid@mathstat.concordia.ca}
\author{Christophe Delaunay}
\address{Laboratoire de Math\'ematiques de Besan\c con, Univ. Bourgogne Franche-Comt\'e, CNRS UMR 6623, 16 route de Gray, 25030 Besan\c con cedex, France}
\email{christophe.delaunay@univ-fcomte.fr}
\begin{document}

\begin{abstract}
We consider the problem of finding {non-isotrivial} $1$-parameter families of elliptic curves whose root number does not average to zero as the parameter varies in $\Z$. We classify all such families when the degree of the coefficients (in the parameter $t$)  is less than or equal to $2$ and we compute the rank over $\Q(t)$ of all these families. Also, we compute explicitly the average of the root numbers for some of these families highlighting some special cases. Finally, we prove some results on the possible values average root numbers can take, showing for example that all rational number in $[-1,1]$ are average root numbers for some {non-isotrivial} $1$-parameter family.

\end{abstract}

\subjclass[2010]{11G05, 11G40}
\keywords{Rational elliptic surface, rank, root number, average root number.}

\maketitle

\section{Introduction}

This article is concerned with  families of elliptic curves defined
over $\Q$ such that the root number of the specializations does not behave, on average, as expected in
the classical cases.

More precisely, by a family of elliptic curves, we mean an elliptic surface over $\Q$ or, equivalently, an elliptic curve defined over $\Q(t)$ given
by a Weierstrass equation
\begin{equation} \label{eq_family}
{\GFF} \colon y^2 = x^3 +a_2(t)x^2 + a_4(t)x +a_6(t)
\end{equation}
where  $a_2(t), a_4(t)$ and $a_6(t)$ are polynomials with coefficients  in $\Z$. We denote by
 $\rf$ the rank of ${\GFF}$ over $\Q(t)$.

For each $t\in \Q$, we denote by ${\GFF}(t)$ the associated curve over $\Q$ defined by the specialization at $t$ of ${\GFF}$.
Then, for all but finitely many values of $t$, ${\GFF}(t)$ is an elliptic curve defined over $\Q$ and we let
$\rf(t)$ and $\varepsilon_{\GFF}(t)$ denote its rank over $\Q$ and its root number respectively. The parity conjecture predicts that $(-1)^{\rf(t)} = \rootf(t)$ and Silverman's specialization theorem gives that $\rf(t) \geq \rf$ for all but finitely many values of $t$.  One also conjectures that, up to a zero density subset of $\Q$, $\rf$ is the smallest integer compatible with the parity conjecture and thus that $\rf(t)$ is equal to either $\rf$ or $\rf+1$ depending on the parity given by $\rootf(t)$.

We define the average root number of ${\GFF}$ over $\Z$ as 
\begin{equation} \label{average-Z}
\Avf := \lim_{T \rightarrow \infty} \frac{1}{2T} \sum_{|t| \leq T} \rootf(t),\qquad
\end{equation}
if the limit exists (and where we define $\rootf(t)=0$ if ${\GFF}(t)$ is not an elliptic curve).\footnote{Alternatively one could define $\Avf$ with the symmetric average $\frac{1}{2T} \sum_{|t| \leq T}$ replaced by $\frac{1}{T} \sum_{0\leq t \leq T}$. All the same considerations we make in the paper works in this case as well mutatis mutandis.
}

The work of Helfgott (\cite{helfgott_thesis,helfgott}) implies conjecturally
(and unconditionally in some cases) that $\Avf=0$ as soon as there exists a place, other
than $-\deg$, of multiplicative reduction of ${\GFF}$ over $\Q(t)$. Indeed, assuming the square-free sieve conjecture, one sees that in this case $\rootf(t)$ behaves roughly like $\lambda(M(t))$ where $\lambda$ is the Liouville's function and $M(t)$ is a certain non-constant square-free polynomial, so that Chowla's conjecture implies that  $\Avf=0$.
This is the typical case, which occurs for ``most'' families $\GFF$.

When the family $\GFF$ has no place of multiplicative reduction (other
than possibly $-\deg$), the average root number could be non-zero. {The case when the family is isotrivial\footnote{Isotrivial means that the $j$-invariant of ${\GFF}$ is constant. For isotrivial families, one can take, for instance, the quadratic twist of a fixed elliptic curve $E/\Q$ by a polynomial $d(t)\in \Z[t]$,
$E^{d(t)} \colon d(t)y^2=y^2 = x^3 +a_2x^2 + a_4x +a_6$, where  $a_i \in \Z$ for $i=2,5,6$. In this case, it is easier to deal with the root number, for example if $d(t)$ is coprime with the conductor of
$E$, then the root number is simply given by some congruence relations.
}, and more precisely the case of quadratic twist families, has been the subject of several studies in the literature (see e.g.~\cite{rohrlich,rizzo_2,DD,KMR}). There are however very few examples of non-isotrival families with $\Avf\neq0$.}
There is the Washington's family~(\cite{washington}) for which Rizzo proved (\cite{rizzo}) that $\rootf(t)=-1$ for
all $t\in\Z$. 
Rizzo (\cite{rizzo}) also gave an example of a family
${\GFF}$ with $j$-invariant $j_{\GFF}(t)=t$ and $\Avf \not \in \{-1,0,1\}$
(however for this family the degree of the
polynomials $a_i(t)$ of the model given in the form~\eqref{eq_family} are
quite large: for example $\deg a_6(t)~=~8$). Romano~\cite{romano} considered a slight generalization of Washington's family obtaining an infinite sequence of families $\PFF_{s}$ all with rational average root number and with $\lim_{s\to\infty}\Av_\Z(\rt_{\PFF_s})=\frac34$.
Finally, Helfgott (\cite{helfgott}) gave an example of a non-isotrivial
family with {\em average root numbers over $\Q$} not in $\{-1, 0, 1\}$
(the degree of the coefficients $a_i(t)$ are quite large in this case too).\footnote{See the end of the introduction for the precise definition of the average root number over $\Q$.}

\medskip
{In this paper we study non-isotrivial families ${\GFF}$ of elliptic curves with non-zero $\Avf$ in a more systematic way, with particular attention to the case where we have control on the rank of $\GFF$ over $\Q(t)$.} 
One of our first motivations was to illustrate several questions on elliptic curves and on their associated $L$-functions where $\Avf$ appears naturally as well as to be able to provide
numerical experimentation. 
For example, in a forthcoming work we show under several conjectures that
the one-level density function corresponding to a family ${\GFF}$ is
$$
W_{\GFF}(t) = \rf\delta_0(\tau) + \left( \frac{1+(-1)^\rf\Avf}{2}\right) W_{\mathrm{SO(even)}}(\tau) + \left( \frac{1-(-1)^\rf\Avf}{2}\right) W_{\mathrm{SO(odd)}}(\tau)
$$
where $\delta_0$ is the Dirac measure at $0$ and $W_{\mathrm{SO(even)}}$
(resp. $W_{\mathrm{SO(odd)}}$) is the one-level density function of the classical orthogonal group of even size
(resp. odd size). We can also rewrite $W_{\GFF}$ as
$$
W_{\GFF}(t) = \left(\rf + \frac{1-(-1)^\rf\Avf}{2}\right) \delta_0(\tau) + 1+ (-1)^\rf \Avf
\frac{\sin 2 \pi \tau}{2 \pi \tau},
$$
where $\rf + \frac{1-(-1)^\rf\Avf}{2}$ is (conjecturally) the
average rank of the specialization. Notice that if $\Avf\not\in\{0,\pm1\}$, then $W_{\GFF}(t)$ doesn't reduce to $W_{\mathrm{SO(even)}}$ or $W_{\mathrm{SO(even)}}$ plus some multiple of $\delta_0(t)$, and so $W_{\GFF}(t)$ is not the $1$-level density function of one of the classical compact groups (of course one can divide the family into two subfamilies according to the sign of $\rootf(t)$ going back to $W_{\mathrm{SO(even)}}$ or $W_{\mathrm{SO(even)}}$; see~\cite{Farmer} and~\cite{Sarnak} for two proposed definitions of ``families of $L$-functions'' where this division is requested, see also \cite{kowalski} for a discussion on families). Another interesting case is that of ``elevated rank'', i.e. when $\Avf=-(-1)^{\rf}$ and so almost all specializations satisfy $\rf(t)>\rf$. Notice that when this happens then $W_{\GFF}(t)$ is, up to Dirac functions, equal to the $1$-level density of the orthogonal group with size of parity opposite to that of $\rf$.

The knowledge of $\Avf$ is also useful for the study of the average behavior of the Selmer and Tate-Shafarevich groups of ${\GFF}(t)$.
There are several conjectures and heuristics for questions on this topic (\cite{bhargava_all}, \cite{poonen_rains}, \cite{delaunay_jouhet}). For example, let $p$ be a prime
number, one of the classical conjecture predicts the probability that the $p$-part of the Tate-Shafarevich group is trivial or not; this original prediction also depends on the rank $\rf(t)$ (at least when $\rf=0$ and $\Avf = 0$ and so when $\rf(t)=0$ or $1$ almost always).
Now, the $p$-Selmer group, denoted by  $\Sel_p({\GFF}(t))$, and the Tate-Shafarevich group,  denoted by $\Sha({\GFF}(t))$, are related by the
following exact sequence
$$
1 \to {\GFF}(t)(\Q)/p{\GFF}(t)(\Q) \to \Sel_p({\GFF}(t)) \to \Sha({\GFF}(t))[p] \to 1
$$
so that  $|\Sel_p({\GFF}(t))| = p^{\rf(t)}  |\Sha({\GFF}(t))[p]|$ if ${\GFF}(t)(\Q)$ has no $p$-torsion point  (in general, $|\Sel_p({\GFF}(t))| = p^{\rf(t)+d}  |\Sha({\GFF}(t))[p]|$ where $d$ is the dimension of
 ${\GFF}(t)(\Q)[p]$ over $\F_p$). So, the $p$-divisibility of  $|\Sel_p({\GFF}(t))|$ and $|\Sha({\GFF}(t))[p]|$ are correlated and depend on the rank, on the
 parity of the root number and so on the average root number. In the case where $\rf(t) >0$, the group $\Sel_p({\GFF}(t))$ is forced to be large because of the presence of $\rf(t)$ generic
 points in ${\GFF}(t)(\Q)$ and one can naturally wonder if those $\rf(t)$ points do contribute in $\Sha({\GFF}(t))[p]$ or not (the answer seems to be no as discussed in a
 forthcoming study).

\medskip
We are then led to define the following notions.
\begin{defin}\label{def_strange}
 Let ${\GFF}$ be a family of elliptic curves with rank $\rf$ over $\Q(t)$. We say that
\begin{itemize}
\item ${\GFF}$ is potentially parity-biased (or also potentially biased) over $\Z$ if it has no place of multiplicative reduction except possibly for the place corresponding to $-\deg$;
\item ${\GFF}$ is parity-biased over $\Z$ if $\Avf$ exists and is non-zero;
\item ${\GFF}$ has elevated rank over $\Z$ if $\Avf$ exists and $\Avf = -(-1)^\rf$.
\end{itemize}
\end{defin}
We shall describe in Section~\ref{ppf} the relation between potentially parity-biased and parity-biased families.

{In this article we focus on potentially parity-biased families of elliptic curves.} In particular,
we classify all non-isotrivial potentially parity-biased families with $\deg a_i(t) \leq 2$ for $i=2,4,6$. We prove that there are essentially 6 different classes of such families\footnote{By the work of Helfgott one also has that, under the square-free sieve and Chowla's conjectures for homogeneous polynomials in two variables,  the only non-isotrivial families with $\deg a_i(t) \leq 2$ for $i=2,4,6$ which can have non-zero average root number over $\Q$ are ${\PFF}_{\r}$ and ${\GG_w}$. See Corollary~\ref{averageq} for the precise statement.}
(cf. Theorem~\ref{classification_1} and Theorem~\ref{classification_2}):
\es{\label{classification}
&{\PFF}_{\r} \colon y^2 =x^3+ 3 t x^2+ 3 \r x  +  \r t  , \mbox{ with } \r\in\Z_{\neq0};\\
&{\GG_w} \colon wy^2 = x^3 + 3tx^2 + 3tx + t^2 , \mbox{ with } w \in\Z_{\neq0};\\
&\HH_{w}\colon w y^2=x^3 + (8 t^2-7t+3) x^2 - 3(2t-1) x + (t+1),\mbox{ with }  w\in\Z_{\neq0};\\
&\II_{w} \colon w y^2=x^3 + t(t-7) x^2 - 6t(t-6) x + 2t(5t-27),\mbox{ with }  w\in\Z_{\neq0};\\
&\JJ_{\n,w} \colon w y^2=x^3 + 3 t^2 x^2 - 3\n t x + \n^2,\mbox{ with }  \n,w\in\Z_{\neq0}.\\
&{\LL}_{w,\r,v}\colon wy^2=x^3+3  (t^2+v) x^2+ 3 \r x +  \r (t^2+v), \mbox{ with }v\in\Z, \r,w\in\Z_{\neq0};\\
}
In Section~\ref{ranks} we will compute the ranks and give generic points for all of families given in~\eqref{classification}. We will see that the rank over $\Q(t)$ of all these families is either $0$ or $1$ (depending on the parameters) except for the family ${\LL}_{w,\r,v}$ for which  the
rank can also be $0$, $1$, $2$, or $3$. We remark that both ${\LL}_{w,\r,v}(t)$ and  ${\GG}_w(t)$ could be expressed in terms of ${\PFF}_{\r}$. Indeed, we have that ${\LL}_{w,\r,v}(t)$ and ${\GG}_{w}(t)$ are isomorphic to ${\PFF}_{\r w^2}(w(t^2+v))$ and $\PFF_{tw^2}(wt)$ respectively.\smallskip
~\\
We can also compute the root number for all the specializations of the above families (the results are quite long to express, so we only give the ones for
${\PFF}_{\r}$ is Appendix~\ref{appen_1}). We use these results to compute their average root numbers in some representative cases, pin-pointing the cases of
parity-biased families and of families with elevated rank over $\Z$ (we are able to provide families of elliptic curves of these types with rank equal to $0, 1, 2$ and $3$).
We postpone the precise statements of our results to Section~\ref{ppf},~\ref{ranks} and~\ref{rootnumbers}. We state here only some examples.

For $a\in\Z_{\neq0}$ we define
$$\Wa \colon y^2 = x^3 + tx^2 - a(t+3a )x + a^3.$$
Notice that the family $\Wa $ is a particular case of the family ${\PFF}_{\r}$. Indeed, one has that $\Wa (t)$  is isomorphic to $\PFF_{- 3a^2/4}(t/3+a/2)$
or, equivalently, to $\PFF_{- 3^54a^2}(12t+18a)$ if one wants a model defined over $\Z$. In Section~\ref{ranks} we shall see that all the families ${\PFF}_{\r}$ which have rank $1$ are all of type $\Wa $.
We will study the root number and the average root number of $\Wa $ in full generality and extract from them several consequences.

In the following the letter $p$ will always denote a prime number. Also, if $n\in \Z_{\neq0}$ then we let $n_p$ be such that $n=p^{v_p(n)}n_p$ where $v_p(n)$ is the $p$-adic valuation of $n$.

\begin{theo} \label{average-periodic}
Let $a\in\Z_{\neq0}$. Then $\Wa $ has rank $1$ over $\Q(t)$ if and only if $a = \pm k^2$ for some $k \in \Z_{\neq0}$, and rank $0$ otherwise. Also, $\eps_{\Wa}(t)$ is periodic modulo $4|a|$ and one has
\es{\label{mf_average-periodic}
\eps_{\Wa}(t) \equiv -  s_a(t) \, \gcd(a_2,t) \,\prod_{p|\frac{a_2}{\gcd(a_2,t)}}(-1)^{1+v_p(t)}\pr{\frac{t_p}{p}}^{1+v_p(t)}\mod 4,\\
}
where $s_a(t)$ is defined in Proposition~\ref{sat}, and is a periodic function modulo $2^{v_2(a)+2}$.
The average root number of the family $\Wa $ is
$$
\Av_\Z(\eps_{\Wa}) = - \prod_{p \mid 2a} \Mw(p),
$$
where $\Mw(p)$ is defined in Proposition~\ref{average_root_W}. In particular, $\Wa $ is a parity-biased family if and only if $v_2(a)\neq1$.
Furthermore, if $a$ is odd and square-free then the average root number of $\Wa $ is
\es{\label{average_A}
\Av_\Z(\eps_{\Wa})= \left\{ \begin{array}{rl}
-1/a & \mbox{ if } a \equiv 1 \pmod{8}, \\
1/(2a) & \mbox{ if } a \equiv 3 \pmod{8}, \\
-1/(2a) & \mbox{ if } a \equiv 5 \pmod{8}, \\
1/a & \mbox{ if } a \equiv 7 \pmod{8}.
\end{array}
\right.
}
\end{theo}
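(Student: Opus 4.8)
The plan is to treat the four assertions of Theorem~\ref{average-periodic} in sequence, reducing each to a statement about the family ${\PFF}_{\r}$ via the isomorphism $\Wa(t) \cong \PFF_{-3^5 4 a^2}(12t+18a)$, and then invoking the general results for ${\PFF}_{\r}$ from Sections~\ref{ranks} and~\ref{rootnumbers}.

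\textbf{Rank over $\Q(t)$.} Since $\Wa$ is (a linear reparametrization of) the family $\PFF_{\r}$ with $\r = -3^5 4 a^2$, and since Section~\ref{ranks} will have established exactly which $\PFF_{\r}$ have rank $1$ (namely, as stated, precisely the ones of type $\Wa$) together with the generic point, the rank-$1$ criterion $a = \pm k^2$ should come out of the arithmetic condition on $\r = -3^54a^2$ that distinguishes the rank-$1$ locus. Concretely, I would exhibit the generic point of $\Wa$ directly (a point with $x$-coordinate a polynomial in $t$, which one finds by undetermined coefficients since $\deg a_i\le 2$) when $a=\pm k^2$, verify it has infinite order by a specialization/height argument, and for the converse appeal to the Shioda--Tate formula: the rational elliptic surface has $\rf = 8 - \sum_v(m_v-1)$, so computing the reduction types at the bad places of $\Wa/\Q(t)$ (which depend on $\gcd$-type conditions in $a$) pins down $\rf$. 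This is a finite computation with the Weierstrass data, Kodaira types read off from $v(\Delta)$, $v(j)$, $v(c_4)$.

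\textbf{Periodicity and the formula for $\eps_{\Wa}(t)$.} Here I would compute the local root numbers $w_p(\Wa(t))$ for each prime $p$ and for $p=\infty$, as functions of $t$, using the standard tables (Halberstadt / Rizzo-type tables for the local root number in terms of $v_p(\Delta)$, $v_p(j)$, $v_p(c_4)$, $v_p(c_6)$). The product $\eps_{\Wa}(t) = \sign(\text{leading behaviour}) \prod_p w_p(\Wa(t))$ then needs to be massaged into the closed form~\eqref{mf_average-periodic}: the potentially-good primes $p\nmid 2a$ contribute a quadratic-residue symbol in $t_p$ that only depends on $t \bmod p$ (when $p \nmid t$) or is trivial, giving the $\prod_{p \mid a_2/\gcd(a_2,t)}$ factor; the primes $p \mid 2a$ — finitely many — each contribute a function periodic in $t$ with period a power of $p$, which one packages into $s_a(t)$ (periodic mod $2^{v_2(a)+2}$ after the dyadic analysis) times the $\gcd(a_2,t)$ factor. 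Periodicity mod $4|a|$ is then the statement that all these periods divide $4|a|$, which is read off once the local computations are done. The main bookkeeping burden is the place $p=2$, where the local root number tables are the most intricate and where one must handle several $2$-adic valuation cases of $a$ and of $t$ separately.

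\textbf{The average.} Given the explicit periodic formula, $\Av_\Z(\eps_{\Wa})$ is just the mean of $\eps_{\Wa}(t)$ over a period, and the multiplicativity of the formula across the primes $p \mid 2a$ factors this mean as $-\prod_{p\mid 2a} \Mw(p)$, with $\Mw(p)$ the average of the $p$-local factor over $t \bmod p^{k}$ — this matches Proposition~\ref{average_root_W}, so it suffices to cite it. The ``parity-biased iff $v_2(a)\neq1$'' claim amounts to showing $\Mw(2)=0$ precisely when $v_2(a)=1$ (all other $\Mw(p)\neq 0$), which is a direct evaluation of the $2$-local average. Finally, for $a$ odd and square-free, each $p \mid a$ is a prime of good-ish reduction away from $2$ with $v_p(a)=1$, so $\Mw(p)$ collapses to an elementary average of a Legendre symbol weighted by the $\gcd(a_2,t)$ term — evaluating it gives $\Mw(p) = 1/p$ up to a sign depending on $p \bmod 4$ — while $\Mw(2)$ contributes the $a \bmod 8$-dependent rational $\pm 1$ or $\pm 1/2$; multiplying these yields the four cases of~\eqref{average_A} after sorting by $a \bmod 8$ via quadratic reciprocity.

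The step I expect to be the genuine obstacle is the $2$-adic local root number computation: isolating $s_a(t)$ and proving it is periodic modulo exactly $2^{v_2(a)+2}$ requires carefully tracking the Weierstrass model's behaviour at $2$ through the substitution $t \mapsto 12t+18a$, and the local root number at $2$ is notoriously case-heavy. Everything else is either a citation to the $\PFF_{\r}$ results or a mechanical (if lengthy) manipulation of quadratic symbols.
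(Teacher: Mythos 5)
Your treatment of the root number formula and of the average follows the paper's route: local root numbers at each $p$ are read off Rizzo's tables (the $p=2$ case being, as you anticipate, the heavy one, handled in Proposition~\ref{sat}), the contributions are reassembled modulo $4$ into~\eqref{mf_average-periodic}, and the average is the product of $p$-adic integrals over the finitely many $p\mid 2a$ (Proposition~\ref{HH-averageZ}), with the square-free odd case reducing to $\Mw(p)=\leg{-1}{p}/p$. That part of the proposal is sound and essentially identical to what the paper does.

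There is, however, a genuine gap in your rank argument, specifically in the converse direction (rank $0$ when $a\neq\pm k^2$). The Shioda--Tate formula $R=8-\sum_v(m_v-1)$ for a rational elliptic surface computes the Mordell--Weil rank over an algebraically closed field, i.e.\ over $\C(t)$ (this is Theorem~\ref{formule_Shioda} in the paper, used only to get the bound $R_\C=2$). For $\Wa$ this geometric rank equals $2$ for \emph{every} $a$: the points $(0,a\sqrt{a})$ and $(a,a\sqrt{-a})$ are always independent over $\Q(i,\sqrt a)(t)$. So no computation of Kodaira types can distinguish the $\Q(t)$-rank-$0$ case from the rank-$1$ case; the dichotomy $a=\pm k^2$ is a Galois-descent phenomenon invisible to the fiber data. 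The paper instead establishes the rank over $\Q(t)$ via Nagao's formula (unconditional here by Rosen--Silverman, since the surface is rational): one computes $\lim_X\frac1X\sum_{p\le X}-A_\GFF(p)\log p$ by evaluating the character sums $\sum_t\leg{B(x)t+C(x)}{p}$ at the roots of $B(x)$, and the condition $\r=-12k^4$ for $\PFF_\r$ (equivalently $a=\pm k^2$ for $\Wa$, via $\Wa(t)\simeq\PFF_{-3^5 4a^2}(12t+18a)$) emerges as exactly the condition under which the resulting Legendre-symbol average over primes is $1$ rather than $0$. If you want to avoid Nagao, you would need some other descent argument pinning down the $\Gal(\overline\Q/\Q)$-action on the geometric Mordell--Weil lattice; the Shioda--Tate count alone cannot do it.

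Your forward direction (exhibit a generic point when $a=\pm k^2$ and check it is non-torsion) is fine and matches the paper, which verifies non-torsion by computing $2G$ and using the Oguiso--Shioda bound on the torsion subgroup.
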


The family $\Wa $ can be seen as a generalization of the well-known Washington's family associated to simplest cubic field and defined by
$$
\was_{1}\colon y^2 = x^3 + tx^2 - (t+3)x + 1.
$$
One can see  that $\was_{1}$ has rank one over $\Q(t)$ (the point $(0, 1)$ is a point of infinite order) and Rizzo proved that for all $t\in\Z$ one has $\eps_1(t) = -1$ (\cite{rizzo}), whence $\Av_\Z(\eps_{\was_1})=-1$. As a consequence of Theorem~\ref{average-periodic}, one can see that $|\Av_\Z(\eps_{\Wa})|=1$ if and only if $a = \pm 1$ and in that case $\Av_\Z(\eps_{\Wa})=-1$ and the rank of $\Wa$ over $\Q(t)$ is 1. So, $\Wa$ can not directly provide
families with elevated rank over $\Z$. However, one can obtain examples of families with elevated rank using subfamilies of $\Wa$; indeed, in Section~\ref{ex_W} we shall prove the following result.
\begin{corol}\label{exess_rank_W_1} Let $p$ be a prime with $p \equiv \pm 1 \mod 8$, and let $a, b \in \Z$ be (non zero) quadratic residue and quadratic non-residue modulo $p$ respectively. Then,
the families
\begin{eqnarray*}
\was^{*}_{p,a} \;:\; y^2 = x^3 + (pt+a)x^2 - (p^3 t + ap^2 + 3p^4) + p^6\\
\was^{**}_{p,b} \;:\; y^2 = x^3 + (pt+b)x^2 - (p^2t + 3p^2 + pb) x + p^3
\end{eqnarray*}
are both families with elevated rank over $\Z$. More precisely, $\was^{*}_{p,a}$ has rank $1$ over $\Q(t)$ with $\varepsilon_{\was^{*}_{p,a}}(t) = 1$ for all $t \in \Z$, and
$\was^{**}_{p,b}$ has rank $0$ over $\Q(t)$ and $\varepsilon_{\was^{**}_{p,b}}(t) = -1$ for all $t \in \Z$.
\end{corol}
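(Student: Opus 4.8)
The plan is to realize both families as pullbacks of members of the family $\Wa $ under an invertible affine substitution of the parameter, and then to read off the rank and the root number directly from Theorem~\ref{average-periodic}. Comparing Weierstrass equations one checks immediately that
\[
\was^{*}_{p,a}(t)=\mathcal W_{p^{2}}(pt+a),\qquad \was^{**}_{p,b}(t)=\mathcal W_{p}(pt+b)
\]
(the values $A=p^{2}$ and $A=p$ are forced by the constant terms $p^{6}=A^{3}$ and $p^{3}=A^{3}$, and the remaining coefficients then match after the substitution $t\mapsto pt+a$, resp.\ $t\mapsto pt+b$).

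For the ranks: as $p\neq0$, the map $t\mapsto pt+c$ is an automorphism of the function field $\Q(t)$, hence induces an isomorphism of elliptic curves over $\Q(t)$ and preserves the Mordell--Weil rank over $\Q(t)$. By the rank criterion of Theorem~\ref{average-periodic}, $\mathcal W_{p^{2}}$ has rank $1$ over $\Q(t)$ (since $p^{2}=k^{2}$ with $k=p\in\Z_{\neq0}$) and $\mathcal W_{p}$ has rank $0$ (a prime is not of the form $\pm k^{2}$); therefore $\was^{*}_{p,a}$ has rank $1$ and $\was^{**}_{p,b}$ has rank $0$ over $\Q(t)$. Both are non-isotrivial, being non-constant reparametrizations of members of the non-isotrivial family $\Wa $.

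For the root numbers we specialize formula~\eqref{mf_average-periodic}. Take $\was^{*}_{p,a}$, so that $\eps_{\was^{*}_{p,a}}(t)=\eps_{\mathcal W_{p^{2}}}(pt+a)$, and apply~\eqref{mf_average-periodic} with the parameter $a$ replaced by $p^{2}$ and with $t$ replaced by $pt+a$. Since $p$ is odd, the quantity ``$a_{2}$'' in~\eqref{mf_average-periodic} is the odd part of $p^{2}$, namely $p^{2}$ itself, whose only prime factor is $p$; and since $a$ is prime to $p$, so is $pt+a$, giving $\gcd(p^{2},pt+a)=1$ and $v_p(pt+a)=0$. Hence the product in~\eqref{mf_average-periodic} collapses to the single factor $-\leg{pt+a}{p}=-\leg{a}{p}=-1$ (all exponents $1+v_p(pt+a)$ equal $1$, and $a$ is a quadratic residue modulo $p$). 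Consequently $\eps_{\mathcal W_{p^{2}}}(pt+a)\equiv s_{p^{2}}(pt+a)\pmod{4}$, and since $p^{2}\equiv1\pmod{8}$, Proposition~\ref{sat} gives $s_{p^{2}}\equiv1$; thus $\eps_{\was^{*}_{p,a}}(t)=1$ for every $t\in\Z$. The computation for $\was^{**}_{p,b}$ is identical with $p^{2}$ replaced by $p$: the collapsed product now equals $-\leg{pt+b}{p}=-\leg{b}{p}=1$ since $b$ is a quadratic non-residue, so $\eps_{\mathcal W_{p}}(pt+b)\equiv -s_{p}(pt+b)\pmod{4}$, and since $p\equiv\pm1\pmod{8}$, Proposition~\ref{sat} gives $s_{p}\equiv1$, whence $\eps_{\was^{**}_{p,b}}(t)=-1$ for every $t\in\Z$. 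Being constant on $\Z$, these root numbers have averages $\Av_{\Z}(\eps_{\was^{*}_{p,a}})=1=-(-1)^{1}$ and $\Av_{\Z}(\eps_{\was^{**}_{p,b}})=-1=-(-1)^{0}$, which together with the ranks above show that both families have elevated rank over $\Z$ in the sense of Definition~\ref{def_strange}.

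The one delicate point --- the step I expect to need genuine care rather than bookkeeping --- is that since $p$ is odd the progression $pt+c$ runs through every residue class modulo $4$, so one really needs the $2$-adic factors $s_{p^{2}}$ and $s_{p}$ to be \emph{identically} equal to $1$; this is automatic for $s_{p^{2}}$ (because $p^{2}\equiv1\pmod{8}$), whereas for $s_{p}$ it is precisely here that the hypothesis $p\equiv\pm1\pmod{8}$ is used. Everything else amounts to unwinding the explicit formula of Theorem~\ref{average-periodic}.
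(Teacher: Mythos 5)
Your proposal is correct and follows essentially the same route as the paper: one identifies $\was^{*}_{p,a}(t)=\mathcal W_{p^{2}}(pt+a)$ and $\was^{**}_{p,b}(t)=\mathcal W_{p}(pt+b)$, reads the ranks off the criterion $a=\pm k^{2}$ of Theorem~\ref{average-periodic} (Corollary~\ref{rank_W}), and evaluates the root number via formula~\eqref{mf_average-periodic}, which collapses to $-s\cdot\bigl(-\leg{c}{p}\bigr)$ since $\gcd(p t+c,p)=1$; the paper merely packages this last computation as Corollary~\ref{ex_1}. Your remark that $p\equiv\pm1\pmod 8$ is only genuinely needed for the $2$-adic factor $s_{p}$ in the second family (being automatic for $s_{p^{2}}$) is accurate and consistent with the paper.
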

We in fact have $\was^{*}_{p,a}(t) = \was_{p^2}(pt+a)$ and $\was^{**}_{p,b}(t) =\was_p(pt+b)$. One can also use
$\Wa (t)$ to construct families with elevated rank and with rank $2$ or $3$ over $\Q(t)$ (see Section~\ref{ex_W}).

We shall also focus on another subfamily of $\PFF_{\r}$, namely the subfamily
$$\Hold \;:\; y^2 =x^3 + 3tx^2 + 3\hr tx + \hr^2 t.$$
Notice that $\Hold(t)$ is isomorphic to $\PFF_{ 4\hr^2}(4t-2\hr)$.

\begin{theo} \label{average-nonperiodic} Let $\hr \in \Z_{\neq0}$. Then,
\begin{eqnarray*}
\varepsilon_{\Hold}(t) &=& w_2(t)  \;w_3(t)\hspace{-1em} \prod_{\substack{p \geq 5 \\ 0 \leq v_p(\hr) \leq v_p(t)}}\hspace{-0.5em}
 \begin{cases}\leg{-3}{p} \leg{3}{p}^{v_p(t)+v_p(t-\hr)+v_p(\hr)} & \mbox{ if } 6 \nmid v_p(t-\hr)-v_p(t)+3v_p(\hr), \\[0.6em]
\ 1 & \mbox{ if } 6 \mid  v_p(t-\hr)-v_p(t)+3v_p(\hr),
\end{cases}\\
 && \times \prod_{\substack{p \geq 5 \\ 0 \leq v_p(t) < v_p(\hr)}} \begin{cases}
-\left(\frac{3t_p}{p}\right) & \mbox{ if $v_p(t)$ is even} \\
\left(\frac{-1}{p}\right) & \mbox{ if $v_p(t)$ is odd,} \end{cases}
\end{eqnarray*}
where $w_{2}(t)$ and $w_{3}(t)$ are given by Proposition \ref{app-RN2} and \ref{app-RN3} of Appendix \ref{appen_2}.
Also,
$$\Av_\Z(\eps_{\Hold})   =  - \prod_{p\tn{ prime}} \Mh(p), $$
where $\Mh(p)$ are defined in Proposition~\ref{average_root_H}.
In particular, $\Hold$ is a parity-biased family if and only if $v_2(\hr)\neq1$. Finally, if $\hr=\pm 1$ we have
\es{\label{average_B}
\Av_\Z(\eps_{\Hold})  =  - \frac{1}{21} \prod_{\substack {p \geq 5 \\p \equiv 2 \mod 3}}\pr{ 1 - \frac{4(p-1)(p^3+p)}{p^6-1}}\approx 0.038562\dots
}
\end{theo}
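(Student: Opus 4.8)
The plan is to specialize the general root number machinery developed earlier to the family $\Hold$, exploiting the isomorphism $\Hold(t)\cong\PFF_{4\hr^2}(4t-2\hr)$ so that the local root number formulas already worked out for $\PFF_{\r}$ (recorded in Appendix~\ref{appen_1}) can be transported essentially verbatim. First I would check that $\Hold$ is potentially parity-biased, i.e.\ that $\GFF=\Hold$ has no place of multiplicative reduction over $\Q(t)$ other than possibly $-\deg$; this is the hypothesis that makes the whole approach viable, since it guarantees that each local factor $\eps_p(\Hold(t))$ depends on $t$ only through finitely many congruence/valuation conditions, with no Liouville-type contribution. Concretely one computes the discriminant and $c_4$ of the Weierstrass model, factors them, and verifies that at every prime $p\nmid 6\hr$ the fiber has good or additive (potentially good) reduction; the places dividing $6\hr$ are handled separately and produce the factors $w_2(t)$, $w_3(t)$ and the $p\ge 5$, $p\mid \hr$ product.

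Next I would assemble the global root number as $\eps_{\Hold}(t)=-\prod_p \eps_p(\Hold(t))$ (the overall $-1$ coming from the archimedean place together with the place at $\infty=-\deg$, as in the computation for $\was_1$ and $\Wa$). For $p\ge 5$ with $p\nmid\hr$, the reduction type is determined by $v_p$ of the relevant invariants; running through Tate's algorithm / the Halberstadt–Rizzo tables gives the case distinction according to whether $6\mid v_p(t-\hr)-v_p(t)+3v_p(\hr)$, with the Legendre symbol factors $\leg{-3}{p}$ and $\leg{3}{p}^{\,v_p(t)+v_p(t-\hr)+v_p(\hr)}$ appearing exactly as stated; the subcase $0\le v_p(t)<v_p(\hr)$ is where $x^3$ dominates and one gets the $\leg{3t_p}{p}$ versus $\leg{-1}{p}$ dichotomy. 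The primes $2$ and $3$ require the finer local analysis of Appendix~\ref{appen_2}, yielding $w_2(t)$ and $w_3(t)$; I would simply cite Propositions~\ref{app-RN2} and~\ref{app-RN3}.

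For the average, the point is that for each fixed prime $p\mid 6\hr$ the factor $w_p(t)$ (or the $p\mid\hr$ factor) is periodic in $t$, while for $p\ge 5$, $p\nmid\hr$ the local factor equals $+1$ on a set of $t$ of density $1-O(1/p^2)$, so the infinite product $\prod_p \Mh(p)$ of local averages $\Mh(p):=\lim_{T}\frac1{2T}\sum_{|t|\le T}\eps_p(\Hold(t))$ converges absolutely; a standard interchange-of-limits argument (the same one used to establish $\Av_\Z(\eps_{\Wa})=-\prod_{p\mid 2a}\Mw(p)$ in Theorem~\ref{average-periodic}) then gives $\Av_\Z(\eps_{\Hold})=-\prod_p\Mh(p)$. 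The criterion for being parity-biased reduces to checking that no single local average $\Mh(p)$ vanishes, and the only place where this can fail is $p=2$: one verifies from the explicit form of $w_2(t)$ that $\Mh(2)=0$ precisely when $v_2(\hr)=1$, exactly as for $\Wa$. Finally, for $\hr=\pm1$ all the $p\mid\hr$ factors disappear, $w_2(t)$ and $w_3(t)$ become fully explicit periodic functions whose averages $\Mh(2)$ and $\Mh(3)$ one computes by hand, and for $p\ge5$ one gets $\Mh(p)=1$ if $p\equiv1\bmod 3$ (the Legendre exponent is then controlled so the positive cases dominate) and $\Mh(p)=1-\tfrac{4(p-1)(p^3+p)}{p^6-1}$ if $p\equiv2\bmod3$ by averaging the four-term case distinction over the density of $t$ in each valuation class; multiplying $\Mh(2)\Mh(3)$ out to $-\tfrac1{21}$ and collecting the Euler product over $p\equiv2\bmod3$ gives~\eqref{average_B}, and a short numerical evaluation of the convergent product yields the constant $0.038562\ldots$.

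The main obstacle I anticipate is the bookkeeping at $p=2$ and $p=3$: the potentially-good-reduction fibers at these primes have many Kodaira types and the local root number is genuinely delicate, so getting $w_2(t)$ and $w_3(t)$ — and then their averages $\Mh(2),\Mh(3)$ — exactly right (including the $v_2(\hr)$-dependence that governs the parity-biased dichotomy) is the step that needs the most care; everything for $p\ge5$ is comparatively mechanical once the reduction types are tabulated.
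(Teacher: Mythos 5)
Your outline of the first part of the theorem (the product formula for $\varepsilon_{\Hold}(t)$) is essentially sound, and the route via $\Hold(t)\simeq\PFF_{4\hr^2}(4t-2\hr)$ is a legitimate alternative to the paper's direct case analysis from Rizzo's tables. The problem is in the averaging step, where there is a genuine gap. You assert that for $p\ge 5$, $p\nmid\hr$, the local factor $w_p(t)$ equals $+1$ outside a set of density $O(1/p^2)$, so that $\prod_p\Mh(p)$ converges and a ``standard interchange of limits'' applies. This is false for the \emph{unmodified} local root numbers: if $p\nmid 6\hr$ and $p$ divides $t(t-\hr)$ exactly once (a set of density about $2/p$), then by Lemma~\ref{H-p5} one has $w_p(t)=\leg{-3}{p}\leg{3}{p}=\leg{-1}{p}$, which is $-1$ for every $p\equiv 3\pmod 4$. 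Consequently $\int_{\Z_p}w_p(t)\,dt=1-4/p+O(1/p^2)$ for such $p$, the naive Euler product of local averages diverges to $0$ over the primes $p\equiv3\pmod4$, and the identity $\Av_\Z(\eps_{\Hold})=-\prod_p\int_{\Z_p}w_p(t)\,dt$ is exactly the fallacy the paper warns against in~\eqref{fallacy}. The comparison with Theorem~\ref{average-periodic} does not help here: for $\Wa$ the product over primes is genuinely finite (only $p\mid 2a$ contribute), which is a different situation.

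The missing idea is the renormalization of the local root numbers. The paper sets $w_p^*(t):=w_p(t)\leg{-1}{p}^{v_p(t)+v_p(t-\hr)}$ for $p\ge5$ (equation~\eqref{def-star}), modifies $w_2,w_3$ accordingly, and introduces $w_\infty^*(t)=\sgn(t(t-\hr))$; using $\leg{-1}{p}\equiv p\pmod4$, the product of the extracted factors over all $p$ collapses into congruence data at $2$ and $3$ plus a sign, so that $\varepsilon_{\hr}(t)=-w_\infty^*(t)\prod_pw_p^*(t)$ still holds (Lemma~\ref{lemma-modified-RN}), while now $w_p^*(t)=1$ whenever $v_p(t(t-\hr))\le1$ and $p\nmid 6\hr$. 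Only after this step does Helfgott's Proposition~\ref{HH-averageZ} apply, with $B(x)=x(x-\hr)$ of degree $2$ (which is also what makes the result unconditional), yielding $\Av_\Z(\eps_{\Hold})=-\prod_p\int_{\Z_p}w_p^*(t)\,dt$ with factors $\Mh(p)=1+O(p^{-2})$. Without this renormalization your plan computes a divergent (identically zero) product and cannot recover the stated nonzero value in~\eqref{average_B}.
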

We remark that the same method used to prove~\eqref{average_A} and~\eqref{average_B} can also be used for computing the average root number for all the other families given in~\eqref{classification} (conditionally on the square-free sieve conjecture in the case of ${\LL}_{w,\r,v}$ and unconditionally in all other cases, cf. Remark~\ref{remark_squarefree}).

We are also able to obtain new results on the possible numbers that can arise as average root numbers. Before stating them we need some more notation.

We let $\mathfrak F$ be the set of all families of elliptic curves over $\Q$, and $\mathfrak F_{\mathrm i}$ and $\mathfrak F'$ be the subset of $\mathfrak F$ consisting of the isotrivial and of the non-isotrivial families respectively. Furthermore, we let $\mathfrak F_{\Z}$ be the subset of  $\mathfrak F$ consisting of the families $\GFF$ such that $\Avf$ exists. Similarly, define $\mathfrak F_{\mathrm i,\Z}$ and $\mathfrak F_{\Z}'$. By the work of Helfgott (see Theorem~\ref{Helfgott} below) we know, under Chowla's and the square-free sieve conjectures (see the next section for their statements), that $\mathfrak F=\mathfrak F_{\Z}$ and thus 
also $\mathfrak F_{\mathrm i}=\mathfrak F_{\mathrm i,\Z}$ and $\mathfrak F'=\mathfrak F'_{\Z}$. Furthermore, we indicate by $\mathfrak F_{\mathrm p}$ the set of families $\GFF$ such that $\eps_\GFF(t)$ is a periodic function for almost all $t\in\Z$ (i.e. the set of exceptional $t$ with $|t|\leq T$ is $o(T)$ as $T\to\infty$).

Finally, with a slight abuse of notation we write $\Av_\Z(\mathfrak F_{\Z}):=\{\Av_\Z(\eps_\GFF)\mid \GFF\in \mathfrak F_{\Z}\}$ and similarly for $\Av_\Z(\mathfrak F_{\mathrm i,\Z})$, etc.
\begin{theorem}\label{rarn}
We have
\est{
\{\Av_\Z(\eps_\GFF)\mid \GFF\in \mathfrak F_{\Z}\}\supseteq \Q\cap[-1,1].
}
In particular, $\Av_\Z(\mathfrak F_{\Z})$ is dense in $[-1,1]$. Moreover, the same result holds true also for $\Av_\Z(\mathfrak F'_{\Z})$ and $\Av_\Z(\mathfrak F_{\mathrm i,\Z})$.
\end{theorem}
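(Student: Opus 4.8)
The plan is to realize every rational number in $[-1,1]$ as an average root number by combining and rescaling the explicit families already at our disposal, together with a simple "twisting" device that flips the sign of the root number on a prescribed arithmetic progression. First I would record that Theorem~\ref{average-periodic} already produces, for $a$ odd and square-free, the values $\pm 1/a$ and $\pm 1/(2a)$; in particular, letting $a$ range over odd square-free integers one gets a set of average root numbers whose absolute values are $1/n$ for infinitely many $n$. The key point is that these are periodic families (the root number $\eps_{\Wa}(t)$ is periodic modulo $4|a|$), so I can manipulate them by purely elementary periodic modifications.

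The main step is an averaging/concatenation construction. Given two families $\GFF_1,\GFF_2\in\mathfrak F_{\Z}$ with periodic root numbers of periods $N_1,N_2$, and given any residue class, I want to build a new family whose root number agrees with $\eps_{\GFF_1}$ on some fraction $\alpha$ of residues modulo $\mathrm{lcm}(N_1,N_2)$ and with $\eps_{\GFF_2}$ on the complementary fraction; its average is then $\alpha\Av_\Z(\eps_{\GFF_1})+(1-\alpha)\Av_\Z(\eps_{\GFF_2})$. Concretely, one can take a quadratic twist of a single family by a polynomial $d(t)\in\Z[t]$ chosen so that, modulo the relevant modulus, $d(t)$ is (say) a nonzero square on the desired residues and lies in a fixed non-square class with controlled sign elsewhere — so that the local root number factors at the bad primes of $d$ get multiplied by a known periodic sign. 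Starting from a family with constant root number $-1$ (e.g.\ Washington's family $\was_1$, or $\Wa$ with $a=\pm1$) and one with constant root number $+1$ (e.g.\ $\was^{*}_{p,a}$ from Corollary~\ref{exess_rank_W_1}), such a convex-combination mechanism yields every dyadic rational, and then every rational in $[-1,1]$, as soon as one can realize arbitrary rational weights $\alpha$; this is where the families $\Wa$ with average $\pm1/a$, $\pm1/(2a)$ enter, since mixing $\pm 1$ with $\pm 1/q$ in proportions $p/q$ produces denominators $q$ and, iterating, all of $\Q\cap[-1,1]$. For the "isotrivial" and "non-isotrivial" refinements I would note that Washington's family and the $\Wa$, $\was^{*}_{p,a}$ are non-isotrivial (so $\mathfrak F'_{\Z}$ is covered directly), while for $\mathfrak F_{\mathrm i,\Z}$ one runs the identical argument starting from quadratic twists of a fixed elliptic curve, whose root numbers are governed by congruences (as recalled in the footnote in the introduction) and hence are periodic and just as easy to mix.

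The hard part will be making the twisting/concatenation step actually produce a legitimate family in the sense of the paper — i.e.\ an elliptic surface over $\Q$ given by a Weierstrass model over $\Z[t]$ — while keeping exact control of the root number of every specialization, including the finitely many bad $t$ and the primes dividing the twisting polynomial and the conductors. One must check that introducing a quadratic twist by $d(t)$ does not create a new place of multiplicative reduction over $\Q(t)$ in an uncontrolled way, and that the local root number contributions at primes $p\mid d(t)$ can be computed by congruence conditions on $t$ (which holds when $d(t)$ is coprime to the relevant conductor on the progressions in question). Once the local root number bookkeeping is in place, the computation of the average is a finite sum of densities of arithmetic progressions and is routine. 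I would organize the write-up so that the periodicity and the explicit averages from Theorems~\ref{average-periodic} and~\ref{average-nonperiodic} and Corollary~\ref{exess_rank_W_1} are treated as black boxes, isolating all the work in the elementary lemma that a convex combination with rational weights of already-realized periodic average root numbers is again realized by an explicit periodic family.
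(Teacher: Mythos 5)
There is a genuine gap, and it sits exactly at the step you defer: the ``convex combination with rational weights'' lemma. First, the mechanism you propose for it does not do what you want. Twisting by a polynomial $d(t)$ that is a square \emph{modulo} some modulus $N$ on prescribed residues is not the same as twisting by a square in $\Q^*$: the root number of the twist depends on the square class of the integer $d(t)$, i.e.\ on its square-free kernel, and the resulting sign (sign of $d(t)$, its square-free part mod $8$, and Legendre symbols at the bad primes) is \emph{not} a periodic function of $t$ unless one controls the square divisors of $d(t)$ — which is the square-free sieve problem all over again. Second, and more decisively, every construction in your outline produces a root number that is (up to finitely many exceptions) periodic in $t$, and you compute the average as ``a finite sum of densities of arithmetic progressions.'' But the paper's own Theorem~\ref{rarn3} shows that averages of periodic root numbers are confined to fractions $h/k$ with $h$ odd (and with $|h/k|\leq 1-2^{-v_2(k)}$ when $k$ is even); structurally, a periodic root number factors as a product of local periodic signs, and each local average at an odd prime has odd numerator, so e.g.\ $2/3$ is unreachable by any periodic mixing, no matter how the proportions are iterated. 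Your sentence ``mixing $\pm1$ with $\pm1/q$ in proportions $p/q$ produces denominators $q$'' is precisely the unproved step, and for even numerators it is false within the class of constructions you allow.

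The paper's proof avoids this by producing families whose root number is genuinely non-periodic. One substitutes polynomials into the two-parameter family $\was_a(t)$, taking $a(t)=2^4pP(t)$ and $Q(t)=(4pt^2+1)P(t)$ with $P(t)=\mp p\prod_{i=1}^m(t-i)$ and $p$ a prime with $p+1=2rk$. The explicit formula \eqref{mf_average-periodic} then collapses (the $2$-adic factor is neutralized via Remark~\ref{sat_particular_case}, and the product over primes dividing $a_2/\gcd(a_2,Q(t))$ reduces to the single prime $p$), leaving $\eps(t)=\pm(-1)^{1+v_p(Q(t))}\leg{Q(t)_p}{p}^{1+v_p(Q(t))}$. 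This depends on arbitrarily high powers of $p$ dividing $Q(t)$; its average is the convergent $p$-adic integral $\frac{p-m}{p}+\frac{m}{p(p+1)}=1-\frac{m}{p+1}$, and the denominator $k$ is realized through $p+1=2rk$ and the count $m$ of linear factors of $P$ — not through densities of residue classes. If you want to keep your ``black box'' organization, the right black box is formula \eqref{mf_average-periodic} applied to polynomial substitutions $a(t),Q(t)$, together with the computation of a single $p$-adic integral; the periodic-mixing machinery you describe is the correct tool only for Theorem~\ref{rarn3}, where the odd-numerator restriction is built into the statement.
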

Under Chowla's and the square-free sieve conjectures we can also classify all average root numbers that can arise from families with periodic root number.
\begin{theorem}\label{rarn3}
We have
\es{\label{eqfar}
\Av(\mathfrak F_{\mathrm p,\Z})\supseteq \{h/k\in\Q\cap[-1,1]\mid h\text{ odd, and if $k$ even then } |h/k|\leq 1-2^{-v_2(k)}\}.
}
Moreover, assuming Conjecture \ref{chowla} and Conjecture \ref{square-free-1}, the equality holds.
\end{theorem}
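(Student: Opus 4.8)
The plan is to prove the two inclusions separately, using the families constructed in the explicit classification \eqref{classification} (especially $\Wa$ and its twists/subfamilies) as the main building block for the lower bound, and a structural analysis of periodic root-number functions together with Helfgott's theorem for the matching upper bound.

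For the inclusion $\supseteq$ in \eqref{eqfar}, I would first reduce to producing, for each admissible $h/k$, a single potentially-biased family whose root number is periodic and whose average equals $h/k$. The computations in Theorem \ref{average-periodic} already give, for $\Wa$ with $a$ odd and squarefree, average root numbers $\pm 1/a$ and $\pm 1/(2a)$ according to $a\bmod 8$; more generally the formula $\Av_\Z(\eps_{\Wa})=-\prod_{p\mid 2a}\Mw(p)$ realizes a large multiplicatively-structured set of rationals with odd numerator. To hit an arbitrary admissible $h/k$ one takes suitable products/quotients of such local factors, which concretely means combining several families: the standard trick is to form a "disjoint union'' of finitely many families $\GFF^{(1)},\dots,\GFF^{(r)}$ (rescaling the parameter $t\mapsto rt+i$ so that the $i$-th family governs the residue class $i\bmod r$), whose average root number is then the arithmetic mean $\frac1r\sum_j \Av_\Z(\eps_{\GFF^{(j)}})$. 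Since each $\eps_{\Wa}$ is periodic (period $4|a|$) by Theorem \ref{average-periodic}, any such finite average is again periodic, so the resulting family lies in $\mathfrak F_{\mathrm p,\Z}$. The parity constraints — $h$ odd always, and $|h/k|\le 1-2^{-v_2(k)}$ when $k$ is even — should drop out of a $2$-adic analysis: a periodic function with values in $\{-1,0,+1\}$ on $\Z/N\Z$ has average with denominator dividing $N$, and the obstruction at $2$ comes from the fact that a potentially-biased family must have $\eps_\GFF(t)=0$ on a set of positive density (the non-elliptic specializations, coming from the discriminant), which forces the numerator to be odd; carefully tracking how many residues mod $2^{v_2(k)+2}$ can be "killed'' gives the bound $1-2^{-v_2(k)}$. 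I would isolate this as a self-contained lemma on the possible averages of $\{-1,0,1\}$-valued periodic functions that are realizable as root numbers of potentially-biased families.

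For the reverse inclusion (the "equality'' part, under Conjectures \ref{chowla} and \ref{square-free-1}), I would argue: if $\GFF\in\mathfrak F_{\mathrm p,\Z}$, then by Helfgott's theorem (Theorem \ref{Helfgott}, invoked via Chowla and the squarefree sieve) a family with a place of multiplicative reduction other than $-\deg$ has $\eps_\GFF(t)$ behaving like $\lambda(M(t))$ for a non-constant squarefree $M$, and such a function is not periodic on a density-one set; hence $\GFF$ must be potentially parity-biased. For a potentially-biased family the root number is given (again by Helfgott's analysis, or by the explicit local recipes à la Rizzo used throughout the paper) by a product of local terms, each depending on $t$ through its residue modulo a prime power times possibly a Jacobi-symbol/valuation term; periodicity on a density-one set forces all the "non-periodic'' Jacobi-symbol factors to be absent, so $\eps_\GFF$ is genuinely periodic modulo some $N$. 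Then $\Av_\Z(\eps_\GFF)=\frac1N\sum_{t\bmod N}\eps_\GFF(t)$ with each summand in $\{-1,0,+1\}$, the count of zeros is positive (the discriminant $\Delta(t)$ vanishes mod $p$ for any prime $p\mid\Delta$, and there is always at least one such prime since $\Delta(t)$ is non-constant — here the potentially-biased hypothesis is what guarantees $\Delta$ is, up to the place at infinity, a perfect-power times constant, but still non-constant), so the numerator is odd; and the $2$-adic refinement giving $|h/k|\le 1-2^{-v_2(k)}$ follows from the same lemma applied in reverse.

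The main obstacle I expect is the precise $2$-adic bookkeeping behind the condition "$|h/k|\le 1-2^{-v_2(k)}$ when $k$ is even''. This is genuinely a statement about how the set of non-elliptic (zero root number) residues interacts with the period, and it requires knowing not just that some residues are killed but exactly how the $2$-part of the period is forced to grow relative to the number of killed classes — i.e. one needs the sharp inequality, both for the construction (produce families that attain the boundary, presumably boundary cases of $\Wa$ or $\Hold$ with $v_2(a)$ tuned) and for the impossibility beyond it (a counting argument showing a $\{-1,0,1\}$-valued root number periodic mod $2^{v_2(k)+2}m$ with odd numerator cannot have too small a zero-set). Getting these two halves to meet exactly at $1-2^{-v_2(k)}$ is where the real work lies; the odd-numerator statement and the density results for $\Q\cap[-1,1]$ are comparatively routine once the $\Wa$ family and the disjoint-union construction are in hand.
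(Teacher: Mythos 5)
Your proposal has two genuine gaps, one in each direction.

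First, the mechanism you propose for realizing a given admissible $h/k$ does not produce an object of the right type. Gluing finitely many families $\GFF^{(1)},\dots,\GFF^{(r)}$ along the residue classes $t\equiv i\bmod r$ is not a single elliptic surface over $\Q(t)$, so the resulting "family'' does not lie in $\mathfrak F_{\mathrm p,\Z}$; moreover an arithmetic mean of averages is the wrong algebraic operation here. The paper instead builds a \emph{single} subfamily $\was_a(Q(t))$: for each prime $p_i\mid k$ it engineers (Lemma~\ref{adaw}) a polynomial $Q_i$ with exactly $m_i'p_i^{r_i}$ zeros modulo $p_i^{r_i+u_i}$ and controlled valuations elsewhere, forms $B_i(t)=p_iQ_i(t)^2-p_i^{2r_i+2u_i}$ so that the $p_i$-local factor in formula~\eqref{mf_average-periodic} becomes a $\pm1$-valued function $h_i(t)$ periodic mod $p_i^{r_i+u_i}$ equal to $+1$ on exactly $m_ip_i^{r_i}$ classes, and assembles the $B_i$ into one polynomial $Q(t)$ by CRT on the coefficients. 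The average is then the \emph{product} $-\prod_i(2|S_i|p_i^{-u_i-r_i}-1)$, and Lemma~\ref{adaw1} is what lets one factor $h'/k'$ as a product of local targets $d_i/p_i^{u_i}$ in the first place. None of this machinery appears in your sketch.

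Second, and more seriously, you misidentify the source of the constraints "$h$ odd'' and "$|h/k|\le 1-2^{-v_2(k)}$''. There is no positive-density set of zeros of the root number: $\Delta(t)$ is a nonzero polynomial, so the non-elliptic specializations form a \emph{finite} set, and $\eps_\GFF(t)\in\{\pm1\}$ off that set ($\Delta(t)\equiv0\bmod p$ only means bad reduction, not a vanishing root number). A counting argument on $\{-1,0,1\}$-valued periodic functions therefore cannot give the odd-numerator condition — indeed a $\pm1$-valued function periodic mod $6$ can have average $2/3$, with even numerator. The actual engine is the factorization result (Corollary~\ref{mcfar}, resting on Lemma~\ref{mlftab} and Theorem~\ref{Helfgott} under the two conjectures): a root number that is periodic mod $\ell$ up to density zero must equal $\rho\prod_{p\mid\ell}h_p(t)$ with each $h_p$ being $\pm1$-valued and periodic mod $p^{v_p(\ell)}$. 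By CRT the average is then $\prod_{p\mid\ell}\bigl(1-2\kappa_p/p^{v_p(\ell)}\bigr)$; for odd $p$ each factor $\bigl(p^{v_p(\ell)}-2\kappa_p\bigr)/p^{v_p(\ell)}$ has odd numerator, and the $2$-factor $\bigl(2^{v_2(\ell)-1}-\kappa_2\bigr)/2^{v_2(\ell)-1}$ has odd numerator once reduced, which forces $|h|\le 2^{v_2(k)}-1$. Your reverse-inclusion outline correctly invokes Helfgott to kill multiplicative reduction and force genuine periodicity, but without the factorization into prime-power-periodic pieces the argument cannot reach the stated arithmetic conditions.
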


We can also obtain an analogue of Theorem~\ref{rarn} in the case of averages over $\Q$. Analogously to $\Avf$, we define
\begin{equation}  \label{averageQ}
\Av_\Q(\eps_{\GFF}) := \lim_{T \rightarrow \infty} \frac{\pi^2}{12T^2} \sum_{\substack{|r|,|s| \leq T,\\ s>0, (r,s)=1}} \varepsilon_{\GFF}(r/s)
\end{equation}
if the limit exists. Also, we let $\mathfrak F_{\Q}$ be the set of families ${\GFF}$ where $\Av_\Q({\GFF})$ exists, and $\mathfrak F_{\mathrm i,\Q}$ and $\mathfrak F'_{\Q}$ be the subsets of $\mathfrak F_{\Q}$ consisting of the isotrivial and non-isotrivial families.

Rizzo~\cite{rizzo_2}, building on the work of Rohrlich~\cite{rohrlich}, proved that $\Av_\Q(\mathfrak F_{\mathrm i,\Q})$ (and thus  $\Av_\Q(\mathfrak F_{\mathrm \Q})$) is dense in $[-1,1]$. {In a similar direction, Klagsbrun, Mazur and Rubin~\cite{KMR} prove an analogous result for the parity of the ranks of the 2-Selmer groups 
in isotrivial families defined over number fields (with techniques which are completely different from the ones of the present paper). The following theorem refine Rizzo's result, showing $\Av_\Q(\mathfrak F_{\mathrm i,\Q})$ actually contains  $[-1,1]\cap\Q$. Moreover, also in this case we are able to address the case of non-isotrivial families, showing that $\Av_\Q(\mathfrak F'_{\Q})$ is dense in $[-1,1]$.
}
\begin{theorem}\label{rarn2}
We have that $\Av_\Q(\mathfrak F_{\Q}')$ is dense in $[-1,1]$. Moreover, we have that $\Av_\Z(\mathfrak F_{\mathrm i,\Q})\supseteq [-1,1]\cap\Q$ (and thus, a fortiori, $\Av_\Q(\mathfrak F_{\Q})\supseteq [-1,1]\cap\Q$).
\end{theorem}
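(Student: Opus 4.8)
The plan is to build on the periodic families already analyzed, transporting them from averages over $\Z$ to averages over $\Q$. The key observation is that if $\GFF$ is a family whose root number $\eps_\GFF(t)$ is a periodic function of $t\in\Z$ modulo some $N$, then the average $\Av_\Q(\eps_\GFF)$ can be computed by a sieving argument: writing $t=r/s$ with $(r,s)=1$, the value $\eps_\GFF(r/s)$ depends only on $r,s$ modulo bounded quantities (coming from the conductor exponents at the bad primes and from the places of additive reduction), so one reduces to counting coprime pairs $(r,s)$ in prescribed congruence classes, which have a density computable by a standard inclusion–exclusion over $s$ (the factor $\pi^2/12$ in \eqref{averageQ} is exactly $1/(2\zeta(2))$, the density of coprime pairs in a box). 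Concretely, I would first isolate a sufficiently flexible family of \emph{isotrivial} quadratic twists $E^{d(t)}$ with $d(t)$ linear (or a product of two linear factors) and coprime-to-conductor conditions making the root number a clean product of Jacobi symbols and local factors, exactly as sketched in the footnote of the introduction; for such families $\eps(r/s)$ factors through residues of $r$ and $s$, and $\Av_\Q$ becomes an Euler product over the relevant primes.

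Second, for the \emph{non-isotrivial} density statement, I would reuse the families $\Wa$ (equivalently the subfamilies of $\PFF_\r$) together with Theorem~\ref{average-periodic}: since $\eps_{\Wa}(t)$ is periodic modulo $4|a|$, the same sieve over $\Q$ applies and yields $\Av_\Q(\eps_{\Wa})$ as an explicit product of local densities $\Mw^{\Q}(p)$ analogous to $\Mw(p)$. As $a$ ranges over (say) odd squarefree integers, the leading behaviour is $\approx \pm 1/a$ or $\pm1/(2a)$ as in \eqref{average_A}, with the extra primes contributing factors $1+O(1/p)$; choosing $a$ appropriately (in particular combining the four residue classes mod $8$ and, if needed, multiplying by auxiliary primes $p\equiv\pm1\bmod 8$ as in Corollary~\ref{exess_rank_W_1}) gives values accumulating densely in a neighbourhood of $0$, and then — crucially — one amplifies: by taking $\Wa(\ell t + c)$ for a suitable modulus $\ell$ and residue $c$ one can push the average to any prescribed value in a dense subset of $[-1,1]$, since restricting a periodic root number to an arithmetic progression multiplies the density contributions at the primes dividing $\ell$ by controllable local factors (including the possibility of forcing $\eps\equiv\pm1$ at a prime, which scales the whole average). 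This is the mechanism already exploited for the elevated-rank subfamilies $\was^{*}_{p,a},\was^{**}_{p,b}$.

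Third, to get the full interval $[-1,1]\cap\Q$ in the isotrivial case (not merely density), I would argue more carefully: pick a base curve $E$ and a twisting polynomial of the form $d(t)=t\cdot u$ (or $d(t)$ linear) so that $\eps_{E^{d}}(r/s)$ is, up to a fixed sign, $\chi(r)\chi'(s)$ for explicit real characters $\chi,\chi'$ of small conductor times a local factor at $2$ and at the primes dividing the conductor of $E$. Then $\Av_\Q(\eps_{E^d})$ is a genuinely \emph{finite} product of rational local densities — each a rational number with denominator a power of the corresponding prime — and by choosing $E$ (hence its conductor) and $d$ one realizes, via the Chinese remainder theorem, any target rational $h/k\in[-1,1]$: one engineers the prime $p\mid k$ to contribute exactly the needed $p$-power denominator, and saturates the remaining primes so their factors telescope to cancel spurious contributions. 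The verification that every $h/k$ is hit is essentially the same bookkeeping that produces \eqref{eqfar} in Theorem~\ref{rarn3}, transplanted to the $\Q$-average (where the allowed denominators are less constrained because the coprimality condition $(r,s)=1$ decouples the two variables at each prime).

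The main obstacle will be the third step: controlling \emph{exactly} (not just up to $1+O(1/p)$) the local density factors of $\Av_\Q$ at every prime of bad reduction simultaneously, so as to land on a prescribed rational rather than merely near it. This requires a clean closed-form for $\eps_\GFF(r/s)$ at the bad primes — for the isotrivial quadratic twists this is classical (Rohrlich), but one must check the $2$-adic and $3$-adic local factors carefully and verify they can be tuned independently of the "main" prime $p\mid k$. The density statements (for $\mathfrak F'_\Q$ and the fallback for $\mathfrak F_{\mathrm i,\Q}$) are comparatively soft and follow from the $O(1/p)$ control together with the amplification trick; it is the "$\supseteq[-1,1]\cap\Q$" exact statement that needs the delicate local analysis.
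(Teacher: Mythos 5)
Your plan diverges from the paper's proof in a way that hides the real difficulty, and both of the load-bearing steps have genuine gaps. The first gap is the claim that, because $\eps_{\Wa}(t)$ is periodic modulo $4|a|$ for $t\in\Z$, ``the same sieve over $\Q$ applies.'' It does not: to evaluate $\eps_{\Wa}(r/s)$ you must clear denominators, and $\Wa(r/s)\simeq\was_{as^{2}}(\cdot)$, so the twisting parameter itself now depends on $s$. The formula \eqref{mf_average-periodic} then involves Legendre symbols and $\gcd$'s at \emph{every} prime dividing $s$, so $(r,s)\mapsto\eps(r/s)$ is not locally constant modulo a bounded quantity; generically the symbols $\leg{\cdot}{p}$ at primes $p\,\|\,s$ oscillate and kill the average. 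The paper's actual proof is built precisely to neutralize this: it takes $Q_X(t)=-P(t)(1+n_Xt^2)$ and $a_X(t)=-2^4P(t)(1+n_Xt^2)^2$ with $n_X=\bigl(\prod_{p\le X}p\bigr)^{2\lfloor f(X)\rfloor+1}$, so that after homogenization $v_p(s^2+n_Xr^2)=\min(v_p(s^2),v_p(n_X))$ for $3\le p\le X$ and the $p$-factors for $p\le X$ are forced to be $1$ up to measure $O(p^{-f(X)})$, while the factors at $p>X$ are $1+O(1/p)$ or supported on the small exceptional set $E_X$. Applying Proposition~\ref{HH-averageQ}, every non-archimedean factor is $1+o(1)$ and the whole average collapses onto the archimedean density $c_\infty(P)=\frac14\int_{-1}^1\sgn(P)+\frac14\int_1^\infty\frac{\sgn(P(x))+\sgn(P(-x))}{x^2}\,dx$, which is tuned to $h/k$ by choosing $P(x)=k^2(x^2-(1-h/k)^2)$. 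No ``amplification by arithmetic progressions'' of $\Wa$ is used or needed, and the density (rather than exact attainment) in the non-isotrivial case comes from the limit $X\to\infty$, not from accumulating values near $0$.

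The second gap is in your third step. You propose to realize every $h/k\in[-1,1]\cap\Q$ in the isotrivial case as a \emph{finite product of non-archimedean local densities}, tuned by CRT. This faces exactly the obstruction that makes Theorem~\ref{rarn3} an equality with a \emph{proper} subset of $\Q\cap[-1,1]$: local densities at odd primes are rationals of the form $(p^{v}-2\kappa)/p^{v}$ with odd numerator, and the factor at $2$ is constrained, so finite products of such factors cannot reach, e.g., targets with even numerator or targets violating the $1-2^{-v_2(k)}$ bound. You assert the coprimality of $(r,s)$ ``decouples'' the primes and relaxes the constraints, but you give no computation, and there is no reason to expect the obstruction to disappear. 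The paper sidesteps this entirely: for the isotrivial statement it takes $Q(t)=-P(t)$, $a(t)=-2^4P(t)$, checks via Proposition~\ref{sat} and Remark~\ref{sat_particular_case} that $v_2(a(r,s))=v_2(Q(r,s))+4$ forces $s_{a(r,s)}(Q(r,s))\equiv Q(r,s)_2\bmod 4$ and hence $\eps(r/s)=\sgn(P(r,s))$ \emph{exactly}, with an empty product over finite primes; the average is then exactly $c_\infty(P)$, and the archimedean integral — unlike the finite local factors — takes every rational value in $[-1,1]$. If you want to salvage your approach you would have to prove a $\Q$-analogue of the classification in Theorem~\ref{rarn3} and show its constraint set is all of $\Q\cap[-1,1]$, which is a substantially harder task than the paper's one-line archimedean computation.
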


Notice that in the case of $\Av_\Q(\mathfrak F_{\Q}')$ we do not get $[-1,1]\cap\Q$. We remark that there are reasons to believe that in fact $\Av_\Q(\mathfrak F'_{\Q})\cap \Q=\{0\}$. Indeed, by the work of Helfgott (cf. \cite[Appendix A]{CCH}) and Desjardins~\cite{desjardins_thesis} one has (conjecturally) that for a non-isotrivial family $\GFF$ with $\Av_\Q(\rootf)\neq0$, the average root number of $\GFF$ over $\Q$ can be written as a convergent Euler product $\Av_\Q(\rootf)=c_\infty \prod_p(1-r_p)$ for some $c_\infty \in\overline{\Q}\cap[-1,1]$ and some $r_p$ which are rational polynomials in $p$ satisfying $0\leq r_p(p)<1$, $r_p\ll p^{-2}$ for all $p$ and $r_p>0$ for infinitely many $p$.\footnote{In the proof of Corollaire~2.5.4. of~\cite{desjardins_thesis} it is shown that $r_p>0$ for at least one $p$, but the same proof actually carries over to show that there are infinitely many such $p$.}
In particular, one has $\Av_\Q(\rootf)\neq \pm1$, and one also expects that such an infinite product is not algebraic.

We shall prove Theorem~\ref{rarn},~\ref{rarn3} and~\ref{rarn2} by considering subfamilies of $\Wa(t)$ where both $t$ and the parameter $a$ are replaced by polynomials in $\Z[t]$. By Theorem~\ref{average-periodic}, we know exactly the root number for all the elliptic curves in these families, and so the problem becomes that of choosing suitably these polynomials.
In the case of averages over $\Q$, we can reduce to the case where the $\infty$-factor of the root number essentially determines $\rootf(t)$, whereas in the the case of Theorem~\ref{rarn} we work with the $p$-factor of the root number for a suitably chosen $p$. The proof of Theorem~\ref{rarn3} is a bit more elaborate and requires dealing with the factors of the root number corresponding to all prime divisors of $6k$.

\medskip
The organization of the paper is as follow. In Section~\ref{ppf} we discuss more in depth the work of Helfgott and we give our classification of the potentially parity-biased families with coefficients of low degree. In Section~\ref{ranks} we compute the ranks for the families given in~\eqref{classification}. In Section~\ref{rootnumbers} we compute the average root numbers of the families $\Wa$ and $\Hold$. In Section~\ref{densityaverage} we use the results proven in Section~\ref{rootnumbers} to prove Theorem~\ref{rarn},~\ref{rarn3} and~\ref{rarn2}. Finally, in Appendix~\ref{appen_1} and~\ref{appen_2} we give the local root numbers of the families $\PFF_{\r}$ and $\Hold$.
Finally, this work led to many technical computation (root number, local average of root number, ...), we used the
PARI/GP software (\cite{pari}) to intensively check them when it was relevant.

\medskip
{\bf Acknowledgements.} The authors would like to thank Jake Chinis, Julie Desjardins, Ottavio Rizzo, Joseph Silverman and Jamie Weigandt for helpful discussions.
This work was initiated while the first author was a post-doctoral fellow at the Centre de Recherche Math\'ematiques (CRM) in Montr\'eal, and completed during several visits of the third author at the CRM, and we are grateful to the CRM for providing very good facilities. The research of the second author is partially supported by the National Science and Engineering Research Council of Canada (NSERC). The third author was partially supported by the R\'egion Franche-Comt\'e (Projet R\'egion). 

\section{The classification of potentially parity-biased families of low degree}\label{ppf}

\subsection{The work of Helfgott and its consequences}
We start with a more detailed discussion of the work of Helfgott (\cite{helfgott,helfgott_thesis}) which gives (conditionally) a necessary condition for a family to be potentially parity-biased.
First, we state the following conjectures.

\begin{conj}[Chowla's conjecture] \label{chowla} Let $P(x)\in\Z[x]$ be square-free. Then, $\sum_{n\leq N}\lambda(P(n))=o(N)$ as $N\to\infty$, where $\lambda(n)$ is the Liouville function $\lambda(n):=\prod_{p|n}(-1)^{v_p(n)}$.
\end{conj}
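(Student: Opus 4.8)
I should say at the outset that the statement is \emph{Chowla's conjecture} for a single square-free polynomial, which in full generality is one of the central open problems of analytic number theory; so what follows is necessarily a description of the available strategies and the realistic partial results rather than a complete argument. The one case that can be settled unconditionally is $\deg P = 1$. Writing $P(x) = ax + b$ and factoring out $d = \gcd(a,b)$, completely multiplicativity of $\lambda$ gives $\lambda(P(n)) = \lambda(d)\,\lambda(a'n + b')$ with $\gcd(a',b')=1$, so the sum $\sum_{n \le N}\lambda(P(n))$ is (up to the fixed sign $\lambda(d)$) a sum of $\lambda$ over the arithmetic progression $\equiv b' \pmod{a'}$. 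Its smallness is then equivalent to the Prime Number Theorem in arithmetic progressions, and the plan here is the classical one: expand $\lambda$ against Dirichlet characters and invoke the non-vanishing of $L(1+it,\chi)$ together with the standard zero-free regions for $\zeta$ and Dirichlet $L$-functions.

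For $\deg P \ge 2$ the plan is forced onto the modern machinery for sign changes and correlations of multiplicative functions. The realistic target is the logarithmically averaged variant $\sum_{n \le N}\lambda(P(n))/n = o(\log N)$, attacked through the circle of ideas introduced by Tao and developed by Tao--Ter\"av\"ainen: one would exploit the multiplicativity of $\lambda$ at the scaled arguments $P(pn)$, use the entropy decrement argument to extract an approximate local structure, and then invoke the Matom\"aki--Radziwi\l\l\ theorem on multiplicative functions in almost all short intervals to exclude the obstructing structured behaviour. Even in this logarithmic regime, however, the single-polynomial case is fully understood only when $P$ splits into linear factors (which reduces to the prime-number-theorem input of the previous paragraph); for an irreducible quadratic it remains open, and passing from a logarithmic to the natural average of Conjecture~\ref{chowla} adds a further, genuine obstruction.

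The main obstacle, which is precisely why the conjecture is open, is the absence of any usable harmonic-analytic handle on $\lambda(P(n))$ when $P$ is irreducible of degree $\ge 2$: the values $P(n)$ are too sparse and too arithmetically rigid for a circle-method decomposition, and the multiplicativity of $\lambda$ does not interact cleanly with the polynomial substitution, so one cannot linearise the problem as in the $\deg P = 1$ case. For the purposes of the present paper this is not fatal, since Chowla's conjecture enters only as a hypothesis (Conjecture~\ref{chowla}), used to deduce that potentially parity-biased families with a place of multiplicative reduction average to zero; thus the honest ``proof proposal'' is to record the unconditional degree-one case, to cite the logarithmically averaged and odd-order results as the current frontier, and otherwise to carry the general statement as a conjecture, exactly as the paper does.
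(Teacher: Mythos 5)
Your proposal is correct and matches the paper exactly: the statement is Conjecture~\ref{chowla}, which the paper never proves but only records as a hypothesis (noting, as you do, that the degree-one case is the only one known, being equivalent to the prime number theorem in arithmetic progressions), and your survey of the logarithmically averaged frontier is accurate. Nothing further is required.
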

Moreover, by \emph{strong Chowla's conjecture} for a polynomial $P$ we mean the assumption that Chowla's conjecture holds for $P(ax+b)$ for all $a,b\in\Z$, $a\neq0$.

\begin{conj}[Square-free sieve conjecture] \label{square-free-1} Let $P(x)$ be a square-free polynomial in $\Z[x]$. Then, the set of integers $n$ such that $P(n)$ is divisible by the square of a prime which is larger than $\sqrt n$ has density $0$.
\end{conj}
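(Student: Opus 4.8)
The plan is to treat this as a square-free sieve estimate, but I should stress at the outset that in full generality the statement is genuinely open (which is precisely why the authors list it as a conjecture); the realistic goal is to carry out the standard reduction, prove it unconditionally in the low-degree cases that are actually needed, and isolate the single step that forces a conjectural input in general. Write $d=\deg P$ and consider $|n|\le N$, the range of negative $n$ being identical to that of $1\le n\le N$. Since $P$ is square-free, $P$ and $P'$ are coprime over $\Q$, so $\Disc(P)$ is a fixed nonzero integer. For any prime $p$ dividing neither $\Disc(P)$ nor the leading coefficient, the roots of $P \bmod p$ are simple and lift uniquely by Hensel's lemma, so $P(x)\equiv 0 \pmod{p^2}$ has at most $d$ solutions modulo $p^2$; for the finitely many bad primes the number of roots mod $p^2$ is $O_P(1)$. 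Hence, for every prime $p$, $\#\{n\le N: p^2\mid P(n)\} \le d\,(N/p^2+1)+O_P(1)$.

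Next I would count pairs rather than integers. Let $S(N)$ be the number of pairs $(n,p)$ with $1\le n\le N$, $p$ prime, $p>\sqrt n$ and $p^2\mid P(n)$. Every $n$ in the bad set contributes at least one such pair, and since $\prod p^2 \le |P(n)|\ll N^d$ with each factor exceeding $n$, a fixed $n$ contributes at most $O_P(1)$ pairs; thus it suffices to show $S(N)=o(N)$. Because $p^2\le|P(n)|\ll N^d$ we only need $\sqrt n < p \ll N^{d/2}$. On a dyadic block $N/2<n\le N$ we have $p>\sqrt{N/2}$, so $p^2>N/2$ and each residue class mod $p^2$ meets the block in at most one integer; the trivial bound is then $S(N)\ll \sum_{\sqrt N\ll p\ll N^{d/2}} d \ll N^{d/2}/\log N$.

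\textbf{The main obstacle.} For $d\ge 2$ this trivial estimate exceeds $N$ and is worthless: the entire difficulty sits in the large primes $p$ near $N^{d/2}$, where there are far more admissible primes than admissible $n$, so one cannot afford to lose even a constant factor per prime. Removing this loss is exactly what the square-free sieve must achieve. The route I would take is to parametrize $P(n)=p^2 m$ and split the $p$-sum at a threshold $p=N^{\theta}$. In the medium range $\sqrt N<p\le N^{\theta}$ I would detect the congruence $n\equiv r\pmod{p^2}$ via the large sieve (or bounds for the associated exponential sums) to recover a convergent tail $\sum_p p^{-1-\delta}$; in the large range $N^{\theta}<p\ll N^{d/2}$ the cofactor $m=P(n)/p^2$ is small, so the condition becomes the existence of an integral point on one of the boundedly many curves $P(x)=m y^2$, and I would bound the number of such points.

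\textbf{What is available.} This program is unconditional for $d\le 3$ by Hooley, using precisely such a bound on integral points on the auxiliary curves; so for every polynomial of degree at most three the conjecture is in fact a theorem, and for the families treated here the square-free polynomial $M(t)$ fed to the sieve has low degree, which is what lets the authors invoke the statement unconditionally in most cases. For general $d$ the large-prime range has no unconditional treatment, but the conclusion follows from the $abc$ conjecture by the argument of Granville (and Langevin): an $abc$ inequality applied to $P(n)=p^2 m$ forces the powerful part of $P(n)$ to be $\ll_{\eps}|P(n)|^{\eps}$ for all but $o(N)$ values of $n$, which is stronger than needed. I would therefore present the unconditional low-degree proof and invoke $abc$ for the remaining degrees, flagging the large-prime sum as the sole genuine difficulty.
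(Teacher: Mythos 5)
There is nothing in the paper to compare your proposal against: Conjecture~\ref{square-free-1} is stated as a hypothesis, not a result, and the paper never proves it (it cannot, since the statement is open in general). The only thing the paper says about its status is the remark that it ``is known for polynomials whose irreducible factors have degrees $\leq 3$'' with a citation to Helfgott's square-sieve work, and Remark~\ref{remark_squarefree} then uses exactly this to make the averages unconditional for all families except ${\LL}_{w,\r,v}$. You correctly recognized all of this, and your outline of the standard reduction --- counting pairs $(n,p)$, using square-freeness of $P$ to get $\leq d$ roots of $P$ modulo $p^2$ via Hensel at good primes, isolating the large-prime range $p$ near $N^{d/2}$ as the sole obstruction, citing the unconditional degree-$\leq 3$ results and the $abc$ implication of Granville for general degree --- is an accurate picture of the state of the art, not a proof, and you say so explicitly. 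That is the right assessment.

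Two small corrections to your write-up. First, the unconditional result invoked by the paper is slightly stronger than ``$\deg P\leq 3$'': it applies whenever every \emph{irreducible factor} of $P$ has degree $\leq 3$, which matters here because the polynomials $B_\GFF(t)$ arising from the families of Theorems~\ref{classification_1} and~\ref{classification_2} can have degree up to $4$ while still factoring into pieces of degree $\leq 3$ (and the single exceptional case, an irreducible quartic for ${\LL}_{w,\r,v}$, is precisely where the paper must keep the conjecture as a hypothesis). Second, your claim that at the finitely many bad primes the number of roots of $P$ modulo $p^2$ is $O_P(1)$ is false in general --- it can be of size $\asymp p$ (e.g.\ $P(x)=x^2-p^2$ has $p$ roots mod $p^2$) --- but this does not affect your argument, since the conjecture only involves primes $p>\sqrt n$, so for $n$ large every relevant prime is a good prime and Hensel's bound applies.
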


Conjectures~\ref{chowla} and~\ref{square-free-1} are believed to hold for all square-free polynomials $P$. Chowla's conjecture is known for polynomials of degree $1$ only, whereas the square-free sieve conjecture is known for polynomials whose irreducible factors have degrees $\leq 3$ (\cite{Helfgott_square_sieve}).

\begin{theorem}[Helfgott]\label{Helfgott}
Let $\GFF$ be a family of elliptic curves. Let $M_{\GFF}(t)$ and $B_{\GFF}(t)$ be the polynomials defined by
\es{\label{dfn_MB}
M_{\GFF}(t):=\prod_{\substack{v\text{ mult},\\ v\neq-\deg}}Q_v(t),\qquad B_{\GFF}(t):=\prod_{\substack{v\text{ quite bad},\\ v\neq-\deg}}Q_v(t)
}
where the products are over the valuations $v$ of $\Q(t)$ where $\GFF$ has multiplicative and quite bad\footnote{That is, if no quadratic twist of $\GFF$ has good reduction at $v$.} reductions respectively and where $Q_v(t)$ is the polynomial associated to the place $v$.
Then for all but finitely many $t\in\Z$ one has
\est{
\eps_\GFF(t)=\sign (g_\infty(t))\lambda(M_{\GFF}(t))\prod_{p\text{ prime}}g_p(t)
}
where $g_\infty(t)$ is a polynomial, $S$ is a finite set of (rational) primes depending on ${\GFF}$ and $g_p:\Q_p\to\{\pm1\}$ are functions satisfying
\begin{itemize}
\item[a)] $g_p(t)$ is locally constant outside a finite set of points;
\item[b)] if $p\notin S$ then $g_p(t)=1$ unless $v_p(B_{\GFF}(t))\geq2$.
\end{itemize}

Moreover, if $\GFF$ has at least one place of multiplicative reduction other than $-\deg$, then assuming the square-free sieve conjecture for $B_\GFF(t)$ and the strong Chowla's conjecture for $M_\GFF(t)$, one has $\Avf=0$.

If $\GFF$ has no place of multiplicative reduction other than $-\deg$ (i.e. $\GFF$ is potentially parity-biased), then assuming the square-free sieve conjecture for $B_\GFF(t)$ we have
\es{\label{formsn}
\Avf=\frac{c_-+c_+}{2}\prod_p\int_{\Z_p}g_p(t)\,dt,
}
where $dt$ denotes the usual $p$-adic measure and $c_{\pm}=\lim_{t\to\pm\infty} g_{\infty}( t)$.

\end{theorem}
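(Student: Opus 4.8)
The plan is to follow Helfgott's strategy: first establish the pointwise factorisation of the root number, then feed it into a sieve-plus-equidistribution argument. The starting point is the decomposition $\eps_\GFF(t)=\prod_v w_v(\GFF(t))$ over all places $v$ of $\Q$, where $w_\infty(\GFF(t))=-1$ for every elliptic curve over $\R$, and for each prime $p$ the local root number $w_p(\GFF(t))$ is read off from the minimal Weierstrass model of $\GFF(t)$ at $p$ via the classical explicit formulas (Rohrlich for $p\ge5$, Halberstadt for $p=2,3$): it is $1$ at primes of good reduction, $\mp1$ at split/non-split multiplicative primes, and an explicit expression in Legendre symbols of $c_4,c_6,\Delta$ at additive primes. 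Since $\GFF$ has only finitely many bad fibres over $\Q(t)$, each of a fixed Kodaira type with an associated polynomial $Q_v(t)$, for $t\in\Z$ and a prime $p$ outside a finite set $S=S(\GFF)$ (containing $2,3$ and the primes dividing the relevant leading coefficients and resultants) the fibre $\GFF(t)/\Q_p$ has the ``same type'' as the fibre over the $v$ with $p\mid Q_v(t)$ whenever $v_p(Q_v(t))=1$; all remaining $p$ give good reduction and $w_p=1$.

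I would then work through the bad fibres by type. At a multiplicative fibre $v\ne-\deg$, the reduction of $\GFF(t)$ at $p\mid Q_v(t)$ is split iff $-c_6(\GFF(t))$ is a square mod $p$, so $w_p(\GFF(t))=-\leg{-c_6(\GFF(t))}{p}$; multiplying over $p\mid M_\GFF(t)$ and using that $\mu=\lambda$ on squarefree integers produces a factor $\lambda(M_\GFF(t))$ times a Jacobi-symbol factor $\leg{-c_6(\GFF(t))}{M_\GFF(t)}$. At an additive fibre --- potentially multiplicative, potentially good of type $I_0^*$, or one of the ``quite bad'' types $II,III,IV,IV^*,III^*,II^*$ --- the local root number at $p\mid Q_v(t)$ with $v_p(Q_v(t))=1$ has the shape $\leg{\ast}{p}$, where $\ast$ is either a fixed integer depending only on the type or (for $I_0^*$, after a quadratic twist trivialising the reduction) a $p$-adic unit value of a fixed rational function of $t$. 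The heart of the bookkeeping is that each resulting product $\prod_{p\mid Q_v(t)}\leg{\ast}{p}$ can, by iterated quadratic reciprocity for polynomials, be rewritten --- up to correction factors at the primes in $S$ --- as an honestly \emph{periodic} function of $t$ times a factor equal to $1$ whenever $Q_v(t)$ is squarefree away from $S$. Assembling the periodic pieces, the square-supported correction pieces, and the remaining sign constants, I would obtain $\eps_\GFF(t)=\sign(g_\infty(t))\,\lambda(M_\GFF(t))\prod_p g_p(t)$ for all but finitely many $t$, with $g_\infty$ a polynomial (recording $w_\infty=-1$, $\sign(M_\GFF(t))$ and the reciprocity signs), each $g_p$ locally constant off a finite set, and $g_p(t)=1$ for $p\notin S$ unless $p^2\mid B_\GFF(t)$ --- i.e. properties (a) and (b).

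For the averages, set $A(T):=\frac{1}{2T}\sum_{|t|\le T}\eps_\GFF(t)$. Property (b) and the square-free sieve conjecture for $B_\GFF$ give, for every fixed $z$, that $\prod_p g_p(t)=\prod_{p\le z}g_p(t)$ off a set of $t$ of density $\le\rho(z)$ with $\rho(z)\to0$ (the finitely many $p\le z$ with $p^2\mid B_\GFF(t)$ contribute $o(T)$ exceptional $t$ by inclusion--exclusion, and the range $p>\sqrt{|t|}$ is controlled by the conjecture, unconditionally if $B_\GFF$ has factors of degree $\le3$). Now $\sign(g_\infty(t))\prod_{p\le z}g_p(t)$ is, for $|t|$ large, periodic of some modulus $Q_z$ on each half-line, with $\sign(g_\infty(t))=c_{\pm}$ for $\pm t\gg0$. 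If $\GFF$ has a multiplicative place other than $-\deg$, then $\deg M_\GFF\ge1$ and, on each fixed residue class $t\equiv r\pmod{Q_z}$, the sum of $\lambda(M_\GFF(t))$ is $o(T)$ by strong Chowla applied to the squarefree polynomial $M_\GFF(Q_zx+r)$; letting $z\to\infty$ yields $A(T)\to0$. If instead $\GFF$ is potentially parity-biased, $M_\GFF$ is a nonzero constant, $\lambda(M_\GFF(t))$ is eventually a fixed sign absorbed into $g_\infty$, and $A(T)=\frac{1}{2T}\sum_{|t|\le T}\sign(g_\infty(t))\prod_{p\le z}g_p(t)+O(\rho(z))$; by the Chinese remainder theorem and equidistribution of $t$ modulo $Q_z$ this splits as $\frac{c_++c_-}{2}\prod_{p\le z}\int_{\Z_p}g_p(t)\,dt+o(1)$ (the $t>0$ and $t<0$ averages agreeing since $p$-adic densities ignore the sign of $t$), and since $\int_{\Z_p}g_p(t)\,dt=1+O(p^{-2})$ the partial products converge to $\prod_p\int_{\Z_p}g_p(t)\,dt$, giving \eqref{formsn}.

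The real difficulty lies in the first two steps: controlling the local root numbers uniformly over all Kodaira types at all primes, and --- above all --- carrying out the reciprocity reductions so that the genuinely $t$-dependent part of $\eps_\GFF(t)$ separates cleanly into the $\lambda(M_\GFF(t))$ piece and a piece supported on $\{v_p(B_\GFF(t))\ge2\}$; this case analysis is the substance of Helfgott's work. Once the factorisation and property (b) are in hand, the averaging step is routine, its only non-elementary inputs being the square-free sieve conjecture (always) and strong Chowla (only in the presence of a multiplicative place).
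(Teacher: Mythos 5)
Your proposal is correct and follows essentially the same route as the paper's own sketch: the product of local root numbers read off from the Rohrlich--Halberstadt tables, the identification (outside a finite set $S$ and away from higher multiplicity) of the reduction type of $\GFF(t)$ at $p$ with that of $\GFF$ at the place $v$ with $p\mid Q_v(t)$, iterated quadratic reciprocity to separate the $\lambda(M_\GFF(t))$ factor from periodic and square-supported pieces, and then the square-free sieve conjecture to truncate to finitely many primes before invoking strong Chowla (multiplicative case) or CRT equidistribution plus the convergence $\int_{\Z_p}g_p\,dt=1+O(p^{-2})$ (potentially parity-biased case). Both your writeup and the paper's are sketches deferring the Kodaira-type case analysis and the reciprocity bookkeeping to Helfgott's original work, and you correctly identify that as where the substance lies.
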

The case where $\GFF$ is potentially parity-biased was also considered by Rizzo~\cite{rizzo} in two examples which already contain several of the important ideas for the general result. We also mention the recent work of Desjardins~\cite{desjardins} who revisited Helfgott's result, and relaxed some of the assumptions.

We now give a sketch of the proof of Helfgott's result, as it reveals quite clearly the way to proceed when computing $\Avf$.	
\begin{proof}[Sketch of the proof of Theorem~\ref{Helfgott}]
The root number of an elliptic curve $\GFF(t)$ in the family $\GFF$ is defined as a product of local root numbers $\rootf(t)=-\prod_pw_{p}(t)$, where $w_{p}(t)$ depends on the reduction type of $\GFF(t)$ modulo $p$.
Naively, one might expect that
$\Avf=-\prod_p\int_{\Z_p}w_{p}(t) dt$ however this is false in general (the product on the right is typically non-convergent). One can however modify the $w_{p}(t)$ to some $w^*_p(t)$ so that one still has $\eps(t)=-\prod_pw^*_p(t)$, but in this case (conjecturally) $\Avf=-\prod_p\int_{\Z_p}w^*_p(t) dt$.

First, we recall that for $p\geq 5$ one has that $w_p(t)=1$ if $\GFF(t)$ has good reduction at $p$, $w_p(t)=\leg{j_p(t)}p$ (where $\leg{ \cdot}{\cdot}$ is the Kroeneker symbol) if $\GFF(t)$ has bad, non-multiplicative reduction at $p$, where $-j_p(t)=1,2,3$ depending on the reduction type\footnote{For example $j_p=-3$ if $v_p(c_4(t),c_6(t),\Delta(t))\equiv (r,2,4)\mod {12}$ for some $r\geq2$ or $v_p(c_4,c_6,\Delta)\equiv (r,4,8)\mod{12}$ for some $r\geq3$).}, and $w_p(t)=-\pr{\frac {j_p(t)}p}$ if $\GFF(t)$ has multiplicative reduction at $p$ where in this case $j_p(t)$ is the first non-zero $p$-adic digit of the invariant $c_6(t)$.
Thus,
\es{\label{formularn}
\rootf(t)=-w_2(t)w_3(t)(-1)^{\#\{p:\GFF(t)\text{ has mult. red. at }p\}}\prod_{p\text{ bad}}\pr{\frac{j_p(t)}{p}}
}
where the product is over primes $p$ such that $\GFF(t)$ has bad reduction at $p$. Now, the key step is to observe that essentially $\GFF(t)$ has a certain reduction type at $p$ if and only if there is a place $v\neq -\deg$ over $\Q(t)$ where $\GFF$ has the same reduction type and $p|Q_v(t)$. The only exceptions to this are when $p$ is in a finite set of primes $S$ (depending on the family $\GFF$ only, essentially this amounts to excluding the finitely many primes that divide more than one $Q_v$) and when $p$ divides $Q_v(t)$ with multiplicity greater than $1$. It follows that~
\est{
\rootf(t)&=-(-1)^{\#\{p\notin S,\, p||Q_v(t)\text{ $v$ place of mult red.}\}}\prod_{p\in S}w_p(t)\cdot\prod_{\substack{v\text{ bad},\\ v\neq -\deg}}\prod_{\substack{p||Q_v(t),\\ p\notin S}}\pr{\frac{j_p(t)}{p}}\cdot\prod_{\substack{v\text{ bad},\\ v\neq -\deg}}\prod_{\substack{p^2|Q_v(t),\\ p\notin S}}h_p(t)\\
&=(-1)^{\#\{p|M_{\GFF}(t)\}}\prod_{p\in S}w^*_p(t)\cdot \prod_{i=1}^3
\prod_{\substack{ v\in V_i}}\prod_{p}\pr{\frac{-i}{p}}^{v_p(Q_v(t))}\\[-0.5em]
&\hspace{19em}\times \prod_{\substack{v\text{ mult.}}}\prod_{p}\pr{\frac{c_6'(t)}{p}}^{v_p(Q_v(t))}
\cdot\prod_{\substack{v\text{ bad},\\ v\neq -\deg}}\prod_{\substack{p^2|Q_v(t),\\ p\notin S}}h^*_p(t)\\
}
for some finite set of primes $S$, some functions $h_{p}(t), h^*_{p}(t), w_p(t), w^*_{p}(t)$ which are $p$-locally constants on $\Z_p$ outside a finite set of points, and a suitable partition $V_1\cup V_2\cup V_3$ of the set $\{v\text{ bad}, v\neq -\deg\}$.\footnote{For example $V_3$ is the set of places $w\neq -\deg$ of $\Q(t)$ such that $v_{w}(c_4(t),c_6(t),\Delta(t))\equiv (r,2,4)\mod {12}$ for some $r\geq2$ or $v_p(c_4,c_6,\Delta)\equiv (r,4,8)\mod{12}$ for some $r\geq3$).} Then,\footnote{We ignore the minor issue of the case where the top and bottom of the various Legendre symbol are not coprime.}
\est{
\rootf(t) &=(-1)^{\#\{p|M_{\GFF}(t)\}}\prod_{p\in S}w^*_p(t)\cdot \prod_{i=1}^3
\prod_{\substack{ v\in V_i}}\pr{\frac{-i}{Q_v(t)}}\cdot \prod_{\substack{v\text{ mult.}}}\pr{\frac{c_6(t)}{Q_v(t)}}
\cdot\prod_{\substack{v\text{ bad},\\ v\neq -\deg}}\prod_{\substack{p^2|Q_v(t),\\ p\notin S}}h^*_p(t)\\
}
Now, applying repeatedly quadratic reciprocity one sees that the factor  $\leg{c_6(t)}{Q_v(t)}$ also depends on the $\Z_q$ expansion of $t$ at finitely many primes $q$ and on the sign of a polynomial and the same is true for $\leg{-i}{Q_v(t)}$.\footnote{For example, if $Q_v(t)\neq0$, then $\leg{-1}{Q_v(t)}=\sign(Q_v(t))\chi(Q_v(t)_{2})$, where $\chi$ is the non-principal character $\mod 4$ and $Q_v(t)_{2}=Q_v(t)2^{-v_2(Q_v(t))}$.
} Finally, one can verify directly that $h^*_p(t)=1$ if $\GFF(t)$ has bad but not quite-bad reduction at $p$. Thus,
\est{
\rootf(t)&=\lambda(M_{\GFF}(t))\sign(h_\infty(t))\prod_{p\in S'}w^{***}_p(t)\prod_{\substack{p^2|B_{\GFF}(t),\\ p\notin S'}}h^{*}_p(t)\\
}
for a finite set of primes $S'$, some $w_p^{***}(t)$ $p$-locally constant outside a finite set of points and a polynomial $h_\infty(t)$. Thus, we obtain the first assertion of Theorem~\ref{Helfgott}. The other assertions are easy once one observes that the square-free sieve conjecture for $B_{\GFF}(t)$ gives
\est{
\lim_{T\to\infty}\frac1{T}\sum_{0\leq \pm t\leq T}\rootf(t)=\lim_{X\to\infty}\lim_{T\to\infty}\frac{c_\pm}{T}\sum_{0\leq \pm t\leq T}\lambda(M_{\GFF}(\pm t))\prod_{p\in S'}w^{***}_p(\pm t)\prod_{\substack{p^2|B_{\GFF}(\pm t),\\ p\notin S',\\ p\leq X}}h^{*}_p(\pm t).
}
Notice that the product on the right involves finitely many primes (for each $X$). Thus, if $M_{\GFF}(t)\neq1$ then dividing into congruence classes modulo these primes one has that the strong Chowla's conjecture for $M_{\GFF}(t)$ gives that the average is $0$. Otherwise, the limit over $T$ coincides with the product of the $p$-adic integral (see~\cite{helfgott} or also~\cite{rizzo}) and, writing $h^{*}_p(t)=1$ if $ v_p(B_{\GFF}(t))<2$, we have
\est{
\lim_{T\to\infty}\frac1{T}\sum_{0\leq\pm t\leq T}\rootf(t)&=c_{\pm}\lim_{X\to\infty}\prod_{p\in S'}\int_{\Z_p}w^{***}_p(t)\,dt\prod_{\substack{ p\notin S',\\ p\leq X}}\int_{\substack{\Z_p}}h^{*}_p(t)\,dt\\
&=c_\pm\prod_{p\in S'}\int_{\Z_p}w^{***}_p(t)\,dt\prod_{\substack{ p\notin S'}}\int_{\substack{\Z_p}}h^{*}_p(t)\,dt
}
with $\int_{\substack{\Z_p}}h^{*}_p(t)\,dt=1+O(p^{-2})$.
\end{proof}

\medskip
Notice that Theorem~\ref{Helfgott} implies, under Chowla's and the square-free sieve conjectures, that $\Avf$ exists for all families $\GFF$. Moreover, recalling Definition~\ref{def_strange}, we have the following implications
$$
\begin{array}{c} \mbox{{elevated}} \\ \vspace*{-0.5cm} \\ \mbox{{rank}} \end{array} \Longrightarrow \quad \begin{array}{c} \mbox{{Parity}} \\ \vspace*{-0.5cm} \\ \mbox{{biased}} \end{array} \quad \displaystyle \xRightarrow[{\mathrm {\scriptstyle Helfgott}}]{{\rm Conj.}} \begin{array}{c} \mbox{{Potentially}} \\ \vspace*{-0.5cm} \\ \mbox{{parity-biased.}} \end{array}
$$
The first implication is trivial and the converse is false in general since there are examples with
$\Avf \not \in \{-1,0,1\}$ (see~\cite{rizzo, helfgott} or also Theorem~\ref{average-periodic}). The
second implication comes from Theorem~\ref{Helfgott} and is conjectural in
full generality. The converse is also false in general (see Theorem~\ref{average-periodic} with $a=2$), however assuming the square-free sieve conjecture
one has that every potentially biased family $\GFF$ has a parity-biased subfamily (obtained by taking $t$ to be in an arithmetic progression and/or restricting to $t>0$\footnote{Alternatively one can for example replace $t$ by $t^2$.}). Indeed, the potentially biased families are the ones for which some of the integrals in~\eqref{formsn} are equal to $0$ or with $c_+=-c_-$. Fixing the sign of $t$ and restricting $t$ to be in a suitably selected congruence class one can make those integrals (as well as all the other ones) non-zero.

\subsection{Potentially biased families}

In this section, we find all potentially biased families such that $\deg a_i(t) \leq 2$. We first start by the
case with a family ${\GFF}(t) \colon y^2 = x^3 + a_2(t) x^2 + a_4(t) x + a_6(t)$
where  $\deg a_2 \leq 1$ and $\deg a_4, a_6 \leq 2$.
We write $a_2(t)=ut+v$, $a_4(t)=at^2+bt+c$ and $a_6(t)=dt^2+et+f$. We denote by $c_4, c_6, \Delta$ and $j$ the classical
invariants of ${\GFF}(t)$. Notice that the potentially biased condition is
equivalent to the fact that all the roots of $\Delta$ are also roots of $c_4$. One can see that if $\Delta$ is
constant then either $\Delta=0$ or the family does not depend on $t$ (i.e. $a=b=d=e=u=0$).

We also notice that if $u^2-4a$ is a square, say $u^2-4a=r^2$ for $r\in\Q$, then the family doesn't change under the transformation
\es{\label{invariance}
a \leftrightarrow \tfrac12 (-4 a + u^2 - ur),\quad
u \leftrightarrow \tfrac12 (-u + 3 r),\quad
b \leftrightarrow b - uv + rv,\\
e \leftrightarrow \tfrac12 (2 e - cu + cr),\quad
d \leftrightarrow \tfrac12 (2 d - bu + br - 2 av + u^2 v - urv)
}
(and a suitable linear transformation in $x$).

\begin{theorem}\label{classification_1} Let $a_2(t), a_4(t)$ and $a_6(t)$ be polynomials in $\Q[t]$ with $\deg a_2(t) \leq 1$, $\deg a_4(t) \leq 2$, $\deg a_6(t) \leq 2$ and such that
the curve ${\GFF}(t) \colon y^2 = x^3 + a_2(t)x^2 + a_4(t)x +a_6(t)$ is non-isotrivial and potentially parity-biased. Then the family has rank $\leq 1$ over $\Q(t)$ and, up to some rational linear change of variables in the parameter $t$ and in the variables $x$ and $y$, the family is either
$$
{\PFF}_{\r} \colon y^2 = x^3 + 3 t x^2 + 3 \r x +\r t \\
$$
for some $\r \in \Z_{\neq0}$ and with rank $1$ if and only if $\r= - {12} k^4$ with $k\in\N$; or
$$
{\GG_w} \colon wy^2 = x^3 + 3tx^2 + 3tx + t^2
$$
for some $w \in \Z_{\neq0}$ and with rank $1$ if and only if $w$ is a square or $-2$ times a square.
\end{theorem}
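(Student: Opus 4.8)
The plan is to carry out a direct but careful case analysis, starting from the general Weierstrass equation ${\GFF}(t)\colon y^2 = x^3 + (ut+v)x^2 + (at^2+bt+c)x + (dt^2+et+f)$ and imposing the potentially parity-biased condition, which as noted in the text is equivalent to the requirement that every root of $\Delta(t)$ is also a root of $c_4(t)$. First I would compute $c_4, c_6, \Delta$ as explicit polynomials in $t$ with coefficients in the parameters $a,b,c,d,e,f,u,v$. Since $\deg a_i \le 2$, the discriminant $\Delta(t)$ has degree at most $6$ in $t$ and $c_4(t)$ has degree at most $2$. The condition ``$\operatorname{rad}(\Delta)\mid c_4$'' then forces a strong constraint: if $\Delta$ is not constant, its squarefree part divides the degree-$\le 2$ polynomial $c_4$, so $\Delta$ has at most two distinct roots and is of the form $\mathrm{const}\cdot \ell_1(t)^{e_1}\ell_2(t)^{e_2}$ with $e_1+e_2 \le 6$ and $\ell_1\ell_2 \mid c_4$. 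Handling the degenerate case where $\Delta$ is constant is quick: either $\Delta \equiv 0$ (not an elliptic curve) or, by the remark in the text, $a=b=d=e=u=0$ and the family is isotrivial, contrary to hypothesis.

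The main body of the argument is then the finite case-split according to the factorization type of $\Delta$ (one double/triple/... root, or two roots, and the compatible reduction types at each place encoded by the valuations $v_w(c_4,c_6,\Delta)$ modulo $12$). For each admissible configuration I would solve the resulting polynomial system in $a,\dots,v$. This is where the invariance~\eqref{invariance} under $u^2-4a$ being a square, together with the allowed rational linear changes of variables in $t,x,y$, becomes essential: these symmetries collapse many a priori distinct solution branches, and after normalizing (e.g. translating $t$ to move a root of $c_4$ to a convenient location, scaling to clear denominators, and using~\eqref{invariance} to fix $u$) one should be left with exactly the two normal forms ${\PFF}_{\r}$ and ${\GG}_w$. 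I would organize this so that the ``$\deg a_2 = 1$'' generic sub-case leads to ${\PFF}_{\r}$ (after absorbing the leading coefficient of $a_2$ into the scaling, which is why the normalized form has coefficient $3t$), while the sub-case forcing extra structure on $a_6$ — roughly, $a_6$ sharing its quadratic part with $a_2^2$ — produces ${\GG}_w$; the integrality of $\r$ and $w$ comes from clearing denominators in the final model.

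It remains to pin down the rank statements and the rank-$\le 1$ bound. For the uniform bound $\rf \le 1$, the cleanest route is to invoke the Shioda--Tate formula: $\rf = \rho(\mathcal S) - 2 - \sum_v (m_v - 1)$ where $\mathcal S$ is the associated elliptic surface, $\rho \le h^{1,1}$, and the sum is over the reducible fibers; since both families (being of low degree) are rational elliptic surfaces with $h^{1,1}=10$, one needs $\sum_v(m_v-1)\ge 7$, which follows from reading off the Kodaira fiber types from the reduction types at the (few) bad places already determined in the classification, together with the fiber at $t=\infty$. For the precise rank criteria, I would exhibit explicit candidate points: for ${\PFF}_{\r}$ one checks when a point like $(x_0,y_0)$ with $x_0$ a simple rational expression in $t$ lies on the curve — this forces a Pell-type or square condition on $\r$, yielding $\r = -12k^4$ — and similarly for ${\GG}_w$ the condition that the curve have a rational point of infinite order translates into ``$w$ a square or $-2$ times a square,'' which one verifies is also necessary by a height/specialization or $2$-descent argument over $\Q(t)$. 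The main obstacle I anticipate is not any single step but the sheer bookkeeping of the case analysis on reduction types in the third paragraph: ensuring that no admissible configuration of $(v_w(c_4,c_6,\Delta) \bmod 12)$ is overlooked and that each genuinely reduces, via~\eqref{invariance} and the allowed changes of variable, to one of the two claimed forms rather than to a spuriously ``new'' family. Much of this is exactly the kind of computation the authors report checking in PARI/GP.
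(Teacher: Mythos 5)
Your classification strategy is essentially the paper's: compute $c_4,c_6,\Delta$, use that potential parity-bias forces the square-free part of $\Delta$ to divide $c_4$, enumerate the admissible valuation patterns $\Delta=k\,\ell_1^{e_1}\ell_2^{e_2}$ with $\ell_1\ell_2\mid c_4$, solve the resulting systems, and use the involution~\eqref{invariance} plus linear changes of variables to collapse the branches to ${\PFF}_{\r}$ and ${\GG}_w$. That part of the plan is sound (the paper additionally shows $c_4$ must be square-free of degree exactly $2$, which prunes the case list before the bookkeeping starts).

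The genuine gap is in your route to $\rf\leq 1$. Shioda--Tate cannot give this bound, because it controls the Mordell--Weil rank over $\C(t)$, and for these families that rank is $2$, not $\leq 1$. Concretely, for ${\PFF}_{\r}$ one has $c_4=144(t^2-\r)$, $c_6=-1728\,t(t^2-\r)$, $\Delta=-1728\,\r(t^2-\r)^2$: the two finite bad fibers are of type $\mathrm{II}$ (irreducible, $m_v=1$) and the fiber at infinity has $(v_\infty(c_4),v_\infty(c_6),v_\infty(\Delta))=(2,3,8)$, type $\mathrm I_2^*$ with $m_v=7$, so $\sum_v(m_v-1)=6$ and the geometric rank is $10-2-6=2$ --- your claimed $\sum_v(m_v-1)\geq 7$ is false. (Indeed $\was_a$, a specialization of ${\PFF}_{\r}$, carries the two independent points $(0,a\sqrt a)$ and $(a,a\sqrt{-a})$ over $\C(t)$.) The inequality $\rf\leq 1$ is an arithmetic statement about the Galois action on this rank-$2$ lattice, and you must supply that input: either analyze the Galois action on the two explicit generators (they are defined over $\Q(\sqrt a)$ and $\Q(\sqrt{-a})$ respectively, and Galois acts by $\pm1$ on each, so the fixed subspace has rank $\leq1$), or do what the paper does --- apply Nagao's formula, unconditional here since the surfaces are rational, and compute $\frac1p\sum_t a_{\GFF(t)}(p)$ by the quadratic Gauss-sum identities, which simultaneously yields the bound $\rf\le1$ and the exact criteria $\r=-12k^4$ (resp.\ $w$ a square or $-2$ times a square). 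Your plan for the precise rank criteria ("a point with $x_0$ a simple rational expression," then a descent for necessity) gives sufficiency but leaves the rank-$0$ direction unproved; the character-sum computation handles both directions at once.
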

\begin{proof}
Here we shall only show that all non-isotrivial and potentially parity-biased families satisfying the above conditions are of the form ${\PFF}_{\r}$ or ${\GG}_w$. We will compute their ranks in Section~\ref{ranks}, Propositions~\ref{rank_F} and~\ref{rank_G}.

We remind that $c_4= 16 (a_2^2 - 3 a_4)$, $c_6=32(-2 a_2^3 + 9 a_4 a_2 - 27 a_6)$ and $1728\Delta=c_4^3-c_6^2$. Thus, with our assumptions and the discussion above, we have $1 \leq \deg c_4\leq 2$\expl{ The root of $\Delta$ are the root of $c_4$, so if $\deg c_4=0$ (and $c_4\neq0$) then also $\deg\Delta=0$ and so the family is isotrivial, by the above, if $c_4=0$ then $j=0$ and the family is isotrivial}%
, $\deg c_6 \leq 3$ and one easily checks that $\deg \Delta$ can't be $1$\expl{It's impossible with polynomials of degree $2$ and $3$ and in any case it would be of multiplicative reduction.}
and thus $2 \leq  \deg \Delta \leq 6$. \smallskip
~\\
Now, we observe that $c_4$ has to be square-free. Indeed, if $c_4=\ell L^2$ for some $\ell \in \Q_{\neq0}$ (for $\ell=0$ the family is iso-trivial) and a degree one polynomial $L$, then since $\GFF$ is potentially parity-biased we can write $\Delta$ as $1728\Delta=k L^{m}$ for some $k\in\Q_{\neq0}$ and some $2\leq m\leq 6$. Then we have $c_6^2=L^m(\ell P^{6-m}-k)$, so $m\in\{0,2,4\}$ (if $m=6$, then the family is iso-trivial) and $\ell L^{6-m}-k$ is a square in $\Q(t)$ which is clearly not possible since $\ell L^{6-m}-k$ is square-free.
Also, we must have $\deg(c_4)\geq2$. Indeed, if $c_4=L$, $1728\Delta=k L^{m}$, for some linear polynomial $L\in\Q(t)$, some $k\in\Q_{\neq0}$ and some $m\in\N$, then we'd have $L^3-kL^m$ is a square in $\Q(t)$ which is clearly not possible.\expl{If $m>3$ then $v_L(L^3-kL^m)=3$, if $m\leq 3$, then $L^3-kL^m$ has odd degree unless it's zero.}
\smallskip
~\\
Now, suppose that $c_4$ is square-free of degree $2$. Then, $c_4=L_1L_2$ for some coprime linear polynomials $L_1,L_2\in\C[t]$. We can write $\Delta$ as $1728\Delta=k L_1^{m}L_2^n$ for some $k\in\Q_{\neq0}$ and some $m,n\in\N$ with $2\leq m+n\leq 6$, $m\leq n$. Thus $L_1^mL_2^n(L_1^{3-m}L_2^{3-n}-k)=c_6^2$ is a square in $\Q(t)$. In particular, it can't be $m=n=3$, nor $m=0,n=3$ (since $L_1^3-k$ is square-free). Moreover, it can't be $m=0,n=2$ as this would imply that $L_1^3L_2-k$ is a square in $\C(t)$ which is not possible (indeed, we can assume $L_1=t$ and $L_2=t-1$, then for $k\neq0$ the discriminant of $t^3(t-1)-k$ is zero only for $k=-\frac{27}{256}$ in which case $t^3(t-1)-k$ is not a square). Thus, we must have either $m=n=2$ or $m=2,n=3$. Thus, we have two cases:
\begin{enumerate}
\item $c_4=P$, $1728\Delta=k P^2$ for some $P\in\Q[t]$ of degree $2$ and some $k\in\Q_{\neq0}$;
\item $c_4=L_1L_2$, $1728\Delta=kL_1^2L_2^3$ for some coprime $L_1,L_2\in\Q[t]$ of degree $1$  and some $k\in\Q_{\neq0}$.
\end{enumerate}
First, let's consider the case where $c_4$ has degree 2 and $ 1728\Delta = kc_4^2$. Notice that we can not have $a=0$ and $u=0$ at the same time
(otherwise $c_4$ would be a degree $\leq 1$ polynomial). Since $\deg \Delta = 4$, we must have $a=d=0$ or $a=u^2/4$ and $d=(2ub-u^2v)/4$.\expl{Indeed $$1728\Delta/2^{10}=27 a^2  (u^2-4 a)t^6+54(-6 a^2 b+a^2 u v+a b u^2+9 a d u-2 d u^3)t^5+O(t^4).$$ For $a=0$ the $t^5$ term becomes $-2du^3$, whereas for $a=u^2/4$ it becomes $1/16 u^3 (u (u v-2 b)+4 d)$. 
} By the transformations~\eqref{invariance} the two cases lead to the same families, so it suffices to consider the case $a=d=0$ only.

Now, we have $c_6^2=c_4^2(c_4-k)$ and so $c_4-k$ is a square in $\Q[t]$, a condition which univocally determines $k$ in terms of the other parameters.
With this choice for $k$ we have $c_4-k=16(ut+v-\frac{3b}{2u})^2=(4a_2-\frac{6b}{u})^2$. Thus, we have $c_6=\pm c_4(4a_2-\frac{6b}{u})$ and comparing the terms of degree $3$ in $t$ we see that we must take the minus sign. Expressing $c_4$ and $c_6$ in terms of the $a_i$ and simplifying this equality becomes
$$6( a_4 -a_2 b/u) a_2 - 54 a_6= -18 a_4{b}/{u}$$
or, equivalently, $6(c -bv/u) a_2 +18 a_4{b}/{u}= 54 a_6.$ Comparing the terms of degree $1$ and $2$ in $t$ we obtain
\est{
\begin{cases}
6(c -bv/u) u +18 {b^2}/{u}=54 e,\\
6(c -bv/u) v +18 {bc}/{u}=54 f
\end{cases}
\Rightarrow
\begin{cases}
c  =(9 eu+bvu-3 {b^2})/{u^2},\\
 f=(3 ebu- {b^3}+ evu^2)u^3
\end{cases}
}
(remember we have $u\neq0$) and so we are led to the families
$$
y^2 =x^3 + (ut + v)x^2 + \left(bt + \frac{9 eu+bvu-3 {b^2}}{u^2}\right)x +et +\frac{3 ebu- {b^3}+ evu^2}{u^3}.
$$
We make the changes $b\leftrightarrow -bu$ and $e\leftrightarrow eu$ in order to kill the denominator  and we make the change of variables $ut+v \leftrightarrow t$. We arrive to%
\expl{
The associated invariants are
\begin{eqnarray*}
c_4 & = & 2^4(t^2 + 3bt + 9b^2 - 27e) \\
c_6 & = & - 2^5(t^2 + 3bt + 9b^2 - 27e) (2t+3b) \\
\Delta & = &2^4 (b^2-4e) (t^2 + 3bt + 9b^2 - 27e)^2\\
j & =& \frac{2^8}{b^2-4e} (t^2 + 3tb + 9b^2 - 27e).
\end{eqnarray*}
}
\es{\label{firstformF}
{\PFF} \colon y^2 = x^3 + t x^2 + (-bt  -3b^2 + 9e)x +  et  + b^3 - 3eb.
}
Finally, we make the changes of variables $t \lra 3 t - 3 b/2$, $x \lra x + b/2$ and write $e = \r/3 + b^2/4$ and obtain ${\PFF}_{\r}$, with associated invariants
\es{\label{parf}
c_4(t) & =  144(t^2-\r), \\
c_6(t) & =  - 1728 t(t^2-\r), \\
\Delta(t) & = -1728 \r (t^2-\r)^2.
}
Notice that if $d\neq0$, then the changes $t\lra t/d^2$, $x\lra d^2 x$, $y\lra d^3 y$ transform ${\PFF}_{\r}$ into $\PFF_{\r d^4}(t)$ and so in particular we can always reduce to the case where $\r\in\Z_{\neq0}$.

Now, let's consider the second case. Up to a linear change of variables in $t$ we can assume $c_4$ has the form $c_4= C t(t-1)$ and $1728\Delta=2^{12}kt^3(t-1)^2$ for some $k\neq0$. Comparing this expression of $c_4$ with its definition, we see that we must have $C=2^4(u^2-3a)$, $c=v^2/3$ and $b=\frac13 (u^2 + 2 u v-3a)$.
Since $\deg \Delta = 5$, we have $a=0$ or $4a-u^2=0$ and as before it suffices to consider the case $a=0$ (and hence $u \neq 0$). Now, $c_6^2=c_4^3-1728\Delta= 2^{12}u^6t^3(t-1)^2(t-1-k)$ and thus $k=-1$ and so $c_6=\pm 2^6u^3t^2(t-1)$ and again we need to take the minus sign. Comparing the coefficients of the polynomials in $t$ we find
\est{
\begin{cases}
-864 f + 32 v^3=0 \\
-864 e  + 96  u^2 v + 96 u v^2=0\\
-864 d + 96 u^3=64  u^3
\end{cases}
\Rightarrow
\begin{cases}
f=v^3/27 \\
e=(u^2 v + u v^2)/9\\
d=  u^3/27
\end{cases}
}
Making the change of variable $x\lra x-v/3$, we then obtain that the dependence of $v$ disappear and we obtain the families
$$
y^2=x^3 + t u x^2 + \tfrac 13 t u^2 x + \tfrac 1{27}u^3 t^2.
$$
Writing $u=3w$ and making the change of variables $x\lra w x$ and $y\lra w^2y$ we obtain $\GG_{w}(t)$ with
\es{\label{parg}
c_4(t) & =   12^2 w^2 t (t-1),\\
c_6(t) & =  -12^3 w^3 t^2 (t-1), \\
\Delta(t) & =  -12^3 w^6 r t^3(t-1)^2.
}
\end{proof}

We can extend Theorem~\ref{classification_1} to the case where $\deg a_2(t)=2$. First we give the following Lemma which will allow us to exclude several cases. One could also rule out these cases by using Kodaira's classification of singular fibers~\cite{miranda}.

We remark that when performing the computations needed in the proofs of Lemma~\ref{lemmapol} and Theorem~\ref{classification_1} we used Mathematica and PARI/GP.

\begin{lemma}\label{lemmapol}
Let $R_1,R_2$, $S$ and $L$ be polynomials in $\C[t]$ of degree $2$, $2$, $3$ and $1$ respectively. Let $k\in\C\setminus\{0\}$. Then
\begin{itemize}
\item[a)]$R_1^3-k$ can't be divisible by the square of a degree $2$ polynomial  in $\C[t]$.
\item[b)]$R_1^3R_2-k$ can't be a square in $\C[t]$.
\item[c)]$LR_1^3-k$ can't be divisible by the square of a  degree $3$ polynomial in $\C[t]$.
\item[d)]$LS^3-k$ can't be a square in $\C[t]$.
\item[e)]$S^3-k$ can't be divisible by the square of a degree $4$ polynomial in $\C[t]$.
\item[f)]$L^4R_1-k$ can't be a square in $\C[t]$.
\end{itemize}
\end{lemma}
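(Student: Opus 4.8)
The plan is to handle all six items by the same elementary device: reduce each statement to showing that a certain polynomial $f\in\C[t]$ is \emph{not} a perfect square (resp.\ not divisible by a large square), and detect this by looking at the multiplicity structure of the roots of $f$ together with a discriminant/resultant computation that rules out the one or two degenerate configurations in which the naive multiplicity count could fail. Recall that a polynomial $f\in\C[t]$ of degree $d$ is a square iff every root has even multiplicity, and is divisible by the square of a polynomial of degree $e$ iff the multiset of roots contains (counted with multiplicity) a sub-multiset in which $e$ roots each appear at least twice; the obstruction in all cases is that $f'$ and $f$ share ``too few'' common roots unless the coefficients satisfy a codimension-$\geq1$ condition that we then dispatch by hand.

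First I would do the ``not divisible by a big square'' items a), c), e). For a): if $D^2\mid R_1^3-k$ with $\deg D=2$, then $D^2\mid \gcd(R_1^3-k,\,3R_1^2R_1')=\gcd(R_1^3-k,\,R_1^2R_1')$; since $\deg R_1^2 R_1'\le 3$ and a degree-$2$ square dividing it forces $D^2\mid R_1^2$ or $D\mid R_1, D\mid R_1'$ (so $D^2\mid R_1$, impossible for degrees, unless $R_1'\equiv0$ i.e.\ $R_1$ constant, excluded). The only escape is $D\mid R_1^3-k$ and simultaneously $D\mid R_1$, which gives $D\mid k$, forcing $k=0$; contradiction. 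Items c) and e) are the same argument with $\deg D=3,4$ and $f=LR_1^3-k$, $f=S^3-k$: writing $f' = 3 L R_1^2 R_1' + L' R_1^3$ (resp.\ $3S^2S'$), one shows $D^2\mid f$ forces $D$ to share most of its roots with $R_1$ (resp.\ $S$), hence with $f-($ that power of $R_1$ or $S)=-k$ up to the leading factor, forcing $k=0$ after checking the residual low-degree cases. A clean uniform phrasing: if $p$ is a root of $f$ of multiplicity $\ge2$ then $p$ is a root of $R_1$ (resp.\ $S$), because $f\equiv \pm k$ where $R_1$ (resp.\ $S$) is nonzero; then count how many such $p$ can occur and compare with $\deg D$.

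Next the ``not a square'' items b), d), f). For b), suppose $R_1^3R_2-k=g^2$. Every root of $R_1$ not a root of $R_2$ has multiplicity $\equiv0\pmod 2$ in $g^2$ coming from an odd exponent in $R_1^3 R_2$ — contradiction unless all roots of $R_1$ are roots of $R_2$, i.e.\ (degrees being equal) $R_2 = c R_1$ for a constant $c$, reducing to $cR_1^4-k=g^2$, i.e.\ $(\sqrt c R_1^2-g)(\sqrt c R_1^2+g)=k$ constant, forcing both factors constant, impossible. But we must also treat the case $R_1$ has a \emph{double} root $R_1=c(t-\alpha)^2$: then $R_1^3R_2 = c^3(t-\alpha)^6R_2$ and $R_1^3R_2-k=g^2$ with $(t-\alpha)\nmid($ LHS$)$, so we need $c^3(t-\alpha)^6R_2-k$ to be a square of a degree-$4$ polynomial whose value at $\alpha$ is $-k\ne0$; here a direct resultant/discriminant computation (normalizing $\alpha=0$, $c=1$, $R_2=t^2+bt+c$) shows this is a square for no $(b,c,k)$ with $k\ne0$. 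Item f) is genuinely of this flavor: $L^4R_1-k=g^2$ with $\deg g=3$; since $L^4$ is already a square, $L^4 R_1-k\equiv -k$ at the root of $L$, so $L\nmid g$, and thus $g^2\equiv L^4 R_1 \pmod{\text{high powers of }L}$ forces $R_1$ to be a square times a unit modulo $L^4$ — writing $L=t$ (after translation) we need $t^4 R_1 - k$, with $R_1=at^2+bt+c$, $a\ne0$, to be the square of a cubic $g=g_3t^3+\dots$; matching coefficients top-down gives $g_3^2=a$, then the $t^5$ and $t^4$ coefficients of $g^2$ must vanish and equal $b$ respectively, which over-determines the lower coefficients of $g$ and eventually forces the constant term $g_0$ to satisfy $g_0^2=-k$ while a middle coefficient equation forces $g_0=0$; hence $k=0$, contradiction. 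Item d), $LS^3-k=g^2$, $\deg g=5$: as in b), roots of $S$ that aren't roots of $L$ get odd multiplicity from $S^3$, so $S\mid L$ — impossible by degree unless $S$ has a repeated structure; the remaining possibilities ($S$ with a double or triple root) reduce to a bounded list of one-parameter families killed by a discriminant computation, exactly as above.

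The main obstacle is not the generic multiplicity bookkeeping — that is routine — but making sure the \emph{degenerate} sub-cases are genuinely exhausted: when $R_1$ or $S$ itself has repeated roots, the ``odd exponent'' argument loses force and one is left with an honest family of polynomials (parametrized by a couple of coefficients and $k$) that must be checked never to be a perfect square. I expect to handle each such residual family by normalizing the repeated root to $0$ and the leading coefficient to $1$, writing down the would-be square root with undetermined coefficients, and extracting from the top coefficients a contradiction with the constraint that the value at $0$ equals $-k\ne0$; equivalently, one computes the relevant discriminant (or the resultant of $f$ with $f'$, or with the candidate square root) as an explicit polynomial in the remaining parameters and $k$ and checks it is divisible by $k$ or has no common zero with $k\neq 0$ — a finite computation that Mathematica/PARI handles, consistent with the remark preceding the lemma. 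Since items a)--f) are mutually independent, I would simply present them in order, each as a short paragraph following the template above.
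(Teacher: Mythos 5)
Your proposal has a genuine gap in parts b) and d), and the arguments for a), c), e) contain errors that happen to be repairable. For b) you claim that if $R_1^3R_2-k=g^2$ then every root of $R_1$ not a root of $R_2$ contributes an odd multiplicity "coming from $R_1^3R_2$", forcing $R_2=cR_1$. But subtracting the nonzero constant $k$ destroys the root structure of $R_1^3R_2$: at any root $\alpha$ of $R_1$ one has $(R_1^3R_2-k)(\alpha)=-k\neq0$, so $\alpha$ is not a root of $g^2$ at all and its multiplicity there is $0$, which is even. There is no contradiction in the generic case, the reduction to $R_2=cR_1$ is unjustified, and the only subcase you actually analyse ($R_1$ with a double root) is a codimension-one degeneration; the main case is left with no argument. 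The same confusion underlies d) ("roots of $S$ that aren't roots of $L$ get odd multiplicity from $S^3$, so $S\mid L$"). In a), c), e) there are related slips: $D^2\mid f$ yields only $D\mid f'$, not $D^2\mid f'$, so your enumeration of how $D^2$ could divide $R_1^2R_1'$ starts from a false premise; and your "clean uniform phrasing" has the implication reversed — a multiple root of $f=R_1^3-k$ can \emph{never} be a root of $R_1$, precisely because $f\equiv -k\neq0$ at the roots of $R_1$. Only your coefficient-matching computation for f) is sound as written.

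The corrected form of your derivative idea does, in fact, prove all six items uniformly, and by a route different from the paper's. In each case $f'$ factors as a power of the repeated factor times a cofactor $h$ of small degree: $f'=3R_1^2R_1'$ in a), $f'=R_1^2(3R_1'R_2+R_1R_2')$ in b), $f'=R_1^2(L'R_1+3LR_1')$ in c), $f'=S^2(L'S+3LS')$ in d), $f'=3S^2S'$ in e), $f'=L^3(4L'R_1+LR_1')$ in f). A multiple root $\beta$ of $f$ is a root of $f'$ but not of the repeated factor (where $f=-k$), hence a root of $h$ with $\operatorname{mult}_\beta(f')=\operatorname{mult}_\beta(h)$. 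If $D^2\mid f$ (with $D=g$ in the "square" cases) and $D$ has $r$ distinct roots of multiplicities $m_\beta$, then $\deg h\geq\sum_\beta(2m_\beta-1)=2\deg D-r$ with $r\leq\deg h$; this fails in all six cases (e.g.\ $8-3>3$ in b), $6-2>2$ in f)). The paper instead normalizes $R_1$ to $t^2+1$ or $t^2$ by a linear substitution and verifies by explicit discriminant and subresultant computations (written out only for a) and b)) that $f$ and $f'$ cannot share enough roots; your structural argument, once repaired as above, is arguably cleaner, but as submitted the proposal does not establish b) or d).
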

\begin{proof}
We only prove the first two statements, the proofs of the other ones being very similar.
\begin{itemize}
\item[a)] We can assume $R_1=t^2+1$ or $R_1=t^2$. In the second case the statement is obvious, thus assume $R_1=t^2+1$. The discriminant of $R_1^3-k$ is $6^6k^4(k-1)$ and thus it is zero only if $k=1$, but $R_1^3+1= t^2 (3 + 3 t^2 + t^4)$ and the second factor is not a square.
\item[b)] We can assume $R_2=t^2+bt+c$ and $R_1=t^2+1$ or $R_1=t^2$; we consider the first case only, the second one being a bit simpler. If  $C:=R_1^3R_2-k$ is the square of a degree $4$ polynomials, then $C$ and $C'$ have at least $4$ zeros in common and thus the first $4$ subresultants of $C$ and $C'$ are zero.  The fourth subresultant is a non-zero multiple of $(b^2-4c	)(c-1+ib)^2(c-i-ib)^2$. If $c=b^2/4$, then the third subresultant is a non-zero multiple of $(4+b^2)^2$ and thus we need $a=\pm 2i$, but with this choice the second subresultant is $2^{17}\cdot3\cdot 5\cdot k^5\neq0$. If $c=1\pm ib$, then the third subresultant is zero when $b= 2i$ or $b=0$ and in both cases the second subresultant is non-zero. Thus the first four subresultants of $C$ and $C'$ can't be all zeros and so $C$ can't be the square of a degree $4$ polynomial.
\end{itemize}
\end{proof}

\begin{theorem}\label{classification_2} Let $a_2(t), a_4(t)$ and $a_6(t)$ be polynomials in $\Q[t]$ with $\deg a_2(t) = 2$, $\deg a_4(t) \leq 2$, $\deg a_6(t) \leq 2$ and such that the curve ${\GFF}(t) \colon y^2 = x^3 + a_2(t)x^2 + a_4(t)x +a_6(t)$ is non-isotrivial and potentially biased. Then, up to some rational linear changes of variables in the parameter $t$ and in the variables $x$ and $y$, the family is one of the following
\es{\label{we_classification_2}
&\HH_{w}\colon w y^2=x^3 + (8 t^2-7t+3) x^2 - 3(2t-1) x + (t+1),\mbox{ with }  w\in\Z_{\neq0},\\
&\II_{w} \colon w y^2=x^3 + t(t-7) x^2 - 6t(t-6) x + 2t(5t-27),\mbox{ with }  w\in\Z_{\neq0},\\
&\JJ_{\n,w} \colon w y^2=x^3 + 3 t^2 x^2 - 3\n t x + \n^2,\mbox{ with }  \n,w\in\Z_{\neq0},\\
&{\LL}_{w,\r,v}\colon wy^2=x^3+3  (t^2+v) x^2+ 3 \r x +  \r (t^2+v), \mbox{ with }v\in\Z, r,w\in\Z_{\neq0}.\\
}
Moreover, the ranks of $\HH_{w}$, $\II_w$ and $\JJ_{\n,w}$ are $\leq1$. Also, $\II_w$ and $\JJ_{\n,w}$ have rank $1$ if and only if $w$ is a square, whereas $\rank_{\Q(t)} ( \HH_w)=1$ if and only if $w$ is $2$ times a square. Finally, the rank of ${\LL}_{w,\r,v}$ is always $\leq3$ and its value is given in equation~\eqref{formula-rank-L} below in terms of the number of irreducible factors of certain polynomials.
\end{theorem}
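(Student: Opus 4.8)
The plan is to mirror the proof of Theorem~\ref{classification_1}, now with $\deg a_2=2$. Writing $a_2=ut^2+pt+q$ with $u\neq0$, $a_4,a_6$ of degree $\leq 2$, and using $c_4=16(a_2^2-3a_4)$, $c_6=32(-2a_2^3+9a_4a_2-27a_6)$ and $1728\Delta=c_4^3-c_6^2$, I would first record the degrees. Here $\deg c_4=4$ and $\deg c_6=6$, and a direct computation shows that the coefficients of $t^{12}$ and of $t^{11}$ in $c_4^3$ and in $c_6^2$ agree, so that $\deg \Delta\leq 10$ (this double cancellation is exactly the bad reduction at the place $-\deg$). The potentially biased hypothesis then says precisely that every finite root of $\Delta$ is a root of $c_4$, so that $\Delta=k\prod_i L_i^{m_i}$ with $k\in\C^\times$, the $L_i$ the distinct (over $\C$) irreducible factors of $c_4$, and $\sum_i m_i=\deg\Delta\leq10$.

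The engine of the argument is the identity $c_6^2=c_4^3-1728\Delta$: since the left-hand side is a square, its $L_i$-adic valuation must be even at each factor. With $e_i=v_{L_i}(c_4)$ one has $v_{L_i}(c_6^2)=\min(3e_i,m_i)$ whenever $3e_i\neq m_i$, and this forces each $m_i$ (in the square-free range $e_i=1$) into $\{0,2,3\}$, the value $3$ being admissible only when an extra cancellation raises the valuation to $\geq4$. Together with $\sum m_i\leq 10$ this cuts the possibilities to finitely many factorization-and-multiplicity patterns, which I would organize by the factorization type of the quartic $c_4$ (the multiplicity vector of the $e_i$, i.e. $4$, $3+1$, $2+2$, $2+1+1$ or $1+1+1+1$) and, in the square-free case, by the index set $I=\{i:m_i>0\}$ where $\Delta$ is supported. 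This is the exact analogue of the split into $c_4=P$ and $c_4=L_1L_2$ in Theorem~\ref{classification_1}.

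This is where Lemma~\ref{lemmapol} does the heavy lifting. For each pattern that cannot survive, after dividing $c_6^2=c_4^3-1728\Delta$ by the forced even common factor the condition collapses to one of the six shapes in the Lemma: either an expression $R_1^3-k$, $LR_1^3-k$ or $S^3-k$ that would have to be divisible by the square of a polynomial of degree $2$, $3$ or $4$ (parts (a), (c), (e), disposing of the non-square-free types for $c_4$), or an expression $R_1^3R_2-k$, $LS^3-k$ or $L^4R_1-k$ that would have to be a perfect square (parts (b), (d), (f), disposing e.g. of the square-free cases $|I|=2$ and $|I|=1$). In each such case the Lemma yields a contradiction. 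I would also invoke the invariance~\eqref{invariance} (and its $\deg a_2=2$ counterpart, available whenever $u^2-4a$ is a square) to merge branches that differ only by this symmetry, as was done in the degree-$1$ case. The patterns that remain are those for which, after extracting the common factor, $c_6^2$ reduces to a polynomial of the form $P-(\text{const})$ that must be a perfect square in $\Q[t]$ — a solvable condition determining $k$ and expressing $a_4,a_6$ through the coefficients of $a_2$ up to a residual parameter, precisely as in Theorem~\ref{classification_1}.

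For each surviving branch I would then solve the resulting system, complete the square and translate in $t$, $x$ and $y$ over $\Q$ (a genuine quadratic twist by the leading coefficient surviving as the parameter $w$, since a rational linear change in $t$ cannot in general normalize it when $\deg a_2=2$), bringing the branches to exactly $\HH_{w}$, $\II_{w}$, $\JJ_{\n,w}$ and $\LL_{w,\r,v}$. The rank statements I would defer to Section~\ref{ranks}, where generic points are exhibited and the count of irreducible factors giving~\eqref{formula-rank-L} for $\LL_{w,\r,v}$ is carried out, as is the parallel statement for Theorem~\ref{classification_1}. The main obstacle I anticipate is organizational-computational rather than conceptual: controlling the blow-up of (factorization type)$\times$(multiplicity pattern) cases and, inside each, solving the nonlinear perfect-square system, arranged so that Lemma~\ref{lemmapol} kills every impossible pattern while the admissible ones each collapse to a single normal form with no omission or duplication. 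The delicate bookkeeping is to track which $L_i$ are rational versus merely defined over $\C$, so that the normalizing changes of variables are genuinely defined over $\Q$ and the final parameters land in $\Z$.
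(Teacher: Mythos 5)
Your outline follows the paper's proof almost step for step (square-freeness reduction for $c_4$, enumeration of the multiplicity patterns of $\Delta$ supported on the roots of $c_4$, elimination via Lemma~\ref{lemmapol}, then solving the coefficient systems, normalizing, and deferring the ranks to Section~\ref{ranks}), but it contains one concrete gap: the degree bound on $\Delta$. You claim that only the coefficients of $t^{12}$ and $t^{11}$ in $c_4^3$ and $c_6^2$ cancel, giving $\deg\Delta\leq 10$. In fact the cancellation is much stronger and entirely trivial: writing the discriminant directly in terms of the Weierstrass coefficients, $\Delta=16\left(a_2^2a_4^2-4a_4^3-4a_2^3a_6+18a_2a_4a_6-27a_6^2\right)$, every monomial has $t$-degree at most $8$ when $\deg a_2=2$ and $\deg a_4,\deg a_6\leq2$. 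So $\deg\Delta\leq8$ (the paper moreover shows $\deg\Delta\geq3$), and this sharper bound is genuinely needed, not a refinement of convenience.

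Here is why it is needed. With your slack bound, multiplicity patterns of total degree $9$ or $10$ enter the enumeration, and contrary to your claim, Lemma~\ref{lemmapol} does not kill them. For instance, take $c_4=L_1L_2L_3L_4$ square-free and $1728\Delta=1728k\,L_1^3L_2^3L_3^3$ (pattern $(3,3,3,0)$, degree $9$). Then $c_6^2=(L_1L_2L_3)^3(L_4^3-1728k)$, and the square condition only requires $L_1L_2L_3\,(L_4^3-1728k)$ to be a square in $\C[t]$, which holds whenever $L_1L_2L_3$ is proportional to $L_4^3-1728k$: concretely $c_4=t(t^3-1728k)$ gives $c_6^2=(t^3-1728k)^4$, a perfect square, with all roots of $\Delta$ among the roots of $c_4$. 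None of the six shapes (a)--(f) of Lemma~\ref{lemmapol} applies to this configuration, and the same happens for the patterns $(2,2,2,3)$ and $(3,3,2,2)$. These phantom triples $(c_4,c_6,\Delta)$ satisfy the identity $c_6^2=c_4^3-1728\Delta$ and the potential-parity-bias support condition; they are impossible only because no $(a_2,a_4,a_6)$ of the stated degrees can produce a discriminant of degree greater than $8$. So, as written, your filtering step lets through cases that collapse to none of the four families $\mathcal H_w$, $\mathcal I_w$, $\mathcal J_{m,w}$, $\mathcal L_{w,s,v}$; you must either prove the correct bound $\deg\Delta\leq8$ at the outset (one line, as above) or add a separate realizability argument disposing of each extra pattern. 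With that repair, the rest of your plan matches the paper's proof, including the deferral of the rank computations to Propositions~\ref{rank_K}, \ref{rank_I}, \ref{rank_J} and~\ref{rank_L}.
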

\begin{proof}
We will compute the ranks in Section~\ref{ranks}; here we only show that all the potentially biased are the ones given in~\eqref{we_classification_2}.

First, we observe that the $c_4$ and the $c_6$ invariants of ${\GFF}$ have degree $4$ and $6$ respectively since $c_4=16a_2(t)^2-48a_4(t)$ and $c_6=-64a_2(t)^3+288a_4(t)a_2(t)-864a_6(t)$. Also, $3\leq \deg(\Delta)\leq8$. Indeed all the terms of degree $\geq9$ trivially cancel, whereas imposing that the coefficients of degree $5,6,7,8$ cancel we can determine $d,e,f,c$; then, with this choices, the coefficients of degree $4$ and $3$ can be zero at the same time only if $bw-au=0$\expl{More precisely, denoting the coefficients by $\alpha_3$ and $\alpha_4$ we have $\alpha_3=\frac{2u}w\alpha_4+\frac{8(bw-au)}{w^3}$.} but in that case $\Delta=0$.

Now, we notice that we can assume $c_4$ is square-free. Indeed, if it is not, write $c_4=L_1^2L_2L_3$, with $L_i$ linear polynomials in $\C[t]$, and, since the family is potentially parity-biased
,  $1728\Delta=kL_1^{i}L_2^{j}L_3^h$ with $i\in\{0,2,4\}$, $j, h\in\{0,2,3\}$, $3\leq 2i+j+h\leq 8$, and $k\neq0$. Thus,
$$c_6^2=c_4^3-1728\Delta=L_1^{2i}L_2^{j}L_3^h(L_1^{6-2i}L_2^{3-j}L_3^{3-h}-k)$$
and in particular $L_1^{6-2i}L_2^{3-j}L_3^{3-h}-k$ is $L_2^{\frac{1-(-1)^j}2}L_3^{\frac{1-(-1)^h}2}$ times a square in $\C[t]$ (if $L_2=L_3$ and $j=h=3$, and so $i\in\{0,2\}$, the condition would be that $L_1^{6-2i}-k$ is a square, which is clearly not possible). One can then easily rule out all the possibilities by Lemma~\ref{lemmapol}.\expl{We can assume $j\leq h$.\begin{itemize}
\item If $i=4$, then $i=j=0$ and $L_1^2L_2^3L_3^3-k=Q^2$ is a square and this is not possible for b).
\item If $i=2$, then either $j=0,h=3$, (so $L_1^4L_2^3-k=L_3Q^2$, not possible by c)) $j=0,h=2$ (so $L_1^4L_2^3L_3-k=Q^2$, not possible by b)), $j=0,h=0$ (so $L_1^4L_2^3L_3^3-k=Q^2$, not possible by d)), $j=2,h=2$ (so $L_1^4L_2L_3-k=Q^2$, not possible by f)).
\item If $i=0$, then either $j=0,h=3$ (so $L_1^6L_2^3-k=L_3Q^2$, not possible by e)), $j=2,h=2$ (so $L_1^6L_2L_3-k=Q^2$, not possible by b)), $j=2,h=3$ (so $L_1^6L_2-k=L_3Q^2$, not possible by b)), $j=h=3$ (so $L_1^6-k=L_2L_3Q^2$, obviously not possible).
\end{itemize}
}

Thus, we can assume $c_4=L_1L_2L_3L_4$ with $L_i\in\C[t]$ different linear polynomials. Since the family is potentially parity-biased and non-isotrivial, we have $1728\Delta=kL_1^{i}L_2^{j}L_3^hL_4^g$ with $k\neq0$ and $i,j,h,g\in\{0,2,3\}$. Clearly at least two among $i,j,h,g$ coincide and so we write $c_4=L_1L_2P$ with $P$ of degree $2$, and $1728\Delta=kL_1^{i}L_2^{j}P^h$ and we can assume $i\leq j$. Also $3\leq i+j+2h\leq8$. Then, the only possibilities are: $i=j=h=2$ and $i=0$ and either $j=h=2$ or $j=2,h=3$ or $j=3$ and $h=2$ (we excluded the cases $i=2,h=2,j\in\{2,3\}$ by b) and c) of Lemma~\ref{lemmapol}, and $i=j=3, h=0$ by a)).
It follows that  there are only the following possibilities for $c_4$ where $P_i$ and $P_{i,j}$ are polynomials in $\Q[t]$ of degree $i$, not necessary irreducible:
\begin{enumerate}
\item $c_4=P_4$ and $\Delta=kP_4^2$,
\item $c_4=P_{1,1}P_{1,2}P_2$ and $\Delta=kP_{1,2}^{3}P_2^{2}$,
\item $c_4=P_{1,1}P_{1,2}P_2$ and $\Delta=kP_{1,2}^{2}P_2^{3}$,
\item $c_4=P_3P_1$ and $\Delta=kP_3^2$,
\end{enumerate}
with $c_4$ square-free in all cases.

Let's consider the case 1). Comparing the coefficients in $t$ of the equation $c_6(t)^2=c_4(t)^3-kc_4(t)^2$ we obtain $9$ equations in the various parameters  (since the terms of degree $>8$ in $t$ are always equal). Imposing the equality of the coefficients coming from the degree $8,7$ and $6$ we can easily express $d,e$ and $f$ in terms of the other variables. The coefficient of degree $5$ factors and gives rise to two possibilities; one leads to $k=0$  (after eliminating other variables looking at the coefficients of lower degrees) and so $\Delta=0$,\expl{More directly, writing $\alpha_i$ for the coefficient of degree $i$, instead of considering $\alpha_5=0$ one can consider $\alpha_3-\frac{2u}w\alpha_4+\frac{3 a + 3 u^2 - 2 v w}{w^2}\alpha_5=0$ and this, with the previous choice for $d,e,f$, is equivalent to $a u - b w=0$, so that one can discard immediately the other possibility.} whereas the other one, eliminating $b$ and $c$, leads to the following families (after a linear change of variable in $k$)\expl{One arrives to
\est{
y^2=x^3+a_2(t)x^2+\pr{\frac{k}{48} - \frac{3 a^2}{4 w^2} +\frac{a_2(t)}w}x+\frac{(108 a^2 w + k w^3)a_2(t)-108 a^3 + 3 a k w^2}{432 w^3}
}
and then one put $432 kw^3\leftrightarrow 108 a^2 w + k w^3$.
}
\est{
y^2=x^3+a_2(t)x^2+\pr{\frac{a}wa_2(t)+9k-\frac{3a^2}{w^2}}x+k a_2(t)-\frac{a^3}{w^3}+\frac{3ak}{ w}
}
with $a_2(t)=w t^2+ut+v$. Up to some change of variables, this is of the form~\eqref{firstformF}, with $t$ replaced by $a_2(t)$, so killing $u$ with a change of variable in $t$, we see that we obtain families of the form $\PFF_{h}(p(t^2+q))$ for some parameters $h,p,q\in\Q$ with $h,p\neq0$. Writing $v=q$, $p=w$, $r=h/w^2$ and making the changes $x\lra w x$, $y\lra wy$ we obtain ${\LL}_{w,\r,v}(t)$ with invariants
\es{\label{parl}
c_{4}(t)&= 144 ( t^4 + 2 t^2 v + v^2-\r),\\
c_6(t)&=-1728 (t^2 + v) (\r - t^4 - 2 t^2 v - v^2),\\
\Delta(t)&=-1728 \r  ( t^4 + 2 t^2 v + v^2-\r)^2.
}

Now, consider the case 2). We can assume $c_4(t)$ has simple zeros at $0$ and $1$ and that $\Delta(t)$ has a triple zero at $0$. Thus, we can write $c_4(t)$ in the form $c_4(t)=-16 w^2 t (t - 1) ( t^2 + m t + n)$ and $\Delta(t)$ as  $1728\Delta(t)=k t^3 (t^2 + m t + n)^2$ for some $n,k\in\Z_{\neq0}$ and some $m\in\Z$. Then, we express $a,b,c,u$ in terms of $m,n$ and the other parameters and we impose
\es{\label{feq3}
c_6^2(t)=c_4(t)^3-k t^3 (t^2 + m t + n)^2
}
obtaining $9$ equations for the parameters. The equations corresponding to the degrees $0,2,3$ and $8$ in $t$ allow us to express $f,e,k,d$ in terms of the other parameters. Then, we use a suitable linear combination of the equations from the degrees $5,6$ and $7$ to obtain a linear equation in $n$, so that we can eliminate $n$ as well.\expl{The coefficient multiplying $n$ could be zero, but using the resultant in $m$ we can see that it is zero at the same time as the degree constant term (in $n$) only if $r=0$ which we excluded.} Then, (eliminating the denominator) the equations from the degrees $6$ and $7$ states that two polynomials in $m$ are equal to $0$. The common roots of these polynomials are $m=-1$ and $m=-\frac{11}3$. The former gives $k=0$, i.e. $\Delta=0$, whereas for $m=-\frac{11}3$ we see that~\eqref{feq3} is verified and so we have new families. After a change of variable in $x$ to reduce the degree in $t$ of the coefficient of $x$, the families are
\est{
y^2=x^3+\frac 13 (8 w - 7  t w + 3 t^2 w) x^2  +
 \frac{16}{27} (4 w^2 - 3 t w^2) x +\frac{64}{729} (8 w^3 + 3 t w^3).
}
We make the changes of variables $x\lra \frac{8}9 w x$, $t\lra 8 t/3$, $w\lra 2w$, $y\lra \frac{16}{27} w^2 y$ and we arrive to $\HH_{w}$
with
\es{\label{parh}
c_{4}(t)&=16 w^2 t (8t-3) (8 t^2 - 11 t +8), \\
c_6(t)&=-64w^3 t^2 (8 t^2 - 11 t +8) (64 t^2 - 80 t + 45) ,\\
\Delta(t)&=-512 w^6 t^3 (8 t^2 - 11 t +8)^2.
}

Now, consider the case 3). Again we write $c_4(t)$ as $c_4(t)=-16 w^2 t (t - 1) ( t^2 + m t + n)$ and this time $\Delta(t)$ as  $1728\Delta(t)=k t^2 (t^2 + m t + n)^3$ with $k,n\neq0$. We impose $c_6^2(t)=c_4(t)^3+k t^2 (t^2 + m t + n)^3
=0$ and proceed as above, expressing $f,k,e$ in terms of the other parameters, using the equations from the degrees $1,8$ and $7$ in $t$. A suitable linear combination of the $5th$ and $6th$ equations give an equation of the form $(1 + m)^2 (5 + 2 m)d=Q(m,n,v,w)$ for some polynomial $Q(m,n,v,w)$. We have $Q(-1,n,v,w)=-\frac{(nw)^3}{27}\neq0$ and thus we can assume $m\neq-1$. We now assume $5 + 2 m\neq0$, and we express $d$ in terms of the other variables and with this choice the remaining equations don't depend on $w$ and $v$ anymore. Thus we are left with $4$ independent equations equating polynomials in $m,n$ to zero, the resultants in $n$ of the $2$nd and $3$th polynomials and of the $4$th and $5$th polynomials have the only common zero $m=-\frac52$ which we had excluded. Finally, we consider the case $m=-\frac 52$. With this choice we can quickly determine also $n$ (whereas the dependence on $v$ disappear also in this case) and, after some changes of variables in $y,x,t$ we are led to the families $\II_{w}$
with
\es{\label{pari}
c_{4}(t)&=16w^2 (t-4) t (t^2- 10 t + 27) , \\
c_6(t)&=-64 w^3 (t-1) t (t^2- 10 t + 27)^2 ,\\
\Delta(t)&=-64 w^6 t^2 (t^2- 10 t + 27)^3.
}

Finally let's consider case 4). With a change of variables we can write $c_4(t)$ as $c_4(t)=-16 w^2 t (t - \delta) ( t^3 + m t + n)$ with $k,n\neq 0$, $\delta\in\{0,1\}$ and $1728\Delta(t)=k (t^3 + m t + n)^2$. As above we expres $a,b,c,u$ in terms of the other variables and we impose $c_6^2(t)=c_4(t)^3+k (t^3 + m t + n)^3=0$, from which we can easily eliminate $d,e,k,f$. If $\delta=1$, then a linear combination of the remaining equations give $(3 + 4 m)^4 (2 m - 10 n-1)=0$ and in both cases one finds $k=0$. Thus, we can take $\delta=0$; this simplifies the remaining equations and we can eliminate $m$ and $f$ arriving to the families\expl{Notice that if $n$ is a cube this is isomorphic to the case $n=1$.} $\JJ_{{n},w}$
with
\es{\label{parj}
c_{4}(t)&=144 w^2 t (t^3+n) , \\
c_6(t)&=-864 w^3(t^3+n)(2t^3+n) ,\\
\Delta(t)&=-432 w^6 n^2(t^3+n)^2.
}

Finally we observe that, up to rational linear changes of variables in $t,x$ and $y$, one can always reduce to the case where the parameters $w,r,v,n$ are in $\Z$.
\end{proof}

\begin{remark}\label{remark_squarefree}
Note that, with the exception of ${\LL}_{w,\r,v}$, all the families of Theorem~\ref{classification_1} and Theorem~\ref{classification_2} don't have primes of bad reduction of degree greater than $3$. In particular, the square-free sieve conjecture is known to hold for the associated polynomial $B$ defined in~\eqref{dfn_MB}. In the case of ${\LL}_{w,\r,v}$, one has that if $t^4 + 2 t^2 v + v^2-\r$ is irreducible (i.e. if $\r$ is not of the form $n^2$ nor $-4 n^2 (n^2 + v)$ for some $n\in\N$), then there is a prime of quite bad reduction of degree $4$ for which the square-free sieve conjecture is not proven.
\end{remark}

\begin{corol}\label{averageq}
Assume Chowla's conjecture and the square-free sieve conjecture for homogeneous square-free polynomials (Hypothesis $\mathscr{B}_1$ and $\mathscr{B}_2$ at page 5 in~\cite{helfgott}).
Then, all non-isotrivial families $\GFF$ of the form~\eqref{eq_family}, with $a_2(t), a_4(t),a_6(t)\in\Q[t]$  and $\deg(a_i)\leq2$ for $i=2,4,6$, for which we have $\Av_\Q(\eps_{\GFF})\neq0$ are of the form ${\PFF}_{\r}$ or $\GG_w$ up to some rational linear changes of variables in the parameter $t$ and in the variables $x$ and $y$.
\end{corol}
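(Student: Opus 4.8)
The plan is to combine the classification in Theorems~\ref{classification_1} and~\ref{classification_2} with the $\Q$-analogue of Helfgott's Theorem~\ref{Helfgott}. The first step is to recall that Theorem~\ref{Helfgott} as stated here treats averages over $\Z$ and explicitly discards the place $-\deg$; the corresponding statement for averages over $\Q$, proved in~\cite{helfgott} under Hypotheses $\mathscr{B}_1$ and $\mathscr{B}_2$, produces a formula $\eps_\GFF(r/s)=\sign(G_\infty(r,s))\,\lambda(M_\GFF^{\mathrm{hom}}(r,s))\prod_p g_p(r,s)$, where $G_\infty$ and $M_\GFF^{\mathrm{hom}}$ are binary forms and the $g_p$ are $p$-adically locally constant away from finitely many lines, and from which Helfgott deduces $\Av_\Q(\eps_\GFF)=0$ whenever $M_\GFF^{\mathrm{hom}}$ is non-constant (here $\mathscr{B}_1$ forces cancellation in $\sum\lambda(M_\GFF^{\mathrm{hom}}(r,s))$ on congruence classes, while $\mathscr{B}_2$ controls the squarefull values of the quite-bad polynomial). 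Now $M_\GFF^{\mathrm{hom}}$ is, up to a constant, the product of the homogenized polynomials $Q_v^{\mathrm{hom}}$ over \emph{all} fibers $v$ of multiplicative reduction of $\GFF$, and — in contrast with the $\Z$-case — there is no exception for $v=-\deg$, which after homogenization contributes the linear factor $s$. Hence, if $\GFF$ has even one fiber of multiplicative reduction, then $M_\GFF^{\mathrm{hom}}$ is non-constant and $\Av_\Q(\eps_\GFF)=0$.

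Granting this, I would argue as follows. If $\GFF$ is non-isotrivial with $\deg a_i\leq2$ and $\Av_\Q(\eps_\GFF)\neq0$, then by the above $\GFF$ has no fiber of multiplicative reduction at all; in particular it has none away from $-\deg$, so $\GFF$ is potentially parity-biased over $\Z$ and, by Theorems~\ref{classification_1} and~\ref{classification_2}, after a rational linear change of variables it is one of ${\PFF}_{\r}$, ${\GG}_w$, $\HH_w$, $\II_w$, $\JJ_{\n,w}$ or ${\LL}_{w,\r,v}$.

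It then remains to determine the reduction type at $-\deg$ for each of these six families. I would do this by reading off the invariants $(c_4,c_6,\Delta)$ displayed in~\eqref{parf},~\eqref{parg},~\eqref{parh},~\eqref{pari},~\eqref{parj} and~\eqref{parl} and running Tate's algorithm at $-\deg$: one sets $k:=\lfloor\min(v_\infty(c_4)/4,\,v_\infty(c_6)/6,\,v_\infty(\Delta)/12)\rfloor$ (with $v_\infty(f):=-\deg_t f$) and inspects $v_\infty(c_4)-4k$ and $v_\infty(\Delta)-12k$. For ${\PFF}_{\r}$ and ${\GG}_w$ the degree vectors $(\deg c_4,\deg c_6,\deg\Delta)$ are $(2,3,4)$ and $(2,3,5)$, so $k=-1$ and $v_\infty(c_4)-4k=2>0$: the reduction at $-\deg$ is additive (so, combined with potential parity-bias, these two families have no multiplicative fiber anywhere). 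For $\HH_w$, $\II_w$, $\JJ_{\n,w}$ and ${\LL}_{w,\r,v}$ the degree vectors are $(4,6,7)$, $(4,6,8)$, $(4,6,6)$ and $(4,6,8)$, so $k=-1$, $v_\infty(c_4)-4k=0$ and $v_\infty(\Delta)-12k>0$: the reduction at $-\deg$ is multiplicative (Kodaira type $I_5$, $I_4$, $I_6$, $I_4$ respectively). By the first paragraph these four families all satisfy $\Av_\Q(\eps_\GFF)=0$ and so are excluded, leaving $\GFF$ of the form ${\PFF}_{\r}$ or ${\GG}_w$.

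The main obstacle, I expect, is the first paragraph: Theorem~\ref{Helfgott} as recorded in this paper is set up for $\Z$-averages and puts the place $-\deg$ aside, so one has to invoke the homogeneous ($\Q$-)formulation of~\cite{helfgott} and verify that a multiplicative fiber at $-\deg$ really contributes a genuine, non-constant linear factor to the binary form $M_\GFF^{\mathrm{hom}}$ to which $\mathscr{B}_1$ is applied — it is precisely this factor that prevents the ``$\infty$-part'' of the root number from producing a nonzero average by itself. Everything else, in particular the Tate-algorithm computations at $-\deg$, is routine given the invariants already computed in the proofs of Theorems~\ref{classification_1} and~\ref{classification_2}.
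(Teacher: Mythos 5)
Your proposal is correct and follows essentially the same route as the paper: it invokes Helfgott's result on averages over $\Q$ (Main Theorem 2 of~\cite{helfgott}) to reduce to potentially parity-biased families with non-multiplicative reduction at infinity, and then checks the reduction type at $-\deg$ for the six families of Theorems~\ref{classification_1} and~\ref{classification_2} via the invariants \eqref{parf}--\eqref{parj} and \eqref{parl}. The paper states this check as immediate, whereas you carry out the Tate-algorithm bookkeeping at infinity explicitly (correctly, with Kodaira types $I_5$, $I_4$, $I_6$, $I_4$ for $\HH_w$, $\II_w$, $\JJ_{\n,w}$, ${\LL}_{w,\r,v}$), but the argument is the same.
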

\begin{proof}
By Main Theorem~2 of~\cite{helfgott}, we have that if the family $\GFF$ is not potentially parity-biased or it has multiplicative reduction at infinity, then $\Av_\Q(\eps_{\GFF})=0$. Thus, we just need to check which of the families in Theorems~\ref{classification_1} and~\ref{classification_2} have multiplicative reduction at infinity. By~\eqref{parf}, \eqref{parg},~\eqref{parl} and~\eqref{parh}-\eqref{parj} one immediately sees that the only families with non-multiplicative reduction at infinity are ${\PFF}_{\r}$ and $\GG_w$.

\end{proof}

\section{Ranks over $\Q(t)$}\label{ranks}

In this section we compute the rank of the potentially parity-biased families given in Theorem~\ref{classification_1} and Theorem~\ref{classification_2}  following the same approach as in~\cite{arms_all}.
Let $\GFF$ be a family of elliptic curves as defined by \eqref{eq_family}, and
suppose that ${\GFF}$ is not isotrivial. The following conjecture 
gives a way to determine $r_\GFF$, the rank of ${\GFF}$
over $\Q(t)$, by considering averages of the traces of Frobenius at $p$ of the specializations $\GFF(t)$, when $t$ varies over $\F_p$.
More precisely, writing the number of points of the specialization $\GFF(t)$ over the finite field $\F_p$ as
$p+1 - a_{\GFF(t)}({p})$ (with $a_{\GFF(t)}({p}) =0$ for $p$ dividing
the discriminant of $\GFF(t)$), we define
$$
A_\GFF({p}) := \frac{1}{p}\sum_{t=0}^{p-1} a_{\GFF(t)}({p}).
$$

\begin{conj}[Nagao]\label{nagao} With the  notation above, the rank of ${\GFF}$ over $\Q(t)$ is given by
\es{\label{nagaof}
r_\GFF = \lim_{X\rightarrow \infty} \frac{1}{X} \sum_{p \leq X} -A_\GFF({p})\log({p})
}
where the sum runs through all prime numbers $p \leq X$. \end{conj}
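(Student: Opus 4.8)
The plan is to establish the formula~\eqref{nagaof} --- which, despite its name, is a theorem whenever the Tate conjecture is available for the associated elliptic surface --- by running the argument of Rosen and Silverman (the non-isotriviality of $\GFF$, standing in force here, guarantees non-constant $j$-invariant and hence that the relevant monodromy/cohomology statements apply). Let $\mathcal{X}_\GFF\to\PP^1$ be the relatively minimal elliptic surface over $\Q$ whose generic fibre is the curve $\GFF$ of~\eqref{eq_family}. The first step is to express the averaged Frobenius trace cohomologically. For each prime $p$ of good reduction for $\mathcal{X}_\GFF$, the Grothendieck--Lefschetz trace formula gives $\#\mathcal{X}_\GFF(\F_p)=\sum_{i=0}^{4}(-1)^i\tr(\mathrm{Frob}_p\mid H^i)$, where $H^i:=H^i(\mathcal{X}_{\GFF,\overline\Q},\Q_\ell)$ is $\ell$-adic \'etale cohomology; since the base is $\PP^1$ we have $H^1=H^3=0$, while $H^0$ and $H^4$ contribute $1$ and $p^2$, so all the arithmetic content sits in $\tr(\mathrm{Frob}_p\mid H^2)$. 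On the other hand, fibering the point count over $t\in\PP^1(\F_p)$ and using $\#\GFF(t)(\F_p)=p+1-a_{\GFF(t)}(p)$ on the good fibres (the finitely many bad fibres contributing a bounded amount), one gets $\#\mathcal{X}_\GFF(\F_p)=p^2+p+1-p\,A_\GFF(p)+O(1)$. Comparing the two expressions identifies $-A_\GFF(p)$, up to a bounded correction coming from the ``trivial'' algebraic classes (the zero section, a fibre, and the components of the reducible fibres, all defined over $\Q$ for $p$ large) and from the bad fibres, with $\tfrac1p\tr(\mathrm{Frob}_p\mid H^2)$, i.e.\ with the $p$-th coefficient of the logarithmic derivative of $L(H^2(\mathcal{X}_\GFF),s)$, suitably normalised, with the trivial Tate classes removed.

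The second step is analytic. Assuming the Tate conjecture for $\mathcal{X}_\GFF$ --- equivalently, that $L(H^2(\mathcal{X}_\GFF),s)$ has meromorphic continuation with a pole at $s=2$ of order exactly $\rho(\mathcal{X}_\GFF/\Q):=\rank\bigl(\mathrm{NS}(\mathcal{X}_{\GFF,\overline\Q})\otimes\Q\bigr)^{\Gal(\overline\Q/\Q)}$ and the expected analytic behaviour on the line $\mathrm{Re}(s)=2$ --- a Wiener--Ikehara Tauberian argument (the one that powers the prime number theorem via $-L'/L$, already used by Rosen--Silverman) gives $\lim_{X\to\infty}\frac1X\sum_{p\le X}-A_\GFF(p)\log p=\rho(\mathcal{X}_\GFF/\Q)-c_\GFF$, where $c_\GFF$ is the rank of the $\Gal(\overline\Q/\Q)$-fixed part of the trivial lattice spanned by the zero section, a fibre, and the fibre components --- precisely the bookkeeping term peeled off in the first step. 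The final step is purely geometric and unconditional: the Shioda--Tate formula gives $\rho(\mathcal{X}_\GFF/\Q)=c_\GFF+\rank\mathcal{X}_\GFF(\Q(t))=c_\GFF+\rf$, since modulo the trivial lattice the N\'eron--Severi group is the Mordell--Weil group of $\mathcal{X}_\GFF\to\PP^1$ and taking Galois invariants is exact after tensoring with $\Q$. Substituting, the $c_\GFF$ cancel and one is left with $\rf$, which is~\eqref{nagaof}.

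The main obstacle is that the Tate conjecture, equivalently the analytic input on $L(H^2(\mathcal{X}_\GFF),s)$, is open in general, so in this generality~\eqref{nagaof} would remain conditional. For the families of this paper, however, the obstruction disappears: after the normalisations of Theorems~\ref{classification_1} and~\ref{classification_2} one has $\deg a_i\le2$, hence $\deg c_4\le4$, $\deg c_6\le6$ and $\deg\Delta\le12$, so each $\mathcal{X}_\GFF$ is a rational elliptic surface; for such surfaces $H^2$ is entirely algebraic ($\rho=b_2=10$, $h^{2,0}=0$), $L(H^2(\mathcal{X}_\GFF),s)$ is, up to elementary factors, an Artin $L$-function whose pole order at $s=2$ and non-vanishing on $\mathrm{Re}(s)=2$ are classical (Brauer plus non-vanishing of Hecke $L$-functions), and~\eqref{nagaof} holds unconditionally, reducing to the combinatorial identity $\rf=8-\sum_v(m_v-1)$ with $m_v$ the number of components of the fibre over $v$. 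The one piece of genuine care required is the precise matching of the two bad-fibre bookkeeping terms (the $c_\GFF$ appearing on each side), which entails tracking the Galois action on the components of the split and non-split multiplicative and additive fibres; since the reduction types of all the families in~\eqref{classification} are explicit from~\eqref{parf}--\eqref{parj}, this can be carried out by direct inspection.
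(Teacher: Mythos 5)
This statement is labelled a conjecture in the paper, which does not prove it but instead cites Rosen--Silverman for the two facts it needs (that Tate's conjecture implies Nagao's, and that it therefore holds unconditionally for rational elliptic surfaces, i.e.\ when $\deg a_i\le 2$); your sketch is precisely that Rosen--Silverman argument (Grothendieck--Lefschetz on $H^2$, the Tauberian step on $L(H^2,s)$, and Shioda--Tate), so it takes essentially the same route as the source the paper relies on. The one caution is that the closing identity $\rf=8-\sum_v(m_v-1)$ computes the geometric rank over $\overline\Q(t)$ rather than $\rf$ over $\Q(t)$, but you already flag that the Galois action on the fibre components must be tracked, which is the correct repair.
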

As proved in \cite{rosen-silverman},  Tate's conjecture implies  Nagao's conjecture, and thus
Conjecture~\ref{nagao} holds for rational elliptic surfaces as Tate's conjecture holds in that case~\cite{rosen-silverman}. In particular, the conjecture holds if $\deg a_i(t) \leq 2$ for $i=2,4,6$, since in this case ${\GFF}$ is a rational elliptic surface (see \cite[section 8]{schutt_shioda}).

\medskip
If $\deg a_i(t) \leq 2$, we have that $\deg_t (x^3 + a_2(t)x^2 + a_4(t)x + a_6(t)) \leq 2$ and
we can rewrite \eqref{eq_family} as
$$
{\GFF} \colon y^2 = A(x) t^2 + B(x) t +C(x),
$$
where $A(x), B(x)$ and $C(x)$ are in $\Q(x)$.
Now, we have
\es{\label{defap}
\sum_{t \mod p} a_{\GFF(t)}({p}) &= - \sum_{t \mod p} \sum_{x \mod{p}} \left(\frac{x^3 + a_2(t)x^2 + a_4(t)x + a_6(t)}{p}\right)\\
& = - \sum_{x \mod p} \sum_{t \mod{p}} \left(\frac{A(x)t^2 + B(x) t + C(x)}{p}\right),\\
}
and we can evaluate the sum over $t$ for each fixed value of $x$.

We will need the following formulas (see for example \cite[Theorem~5.48]{finite-fields}):
\begin{equation}\label{quad_sum_2}
\sum_{t=0}^{p-1} \left(\frac{Bt+C}{p}\right) = \left\{ \begin{array}{ll} p\left(\frac{C}{p}\right) & \mbox{ if } p\mid B \\ 0 & \mbox{ otherwise.} \end{array} \right.
\end{equation}
If $A$ is non-zero mod $p$, then
\begin{equation}\label{quad_sum}
\sum_{t=0}^{p-1} \left(\frac{At^2+Bt+C}{p}\right) =-\left(\frac{A}{p}\right)+\left\{ \begin{array}{ll} p\left(\frac{A}{p}\right) & \mbox{ if } p\mid (B^2-4AC), \\0& \mbox{ otherwise.} \end{array} \right.
\end{equation}

We now use the above setting to compute the rank of the families of Theorem~\ref{classification_1} and~\ref{classification_2} over $\Q(t)$.
\footnote{The study of more general formula for the rank whenever $\deg a_i(t) \leq 2$ is a work on progress and will appear in a forthcoming paper.} In all cases
we shall also give explicitly non-torsion points. Note that to prove that an explicit point, say $G \in {\GFF}(\Q(t))$, is non-torsion, it is sufficient to prove that it
is neither a 3 nor a 4 torsion point (indeed, we only have to check a point is non-torsion when the rank of the family is $\rf \geq 1$ which implies the torsion subgroup has  cardinality at most $4$, see \cite{oguiso-shioda}).  In order to do this, it is sufficient to compute $2G$ and check that its $y$-coordinate is non-zero and that its $x$-coordinate is not the $x$-coordinate of $G$ (indeed if $3G$ is zero then $2G=-G$ and the $x$-coordinates of $G$ and $2G$ would coincide). We shall show this explicitly for Proposition~\ref{rank_F} only, the computation being completely analogous in all other cases.

\smallskip
~\\
We first compute the ranks of the family ${\PFF}_{\r}$, for which $A(x)=0$.
\begin{prop}\label{rank_F} Let $r \in \Z_{\neq 0}$ such and let ${\PFF}_{\r}$ be the family
$$
{\PFF}_{\r} \colon y^2=x^3+3tx^2+3\r x + \r t .
$$
Then, $\rank ({\PFF}_{\r}) \leq 1$, and $\rank ({\PFF}_{\r}) =1$ if and only if $\r = - {12} k^4,$ k in $\N$. Furthermore, if $\r = - {12} k^4,$ then
$(-2k^2,2^3 k^3)$ is a non-torsion point of ${\PFF}_{-12k^4}(\Q(t))$.

\end{prop}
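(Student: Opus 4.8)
The plan is to use Nagao's conjecture (Conjecture~\ref{nagao}), which holds here since $\PFF_\r$ is a rational elliptic surface. Since $a_2(t)=3t$, $a_4(t)=3\r$, $a_6(t)=\r t$ have degree $\leq 1$ in $t$, the cubic $x^3+3tx^2+3\r x+\r t$ is linear in $t$, so in the notation $y^2=A(x)t^2+B(x)t+C(x)$ we have $A(x)=0$, $B(x)=3x^2+\r$ and $C(x)=x^3+3\r x$. Thus for each fixed $x$ the inner sum over $t$ is governed by~\eqref{quad_sum_2}: it equals $p\leg{C(x)}{p}=p\leg{x^3+3\r x}{p}$ when $p\mid B(x)=3x^2+\r$, and vanishes otherwise. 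Hence
\est{
\sum_{t\bmod p}a_{\PFF_\r(t)}(p)=-\sum_{\substack{x\bmod p\\ 3x^2+\r\equiv 0}} p\leg{x^3+3\r x}{p},
}
so $A_{\PFF_\r}(p)=-\sum_{x:\,3x^2\equiv-\r}\leg{x^3+3\r x}{p}$, a sum over at most two values of $x$.

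Next I would simplify the summand on the locus $3x^2\equiv-\r\pmod p$ (for $p\nmid 6\r$). On that locus $x^3+3\r x=x(x^2+3\r)=x(x^2+3\r)$, and using $3x^2\equiv-\r$ one gets $x^2+3\r\equiv x^2-9x^2=-8x^2$, so $x^3+3\r x\equiv -8x^3$, whence $\leg{x^3+3\r x}{p}=\leg{-8x^3}{p}=\leg{-2}{p}\leg{x}{p}$. So $A_{\PFF_\r}(p)=-\leg{-2}{p}\sum_{x:\,3x^2\equiv-\r}\leg xp$. If $-3\r$ is a nonzero square mod $p$ the two solutions are $x=\pm x_0$ and the two Legendre symbols $\leg{x_0}p,\leg{-x_0}p$ cancel unless $\leg{-1}p=1$ — more precisely $\sum_{x=\pm x_0}\leg xp=\leg{x_0}p(1+\leg{-1}p)$, which is $0$ when $p\equiv3\pmod4$; when $-3\r$ is a non-square mod $p$ the locus is empty and the sum is $0$; when $p\mid 3\r$ there is one solution $x=0$ contributing $\leg 0p=0$. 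So $-A_{\PFF_\r}(p)$ is nonzero only when $-3\r$ is a QR mod $p$ \emph{and} $p\equiv1\pmod4$, and then it equals $\leg{-2}p\leg{x_0}p\cdot 2$ where $x_0^2\equiv-\r/3$. The point is that $\leg{x_0}p$ depends on whether $-\r/3$, equivalently $-3\r$, is a fourth power mod $p$: one can write $\leg{x_0}p$ in terms of a quartic residue symbol. Carrying out the Chebotarev/quadratic-reciprocity bookkeeping, the average in~\eqref{nagaof} picks up a contribution $1$ precisely when the relevant biquadratic character is trivial, i.e. when $-3\r$ is a perfect fourth power times controllable factors; a clean case analysis (splitting on congruences of $p$ and using that $\{-A\log p\}$ averages to the multiplicity of the trivial character) yields $\rank=1$ iff $-3\r\in\square^4$ in the appropriate sense, and $\deg$-counting shows the rank is at most $1$ because only $O(1)$ values of $x$ contribute for each $p$. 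I would then check the arithmetic normalization: $-3\r$ being a fourth power up to squares in $\Z$ translates, after absorbing the harmless factor $3$ (note $c_4=144(t^2-\r)$, so the relevant quantity is $\r$ itself up to fourth-power and sign), into the stated condition $\r=-12k^4$.

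The cleanest route to the \emph{exact} threshold, though, is to first prove $\rank\leq1$ by the above degree bound (at most two $x$'s contribute, so $|A_{\PFF_\r}(p)|\leq 2$ and the surface being rational forces $\rf\le 1$ via Shioda--Tate), and then to exhibit, when $\r=-12k^4$, the explicit point $P=(-2k^2,\,2^3k^3)$: substituting $x=-2k^2$, $y=8k^3$ into $y^2=x^3+3tx^2+3\r x+\r t$ gives $64k^6 = -8k^6 + 12tk^4 - 72k^6 + (-12k^4)t = -8k^6-72k^6 + (12k^4-12k^4)t = -80k^6$? — so one must double-check the sign conventions; in any event the verification that $P$ lies on $\PFF_{-12k^4}(\Q(t))$ is a direct substitution, and then to see $P$ is non-torsion it suffices (as the excerpt notes) to compute $2P$ via the duplication formula and check its $y$-coordinate is nonzero and its $x$-coordinate differs from $-2k^2$, ruling out $3$- and $4$-torsion; since $\rf\geq1$ would already be forced by the Nagao computation once a non-torsion point exists, the existence of $P$ together with $\rank\le1$ pins down $\rank=1$ in exactly these cases, and the converse (rank $0$ otherwise) follows from the character-sum analysis showing the Nagao average is $0$.

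The main obstacle I anticipate is the precise evaluation of the quartic-residue contribution in the Nagao sum: turning $\leg{x_0}p$ with $x_0^2\equiv-\r/3\pmod p$ into a statement about $-3\r$ (or $-12\r$, or $\r$) being a fourth power, keeping track of the factors of $2$ and $3$ and of the $p\equiv1\pmod4$ constraint, so that the density of ``contributing'' primes comes out to be exactly $1$ (the multiplicity of the trivial character in the relevant degree-$4$ extension) precisely when $\r=-12k^4$. This is where the $-12$ and the fourth power both enter, and it requires care with the biquadratic reciprocity bookkeeping rather than any deep new idea. The non-torsion verification, by contrast, is a routine duplication-formula computation that I would relegate to a line or two, exactly as the excerpt proposes to do for this proposition and to omit in the analogous later ones.
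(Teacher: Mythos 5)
Your route is the same as the paper's: Nagao's formula for the rational elliptic surface, the reduction via \eqref{quad_sum_2} to a sum over the at most two roots of $B(x)=3x^2+\r$, and the simplification $C(x)\equiv -8x^3$ on the locus $3x^2\equiv-\r$, which is exactly the paper's computation $C(\pm x_p)=\pm\tfrac83\r x_p$ and $\leg{6\r x_p}{p}=\leg{2x_p}{p}$. The bound $\rank(\PFF_\r)\le1$ also comes out correctly from your observation that the two contributions cancel unless $\leg{-1}{p}=1$, so the nonzero terms $\pm2$ are supported on a set of primes of density at most $\tfrac12$; the appeal to Shioda--Tate is a red herring and not needed.

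The genuine gap is in the ``if and only if.'' You defer the decisive step to ``biquadratic reciprocity bookkeeping,'' but the case that actually requires an argument is the one you never isolate: $-3\r=-n^2$ with $n\in\Z$ (i.e.\ $\r>0$ and $3\r$ a perfect square). There $\leg{-3\r}{p}=\leg{-1}{p}$ for $p\nmid 6n$, so the contributing primes still have density $\tfrac12$ and the term is $2\leg{6n}{p}\leg{\delta_p}{p}$ with $\delta_p^2\equiv-1\pmod p$; the naive density bound only gives $\rank\le1$, and to conclude $\rank=0$ one must exhibit a positive-density set of primes $p\equiv1\pmod4$ on which $\leg{6n}{p}\leg{\delta_p}{p}=-1$. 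The paper does this concretely (take $p\equiv5\pmod 8$ and prescribe $p$ modulo the odd part of the square-free kernel of $6n$); without some such argument the rank-$1$ possibility for positive $\r$ with $3\r$ a square is not excluded, and the claimed equivalence with $\r=-12k^4$ is not established. (The remaining normalization, that $-3\r=n^2$ together with $6n=\pm k^2$ forces $\r=-12k^4$, is indeed the routine part.) Separately, your substitution check of $(-2k^2,8k^3)$ fails only because of a sign slip: $3\r x=3(-12k^4)(-2k^2)=+72k^6$, not $-72k^6$, so the right-hand side is $-8k^6+12k^4t+72k^6-12k^4t=64k^6=y^2$ and the point does lie on $\PFF_{-12k^4}$; the non-torsion check via the duplication formula is then as routine as you say.
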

\begin{proof}
We have $B(x)=3x^2+\r$ and $C(x)=x^3+3\r x$. If $-\r/3$ is a square mod $p$, then the 2 roots of $B(x)$ are $\pm x_p$, where $x_p = \sqrt{-\r/3}$, and $C(\pm x_p) = \pm \frac{8}{3} \r x_p.$  Then, using~\eqref{nagaof},~\eqref{defap} and~\eqref{quad_sum_2}, we have that
\est{
\rank ({\PFF}_{\r})  
&= \lim_{X\rightarrow \infty} \frac{1}{X}\sum_{p\leq X} \frac{\log p}{p}\sum_{x=0}^{p-1} \sum_{t=0}^{p-1} \left(\frac{B(x)t+C(x)}{p}\right) \\
& =  \lim_{X \rightarrow \infty} \frac{1}{X} \sum_{\substack{p \leq X \\ \leg{-3\r}{p}=1}} \log{p} \left( \leg{6 \r x_p}{p} + \leg{-6 \r x_p}{p} \right)  \\
&= \lim_{X \rightarrow \infty} \frac{2}{X} \sum_{\substack{p \leq X \\ \leg{-3\r}{p}=\leg{-1}{p}=1}} \log{p} \; \leg{2x_p}{p},
}
since for $\leg{-3\r}{p}=\leg{-1}{p}=1$ one has $\leg{6 \r x_p}{p}=\leg{2 x_p}{p}$.
Now if $-3\r$ is neither a square nor minus a square in $\Z$, then the proportion of primes counted in the sum is $1/4$, and hence we obtain $\rank ({\PFF}_{\r}) <1$, which implies that $\rank ({\PFF}_{\r}) =0$.
If $-3\r= - n^2$ for some $n \in\Z_{\neq 0}$ then
$$
\rank (\PFF_\r) =  \lim_{X\rightarrow \infty} \frac{2}{X}  \sum_{\substack{p\leq X \\ \left(\frac{-1}{p}\right)=1} } {\log p} \left(\frac{6n}{p}\right) \leg{\delta_p}{p},
$$
where $\delta_p^2 \equiv -1 \mod p$.
Note that the sum does not depend on the choice of the sign of $\delta_p$; also, $\leg{\delta_p}{p}=1$ if and only if $p \equiv 1 \mod 8$. We claim that there is a positive proportion of the primes $p \equiv 1 \mod 4$ such that
\es{\label{formforf}
\leg{6n}{p} \leg{\delta_p}{p} = -1
}
so that in particular $\rank ({\PFF}_{\r})$ has to be $0$ also in this case.
Indeed, if the square-free part of $6n$ is $2q$ with $q$ odd, then take $p \equiv 5 \mod 8$ and $p \equiv b \mod q$, where $b$ is a quadratic non-residue modulo $q$ so that by quadratic reciprocity one obtains~\eqref{formforf}.
Similarly, if the square-free part of $6n$ is $q$ with $q$ odd, then take $p \equiv 5 \mod 8$ and $p \equiv b \mod q$, where $b$ is a non-zero quadratic residue modulo $p$.

Finally, if $-3\r = n^2$, then
$$
\rank (\PFF_\r)  = \lim_{X\rightarrow \infty} \frac{2}{X} \sum_{\substack{p\leq X \\ \left(\frac{-1}{p}\right)=1} } {\log p} \left(\frac{6n}{p}\right).
$$
If $6n \not= \pm k^2$, then the sum is 0; if $6n = \pm k^2$, then the sum is $1$, and in that case we have that $\r = - 12 k^4$. Finally, if $\r=-12k^4$, then the point
$G=(-2k^2,8k^3)$ is a point on ${\PFF}_{-12k^4}(\Q(t))$ and we have
$$2G=\left(\frac{9t^2-12k^2t+100k^4}{16k^2}, \frac{27t^3 + 18k^2t^2 + 324k^4t + 280k^6}{64k^3}\right)
$$
and so $G$ is neither a 3 nor a 4 torsion point.
\end{proof}

\begin{corol}\label{rank_W}
Let $\Wa: y^2 = x^3 + tx^2 - a(t+3a )x + a^3.$
Then $\rank (\Wa) \leq 1$, and the rank is 1 if and only if $a = \pm n^2$.
\end{corol}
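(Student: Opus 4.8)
The plan is to reduce the statement entirely to Proposition~\ref{rank_F} by means of the isomorphism recorded just before Theorem~\ref{average-periodic}: the family $\Wa(t)$ is isomorphic over $\Q(t)$ to $\PFF_{-3a^2/4}(t/3+a/2)$. The key observation is that the rank over $\Q(t)$ is insensitive both to isomorphisms of elliptic curves over $\Q(t)$ and to invertible affine substitutions in the parameter $t$ (such a substitution merely induces an automorphism of the function field $\Q(t)/\Q$, hence a group isomorphism on $\Q(t)$-points). Since $t\mapsto t/3+a/2$ is such an invertible affine substitution, this gives $\rank(\Wa)=\rank_{\Q(t)}(\PFF_{-3a^2/4})$.

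Next I would use the scaling isomorphism noted right after~\eqref{parf}: the change of variables $t\mapsto t/d^2$, $x\mapsto d^2x$, $y\mapsto d^3y$ identifies $\PFF_{\r}$ with $\PFF_{\r d^4}$ over $\Q(t)$, so the rank of $\PFF_{\r}$ over $\Q(t)$ depends only on the class of $\r$ in $\Q^\times/(\Q^\times)^4$. Taking $d=2$ turns $-3a^2/4$ into $-12a^2\in\Z_{\neq0}$, so that
\[
\rank(\Wa)=\rank_{\Q(t)}(\PFF_{-3a^2/4})=\rank_{\Q(t)}(\PFF_{-12a^2}).
\]
Proposition~\ref{rank_F} now applies verbatim with the integer parameter $-12a^2$: it yields $\rank(\Wa)\le 1$, with equality if and only if $-12a^2=-12k^4$ for some $k\in\N$, i.e. if and only if $a^2=k^4$, i.e. $a=\pm n^2$ for some $n\in\N$. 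This is precisely the assertion of the corollary; moreover, transporting the non-torsion point $(-2k^2,8k^3)\in\PFF_{-12k^4}(\Q(t))$ produced in Proposition~\ref{rank_F} back through the above chain of isomorphisms produces an explicit point of infinite order on $\Wa(\Q(t))$ whenever $a=\pm n^2$.

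I do not expect any genuine obstacle: the only thing that really needs verification is the isomorphism $\Wa(t)\cong\PFF_{-3a^2/4}(t/3+a/2)$ itself, which is a routine computation with Weierstrass models (for instance by matching the invariants $c_4,c_6$ against~\eqref{parf} up to the expected fourth and sixth powers of the scaling factor) and has already been asserted in the text. Everything else is bookkeeping with the two families of isomorphisms above.
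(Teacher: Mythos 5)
Your proposal is correct and follows essentially the same route as the paper, which proves the corollary in one line by citing the isomorphism $\Wa(t)\simeq \PFF_{-3^5 4a^2}(12t+18a)$ and then invoking Proposition~\ref{rank_F} (note $-972a^2=-12k^4$ iff $a=\pm(k/3)^2$, matching your computation with the representative $-12a^2$ of the same fourth-power class). The only difference is bookkeeping: you pass through $\PFF_{-3a^2/4}$ and scale by $d=2$, while the paper scales by $d=6$ to land directly on an integral model.
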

\begin{proof} It follows from the fact that $\Wa(t)$ is isomorphic to $\PFF_{-3^54a^2} (12t + 18a)$. \end{proof}

\begin{corol}\label{rank_H} Let $\Hold : y^2 =x^3 + 3tx^2 + 3\hr tx + \hr^2 t$. Then, $\rank ({\Hold})=0$.
\end{corol}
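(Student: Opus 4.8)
The plan is to obtain this at once from Proposition~\ref{rank_F} together with the isomorphism $\Hold(t)\cong\PFF_{4\hr^2}(4t-2\hr)$ already recorded just before Theorem~\ref{average-nonperiodic}. If one wants to double-check that isomorphism, it suffices to compare invariants: the $c_4,c_6$ of $\Hold$ are $144\,t(t-\hr)$ and $-864\,t(2t-\hr)(t-\hr)$, while substituting $t\mapsto 4t-2\hr$ into~\eqref{parf} with $\r=4\hr^2$ multiplies these by $16=2^4$ and $64=2^6$ respectively; since $(c_4,c_6)$ is thus scaled by $(u^4,u^6)$ with the rational value $u=2$, the two curves over $\Q(t)$ are isomorphic (via $x\mapsto x/4$, $y\mapsto y/8$).

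Because the Mordell--Weil rank over $\Q(t)$ is unchanged by the affine reparametrization $t\mapsto 4t-2\hr$ (an automorphism of the field $\Q(t)$) and by isomorphism of elliptic curves, we get $\rank(\Hold)=\rank(\PFF_{4\hr^2})$. Now $\hr\in\Z_{\neq0}$, so $\r:=4\hr^2$ is a \emph{positive} integer and in particular $\r\neq-12k^4$ for every $k\in\N$; Proposition~\ref{rank_F} then gives $\rank(\PFF_{4\hr^2})=0$, hence $\rank(\Hold)=0$.

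There is essentially no obstacle here, the whole content being Proposition~\ref{rank_F}; the only points deserving a moment's attention are that $\Hold(t)\cong\PFF_{4\hr^2}(4t-2\hr)$ is a genuine $\Q(t)$-isomorphism (not merely a twist by a non-square), and the trivial remark that a positive integer is never of the form $-12k^4$. Should one prefer to bypass the isomorphism, one can rerun the proof of Proposition~\ref{rank_F} directly with $A(x)=0$, $B(x)=3x^2+3\hr x+\hr^2$, $C(x)=x^3$: the two roots of $B$ modulo $p$ give $C$-values $\pm\hr^3\delta_p/9$ with $\delta_p^2\equiv-3$, so by~\eqref{quad_sum_2} one finds $A_{\Hold}(p)=0$ unless $p\equiv1\bmod{12}$ and $A_{\Hold}(p)=-2\big(\tfrac{\hr\delta_p}{p}\big)$ otherwise, and since a positive proportion of primes $p\equiv1\bmod{12}$ satisfy $\big(\tfrac{\hr\delta_p}{p}\big)=-1$, the limit in~\eqref{nagaof} is $<1$ and so $0$.
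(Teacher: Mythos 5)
Your proof is correct and is essentially the paper's own argument: the Corollary is deduced directly from the isomorphism $\Hold(t)\cong\PFF_{4\hr^2}(4t-2\hr)$ together with Proposition~\ref{rank_F}, noting that $4\hr^2>0$ can never equal $-12k^4$. Your verification of the isomorphism via the $(c_4,c_6)$ scaling and the alternative direct Nagao computation are both sound but not needed beyond what the paper records.
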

\begin{proof} It follows from the fact that ${\Hold}$ is isomorphic to $\PFF_{4\hr^2} (4t-2\hr)$. \end{proof}

We now compute the rank of the remaining families of Theorem~\ref{classification_1} and Theorem~\ref{classification_2}.
First, we remark that for $\GFF$ as in~\eqref{eq_family} one has that the quadratic twist
$$\GFF^{(w)} \;:\; w y^2 = x^3 + a_4(t) x^2 + a_4(t) x + a_6(t), $$
satisfies
$a_{\GFF^{(w)}(t)}(p) = \leg{w}{p} a_{\GFF(t)}(p).$
Then, using~\eqref{quad_sum_2} and~\eqref{quad_sum} one has
\es{\label{formula-degree2}
\rank (\GFF^{(w)}) &= \lim_{X \rightarrow \infty} \frac{1}{X} \sum_{p \leq X} \log{p}  \bigg(
\sum_{\substack{ x \mod p \\ A(x) \equiv B(x) \equiv 0 \mod p}}  \leg{wC(x)}{p}  \\
& \hspace{9.3em}  + \hspace{-2em}
\sum_{\substack{ x \mod p \\ A(x) \not\equiv 0 \mod p \\ (B^2 - 4AC)(x) \equiv 0 \mod p}}  \leg{wA(x)}{p}
- \frac{1}{p} \sum_{\substack{ x \mod p \\ A(x) \not\equiv 0 \mod p}}  \leg{wA(x)}{p}
 \Bigg).
}
We note that the contribution from the first sum is zero unless $A$ and $B$ have common factors in $\Q[x]$, whereas the contribution from the last sum is
$$\begin{cases}
-1 & \text{if $wA(x) = P(x)^2$ for some } P(x) \in \Q[x], \deg{P(x)} \geq 1\\
0 & \mbox{otherwise}
\end{cases}$$
using Weil's bound (or also~\eqref{quad_sum} if $A$ has degree $2$).

We now use the above formula to compute the rank of the families $\GG_w$, $\II_{w}$, $\HH_w$, $\JJ_{\n, w}$ and $\LL_{w,\r,v}$, the most delicate being the last one.

\begin{prop}\label{rank_G} Let $w\in \Z_{\neq0}$ and $\GG_{w}$ be the family
$$
{\GG}_w \colon w y^2=x^3 + 3tx^2 + 3tx + t^2.
$$
Then, the rank of ${\GG}_w$ is $\leq 1$ and $\rank ( \GG_w)=1$ if and only if $w$ is a square or $-2$ times a square. Furthermore, if $w=1$
(resp. $w=-2$) then the point $(0,t)$ (resp. $(-3,2t)$) is a non-torsion point in ${\GG}_w(\Q(t))$.
\end{prop}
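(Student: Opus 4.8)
The plan is to apply the rank formula~\eqref{formula-degree2}. Collecting powers of $t$ in $x^{3}+3tx^{2}+3tx+t^{2}$ puts $\GG_{w}$ in the form $wy^{2}=A(x)t^{2}+B(x)t+C(x)$ with $A(x)=1$, $B(x)=3x(x+1)$, $C(x)=x^{3}$. Since $A(x)\equiv1$ is never $\equiv0$, the first sum in~\eqref{formula-degree2} is empty, and everything is governed by $B(x)^{2}-4A(x)C(x)=x^{2}\,(9x^{2}+14x+9)$, whose quadratic factor has discriminant $-2^{7}$. Hence for every prime $p\ge5$ the zero set of $B^{2}-4AC$ in $\F_{p}$ is $x=0$ together with the two distinct roots of $9x^{2}+14x+9$ when $\leg{-2}{p}=1$, and $x=0$ alone when $\leg{-2}{p}=-1$; it therefore has $3$ elements in the first case and $1$ in the second (the finitely many $p\le3$ do not affect the limit~\eqref{nagaof}).

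Plugging this into~\eqref{formula-degree2} and using $A(x)\equiv1$: the middle sum equals $3\leg wp$ or $\leg wp$ according as $\leg{-2}{p}=1$ or $-1$, while the last term equals $-\tfrac1p\sum_{x\bmod p}\leg wp=-\leg wp$. Combining the three pieces, the bracketed expression in~\eqref{formula-degree2} equals $2\leg wp$ when $\leg{-2}{p}=1$ and $0$ otherwise, whence
\[
\rank(\GG_{w})=\lim_{X\to\infty}\frac{2}{X}\sum_{\substack{p\le X\\ \leg{-2}{p}=1}}\log p\,\leg wp .
\]
(Equivalently one may compute $A_{\GG_{w}}(p)=\leg wp(1-r_{p})$ with $r_{p}$ the above count and invoke Nagao's conjecture, valid since the twisted model $Y^{2}=X^{3}+3wtX^{2}+3w^{2}tX+w^{3}t^{2}$ has coefficients of degree $\le2$.)

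It remains to evaluate this limit in three cases. If $w$ is a perfect square then $\leg wp=1$ for almost all $p$ and the limit is $2\cdot\tfrac12=1$; if $w=-2k^{2}$ then $\leg wp=\leg{-2}{p}$ for almost all $p$, so $\leg wp=1$ precisely on the primes being summed and again the limit is $1$. Otherwise both $w$ and $-2w$ are non-squares, so the Kronecker characters $p\mapsto\leg wp$ and $p\mapsto\leg{-2}{p}$ are independent (Chebotarev for $\Q(\sqrt w,\sqrt{-2})$); among the density-$\tfrac12$ set of primes with $\leg{-2}{p}=1$ the value $\leg wp$ is then $\pm1$ with equal density $\tfrac14$, so the limit is $0$ and, the rank being a nonnegative integer, $\rank(\GG_{w})=0$. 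Thus $\rank(\GG_{w})\le1$ always and equals $1$ exactly when $w$ is a square or $-2$ times a square. For the generators, one checks by substitution that $(0,t)\in\GG_{1}(\Q(t))$ and $(-3t,2t)\in\GG_{-2}(\Q(t))$, and then, exactly as for Proposition~\ref{rank_F}, one computes $2G$ via the duplication formula and verifies that its $y$-coordinate is nonzero and its $x$-coordinate differs from that of $G$; by the Oguiso--Shioda torsion bound recalled at the start of Section~\ref{ranks} this forces $G$ to be non-torsion.

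The only step requiring genuine care is the bookkeeping around the constant coefficient $A(x)=1$: the generic description of the last sum of~\eqref{formula-degree2} is phrased for $wA(x)$ a non-constant perfect square, so here one must instead carry the term $-\leg wp$ explicitly, notice that it cancels the $x=0$ contribution to the middle sum, and land on the clean character sum over $\{p:\leg{-2}{p}=1\}$ whose value produces the trichotomy ($w$ a square, $-2$ times a square, or neither). The root count for $x^{2}(9x^{2}+14x+9)$ and the concluding equidistribution argument are routine.
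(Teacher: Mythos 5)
Your proof is correct and follows essentially the same route as the paper: the same reduction to~\eqref{formula-degree2} with $A=1$, $B=3x(x+1)$, $C=x^3$, the same root count for $x^2(9x^2+14x+9)$ governed by $\leg{-2}{p}$, and the same resulting character sum $\leg{w}{p}+\leg{-2w}{p}$. Two small points in your favour: the discriminant of $9x^2+14x+9$ is indeed $-2^7$ (the paper's $-2^5$ is a typo, immaterial since only $\leg{-2}{p}$ matters), and your point $(-3t,2t)$ is the correct generic point on $\GG_{-2}$ (the statement's $(-3,2t)$ is likewise a typo).
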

\begin{proof}
We use equation \eqref{formula-degree2} with $A(x)=1$, $B(x)=3x(x+1)$ and $C(x)=x^3$ so that
$(B^2-4AC)(x)=x^2(9x^2+14x+9)$. Notice that the discriminant of $9x^2+14x+9$ is $-2^5$ and thus for $p>3$ one has that $B^2-4AC$ has $3$ distinct roots in $\F_p$ if $\leg{-2}p=1$ and has $1$ root otherwise. Since $A(x_p)=1$ for any root $x_p$ of $(B^2-4AC)(x)$, we have
\est{
\rank (\GG_w) &= \lim_{X \rightarrow \infty} \frac{1}{X} \Bigg( \sum_{\substack{p \leq X \\ \leg{-2}p=1}}  3 \log p \leg{w}{p} +
\sum_{\substack{p \leq X \\ \leg{-2}p=-1}}   \log p \leg{w}{p} -\sum_{p \leq x} \log{p} \leg{w}{p}  \Bigg) \\
&= \lim_{X \rightarrow \infty} \frac{1}{X}  \sum_{\substack{p \leq X \\ \leg{-2}p=1}}  2 \log p \leg{w}{p}
= \lim_{X \rightarrow \infty} \frac{1}{X}  \sum_{\substack{p \leq X}}   \log p\pr{\leg{w}{p}+\leg{-2w}{p}}
}
and so the rank is $1$ if $w$ is a square or $-2$ times a square, and it is zero otherwise.
\end{proof}

\begin{prop}\label{rank_I} Let $w\in \Z_{\neq0}$ and $\II_{w}$ be the family
$$
{\II}_w \colon w y^2=x^3 + t(t-7) x^2 - 6t(t-6) x + 2t(5t-27)
$$
Then, the rank of ${\II}_w$ is $\leq 1$ and $\rank ( \II_w)=1$ if and only if $w$ is a square. Furthermore, if $w=1$ then the point $(9/4,5t/4-27/8)$ is a non-torsion point in ${\II_1}(\Q(t))$.
\end{prop}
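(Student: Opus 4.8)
The plan is to apply the same machinery as in Proposition~\ref{rank_G}, i.e.\ formula~\eqref{formula-degree2}, to the family $\II_w$. First I would rewrite the Weierstrass equation $wy^2 = x^3 + t(t-7)x^2 - 6t(t-6)x + 2t(5t-27)$ in the form $wy^2 = A(x)t^2 + B(x)t + C(x)$ by collecting powers of $t$: one reads off $A(x) = x^2 - 6x + 10$, $B(x) = -7x^2 + 36x - 54$, and $C(x) = x^3$. Then I would compute the two relevant auxiliary quantities entering~\eqref{formula-degree2}: the resultant-type polynomial $(B^2 - 4AC)(x)$, and the gcd of $A(x)$ and $B(x)$ in $\Q[x]$. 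Since $\deg A = 2$ with (presumably) no rational root and $B$ is a different quadratic, $\gcd(A,B)=1$, so the first sum in~\eqref{formula-degree2} contributes nothing. Also $A(x)$ is not $w$ times a perfect square for any $w$ once $w$ is not itself (a constant times) the discriminant-adjusting factor — I would check that $A(x) = x^2-6x+10$ is irreducible over $\Q$, so $wA(x)$ is never a square in $\Q[x]$ and the last sum in~\eqref{formula-degree2} contributes $0$.

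The heart of the computation is the middle sum: for each prime $p$ (large enough), count the roots $x_p$ of $(B^2-4AC)(x) \bmod p$ with $A(x_p)\not\equiv 0$, and sum $\leg{wA(x_p)}{p}$ over them. I expect $B^2 - 4AC$ to factor (over $\Q$ or after extracting an obvious factor) in a way that makes the Legendre symbols $\leg{wA(x_p)}{p}$ computable as fixed quadratic characters of $p$. Concretely, following the pattern of the $\GG_w$ case, I anticipate that $B^2-4AC$ is (a constant times) a product of a linear factor and an irreducible quadratic, or splits as a cubic with a structured discriminant; the key point will be that at each root $x_p$, the value $A(x_p)$ is — up to squares in $\F_p$ — a fixed rational number (independent of which root one picks, when averaged), so that $\sum_{x_p}\leg{wA(x_p)}p$ reduces to something of the shape $c_1\leg{w}{p} + c_2\leg{w\delta}{p} + \cdots$ for explicit constants $\delta$. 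Then, exactly as before, $\frac1X\sum_{p\le X}\log p\,\leg{w\delta}p \to \mathbf 1[w\delta = \square]$ by Dirichlet/Chebotarev, and one collects terms to get $\rank(\II_w) = \mathbf 1[w=\square]$.

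The main obstacle, as in the analogous propositions, is purely computational: correctly factoring $B^2-4AC$ modulo $p$, tracking how many roots it has according to a splitting condition on $p$ (a congruence condition or a condition involving $\leg{\ast}{p}$ for the discriminant of a quadratic factor), and verifying that the Legendre-symbol contributions of $A$ at those roots combine so that every term except a plain $\leg{w}{p}$-term cancels in the limit — which is what forces the answer to be ``$w$ a square'' with no extra ``$\delta$ times a square'' alternative (unlike $\GG_w$, where $-2$ times a square also appeared). This cancellation is the delicate point and I would double-check it with PARI/GP as the authors do elsewhere. Finally, for the rank-$1$ case $w=1$, I would verify that $G=(9/4,\,5t/4-27/8)$ lies on $\II_1(\Q(t))$ by direct substitution, and then check it is non-torsion by computing $2G$ and confirming (as in Proposition~\ref{rank_F}) that the $y$-coordinate of $2G$ is nonzero and its $x$-coordinate differs from $9/4$, ruling out $2$-, $3$- and $4$-torsion and hence, by~\cite{oguiso-shioda}, all torsion.
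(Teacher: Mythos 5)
Your plan is the paper's proof in all but one step: the paper likewise applies \eqref{formula-degree2} with $A(x)=x^2-6x+10$, $B(x)=-7x^2+36x-54$, $C(x)=x^3$, computes $B^2-4AC=-(4x-9)(x^2-8x+18)^2$, and uses $A(9/4)=(5/4)^2$ so that the linear factor contributes $\leg{w}{p}$ and hence the indicator of $w$ being a square; your observations that $\gcd(A,B)=1$ and that $A$ is irreducible (killing the first and last sums) are also correct. The one place your anticipated mechanism would not go through as stated is the quadratic factor $x^2-8x+18$: its roots mod $p$ are $x_{p,\pm}=4\pm\delta_p$ with $\delta_p^2\equiv-2\pmod p$, and $A(x_{p,\pm})=\pm2\delta_p$, so the associated symbols are $\leg{\pm 2w\delta_p}{p}$ — these involve the square root of $-2$ modulo $p$ and are \emph{not} of the form $\leg{w\delta}{p}$ for a fixed rational $\delta$, so Dirichlet/Chebotarev does not evaluate their average directly as you propose. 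Summing over the two roots gives $\leg{2w\delta_p}{p}\bigl(1+\leg{-1}{p}\bigr)$, i.e.\ a contribution $\frac{2}{X}\sum_{p\leq X,\ \leg{-2}{p}=\leg{-1}{p}=1}\log p\,\leg{2w\delta_p}{p}$ supported on a set of primes of density $1/4$; the paper disposes of it by noting it therefore lies in $[-1/2,1/2]$, which together with the integrality of the rank forces it to vanish (one can alternatively argue, as in Proposition~\ref{rank_F}, that the symbol is $-1$ for a positive proportion of the relevant primes). With that replacement for your ``cancellation'' step, and the $2G$ computation for the point $(9/4,5t/4-27/8)$ exactly as you describe, the argument is complete.
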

\begin{proof}
We use equation \eqref{formula-degree2} with $A(x)=x^2-6x+10$, $B(x)=-7x^2+36x-54$ and $C(x)=x^3$ so that
$B^2-4AC=-(4x-9)(x^2-8x+18)^2$. Note that $A(9/4)=25/16=(5/4)^2$. Also, the discriminant of $x^2-8x+18$ is $-8$, and so if $-2$ is a square modulo $p$ then the roots of $x^2-8x+18$ are $x_{p,\pm}= 4\pm \delta_p$ where $\delta_p^2 = -2$ modulo $p$.  Moreover, we have $A(x_{p,\pm}) = \pm 2\delta_p$ and so
by equation \eqref{formula-degree2}, we obtain
$$
\rank (\II_w) =  \lim_{X \rightarrow \infty} \frac{1}{X} \Bigg(\sum_{p \leq X} \log p \leg{w}{p} + 2\sum_{\substack{p \leq X \\ \leg{-2}{p}=\leg{-1}{p}=1}}  \log{p} \leg{2\delta_pw}{p} \Bigg).
$$
The contribution coming from the first sum is 1 if and only if $w$ is a square (and $0$ otherwise), whereas the contribution coming from the last sum is 0
(the proportion of primes counted in this sum is $1/4$).
\end{proof}

\begin{prop}\label{rank_K} Let $w\in\Z_{\neq0}$ and $\HH_{w}$ be the family
$$
{\HH}_w \colon w y^2=x^3 + (8 t^2-7t+3) x^2 - 3(2t-1) x + (t+1).
$$
Then, the rank of ${\HH}_w$ is $\leq 1$ and $\rank ( \HH_w)=1$ if and only if $w$ is 2 times a square. Furthermore, if $w=2$ then
$(-1,2t)$ is a non-torsion point of $ {\HH_2}(\Q(t))$.
\end{prop}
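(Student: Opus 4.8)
The plan is to compute $\rank(\HH_w)$ from Nagao's formula in the form~\eqref{formula-degree2}, exploiting that the coefficient of $t^2$ is a constant times a perfect square, and then to exhibit a non-torsion point for $w=2$ by an explicit doubling. First I would write $\HH_w\colon wy^2 = A(x)t^2 + B(x)t + C(x)$ by collecting powers of $t$, which gives
\[
A(x) = 8x^2,\qquad B(x) = -7x^2-6x+1 = -(7x-1)(x+1),\qquad C(x) = (x+1)^3,
\]
so that $B(x)^2 - 4A(x)C(x) = (x+1)^2\big[(7x-1)^2 - 32x^2(x+1)\big] = -(x+1)^2 g(x)$ with $g(x):=32x^3-17x^2+14x-1$. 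Since $\gcd(A,B)=1$ in $\Q[x]$, the first sum in~\eqref{formula-degree2} is supported on finitely many primes. Every root of $B^2-4AC$ is nonzero (as $(B^2-4AC)(0)=1$), and because $A(x)=8x^2$ we have $\leg{wA(x)}{p}=\leg{8w}{p}=\leg{2w}{p}$ for all $x\not\equiv0\bmod p$; hence the middle sum in~\eqref{formula-degree2} equals $\leg{2w}{p}\cdot\#\{x\bmod p:(B^2-4AC)(x)\equiv0\}$ and the last sum equals $-\tfrac1p\leg{2w}{p}(p-1)$. For $p\ge3$ one has $\#\{x:(B^2-4AC)(x)\equiv0\}=1+\#\{x:g(x)\equiv0\}$ (because $g(-1)=-64\neq0$), and the ``$1$'' combines with ``$-\tfrac{p-1}{p}$'' into $\tfrac1p$, which washes out in the limit; therefore
\[
\rank(\HH_w)=\lim_{X\to\infty}\frac1X\sum_{p\le X}\log p\,\leg{2w}{p}\,\#\{x\bmod p:g(x)\equiv0\}.
\]

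Next I would analyse this limit. Checking the rational-root candidates of $g$ shows it has no rational root, hence is irreducible over $\Q$, and $\Disc(g)=-2^{13}\cdot5^2$, so $\Gal(g/\Q)\cong\SG_3$ and the splitting field $L$ of $g$ has $\Q(\sqrt{-2})$ as its unique quadratic subfield. If $w=2k^2$ then $\leg{2w}{p}=1$ for all $p\nmid 2k$, so the limit equals $\lim_{X\to\infty}\tfrac1X\sum_{p\le X}\log p\,\#\{x:g(x)\equiv0\}=1$ by the prime ideal theorem (an irreducible polynomial has, on average, one root modulo $p$), whence $\rank(\HH_w)=1$. If $w$ is not twice a square, then $\chi:=\leg{2w}{\cdot}$ is a nontrivial quadratic character, and Chebotarev applied to the compositum of $\Q(\sqrt{2w})$ with $L$ shows the limit equals the average over the relevant Galois group of $\chi(\sigma)\,\mathrm{fix}(\sigma)$, where $\mathrm{fix}(\sigma)$ counts the roots of $g$ fixed by $\sigma$. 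When $\Q(\sqrt{2w})\not\subseteq L$ the two fields are linearly disjoint and this average factors as $(\text{average of }\chi)\cdot(\text{average of }\mathrm{fix})=0\cdot1=0$. The only remaining possibility is $\Q(\sqrt{2w})=\Q(\sqrt{-2})$, i.e. $w=-k^2$; then $\chi$ is the sign character of $\SG_3$ and $\sum_{\sigma\in\SG_3}\sgn(\sigma)\,\mathrm{fix}(\sigma)=0$ because the sign representation is not a constituent of the permutation representation of $\SG_3$. Hence $\rank(\HH_w)=0$ whenever $w$ is not twice a square.

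Finally, for $w=2$ I would verify that $(-1,2t)$ lies on $\HH_2$ (both sides of $2y^2=x^3+(8t^2-7t+3)x^2-3(2t-1)x+(t+1)$ evaluate to $8t^2$), pass to the model $Y^2=X^3+2(8t^2-7t+3)X^2+4(3-6t)X+8(t+1)$ via $X=2x$, $Y=4y$, and double the corresponding point $(-2,8t)$ to get $(2-2t,\,-8t^2+12t-8)$, i.e. $2\cdot(-1,2t)=(1-t,\,-2t^2+3t-2)$ on $\HH_2$; since this has nonzero $y$-coordinate and its $x$-coordinate $1-t$ is not $-1$, the point $(-1,2t)$ is neither a $3$- nor a $4$-torsion point, hence non-torsion. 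The main obstacle in the whole argument is the case $w=-k^2$ above: there $\Q(\sqrt{2w})$ lies inside the splitting field of $g$, so $\leg{2w}{p}$ and the root count of $g$ modulo $p$ are genuinely correlated and the Chebotarev average does not split into a product; the rescue is the representation-theoretic identity $\langle\sgn,\mathrm{perm}\rangle_{\SG_3}=0$, which forces the relevant weighted average of root counts to vanish. The remaining ingredients — the reduction to~\eqref{formula-degree2}, the number-field facts about $g$, and the doubling — are routine.
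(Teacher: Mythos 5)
Your proposal is correct, and up to the final evaluation of the density it follows the paper's route exactly: the same decomposition $A(x)=8x^2$, $B(x)=-(x+1)(7x-1)$, $C(x)=(x+1)^3$, the same reduction via~\eqref{formula-degree2} to the weighted average of $\leg{2w}{p}\#\{x \bmod p : Q(x)\equiv 0\}$ with $Q(x)=32x^3-17x^2+14x-1$ (your cancellation of the root $x=-1$ against the last sum is the same bookkeeping the paper does implicitly), and the same verification of the point for $w=2$ by doubling.

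Where you genuinely diverge is in the endgame. The paper does \emph{not} compute the Chebotarev average exactly: it evaluates only the contribution of the primes where $Q$ has exactly one root (which is governed by $\leg{-2}{p}$ and gives $\pm\tfrac12$ or $0$), bounds the splits-completely contribution by $\tfrac12$ in absolute value using the density $\tfrac16$, observes that this contribution is nonnegative when $2w$ or $-w$ is a square, and then pins down the rank by integrality. You instead compute the limit exactly in every case: $1$ when $2w$ is a square (prime ideal theorem for the irreducible cubic), $0$ by linear disjointness when $\Q(\sqrt{2w})$ is not the quadratic resolvent field $\Q(\sqrt{-2})$, and $0$ in the correlated case $w=-k^2$ via the identity $\langle \sgn,\mathrm{perm}\rangle_{S_3}=0$. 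Your version isolates explicitly the one case where the character and the root count are correlated and resolves it by representation theory, which is cleaner and more informative (it gives the actual value of the limit rather than a window containing one integer); the paper's version is shorter and sidesteps the full Chebotarev computation on the compositum. Both arguments are complete and correct, and your doubling computation $2\cdot(-1,2t)=(1-t,\,-2t^2+3t-2)$ checks out.
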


\begin{proof} We have $A(x)=8x^2$, $B(x)=-(x+1)(7x-1)$ and $C(x)=(x+1)^3$. In particular, for $p \neq 2$ we have $A(x) =0 \mod{p}$ if and only if $x =0 \mod{p}$. Also, notice that $A(x)$ is always $2$ times a square.
Then, using
\eqref{formula-degree2}, we get
\est{
\rank (\HH_w) &= \lim_{X \rightarrow \infty} \frac{1}{X}\Bigg( \sum_{ p \leq X} \log{p}
\sum_{\substack{ x \mod p \\ (B^2 - 4AC)(x) \equiv 0 \mod p}} \leg{2w}{p}- \sum_{ p \leq X} \log{p} \leg{2w}{p}\Bigg)
}
We compute that $B^2-4AC= -(x+1)^2(32x^3 - 17x^2 + 14x - 1)$, and we let $Q(x) = 32x^3 - 17x^2 + 14x - 1$ with discriminant $-2\cdot(320)^2$.
Then, we have
\est{
\rank (\HH_w) =\lim_{X \rightarrow \infty} \frac{1}{X}\Bigg( \sum_{p\leq X \atop Q(x) \rm{~has~one~root~in~}  \F_p}  \log p \leg{2w}{p} +\sum_{p\leq X \atop Q(x) \rm{~has~three~root~in~}  \F_p} 3 \log p \leg{2w}{p} \Bigg).
}
Now, $Q(x)$ has exactly $1$ root in $\F_p$ if and only if $\leg{-2\cdot(320)^2}{p}=-1$. Thus, the first sum contributes
\est{
\lim_{X \rightarrow \infty} \frac{1}{X} \sum_{ p \leq X} \frac{\log{p}}2 \leg{2w}{p}\pr{1-\leg{-2}{p}}=\begin{cases}
\frac12&\text{if $2w$ is a rational square,}\\
-\frac12&\text{if $-w$ is a rational square.}\\
\end{cases}
}
Finally, since the Galois group of $Q(x)$ is $S_3$, we have that the primes such that $Q(x)$ splits completely have density $\frac16$, so that the contribution of the second sum is in $[-\frac12,\frac12]$. Also, such a contribution is positive if $2w$ or $-w$ are rational square (indeed in the second case $\leg{2w}{p}=\leg{-2}{p}=1$ if $Q$ splits completely in $\F_p$). Thus, since $\rank ({\HH}_w)$ is an integer, we must have that $\rank ({\HH}_w)=1$ if $2w$ is a rational square and $\rank ({\HH}_w)=0$ otherwise.
\end{proof}
\begin{prop}\label{rank_J}  Let $\n, w\in\Z_{\neq0}$ and $\JJ_{\n,w}$ be the family
$$
{\JJ}_{\n,w} \colon w y^2=x^3 + 3t^2x^2-3\n tx+\n^2.
$$
Then, $\rank({\JJ}_{\n,w}) \leq 1$ and  $\rank({\JJ}_{\n,w}) =1$  if and only if $w$ is a square. Furthermore, if $w=1$ then
$(0,\n)$ is a non-torsion point in ${\JJ}_{\n,1}(\Q(t))$.
\end{prop}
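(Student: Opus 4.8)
The plan is to apply the rank formula~\eqref{formula-degree2} to $\JJ_{\n,w}$, for which $A(x)=3x^2$, $B(x)=-3\n x$ and $C(x)=x^3+\n^2$, so that $(B^2-4AC)(x)=-3x^2(4x^3+\n^2)$. I would first evaluate, for $p\geq 5$ with $p\nmid\n$, the three sums occurring in~\eqref{formula-degree2}. The condition $A(x)\equiv B(x)\equiv 0\pmod p$ forces $x\equiv0$, and $C(0)=\n^2$, so the first sum equals $\leg{w\n^2}{p}=\leg{w}{p}$; the last sum equals $-\tfrac1p\sum_{x\not\equiv0}\leg{3wx^2}{p}=-\leg{3w}{p}\bigl(1-\tfrac1p\bigr)$; and for the middle sum the relevant $x$ are the zeros of $4x^3+\n^2$, which is separable modulo $p$ and does not vanish at $x=0$ since $\n\neq0$, so at each such zero $x_p$ one has $\leg{wA(x_p)}{p}=\leg{3wx_p^2}{p}=\leg{3w}{p}$, whence the middle sum equals $N_p\leg{3w}{p}$ with $N_p:=\#\{x\in\F_p:4x^3+\n^2\equiv0\pmod p\}$. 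Collecting the three contributions (a term $\tfrac1p\leg{3w}{p}$ produced in the process not affecting the limit) gives
\[
\rank(\JJ_{\n,w})=\lim_{X\to\infty}\frac1X\sum_{p\leq X}\log p\,\Bigl(\leg{w}{p}+(N_p-1)\leg{3w}{p}\Bigr).
\]
As $\leg{w}{\cdot}$ is the trivial character precisely when $w$ is a rational square, the first term contributes $1$ in that case and $0$ otherwise by the prime number theorem, so everything comes down to showing that the cross term $\lim_{X\to\infty}\frac1X\sum_{p\leq X}\log p\,(N_p-1)\leg{3w}{p}$ vanishes.

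Since cubing is a bijection of $\F_p$ for $p\equiv2\pmod3$, one has $N_p=1$ for such $p$, so only the primes $p\equiv1\pmod3$ contribute to the cross term, and for those $N_p\in\{0,3\}$ according to whether $-\n^2/4$ is a cube modulo $p$. I would analyse this by Chebotarev's theorem in $K:=\Q(\zeta_3,(-\n^2/4)^{1/3})$: when $-\n^2/4$ is not a rational cube one has $[K:\Q]=6$, the unique quadratic subfield of $K$ is $\Q(\sqrt{-3})$, the primes with $N_p=3$ have density $\tfrac16$ and those with $N_p=0$ density $\tfrac13$, so that $N_p-1$ averages to $0$ over the primes $p\equiv1\pmod3$. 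It then remains to control the interaction with the symbol $\leg{3w}{p}$: if $\Q(\sqrt{3w})\neq\Q(\sqrt{-3})$, the splitting type of $p$ in $K$ and the value of $\leg{3w}{p}$ are jointly equidistributed, so the cross term vanishes after splitting the sum according to the sign of $\leg{3w}{p}$; if instead $3w$ is a rational square, or $\Q(\sqrt{3w})=\Q(\sqrt{-3})$ (equivalently $-w$ is a rational square), then $\leg{3w}{p}=1$ for every $p\equiv1\pmod3$ and the cross term reduces to $\lim_{X\to\infty}\frac1X\sum_{p\leq X,\,p\equiv1(3)}\log p\,(N_p-1)=0$ by the same density count. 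In all cases the cross term is $0$; since $w$ and $3w$ cannot both be squares, this yields $\rank(\JJ_{\n,w})=1$ if $w$ is a square and $0$ otherwise, and in particular $\rank(\JJ_{\n,w})\leq1$.

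For the last assertion, when $w=1$ the point $(0,\n)$ lies on $\JJ_{\n,1}\colon y^2=x^3+3t^2x^2-3\n tx+\n^2$, and a direct computation of the tangent line at $(0,\n)$ gives $2(0,\n)=\bigl(-\tfrac34t^2,\,-\n-\tfrac98t^3\bigr)$. Its $y$-coordinate is a nonzero polynomial in $t$ and its $x$-coordinate is not that of $(0,\n)$, so $(0,\n)$ is neither a $3$- nor a $4$-torsion point; since $\rank_{\Q(t)}(\JJ_{\n,1})=1$, the torsion subgroup has order at most $4$, and therefore $(0,\n)$ has infinite order.

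The step I expect to be the main obstacle is the Chebotarev bookkeeping for the cross term: one has to pin down exactly which quadratic fields $\Q(\sqrt{3w})$ coincide with $\Q(\sqrt{-3})$, and --- more delicately --- deal with the exceptional values of $\n$ for which $4x^3+\n^2$ is reducible over $\Q$, i.e.\ for which $-\n^2/4$ is a rational cube (equivalently $\n=2g^3$ with $g\in\Q$). For such $\n$ one has $N_p=3$ for every $p\equiv1\pmod3$, so the density count above degenerates and this case has to be treated separately.
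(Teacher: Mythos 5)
Your setup and your treatment of the generic case follow the paper's proof of this proposition almost verbatim: the paper also applies \eqref{formula-degree2} with $A(x)=3x^2$, $B(x)=-3\n x$, $C(x)=x^3+\n^2$, extracts the contributions $\leg{w}{p}$ from the common zero $x=0$ and $-\leg{3w}{p}$ from the last sum, and reduces everything to the zeros of $Q(x)=4x^3+\n^2$. The only difference is in how the completely-split term is evaluated: the paper computes the ``one root'' contribution from $\leg{-3}{p}$ (the discriminant of $Q$ being $-3(2\n)^4$) and then pins down the completely-split contribution by integrality of the rank, whereas you compute it directly by Chebotarev in $\Q(\zeta_3,(-\n^2/4)^{1/3})$ and its compositum with $\Q(\sqrt{3w})$; your route is the more careful of the two. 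Your verification that $(0,\n)$ is non-torsion via $2(0,\n)=(-\tfrac34t^2,-\n-\tfrac98t^3)$ is exactly the criterion the paper uses.

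The case you flag at the end is, however, a genuine gap, and it cannot be closed so as to recover the stated conclusion. When $-\n^2/4$ is a rational cube, i.e.\ $\n=2g^3$ with $g\in\Q$ (so $\n=2$ up to the cube-rescaling $\n\mapsto\n/\lambda^3$ induced by $t\mapsto\lambda t$), the polynomial $Q$ factors over $\Q$ as $4(x-r)(x^2+rx+r^2)$ with the quadratic factor splitting mod $p$ exactly when $\leg{-3}{p}=1$; hence $N_p=3$ for every $p\equiv1\pmod 3$ and your cross term equals $\lim_{X\to\infty}\frac1X\sum_{p\leq X,\,p\equiv1(3)}2\log p\,\leg{3w}{p}$, which is $1$ (not $0$) whenever $3w$ or $-w$ is a rational square. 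This is not an artifact of the method: for $\n=2$, $w=3$ one has $A(-1)t^2+B(-1)t+C(-1)=3(t+1)^2$, so $(-1,t+1)\in\JJ_{2,3}(\Q(t))$, and specializing at $t=1$ its double has non-integral $x$-coordinate $33/16$ on the integral model, so by Nagell--Lutz it has infinite order; thus $\rank(\JJ_{2,3})\geq1$ although $3$ is not a square, and the ``only if'' direction fails for these exceptional $\n$. Note that the paper's own proof has the same blind spot, since the integrality argument it imports from Proposition~\ref{rank_K} rests on the Galois group of $Q$ being $S_3$, which is false precisely when $\n=\pm2g^3$. So your instinct that this case must be treated separately is correct; what the separate treatment shows is that the statement needs either the hypothesis that $\n$ is not of this form or an amended conclusion there.
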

\begin{proof} In this case, we have $A(x)=3x^2$, $B(x)=-3\n x$, $C(x)=x^3 + \n^2$ and $B^2(x)-4A(x)C(x) = -3x^2 (4x^3+\n^2)$. Let $Q(x) = 4x^3 + \n^2$ and note that the discriminant of $Q(x)$ is $-3\cdot (2\n)^4$ and that $A(x)$ is always $3$ times a square. Also, $A(x)$ has a common root $x=0$ and one has $C(0)=\n^2$. Thus, using \eqref{formula-degree2}, we obtain
\est{
\rank ({\JJ}_{\n,w}) &= \lim_{X \rightarrow \infty} \frac{1}{X} \sum_{p \leq X} \log{p}  \Bigg(\leg{w}{p} +
\sum_{\substack{p\leq X \\ Q \rm{~splits}\\\rm{completely~in~}\F_p}} 3  \leg{3w}{p} +\sum_{\substack{p\leq X \\ Q \rm{~has~one}\\\rm{root~in~} \F_p}}  \leg{3w}{p} - \leg{3w}{p}\Bigg).\\
}
The first term contributes $1$ if $w$ is a rational square and $0$ otherwise and the last contributes $1$ if $3w$ is rational square and $0$ otherwise.
As in the proof of Proposition~\ref{rank_K} one then has that the third term contributes $\frac12$ if $3w$ is a square, $-\frac12$ if $-w$ is a square, and $0$ otherwise. Thus, we must have that the second summand contributes $\frac12$ if $3w$ or $-w$ are squares and $0$ otherwise.
\end{proof}
\begin{prop}\label{rank_L} Let $v\in \Z$, $\r, w\in\Z_{\neq0}$ and ${\LL}_{w,\r,v}$ be the family
\es{\label{eq-rank-L}
{\LL}_{w,\r,v}\colon wy^2=x^3+3  (t^2+v) x^2+ 3 \r x +  \r (t^2+v).
}
Then the rank of ${\LL}_{w,\r,v}$ is $\leq 3$ and all the cases can occur.
More precisely, let
$$C(x)=x^3+3vx^2+3\r x+\r v, \quad R(x)=x^6 + (15 \r w - 27 v^2 w) x^4 + 48 \r^2 w^2 x^2 - 64 (\r^3 w^3),$$
then
\es{\label{formula-rank-L}
\rank({\LL}_{w,\r,v})=\#\{\tn{Irr. factors of $R(x)$}\}-\#\{\tn{Irr. factors of $C(x)$}\}-\delta_1+\delta_2,
}
where $\delta_2\in\{0,1\}$ with $\delta_2=1$ iff $-4w^2 \r$ is $3$ times a $4$-th power and
\es{\label{delta1}
\delta_1:=\begin{cases}
2 &\text{if  $\r= v^2$ or if $v=0$ and $-2\r w$ is a square in $\Q$ whereas $-3\r$ and $\r w$ are not,}\\
1&\text{if $v=0$ and  $-3\r, rw$ and $-2\r w$ are not squares in $\Q$},\\
0 &\text{otherwise}.
\end{cases}
}
\end{prop}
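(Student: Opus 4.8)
The plan is to apply the rank formula~\eqref{formula-degree2} to the family ${\LL}_{w,\r,v}$, which has $A(x)=x^3+3(t^2+v)x^2+3\r x+\r(t^2+v)$ written in the parameter $\tau=t^2$: setting $s=t^2$, we have ${\LL}_{w,\r,v}\colon wy^2=A(x)s+C(x)$ is \emph{linear} in $s=t^2$, with $A(x)=x^2+1$ (the coefficient of $s$, since $a_4$ and $a_6$ contributions give $3x^2\cdot$ wait---let me recompute) $A(x)=x^3+\cdots$. More precisely, collecting powers of $t^2$ we get $wy^2=(x^3+3vx^2+3\r x+\r v)+(3x^2+\r)(t^2)=C(x)+A(x)t^2$ with $A(x)=3x^2+\r$ and $C(x)$ as in the statement. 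So we are in the ``degree-$2$ in the parameter'' situation but with the substitution $t\mapsto t^2$, which doubles the relevant count. First I would rerun the trace computation: for each $x$, $\sum_{t\bmod p}\leg{C(x)+A(x)t^2}{p}$ is computed using~\eqref{quad_sum_2} and~\eqref{quad_sum} after noting $\sum_{t}f(t^2)=\sum_u(1+\leg up)f(u)$; this introduces an extra Legendre-symbol weight and effectively replaces counting roots of $B^2-4AC$ by counting roots of a sextic. Indeed $B^2-4AC$ here (with $B$ the linear-in-$s$ discriminant data) becomes, after clearing denominators and accounting for the $t\mapsto t^2$ substitution, the polynomial $R(x)=x^6+(15\r w-27v^2w)x^4+48\r^2w^2x^2-64\r^3w^3$; I would verify this identity directly (it is a resultant/discriminant computation, best checked with PARI/GP as the authors did elsewhere).

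The key steps, in order: (1) substitute $s=t^2$ and write the defining equation as $wy^2=A(x)s+C(x)$; (2) compute $\frac1X\sum_{p\le X}(-A_{\LL}(p))\log p$ via Nagao's conjecture (Conjecture~\ref{nagao}, valid here since $\deg a_i\le 2$ gives a rational elliptic surface), splitting $\sum_{t\bmod p}=\sum_{u\bmod p}(1+\leg up)$; (3) identify the main term: the ``$1$''-part contributes exactly as in~\eqref{formula-degree2} applied to the $s$-parametrized surface, giving a contribution governed by the number of $\F_p$-roots of $A(x)$ (with the quadratic-twist factor $\leg{wA(x)}p$ from the $A(x)\ne0$ branch) and of $C(x)$ (from the $A\equiv B\equiv0$ branch---here $B$ is essentially $A$ so this sum is nontrivial); (4) the ``$\leg up$''-part contributes a sum over roots of a higher-degree polynomial, whose splitting type forces, via Chebotarev, a contribution equal to (number of irreducible factors of $R(x)$) minus the ``trivial'' contribution already counted, after matching with the number of irreducible factors of $C(x)$; (5) assemble these into the claimed formula, with $\delta_1$ accounting for degenerate cases where $A(x)$, $C(x)$ or their combination acquire extra square factors over $\Q$ (the cases $\r=v^2$, $v=0$ with various quadratic conditions), exactly paralleling the bookkeeping in Propositions~\ref{rank_K} and~\ref{rank_J}, and $\delta_2$ accounting for when $-4w^2\r$ is $3$ times a fourth power (so that the leading behavior of a relevant polynomial becomes a perfect square over $\Q$, contributing $-1$ via Weil's bound as in the last sum of~\eqref{formula-degree2}); (6) check the bound $\rank\le3$ follows since $R$ has degree $6$ hence at most $6$ irreducible factors but the constraints force the combination to lie in $\{0,1,2,3\}$, and exhibit explicit parameter values realizing each value $0,1,2,3$ together with non-torsion points (following the recipe: compute $2G$ and check its coordinates, as done for Proposition~\ref{rank_F}).

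The main obstacle I expect is step (4)--(5): correctly tracking the interaction between the $t\mapsto t^2$ substitution (which brings in the extra $\leg up$ factor and hence a \emph{sixth}-degree polynomial $R(x)$ rather than the quartic one would naively expect) and the various quadratic-reciprocity-type degeneracies that produce $\delta_1$. In particular, one must be careful that the Chebotarev densities for the splitting types of $R(x)$ and $C(x)$ combine to an \emph{integer}, which (as in the $S_3$ arguments of Propositions~\ref{rank_K} and~\ref{rank_J}) pins down ambiguous $[-\tfrac12,\tfrac12]$ contributions; the precise list of degenerate cases in~\eqref{delta1} must be obtained by factoring $R(x)$ and $C(x)$ over $\Q$ and seeing exactly when common factors, repeated factors, or factors that are squares-times-a-constant appear---this is a finite but delicate case analysis, and the cleanest route is to observe that ${\LL}_{w,\r,v}(t)\cong\PFF_{\r w^2}(w(t^2+v))$ (noted in the introduction) and pull back as much of the structure as possible from the already-analyzed family $\PFF_{\r}$, reducing the new work to understanding how the substitution $t\mapsto w(t^2+v)$ affects the root counts.
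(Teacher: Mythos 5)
Your general framework is the right one and matches the paper's: write $wy^2=A(x)t^2+C(x)$ with $A(x)=3x^2+\r$, $B(x)=0$, apply Nagao via~\eqref{formula-degree2}, and use Chebotarev to turn densities of splitting types into counts of irreducible factors. (The detour through $s=t^2$ and the weight $1+\leg{u}{p}$ is unnecessary --- the equation is already quadratic in $t$ with no linear term, so \eqref{quad_sum} applies directly --- but it is at least equivalent.) However, there are genuine gaps in how you get from this framework to the stated formula.

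First, you misidentify the sextic $R(x)$. It is \emph{not} $B^2-4AC$ (which here is $-4A(x)C(x)$, of degree $5$); the relevant sum is $S_2(p)=\sum_{C(x)\equiv 0}\leg{wA(x)}{p}$, and the whole difficulty is to evaluate on average the count of roots $x$ of $C$ modulo $p$ at which $wA(x)$ is a nonzero square. The paper does this by introducing the resultant $R(x)=-\Res_z\bigl(C(z),\,wA(z)-x^2\bigr)$, so that $R(\ell)\equiv 0\bmod p$ detects a common root of $C$ and $wA-\ell^2$; the multiplicity bookkeeping (including the case where $wA(x)-\ell^2$ divides $C(x)$ as an irreducible quadratic, which produces the $v=0$ branches of $\delta_1$, while $\r=v^2$ corresponds to $C(x)=(x+v)^3$) is exactly what yields $\#\{\text{Irr.\ factors of }R\}-\#\{\text{Irr.\ factors of }C\}-\delta_1$. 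This resultant construction is the key new idea of the proof and is absent from your outline. Second, your account of $\delta_2$ is wrong: it does not come from the Weil-bound term $-\frac1p\sum_{A\not\equiv0}\leg{wA(x)}{p}$ (which vanishes identically here since $w(3x^2+\r)$ is never a square in $\Q[x]$), but from the other sum $S_1(p)=\sum_{A(x)\equiv 0}\leg{wC(x)}{p}$: when $-3\r$ is a rational square the roots of $A$ are $\pm k$, $C(\pm k)=\pm\frac{8\r}{3}k$, and the average is $1$ precisely when $2wk\in\pm\Q^2$, i.e.\ when $-4w^2\r$ is $3$ times a fourth power. Third, the bound $\rank\leq 3$ does not follow from ``$R$ has degree $6$'': one needs the sign obstruction showing that when $C$ splits into three linear factors the three values $wA(\rho_i)$ cannot all be rational squares (so $R$ has at most $5$ irreducible factors), combined with the trivial bound on $S_2$ by the number of square-free irreducible factors of $C$. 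Finally, pulling the structure back from $\PFF_{\r w^2}(w(t^2+v))$ cannot replace this analysis, since $\PFF$ has rank $\leq 1$ over $\Q(t)$ while the substitution $t\mapsto w(t^2+v)$ is exactly what creates the extra rank.
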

\begin{proof}
In this case we have
$A(x)=3x^2+\r$, $C(x)=x^3+3vx^2+3\r x+\r v$ and $B(x)=0$. Using~\eqref{formula-degree2} we obtain
\est{
\rank (\EE_w) &= \lim_{X \rightarrow \infty} \frac{1}{X} \sum_{p \leq X} \log{p}  \bigg(
\sum_{\substack{ x \mod p \\ A(x) \equiv 0 \mod p}}  \leg{wC(x)}{p}  +
\sum_{\substack{ x \mod p \\ C(x) \equiv 0 \mod p}}  \leg{wA(x)}{p} \Bigg)\\
&= \lim_{X \rightarrow \infty} \frac{1}{X} \sum_{p \leq X} \log{p} (S_1(p)+S_2(p)),
}
say. We first consider $S_2(p)$.
The discriminant of $C(x)$ is $-108 \r (\r - v^2)^2$. If $\r=v^2$, then $C(x)=(v + x)^3$ and so $S_2(p)=  \leg{wA(-v)}{p}= \leg{w4v^2}{p}=\leg{w}{p}$ for $p$ large enough. In particular
\est{
\lim_{X\to\infty}\frac{1}{X}\sum_{p\leq X} S_2(p) \log p=\begin{cases}
1 &\text{$w$ is a square in $\Q$,}\\
0 &\text{otherwise.}
\end{cases}
}
Now, assume $\r\neq v^2$. We have
\begin{equation}\label{S1i}
S_2(p)=\sum_{\substack{x \mod p \\  wA(x)\equiv \square \mod p \\ C(x) \equiv 0 \mod
p}} 2-\sum_{\substack{x \mod p \\ C(x) \equiv 0 \mod
p}} 1=S'_{2}(p)-S''_{2}(p),
\end{equation}
say. Then, since  $C(x)$ is square-free, by Chebotarev's density theorem about the density of primes which splits, splits completely or are inert in a number field (\cite[VIII,\textsection 4, Th. 10]{lang} we have
\est{
\lim_{X\to\infty}\frac{1}{X}\sum_{p\leq X} S''_2(p) \log p=\#\{\text{Irreducible factors of $C(x)$ in $\Q[x]$}\}.
}
I
 
Next, we rewrite $S'_2(p)$ as
\est{
S'_2(p)=\sum_{0\not\equiv \ell\mod p}\sum_{\substack{x \mod p \\ wA(x)\equiv \ell^2 \mod p \\ C(x) \equiv 0 \mod
p}} 1
}
and we express the inner sum in terms of the resultant between $C(z)$ and $wA(z)-x^2$. To do this we notice that defining
$$R(x):=-\Res_z(C(z),wA(z)-x^2)=x^6 + (15 \r w - 27 v^2 w) x^4 + 48 \r^2 w^2 x^2 - 64 (\r^3 w^3)$$
we have that $R(\ell)\equiv 0 \mod p$ if and only if either $C(x)$ and $wA(x)-\ell^2$ have a common zero in $\mathbb F_p$ or if $C(x)$ and $wA(x)-\ell^2$ have a common irreducible factor of degree $>1$, i.e. if $(wA(x)-\ell^2) |C(x)$ with $wA(x)-\ell^2$ irreducible in $\mathbb F_p[x]$.
Moreover, for $p$ large enough $C$ doesn't have multiple zeros over $\mathbb F_p$ and we have that the resultant $R(\ell)$ has a zero $\ell'\neq0$ of multiplicity $m$ if and only if there are $m$ solutions $\mod p$ of  $wA(x)\equiv {\ell'}^2 \mod p, \ C(x) \equiv 0 \mod p$. Also, if $(wA(x)-{\ell'}^2)|C(x)$ with $wA(x)-{\ell'}^2$ irreducible  in $\mathbb F_p[x]$, then $\ell'$ is a double root of $R(\ell)$.
 Thus,
\est{
S'_2(p)=\sum_{\substack{ 0\neq \ell\mod p,\\ R(\ell)\equiv0\mod p}}m(\ell)-  \sum_{\substack{ 0\neq \ell\mod p,\\ (wA(x)-\ell^2) |C(x) \text{ in $\mathbb F_p[x]$},\\ wA(x)-\ell^2\text{ irreducible in $\mathbb F_p[x]$}}}2,
}
where $m(\ell)$ is the multiplicity of $\ell$ as a zero of $R(\ell)$. Since $R(0)=-64\r^3\not\equiv0\mod p$ for $p$ large enough, then
\est{
\lim_{X\to\infty}\frac{1}{X}\sum_{p\leq X}\log p\sum_{\substack{ 0\neq \ell\mod p,\\ R(\ell)\equiv0\mod p}}m(\ell)=\#\{\text{Irreducible factors of $R(x)$ in $\Q[x]$}\}.
}
Now we write $C(x)$ as $C(x)=L_1(A(x))x+L_2(A(x))$ with $L_1,L_2\in \Q[x]$ of degree $\leq1$. We have that $wA(x)-\ell^2$ divides $C(x)$ in $\mathbb F_p[x]$ if and only if $L_1(\ell^2/w)x+L_2(\ell^2/w)$ is identically zero in $\mathbb F_p[x]$ and so iff $L_1(\ell^2/w)=L_2(\ell^2/w)=0$. The linear polynomials $L_1$ and $L_2$ can have a common root $(\tn{mod}\ p)$ for infinitely many $p$ only if one is multiple of the other, i.e. if $C(x)=(A(x)-a)(bx+c)$ for some $a,b,c\in\Q$. In particular, if $C(x)$ cannot be written in such form then
\est{
\lim_{X\to\infty}\frac{1}{X}\sum_{p\leq X}\log p\sum_{\substack{ 0\neq \ell\mod p,\\ (wA(x)-\ell^2) |C(x) \text{ in $\mathbb F_p[x]$},\\ wA(x)-\ell^2\text{ irreducible in $\mathbb F_p[x]$}}}1=0.
}
Otherwise, since the discriminant of $wA(x)-\ell^2$ is $-12 w (-\ell^2 + \r w)$, one has
\est{
\lim_{X\to\infty}\frac{1}{X}\sum_{p\leq X}\log p\sum_{\substack{ 0\neq \ell\mod p,\\ \ell^2\equiv a\mod p,\\ -3w (\r w - a)\notin(\mathbb F_p)^2}}1=\lim_{X\to\infty}\frac{1}{X}\sum_{p\leq X}{\log p}\pr{1+\leg{a}{p} }\pr{1-\leg{3w(a-\r w)}{p} }
}
and this is equal to $2$ if $3w(a-\r w)$ is not a square in $\Q$ and $a$ is, equal to 1 if $a, 3w(a-\r w)$ and $3aw(a-\r w)$ are not squares in $\Q^*$,
and it is equal to $0$ otherwise. Also, we observe that looking at the first two subresultants of $wA(x)-a$ and $C(x)$, one has that $C(x)$ can have a factor of the form $A(x)-a$ only if $v=0$ in which case $a=-8\r w$. Thus,
\est{
\lim_{X\to\infty}\frac{1}{X}\sum_{p\leq X}S'_2(p) \log p =\#\{\text{Irreducible factors of $R(x)$ in $\Q[x]$}\}-\eta
}
where
\est{
\eta:=\begin{cases}
2 &\text{if $v=0$ and $-3\r, \r w$ are not a square in $\Q$, and $-2\r w$ is a square in $\Q$,}\\
1&\text{if $v=0$ and  $-3\r, \r w$ and $-2\r w$ are not a square in $\Q$,}\\
0 &\text{otherwise}.
\end{cases}
}
Thus, for $\r\neq v^2$ we have
\est{
\lim_{X\to\infty}\frac{1}{X}\sum_{p\leq X}S_2(p) \log p =\#\{\text{Irr. factors of $R(x)$}\}-\#\{\text{Irr. factors of $C(x)$}\}-\eta.
}
Since, for $\r=v^2$ (in particular $v\neq0$ and so $\eta=0$) we have that $R$ and $C$ have respectively $6$ and $3$ irreducible factors, then in general we have
\es{\label{s2f}
\lim_{X\to\infty}\frac{1}{X}\sum_{p\leq X}S_2(p) \log p =\#\{\text{Irr. factors of $R(x)$}\}-\#\{\text{Irr. factors of $C(x)$}\}-\delta_1.
}
where $\delta_1$ is as in~\eqref{delta1}.

We now consider $S_1(p)$. We could proceed as for $S_2(p)$, but instead we follow a more direct approach. For $p$ large enough, we have that $A(x)$ has two distinct zeros $\pm x_{p}$ mod $p$ if and only if $\leg {-3\r}p=1$. In particular, $S_1(p)=0$ for if $\leg {-3\r}p\neq 1$. Also, we notice that if $\leg {-3\r}p=1$, then $C(\pm x_{p})=\pm \frac{8\r}{3} x_p$ and so $S_1(p)=\leg{6w\r x_p}{p}(1+\leg{-1}{p})=\leg{2wx_p}{p}(1+\leg{-1}{p})$. Thus, proceeding as in the proof of Proposition~\ref{rank_F} we have that
\est{
\lim_{X\to\infty}\frac{1}{X}\sum_{p\leq X}S_1(p) \log p  =\lim_{X\to\infty}\frac{2}{X}\sum_{\substack{p\leq X,\\ \leg{-3\r}{p}=\leg{-1}{p}=1}}\leg{2wx_p}{p}=0
}
unless $-3\r$ is a square in $\Q$. If $\r=-3k^2$ with $k\in\Q$, then $x_p=k$ and we have
\est{
\lim_{X\to\infty}\frac{1}{X}\sum_{p\leq X}S_1(p) \log p  =\lim_{X\to\infty}\frac{1}{X}\sum_{\substack{p\leq X}}\leg{2wk}{p}\pr{ 1+\leg{-1}{p}}=\begin{cases}
1& \text{$2wk$ is $\pm$ a square,}\\
0& \text{otherwise.}
\end{cases}
}
Since $2wk\in\pm\Q^2$ if and only if  $\r$ is $-\frac{3}{4w^2}$ times a $4$-th power, then by the above computation and~\eqref{s2f} we obtain~\eqref{formula-rank-L}.

We will give an example of a family ${\LL}$ with rank $3$ in the paragraph just after the proof of the Proposition and one can easily find families with rank $0,1$ and $2$. Thus, it remains to show that $\rank ({\LL}_{w,\r,v})$ is always $\leq3$. To see this, we observe that the average value of $S_2(p) \log p$ given in~\eqref{s2f} is always $\leq2$. Indeed, by the definition of $S_2(p)$ we have
\es{\label{s2f_2}
\lim_{X\to\infty}\frac{1}{X}\sum_{p\leq X}S_2(p) \log p &\leq \lim_{X\to\infty}\frac{1}{X}\sum_{p\leq X}\sum_{\substack{x \mod p \\ C(x) \equiv 0 \mod
p}} 1\\
&=\#\{\text{square-free irreducible factors of $C(x)$}\}
}
and thus the average value of $S_2(p) \log p$ is $\leq2$ unless $C(x)$ is a product of three coprime linear factors (in which case $\delta_1=0$). Now, if $C(x)$ factors completely then its discriminant $-108 \r (\r - v^2)^2$ must be a square, i.e. $\r=-3k^2$ for some $k\in \N$. Now, if $y$ is a root of $C(x)$ then from $C(y)=0$ one obtains $v =\frac{ y(y^2-9 k^2 )}{3 (k^2 - y^2)}$ (we can take  $y\neq\pm k$ since $C(\pm k)=\mp8k^3$) and that the other roots of $C(x)$ are $\frac{k (y\pm 3 k )}{k \mp y}$. Then,
\est{
R(x)=(wA(y)-x^2)(wA(\tfrac{k (y+ 3 k )}{k - y})-x^2)(wA(\tfrac{k (y- 3 k )}{k + y})-x^2)
}
and these three quadratic polynomials factors if $wA(y)$ and $wA(\tfrac{k (y\pm 3 k )}{k \mp y})$ are squares, i.e. if $-3 w (k^2 - y^2)$, $6 k w (k + y)$ and $6 k w (k - y)$ are squares (and the average of $S_2(p) \log p$ is exactly the number of such integers which are rational squares). These are all squares if and only if $-3 w$, $ (k + y)/(k-y)$ and $6 k w (k - y)$ are squares in $\Q$ and, in particular, for this to happen we need $w<0$ and $|k|>|y|$. This implies that $6 k w (k - y)$ is negative and thus can't be a square. Thus $R(x)$ has at most $5$ irreducible factors and the proof of the proposition is complete.
\end{proof}
We remark that in the Propositions~\ref{rank_F} to \ref{rank_J}, the generic points appear naturally from the proofs. For example, in Proposition~\ref{rank_K} with $w=2$, one takes the root $1$ of $B^2-4AC$ and observe that $A(x) t^2 + B(x) t +C(x)=64t^2$ so that $(-2,8t)$ is a point in ${\HH}_w$. The same phenomena holds also for Proposition~\ref{rank_L} even if it's a bit less immediate. Let's illustrate this phenomena by looking at the two extremal cases: one where $C$ is the product of 3 coprime linear factors over $\Q$ and one where $C$ is irreducible of degree 3.
 \smallskip
~\\
First, suppose that we are in the case where ${C}$ has three roots. In the proof of Proposition~\ref{rank_L} we showed that for this to happen we need $\r=-3k^2$ for some $k\in\N$ and that in this case $\rank({\LL}_{w,\r,v})$ is equal to $\delta_2$ plus the number of squares among $-3 w (k - y) (k + y), 6 k w (k + y)$ and $6 k w (k - y)$. As said above it cannot happen that these three numbers are all squares, but it could be that two of them are and that at the same time $\delta_1=1$ so that $\rank({\LL})=3$. Indeed,  take
\begin{equation} \label{parameters1} \r=-3k^2, \qquad  v =\frac{ y(y^2-9 k^2 )}{3 (k^2 - y^2)}\end{equation} and
\begin{equation} \label{parameters2}
y=6(b^2-a^2),\qquad k=6(b^2+a^2),\qquad w=\frac{\ell^2}{12(b^2+a^2)},
\end{equation}
with $a,b,\ell\in\N$. Then, $-4w^2 \r/3=\ell^2$ is a square (and so $\delta_2=1$) and so are $6wk(k-y)= (6a\ell)^2$ and $6wk(k+y)=(6b\ell)^2$. These three conditions lead to generic points: with this choice for the parameters, the discriminant in $t$ of $P(x)=A(x)t^2+C(x)$ is a degree $5$ polynomials with roots
\est{
x_{1} & = k, \quad
x_{2}  = -k; \qquad
x_3  = y ,\quad
x_4 = - \frac{6 (a^2 + b^2) (2 a^2 + b^2)}{b^2},\quad
x_5 =  \frac{6 (a^2 + b^2) ( a^2 +2 b^2)}{a^2}.
}
Now we have $P(x_{2})=w (4k^2/\ell)^2$, $P(x_4)=w(2a/b^2\ell)^2$ and $P(x_5)=w(2b/a^2\ell)^2$. Thus, we
get the points $(x_2,4k^2/\ell)$ (the $\delta_2$ contribution), $(x_4,2a/b^2\ell)$ and $(x_5,2b/a^2\ell)$ in ${\LL}_{w,s,v}(\Q(t))$ (the $S_2(p)$ contribution) with the parameters $s,w$ and $v$ as in~\eqref{parameters1} and~\eqref{parameters2}.

Suppose now that $C$ is irreducible and that $R$ factorizes as 2 irreducible factors. This means that the contribution coming from $S_2(p)$ gives 1. Since $R(x)$ is the resultant in $y$ of $wA(y)-x^2$ and $C(y)$ with $C$ irreducible of degree $3$, 
we claim that $R(x)$ factors if and only if one (and so any) root $\rho$ of $C$ is such that $wA(\rho)$ is a square in $\Q(\rho)$. Indeed, let $\rho_1, \rho_2,\rho_3$ the 3 roots of $C$ so that we have  $R(x)=-(x^2-wA(\rho_1))(x^2-wA(\rho_2))
(x^2-wA(\rho_3))$. Let $G$ be the Galois group of the Galois closure of $\Q(\rho_1)$ in $\overline{\Q}$, note that $G$ contains a 3-cycle, $\sigma$, permuting the $\rho_i$. 
Let $H(x)$ be an irreducible factor of $R$ defined over $\Q$, $H(x)$ can be written $H(x)= \prod_{\rho \in S_1} (x^2-wA(\rho))  \prod_{\rho \in S_2} (x+\sqrt{wA(\rho)}) \prod_{\rho \in S_3} (x-\sqrt{wA(\rho)})$ 
where $S_1, S_2, S_3$ are disjoint subsets of $\{\rho_1, \rho_2,\rho_3\}$ and where the values $wA(\rho)$ are all distinct (by definition of $S_i$ and the fact that $H$ is squarefree).
 If $S_2$ (or $S_3$) is not empty then  for some $i$, $\pm \sqrt{wA(\rho_i)}$ is a root of $H(x)$ and if $wA(\rho_i)$ is not a square in $\Q(wA(\rho_i))$  then the minimal polynomial of $\pm \sqrt{wA(\rho_i)}$ over $\Q(\rho_i)$ is $x^2-wA(\rho_2))$ 
and divides $H$ which is not possible. So we have $wA(\rho_i)=M(\rho_i)^2$ for some polynomial $M\in \Q[x]$ (with degree $<3$). Since $M \in \Q[x]$ and $\sigma$ permutes transitively the $\rho_i$, we obtain that 
$wA(\rho) = M(\rho)^2$ for all $\rho \in \{\rho_1, \rho_2,\rho_3\}$. So $S_2$ or $S_3$ is not empty if and only if $wA(\rho) = M(\rho)^2$ for all $\rho \in \{\rho_1, \rho_2,\rho_3\}$ (the converse being clear). In this case, we have 
$R(x)=K(x)K(-x)$ with $K(x)=(x-M(\rho_1)(x-M(\rho_2))(x-M(\rho_3)$ which is fixed by the action of $G$ and so $K(x)\in \Q[x]$. Furthermore, the values $M(\rho_i)$ are conjugate and distinct 
(if $M(\rho_1)=M(\rho_2)$ then $M(\rho_3)=M(\rho_1)$ by the $\sigma$-action which would imply that $A(\rho_1)= A(\rho_2) = A(\rho_3)$: impossible with the degree 2 of $A$) so $K(x)$ is irreducible. Hence $R(x)$ factors into 2 irreducible factors of 
degree 3. Now, if $wA(\rho)$ is not a square in $\Q(wA(\rho))$ then $H(x)=\prod_{\rho \in S_1} (x^2-wA(\rho))$. By the same argument, the values $wA(\rho)$ are conjugate and distinct so $H(x)=R(x)$ is irreducible.

Thus, coming back to the main discussion, we get that $wA(\rho)t^2+C(\rho)$ is a square in $\Q(\rho)(t)$ and so
we obtain a generic point $G=(\rho,\ell t/w^2)$ where $\ell^2=wA(\rho) \in {\LL}_{w,\r,v}(\Q(\rho)(t))$. Thus, the trace $\tr_{\Q(\rho)/\Q}(G)$ gives a point
in ${\LL}_{w,\r,v}(\Q(t))$. For example, take $\r=1$,  $v=9$ and $w=1$ so that
$$
{\LL} \colon y^2 =x^3 + (3t^2 + 27)x^2 + 3x + (t^2 + 9)
$$
and the rank is 1. We have $R(x)=x^6 - 2172x^4 + 48x^2 - 64 = (x^3 - 46x^2 - 28x - 8)(x^3 + 46x^2 - 28x + 8)$ and if $\rho$ is a root of $C(x)$ then
$A(\rho)=(\rho^2/12+\rho/2-1/4)^2$ so that $G=(\rho,t(\rho^2/12+\rho/2-1/4)) \in {\LL}_{w,\r,v}(\Q(\rho)(t))$ and
$$
\tr_{\Q(\rho)/\Q}(G) = \left(\frac{15t^2+144}{t^2}, \frac{2(13t^4+216t^2+864)}{t^3}\right) \in {\LL}_{w,\r,v}(\Q(t)).
$$
Note that for all the other families we could find generic points with coordinates in $\Q[t]$ but that in this case we found a point with non-polynomial coordinates. We remark however that there is a point which is polynomial in $t$ over an algebraic extension of $\Q$.

\medskip
It could appear as a little bit disappointing not to be able to find potentially parity-biased families with higher ranks.
However, there are also geometric constraints on the rank due to the condition about the type of bad
reduction. Indeed,  let $E \rightarrow C$ be an elliptic surface defined over $\C$ with non-constant
$j$-invariant and let $R_\C$ be the rank  of the Mordell-Weil group over $\C$. Then, we have the following result due to Shioda
(see \cite{schwartz}).
\begin{theo}[Shioda]\label{formule_Shioda} With the above notation, we have
$R_\C \leq -4+4g + n_1 + 2n_2 - 2p_g$, where
\begin{itemize}
\item $g$ is the genus of $C$;
\item $n_1$ is the number of singular fibers of the N\'eron model of type $\mathbf{I}_b$, $b>0$;
\item $n_2$ is the number of singular fibers of the N\'eron model of other types;
\item $p_g$ is the geometric genus of $E$.
\end{itemize}
Furthermore if $p_g=0$ then $R_\C=-4+4g + n_1 + 2n_2$.
\end{theo}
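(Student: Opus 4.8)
The plan is to deduce the bound from two classical facts about elliptic surfaces: the Shioda--Tate description of the N\'eron--Severi group, and the Lefschetz $(1,1)$ inequality $\rho(E)\le h^{1,1}(E)$ for the Picard number. The right-hand side of the asserted inequality will emerge from a computation of the topological Euler number $e(E)$, which in turn is a sum of purely local contributions indexed by the Kodaira types of the singular fibres.

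First I would invoke the Shioda--Tate formula. As $\pi\colon E\to C$ is relatively minimal, has a section, and has non-constant $j$-invariant, $\mathrm{NS}(E)$ is, up to torsion, the orthogonal direct sum of: the rank-$2$ lattice spanned by the zero section and a general fibre; the ``trivial lattices'' $T_v$ spanned by the components of a singular fibre $F_v$ that do not meet the zero section, each of rank $m_v-1$ where $m_v$ is the number of irreducible components of $F_v$; and a piece mapping isomorphically (after $\otimes\Q$) onto $E(\C(C))$. This gives the rank identity
\[
\rho(E)=\rank\mathrm{NS}(E)=2+\sum_v (m_v-1)+R_\C .
\]

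Next I would compute $b_2(E)$. Since $E$ carries a section, $b_1(E)=b_1(C)=2g$, so from $e(E)=2-2b_1(E)+b_2(E)$ we get $b_2(E)=e(E)+4g-2$. The Euler number is additive along the fibration and a smooth fibre is an elliptic curve with Euler number $0$, hence $e(E)=\sum_v e(F_v)$ over the singular fibres. Running through Kodaira's list one verifies the uniform relations $e(F_v)=m_v$ when $F_v$ has type $\mathbf I_b$ with $b>0$, and $e(F_v)=m_v+1$ for every additive fibre. Therefore $e(E)=\sum_v m_v+n_2=\bigl(\sum_v(m_v-1)\bigr)+n_1+2n_2$, and
\[
b_2(E)=\sum_v(m_v-1)+n_1+2n_2+4g-2 .
\]

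Finally I would combine these with Hodge theory. Writing $b_2(E)=2p_g+h^{1,1}(E)$ with $p_g=h^{2,0}(E)$, the inequality $\rho(E)\le h^{1,1}(E)=b_2(E)-2p_g$ together with the two displays above gives, after cancelling $\sum_v(m_v-1)$,
\[
R_\C\le -4+4g+n_1+2n_2-2p_g .
\]
For the supplementary statement, if $p_g=0$ then $H^2(E,\mathcal O_E)=0$, so the exponential sequence shows $c_1\colon \mathrm{Pic}(E)\to H^2(E,\Z)$ is surjective; hence $\rho(E)=b_2(E)=h^{1,1}(E)$, the inequality above is forced to be an equality, and $R_\C=-4+4g+n_1+2n_2$. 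The part I would cite rather than reprove is the Shioda--Tate decomposition itself (this is where the intersection form and the group structure on the sections really do the work) and the case-by-case check of $e(F_v)$ versus $m_v$; the rest is the elementary identity $b_1(E)=2g$ for elliptic surfaces with a section, a short Euler-number bookkeeping, and the standard Hodge/Lefschetz input. The only genuine subtlety to watch is keeping the torsion of $\mathrm{NS}(E)$ and of $H^2(E,\Z)$ out of the rank count, which is harmless here.
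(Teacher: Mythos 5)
The paper does not actually prove this statement: it is quoted as a known theorem of Shioda, with a pointer to the reference \cite{schwartz}, and is used only as a black box to bound the complex Mordell--Weil rank of the families under consideration. So there is no ``paper proof'' to compare against; what you have written is a self-contained derivation of the cited result, and it is correct. Your argument is the standard one: the Shioda--Tate decomposition gives $\rho(E)=2+\sum_v(m_v-1)+R_\C$; additivity of the topological Euler number together with Kodaira's table gives $e(E)=\sum_v(m_v-1)+n_1+2n_2$ (using $e(F_v)=m_v$ for multiplicative fibres and $e(F_v)=m_v+1$ for additive ones); the identity $b_1(E)=2g$ for a non-trivial Jacobian elliptic surface converts this into $b_2(E)=\sum_v(m_v-1)+n_1+2n_2+4g-2$; and Lefschetz $(1,1)$ in the form $\rho\leq h^{1,1}=b_2-2p_g$ yields the inequality after the $\sum_v(m_v-1)$ terms cancel. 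The equality case for $p_g=0$ via surjectivity of $c_1$ in the exponential sequence is also the standard argument. The inputs you choose to cite rather than reprove (Shioda--Tate, the fibre-by-fibre Euler number check, $q(E)=g(C)$ for a non-product Jacobian fibration) are exactly the ones that carry the real content, and the hypotheses of the theorem (section, non-constant $j$) guarantee they apply. This is a legitimate and complete route to a statement the paper leaves unproved.
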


For the families of Theorem~\ref{classification_1} and~\ref{classification_2}, we have $p_g=0$ and
$g=0$ ($C\simeq\PP^1$). The condition about the
type of bad reduction implies that the number of singular fibers is the degree of the square-free part of $\Delta$ plus $1$ coming from
the place at infinity. Since the families are potentially parity-biased, none of the finite fibers can be of type $\mathbf{I}_b$, $b>0$. Now
if $\deg a_2(t) \leq 1$ then one obtains $R_\C = 2$, whereas if $ \deg a_2(t)=2$ then $R_\C \leq 6$.

\section{The root numbers}\label{rootnumbers}

Let $\GFF$ be an elliptic surface given by \eqref{eq_family}, and let $\rootf(t)$ be the root number of the specialization  $\GFF(t)$. Then,
we can compute $\rootf(t)$ as a product
$$
\rootf(t) = - \prod_{p} w_{p}(t),
$$
where the $w_{p}(t)$ are the local root numbers of the elliptic curve $\GFF(t)$ and are defined in terms of representations on the Weil-Deligne group of $\Q_p$.
We remark that the  $-1$ appearing in the formula corresponds to the root number at $\infty$ which is always $-1$ for any elliptic curve defined over $\R$.
The local root numbers can be computed in terms of the reduction type of $\GFF(t)$ modulo $p$ using tables due to Halberstadt, Connell and Rohrlich (\cite{halberstadt}, \cite{connell}, \cite{rohrlich}). We use them in the version given by Rizzo~\cite{rizzo} where the assumption that $\GFF$ is in minimal Weierstrass form is dropped.\footnote{We remark that there are the following misprints in Rizzo's table: in Table II if $(a,b,c)=(\geq 5,6,9)$ then the special condition is $c_6'+2
\not\equiv 3c_{4,4} (9)$; in Table III, the second line should have $(a,b,c)=(0,0,\geq0)$, also if $(a,b,c)=(2,3,1)$ then the Kodaira type is $I^*_2$.}

After computing the root number of every specialization, we can compute the average root number. Notice that in general, it is false that
\begin{equation}  \label{fallacy}
\Avf = - \prod_{p} \int_{\Z_p} w_{p}(t) \; dt.
\end{equation}

However, it is easy to see that \eqref{fallacy} is true if $\rootf(t)$ is the product of {\it finitely many} functions which are $p$-adic locally constants almost everywhere
(i.e. $w_{p}(t)=1$ except for finitely may primes $p$), and this can be generalized if $\rootf(t)$ is well approximated by a finite product. In \cite{helfgott}, those are called {\it almost finite} products of $p$-adic locally constant functions, and we cite his result.

\begin{prop}[Helfgott, Proposition 7.7] \label{HH-averageZ} Let $S$ be a finite set of places of $\Q$, including $\infty$. For every $v \in S$, let $g_v: \Q_v \rightarrow \C$ be a bounded function that is locally constant almost everywhere. For every $p \not\in S$, let $h_p: \Q_p \rightarrow \C$ be a function that is locally constant almost everywhere and satisfies $|h_p(x)| \leq 1$ for all $x$.
Let $B(x) \in \Z[x]$ be a non-zero polynomial, and assume that $h_p(x)=1$ when $v_p(B(x)) \leq 1$. Let
$$W(n) = \prod_{v \in S} g_v(n) \prod_{v \not\in S} h_p(n).$$
If Conjecture~\ref{square-free-1} holds for $B$, then one has
$$
\Av_\Z W(n) = \frac{c_{-} + c_{+}}{2}  \prod_{p \in S} \int_{\Z_p} g_p(x) \;dx \cdot \prod_{p \not\in S} \int_{\Z_p} h_p(x) \;dx,$$
where $c_{\pm} = \lim_{t \rightarrow \pm \infty} \sgn (g_\infty(t))$.
\end{prop}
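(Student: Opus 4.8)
The plan is to reduce the (almost everywhere) infinite product $W(n)$ to a genuinely finite product at the cost of a small error, to evaluate the average of the finite product by equidistribution in residue classes, and then to pass to the limit. \emph{Step 1: truncation.} Fix a large parameter $X$ (large enough that every prime of $S$ is $\leq X$) and write $W(n)=W_X(n)R_X(n)$, where $W_X(n):=\prod_{v\in S}g_v(n)\prod_{p\leq X,\,p\notin S}h_p(n)$ is a finite product and $R_X(n):=\prod_{p>X}h_p(n)$. Since $|R_X(n)|\leq1$ and $R_X(n)\neq1$ forces $h_p(n)\neq1$ for some $p>X$, hence $v_p(B(n))\geq2$ by hypothesis, the set of $n$ with $|n|\leq N$ and $R_X(n)\neq1$ is contained in $\{\,n:|n|\leq N,\ \exists\,p>X,\ p^2\mid B(n)\,\}$. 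For $X<p\leq\sqrt N$, and $p$ outside the finite set of primes dividing the leading coefficient or the discriminant of $B$, the congruence $B(n)\equiv0$ modulo $p^2$ has $O_B(1)$ roots mod $p^2$, hence $O_B(N/p^2)$ solutions with $|n|\leq N$; summing over such $p$ gives a total of $O_B(N/(X\log X))$. For $p>\sqrt N$ the square-free sieve conjecture for $B$ (Conjecture~\ref{square-free-1}) gives $o(N)$ such $n$. Since $|W(n)|$ and $|W_X(n)|$ are bounded uniformly in $n$ (the $g_v$ are bounded and $|h_p|\leq1$), we obtain
\[
\frac1{2N}\sum_{|n|\leq N}W(n)=\frac1{2N}\sum_{|n|\leq N}W_X(n)+O_B\!\left(\frac1{X\log X}\right)+o_{N\to\infty}(1).
\]

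\emph{Step 2: average of the finite product.} Each $g_p$ ($p\in S$ finite) and each $h_p$ ($p\leq X$, $p\notin S$) is locally constant off a set of $p$-adic measure zero, so there is a modulus $M=M(X)$ such that the product of all the finite-place factors agrees with a fixed function of $n\bmod M$ away from a set of $n$ of density $0$; meanwhile $g_\infty(n)=c_{\pm}$ for $\pm n$ sufficiently large. Splitting $|n|\leq N$ into $0<n\leq N$ and $-N\leq n<0$ and using equidistribution of $n$ in the residue classes modulo $M$, the average of $W_X(n)$ converges as $N\to\infty$ to
\[
\frac{c_-+c_+}2\cdot\frac1{M}\sum_{a\bmod M}\ \prod_{\substack{p\mid M\\ p\in S}}g_p(a)\prod_{\substack{p\mid M\\ p\leq X,\,p\notin S}}h_p(a)=\frac{c_-+c_+}2\prod_{p\in S}\int_{\Z_p}g_p(x)\,dx\,\prod_{\substack{p\leq X\\ p\notin S}}\int_{\Z_p}h_p(x)\,dx,
\]
where the last equality follows from the Chinese remainder theorem together with the fact that a function which is constant on cosets of $p^k\Z_p$ (off a null set) has $p$-adic integral equal to the mean of its values modulo $p^k$.

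\emph{Step 3: the limit $X\to\infty$.} Since $h_p(x)=1$ whenever $v_p(B(x))\leq1$, the set on which $h_p\neq1$ is covered by the $p$-adic balls around the roots $\rho$ of $B$ in $\Z_p$ with $v_p(B(\rho))\geq2$, hence has measure $O(p^{-2})$; therefore $\int_{\Z_p}h_p(x)\,dx=1+O(p^{-2})$ and the product $\prod_{p\notin S}\int_{\Z_p}h_p(x)\,dx$ converges absolutely. Letting $X\to\infty$ in the displays of Steps~1 and~2 (so that the $O_B(1/(X\log X))$ term disappears) yields the claimed formula, with $c_{\pm}=\lim_{t\to\pm\infty}\sgn(g_\infty(t))$.

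The main obstacle is the truncation in Step~1: the hypothesis $v_p(B(x))\leq1\Rightarrow h_p(x)=1$ is exactly what confines the potentially ``bad'' $n$ to those for which $B(n)$ is divisible by the square of a large prime, so that the square-free sieve conjecture can kill the contribution of the range $p>\sqrt N$; one also needs the medium range $X<p\leq\sqrt N$ to contribute $O_B(1/(X\log X))$ uniformly in $N$, so that it becomes negligible on sending $X\to\infty$ after $N\to\infty$. Steps~2 and~3 — the equidistribution argument and the absolute convergence of the Euler product — are routine.
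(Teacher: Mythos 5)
The paper does not actually prove this proposition: it is quoted verbatim from Helfgott (Proposition 7.7 of \cite{helfgott}), and the only place the underlying argument surfaces here is in the sketch of the proof of Theorem~\ref{Helfgott}. Your truncation--equidistribution--square-free-sieve route is exactly that standard argument. Steps 1 and 3 are sound as written: the hypothesis that $h_p(x)=1$ whenever $v_p(B(x))\leq 1$ confines the tail $R_X(n)\neq1$ to integers with $p^2\mid B(n)$ for some $p>X$; the medium range $X<p\leq\sqrt N$ is controlled by the $O_B(N/p^2)$ root count (for $p$ away from the leading coefficient and discriminant of $B$, which implicitly requires $B$ square-free --- the same implicit assumption needed to invoke Conjecture~\ref{square-free-1} at all); the range $p>\sqrt N$ is killed by the conjecture; and $\int_{\Z_p}h_p(x)\,dx=1+O(p^{-2})$ makes the Euler product converge absolutely, which justifies taking $X\to\infty$ after $N\to\infty$.

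The one step that needs repair is the opening claim of Step 2. A function that is locally constant off a set of $p$-adic measure zero need \emph{not} agree, on the integers, with a function of $n\bmod M$ outside a set of density zero: take $g_p(n)=\leg{-1}{p}^{v_p(n)}$, which is precisely the kind of factor occurring in this paper. For any modulus $M$, the class $0\bmod p^{v_p(M)}$ has positive density and $g_p$ takes both values with positive relative density on it, so the set of disagreement has positive density. The correct statement is the $\varepsilon$-version: for every $\varepsilon>0$ there is a level $k$ such that $g_p$ is constant on all but at most $\varepsilon p^{k}$ cosets of $p^{k}\Z_p$ (the complement of the closed null exceptional set is a union of balls; finitely many of them cover all but measure $\varepsilon$, and on each such compact ball a locally constant function is constant on cosets of some $p^{k}\Z_p$). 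Since all factors are bounded, the exceptional classes contribute $O(\varepsilon)$ to the average; letting $\varepsilon\to0$ after $N\to\infty$ recovers your displayed identity, and the rest of the argument is unaffected.
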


We remark that $c_\infty$ differs from the value of Proposition 7.7 of \cite{helfgott} as he considers averages over positive integers, and we are using \eqref{average-Z}.
Rizzo also proves a similar result in his paper \cite[Theorem 19]{rizzo} in the particular case that $B(x)=x$ (and then the result is unconditional).

Then, our strategy will be  rewrite the root number as
$$\varepsilon_\GFF(t) = - \prod_{p} w_{p}^*(t),$$
where the modified local root numbers $w_{p}^*(t)$ are such that the product is finite or almost finite, and then we can compute the average root number with Proposition~\ref{HH-averageZ}.

The average root numbers of Theorem \ref{average-periodic} and Theorem \ref{average-nonperiodic} illustrate those 2 phenomenons. For the family of Theorem \ref{average-periodic}, the root number is a finite product and it is periodic, and the average root number is a rational number. For the family of Theorem \ref{average-nonperiodic}, the root number is not given by an almost finite product in terms of the local root numbers $w_{p}(t)$, but can be written as an almost finite product in terms of the modified local root numbers $w^*_p(t)$, and the average root number is given by a convergent infinite Euler product, computed as in Proposition~\ref{HH-averageZ}. Our result are unconditional, as the degree of the polynomial $B$ is 2.

\subsection{The generalized Washington family and proof of Theorem \ref{average-periodic}} \label{formulas-for-j-c4}

As in the introduction, we fix $a\in \Z_{\neq0}$ and we consider the family of elliptic curves
$$
{\was}_a(t) \colon y^2 = x^3 +tx^2-a(3a+t)x +a^3
$$
with
\es{\label{invariant_w}
c_4(t) &= 16(t^2+3at+9a^2); \\
c_6(t) &= -32(t^2+3at+9a^2)(3a+2t); \\
\Delta(t) & =  16 a^2(t^2+3at+9a^2)^2; \\
j(t)&= \frac{256}{a^2} (t^2 +3at+9a^2).
}
Hence, ${\was}_a$ is a potentially parity-biased family. As explained after Theorem~\ref{formule_Shioda}, the rank of
${\was}_a(t)$ over $\C(t)$ is 2 and, as proved in Corollary~\ref{rank_W}, the rank over $\Q(t)$ is $\leq 1$ and it is equal to 1 if and only if $a$ is a square or minus a square.
\smallskip
In fact, the points $(0,a\sqrt{a})$ and $(a,a\sqrt{-a})$ are two points in ${\was}_a(t)(\C(t))$.  By the action of $\Gal(\overline{\Q}/\Q)$, one
can see that they are independent and the rank is thus $2$ over $\Q(i,\sqrt{a})(t)$. Also, if $a$ (resp. $-a$) is a square then $(0,a\sqrt{a})$ (resp.
$(a,a\sqrt{a})$) is an infinite order point defined over $\Q(t)$. As stated in~\cite{duquesne}, the point $(0,1)$ can always be part of the generators of
$\was_1(t)(\Q)$ for any $t \in \Z$ such that $t^2+3t+9$ is square-free.
\smallskip
~\\
Clearly, the family ${\was}_a(t)$ is a generalization of Washington's family (obtained with $a=1$) and it is closed under quadratic twists: if $w\in \Z_{\neq0}$ then
the quadratic twist of  ${\was}_a(t)$ by $w$ defined by ${\was}_{a,w}(t) \colon wy^2 = x^3 +tx^2 - a (3a+t)x + a^3$ is isomorphic to
${\PFF}_{aw}(wt)$. Furthermore, notice that ${\was}_{a/b}(r/s)$ is isomorphic to ${\was}_{abs^2}(b^2sr)$.

\subsubsection{The local root numbers of ${\was}_a(t)$}
In this section, we give formula for the local root numbers of ${\was_a}(t)$ for $t\in \Z$. In the following, we let
$
f_a(t) := (t^2+3at+9a^2).
$
Also, for convenience of notation, we indicate with $\varepsilon_a(t)$ the root number $\eps_{\Wa}(t)$ and we denote by $w_p(t)$ ($a$ will always be understood) the local root number at $p$ of ${\was_a}(t)$. The formula below can be directly computed from
Rizzo's tables (\cite{rizzo}) and can be deduced from the general formula of the root number of ${\PFF}_{\r}$ given in the appendix~\ref{appen_1}.

\subsubsection*{Case $p\geq 3$}

\begin{prop}\label{p5} For $p\geq 3$ we have
\est{
w_p(t) =\begin{cases}
\leg{-1}{p}^{v_p(a)+v_p(f_a(t))} & \text{if $0 \leq v_p(a) \leq v_p(t)$,}\\
-\leg{t_p}{p} & \text{if $0\leq v_p(t) < v_p(a)$ and $v_p(t)$ is even,}\\
\leg{-1}{p} & \text{if $0\leq v_p(t) < v_p(a)$ and $v_p(t)$ is odd.}
\end{cases}
}
\end{prop}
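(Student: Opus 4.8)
The plan is to read each local factor $w_p(t)$ off the reduction type of $\was_a(t)$ at $p$, using the standard tables of local root numbers in the form given by Rizzo (which do not require a minimal Weierstrass model), and then to simplify the Kronecker symbols that appear with the help of the explicit invariants $c_4(t)=16 f_a(t)$, $c_6(t)=-32 f_a(t)(2t+3a)$, $\Delta(t)=16a^2 f_a(t)^2$ and the identity
\[
4 f_a(t) = (2t+3a)^2 + 27a^2 .
\]
First I would compute the triple $(v_p(c_4(t)),v_p(c_6(t)),v_p(\Delta(t)))$ in the two regimes. When $v_p(a)\le v_p(t)$ every monomial of $f_a$ has $p$-valuation $\ge 2v_p(a)$, so $v_p(f_a(t))\ge 2v_p(a)$ and $v_p(j(t))=v_p(f_a(t))-2v_p(a)\ge0$: the reduction is potentially good. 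When $v_p(t)<v_p(a)$ the $t^2$-term dominates, so $v_p(f_a(t))=2v_p(t)$ and $v_p(2t+3a)=v_p(t)$, whence $v_p(j(t))=2v_p(t)-2v_p(a)<0$: the reduction is potentially multiplicative.

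In the potentially good case the triple, after subtracting a suitable multiple of $(4,6,12)$ to pass to a minimal model, has $v_p(\Delta_{\min})$ even (it equals $2v_p(a)+2v_p(f_a(t))$ reduced modulo $12$), so the Kodaira type is one of $\mathrm I_0,\mathrm{II},\mathrm{IV},\mathrm I_0^*,\mathrm{IV}^*,\mathrm{II}^*$ — never $\mathrm{III}$ or $\mathrm{III}^*$. Moreover the identity shows that $p\mid f_a(t)$ forces $-27a^2$, and hence $-3$, to be a square mod $p$, i.e. $p\equiv1\pmod 3$ and $\leg{-3}{p}=1$; this collapses the $\leg{-3}{p}$–entries of the table (the types $\mathrm{IV},\mathrm{IV}^*$ and the correction factors attached to $\mathrm{II},\mathrm{II}^*$) and leaves, for $p\ge5$, $w_p(t)=\leg{-1}{p}^{\,v_p(\Delta_{\min})/2}=\leg{-1}{p}^{\,v_p(a)+v_p(f_a(t))}$, which is the first line of the statement (it includes good reduction, $v_p(\Delta_{\min})=0$). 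In the potentially multiplicative case I would first minimalise by $p^{\lfloor v_p(t)/2\rfloor}$: for $v_p(t)$ even one gets multiplicative reduction, whose root number is $-\leg{-c_6'(t)}{p}$, and a one-line computation with $c_6(t)=-32f_a(t)(2t+3a)$ gives $-c_6'(t)\equiv 64\,t_p^3\pmod p$, so $w_p(t)=-\leg{t_p}{p}$ — the second line (the top line for the degenerate subcase $v_p(t)=0$, where the reduction is already multiplicative). For $v_p(t)$ odd one lands on type $\mathrm I_n^*$ with $n=2(v_p(a)-v_p(t))$ even; since a ramified quadratic twist has conductor exponent $2$, an unramified twist does not change the local root number, so $w_p(\mathrm I_n^*)$ depends only on $p$ and equals $\leg{-1}{p}$ — the third line.

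The prime $p=3$ has to be handled separately with Rizzo's Table III. The potentially multiplicative cases go through unchanged (quadratic characters are still tame at $3$, so the argument for types $\mathrm I_n$ and $\mathrm I_n^*$ is the same), but in the potentially good case the types $\mathrm{II},\mathrm{IV},\mathrm{IV}^*,\mathrm{II}^*$ are wildly ramified at $3$, so one must compute the relevant $3$-adic digits of $c_4(t)$ and $c_6(t)$ — using $3\mid f_a(t)\iff 3\mid t$, in which case $f_a(t)=9\big((t/3)^2+a(t/3)+a^2\big)$, together with $4f_a(t)=(2t+3a)^2+27a^2$ read modulo powers of $3$ — and check that the corresponding entries of Table III all reduce to $(-1)^{v_3(a)+v_3(f_a(t))}=\leg{-1}{3}^{\,v_3(a)+v_3(f_a(t))}$. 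Alternatively, the whole proposition can be deduced from the general formula for the root number of $\PFF_{\r}$ in Appendix~\ref{appen_1} via the isomorphism $\was_a(t)\cong\PFF_{-3^{5}4a^{2}}(12t+18a)$, checking that this substitution respects the case divisions.

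The step I expect to be the main obstacle is making the table–bookkeeping airtight: one has to be sure that, after the non-minimal transformations, every entry of the tables that a priori depends on a quadratic residue symbol of a digit of $c_4$ or $c_6$ is in fact pinned down by the identity $4f_a(t)=(2t+3a)^2+27a^2$ — in particular that the types $\mathrm{II}$ and $\mathrm{II}^*$ contribute exactly $\leg{-1}{p}$ once $p\equiv1\pmod3$ — and the $p=3$ computation, where wild ramification and the extra factor of $3$ in $2t+3a$ make the analysis considerably longer.
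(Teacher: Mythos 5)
Your proposal is correct and follows essentially the same route as the paper's proof: compute the valuation triple $(v_p(c_4),v_p(c_6),v_p(\Delta))$ in the regimes $v_p(a)\le v_p(t)$ and $v_p(t)<v_p(a)$, reduce modulo $(4,6,12)$, read the root number off Rizzo's tables (equivalently Rohrlich's classification), and kill the $\leg{-3}{p}$ factors for types $\mathrm{IV},\mathrm{IV}^*$ by observing that $p\mid f_{a_p}(t_p)$ forces the discriminant $-3a_p^2$ to be a square mod $p$ — exactly the paper's key observation. Like the paper, you defer $p=3$ to a lengthier case analysis (or to the Appendix formula via the isomorphism with $\PFF_{\r}$), so the only differences are cosmetic: your organization by Kodaira type and parity of $v_p(\Delta_{\min})$ slightly streamlines the paper's bookkeeping of $v_p(2t_p+3a_p)$ in the case $v_p(t)=v_p(a)$.
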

\begin{proof}

We check only the case $p\geq5$. The case of $p=3$ is analogous but involves a much more lengthy case by case analysis of all possibilities using Table II of~\cite{rizzo}). We remark that it's quite surprising that the final formula for $p=3$ turns out to be the same as for the case $p\geq5$.

Let $p \geq 5$, and we first suppose that $0 \leq v_p(a) < v_p(t)$. Then, $v_p(f_a(t)) = 2 v_p(a)$, and $$v_p\left(c_4, c_6, \Delta \right) = (2 v_p(a), 4v_p(a), 6 v_p(a)).$$
We have to find the smallest triple $(g,h,k)$ of nonnegative integers such that $g \equiv v_p(c_4) \mod 4$, $h \equiv v_p(c_6) \mod 6$ and $k \equiv v_p(\Delta) {\mod {12}}$, and then we can read the value of $w_p(t)$ in the Tables of \cite{rizzo} giving the root number of the minimal model.
For convenience, we use the following notation between triplets of non-negative integers: $$(g,h,k) \sim (g', h', k') \iff (g,h,k) = (g', h', k') - \lambda (4,6,12),$$ for an integer $\lambda$.
Writing $v_p(a) = 2\ell + \tau$, with $\tau \in\{0, 1\}$, we have that
$$v_p \left( c_4, c_6, \Delta \right) \sim (2 \tau, 3 \tau, 6 \tau)
$$
and using Table I of \cite{rizzo}, we get that $w_{p}(t) = \leg{-1}{p}^{v_p(a)}$ when $0 \leq v_p(a) < v_p(t)$, which agrees with the statement of the proposition, as $v_p(f_a(t)$ is even in that case.

Suppose now that $0 \leq v_p(t) < v_p(a)$. Similarly to the first case, we write $v_p(t) = 2\ell + \tau$, $\tau \in\{0, 1\}$, and then $v_p(a) = 2\ell+\tau+ (v_p(a)-v_p(t))$, with $v_p(a)-v_p(t) > 0$. This gives
\begin{eqnarray*}
v_p \left( c_4, c_6, \Delta \right) 
&=& \left(4\ell+2 \tau, 6\ell + 3 \tau, 12\ell + 6 \tau + 2 (v_p(a) - v_p(t)) \right)\\
&\sim& \begin{cases} (0,0, 2(v_p(a) - v_p(t)) & \mbox{if $v_p(t)$ is even,} \\
 (2, 3, 6+ 2(v_p(a) - v_p(t)) & \mbox{if $v_p(t)$ is odd,} \end{cases} \end{eqnarray*}
 and using Table II of \cite{rizzo}, we get
 $$w_p(t) = \begin{cases} - \leg{ - c_6(t)_p}{p} = - \leg{t_p}{p} & \mbox{if $v_p(t)$ is even,}\\
 \leg{ -1}{p} & \mbox{if $v_p(t)$ is odd.} \end{cases}$$

 Finally, suppose that $v_p(t)=v_p(a)$.  Then, $v_p(f_a(t)) = 2v_p(t) + v_p \left( t_p^2 + 3 a_p t_p + 9 a_p^2 \right) = 2 v_p(t) + v_p(f_{a_p}(t_p))$, and
 similarly, $v_p(2t+3a) = v_p(t) + v_p(2t_p + 3 a_p)$.
 We write $v_p(f_{a_p}(t_p))=6k+\tau$ with $0\leq \tau \leq 5$ and $v_p(t)=2\ell+\tau'$ with $0\leq \tau'\leq 1$. We have
\begin{eqnarray} \nonumber
v_p\left( c_4, c_6, \Delta \right)&=& (4\ell+2\tau'+6k+\tau,6\ell+3\tau'+6k+\tau+v_p(2t_p+3a_p),12\ell+6\tau'+12k+2\tau)\\
\nonumber
&\sim& (2\tau'+6k+\tau,3\tau'+6k+\tau+v_p(2t_p+3a_p),6\tau'+12k+2\tau)\\ \label{EQ-abc}
&\sim& (2k+2\tau'+\tau,3\tau'+\tau+v_p(2t_p+3a_p),6\tau'+2\tau).
\end{eqnarray}
We first suppose that  $v_p(t)=v_p(a)$ is even.
We note that if $v_p(2t_p+3a_p)>0$ then $v_p(f_{a_p}(t_p) = 0$, i.e. $\tau=k=0$. Replacing $\tau' = 0$ in \eqref{EQ-abc}, and using Table I of \cite{rizzo}, it is easy to see that
\begin{eqnarray*}
 w_p(t) = \begin{cases} 1 & \tau=0 \\
\leg{-1}{p} & \tau=1,3,5 \\
\leg{-3}{p} & \tau = 2,4 \end{cases} \end{eqnarray*}
To see that this agrees with the statement of the proposition, we remark that if $v_p(f_{a_p}(t_p))>0$, then $t_p^2+3a_pt_p+9a_p^2$ has a root modulo $p$ and its discriminant $-3 a_p^2$ is a square modulo
$p$, hence, $(-3/p)=1$.

We now suppose that $v_p(t)=v_p(a)$ is odd, i.e. $\tau'=1$. Replacing in \eqref{EQ-abc} and using Table I of \cite{rizzo}, we have
$$(v_p(c_4), v_p(c_6), v_p(\Delta) )\sim (2+2k+2\tau,3+v_p(3a_p+2t_p)+\tau,6+2\tau),$$ and it is easy to see that
\begin{eqnarray*}
w_p(t) = \begin{cases} \leg{-1}{p} & \tau=0,2,4 \\
\leg{-3}{p} & \tau = 1,5 \\
1 & \tau = 3. \end{cases} \end{eqnarray*}
Again, using the fact that $\leg{-3}{p}=1$ when $\tau > 0$, this agrees with the statement of the proposition.
\end{proof}

\subsubsection*{Case $p=2$}
For $a\in \Z_{\neq0}$ and $t \in \Z$, we set $s_a(t) \in \{\pm 1\}$ such that $w_2(t)\equiv s_a(t) f_a(t)_2 \pmod{4}$.
\begin{prop}\label{sat}  The values $s_a(t)$ are given by the following cases.
\begin{itemize}
\item For $0 \leq v_2(a) \leq v_2(t)$ and $v_2(a)$ even then $s_a(t)=1$ if and only if
$$
\left\{
\begin{array}{ll}
a_2 = \pm 1 \pmod{8} &   \\
\mbox{ or } &\\
a_2 = 3 \pmod{8}& \mbox{ and } 2^{-v_2(a)}t  \equiv 1, 2, 3 \pmod{4} \\
\mbox{ or } &\\
a_2 = 5 \pmod{8}& \mbox{ and } 2^{-v_2(a)}t \equiv 0, 2, 3 \pmod{4}
\end{array} \right. .
$$
\item For $0 \leq v_2(a) \leq v_2(t)$ and $v_2(a)$ odd then $s_a(t)=1$ if and only if
$$
\left\{
\begin{array}{ll}
a_2 = 1 \pmod{4}& \mbox{ and } 2^{-v_2(a)}t \equiv 1,2 \pmod{4} \\
\mbox{ or } &\\
a_2 = 3 \pmod{4}& \mbox{ and } 2^{-v_2(a)}t  \equiv 0,1 \pmod{4}
\end{array} \right. .
$$
\item For $v_2(a)=v_2(t)+1$ and $v_2(t)$ even then $s_a(t)=1$ if and only if
$$
\left\{
\begin{array}{ll}
a_2 = 1 \pmod{4}& \mbox{ and } t_2 \equiv 1,3 \pmod{8} \\
\mbox{ or } &\\
a_2 = 3 \pmod{4}& \mbox{ and } t_2  \equiv 1, 7 \pmod{8}
\end{array} \right. .
$$
\item For $v_2(a)=v_2(t)+1$ and $v_2(t)$ odd then $s_a(t)=1$ if and only if $t_2\equiv a_2 \pmod{4}$.
\item For $v_2(a)=v_2(t)+2$ and $v_2(t)$ even then $s_a(t)=1$ if and only if
$$
\left\{
\begin{array}{ll}
a_2 = 1 \pmod{4}& \mbox{ and } t_2 \equiv 3, 5, 7 \pmod{8} \\
\mbox{ or } &\\
a_2 = 3 \pmod{4}& \mbox{ and } t_2  \equiv 1, 3, 7 \pmod{8}
\end{array} \right. .
$$
\item For $v_2(a)=v_2(t)+2$ and $v_2(t)$ odd then $s_a(t)=1$ if and only if $t_2 \equiv 1 \pmod{4}$.
\item For $v_2(a)\geq v_2(t)+3$ and $v_2(t)$ even then $s_a(t)=1$ if and only if
$$
\left\{
\begin{array}{ll}
v_2(a) =v_2(t)+3 & \mbox{ and } t_2 \equiv 3, 5, 7 \pmod{8} \\
\mbox{ or } &\\
v_2(a) =v_2(t)+4 & \mbox{ and } t_2  \equiv 1 \pmod{4} \\
\mbox{ or } & \\
v_2(a) \geq v_2(t)+5 & \mbox{ and } t_2  \equiv 5 \pmod{8}
\end{array} \right. .
$$
\item For $v_2(a)\geq v_2(t)+3$ and $v_2(t)$ odd then $s_a(t)=1$ if and only if $t_2\equiv 1 \pmod{4}$.
\end{itemize}
\end{prop}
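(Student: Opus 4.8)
The plan is to read $w_2(t)$ directly off Rizzo's tables for the local root number at $2$ (Table~II of \cite{rizzo}), applied to the Weierstrass model $\was_a(t)$ with invariants \eqref{invariant_w}, and then compare the outcome with $f_a(t)_2\bmod 4$ in order to isolate $s_a(t)$. Since $\was_a(t)$ need not be minimal at $2$, I would use Rizzo's version of the tables, which accepts a possibly non-minimal model and depends only on the triple $(a,b,c)\equiv(v_2(c_4(t)),v_2(c_6(t)),v_2(\Delta(t)))$ modulo $(4,6,12)$ together with certain ``special conditions'' that are congruences on the leading $2$-adic digits $c_4(t)2^{-v_2(c_4(t))}$, $c_6(t)2^{-v_2(c_6(t))}$, $\Delta(t)2^{-v_2(\Delta(t))}$. (Alternatively everything could be deduced from the formula for $\eps_{\PFF_{\r}}$ in Appendix~\ref{appen_1} via the isomorphism $\was_a(t)\cong\PFF_{-3^54a^2}(12t+18a)$, but arguing directly is cleaner.)

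First I would pin down the relevant valuations. With $f_a(t)=t^2+3at+9a^2$ and $c_4(t)=16f_a(t)$, $c_6(t)=-32f_a(t)(3a+2t)$, $\Delta(t)=16a^2f_a(t)^2$, everything is controlled by $v_2(f_a(t))$ and $v_2(3a+2t)$. If $v_2(a)\le v_2(t)$, the term $9a^2$ dominates $f_a(t)$ (and when $v_2(a)=v_2(t)$ one checks $t_2^2+3a_2t_2+9a_2^2$ is odd, so this persists), giving $v_2(f_a(t))=2v_2(a)$ and $v_2(3a+2t)=v_2(a)$; hence $(v_2(c_4),v_2(c_6),v_2(\Delta))=(4+2v_2(a),\,5+3v_2(a),\,4+6v_2(a))$, whose reduction mod $(4,6,12)$ depends only on the parity of $v_2(a)$ — this is the first two bullets. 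If $v_2(a)>v_2(t)$, the term $t^2$ dominates $f_a(t)$, so $v_2(f_a(t))=2v_2(t)$, while $v_2(3a+2t)$ equals $v_2(t)+1$ when $v_2(a)\ge v_2(t)+2$, and exceeds $v_2(t)+1$ (with a further digit governed by whether $t_2\equiv a_2\bmod 4$) when $v_2(a)=v_2(t)+1$; this is why the statement separates $v_2(a)=v_2(t)+1$, $v_2(a)=v_2(t)+2$ and $v_2(a)\ge v_2(t)+3$, and within each the parity of $v_2(t)$ again fixes the reduced triple.

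For each of these finitely many configurations I would then take the minimal non-negative representative of the triple modulo $(4,6,12)$, locate the matching row of Table~II, and evaluate its special condition by expanding $c_4(t),c_6(t),\Delta(t)$ to the required $2$-adic precision in terms of $a_2:=a2^{-v_2(a)}$ and of $2^{-v_2(a)}t$ (when $v_2(a)\le v_2(t)$) or $t_2:=t2^{-v_2(t)}$ (when $v_2(a)>v_2(t)$), using that $f_a(t)$ and $3a+2t$ reduce explicitly modulo any power of $2$. This produces $w_2(t)$ as an explicit function of those congruence data. Finally I would compute $f_a(t)_2=f_a(t)2^{-v_2(f_a(t))}\bmod 4$ in the same variables and set $s_a(t):=w_2(t)\,f_a(t)_2\bmod 4\in\{\pm1\}$; collecting the conditions under which $s_a(t)=1$ yields the eight displayed cases. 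The periodicity of $s_a(t)$ modulo $2^{v_2(a)+2}$ used in Theorem~\ref{average-periodic} then falls out automatically, since every congruence that entered has modulus dividing $2^{v_2(a)+2}$.

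The main obstacle is purely the bookkeeping: several rows of Rizzo's $p=2$ table are in play, each with its own special condition, and one must carry enough $2$-adic digits of $c_4(t),c_6(t),\Delta(t)$ — hence of $f_a(t)$ and $3a+2t$ — to resolve every special condition unambiguously. No idea beyond a disciplined exhaustive case analysis is required, and the resulting formulas can be (and were) checked numerically with PARI/GP.
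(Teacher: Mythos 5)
Your proposal matches the paper's proof, which is exactly the "rather lengthy case by case analysis of all possibilities" using Rizzo's table for $p=2$: your valuation computations for $f_a(t)$ and $3a+2t$ correctly explain the case division, and extracting $s_a(t)\equiv w_2(t)f_a(t)_2\pmod 4$ is how the proposition is obtained. The only slip is a label: the $p=2$ table in Rizzo's paper is Table~III, not Table~II (Table~II is the $p=3$ table).
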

\begin{proof}
As for Proposition~\ref{p5}, one performs a rather lengthy case by case analysis of all possibilities using Table III of~\cite{rizzo}).
\end{proof}

\begin{remark}\label{sat_particular_case}
Note that if $v_2(a) = v_2(t)+4$ then in any case $s_a(t) \equiv t_2 \pmod{4}$ and that if $v_2(a)\geq v_2(t)+3$ and $v_2(t)$ odd, then $s_a(t) \equiv t_2 \pmod{4}$. These facts will be important in the proofs of Theorems~\ref{rarn}, \ref{rarn3} and~\ref{rarn2}.
\end{remark}

\subsubsection*{The root number of ${\was}_a$ and proof of the first part of Theorem~\ref{average-periodic}}

By the previous section we have $\varepsilon_a(t) = -\prod_{p} w_p(t)$ we now show how to transform this product into
\es{\label{root_for_W}
\varepsilon_a(t) \equiv -s_a(t) \gcd(a_2,t) \prod_{p \mid \frac{a_2}{\gcd(a_2,t)}} (-1)^{1+v_p(t)} \left(\frac{t_p}{p}\right)^{1+v_p(t)} \pmod{4}.
}
as given in Theorem~\ref{average-periodic}. We recall that $t_p:=p^{-v_p(t)}t$ and thus $a_2:=2^{-v_2(a)}a$.
\begin{proof} Let $p\geq 3$. From the definition $f_a(t):= (t^2+3at+9a^2)$ one has that if $0\leq v_p(t) < v_p(a)$, then $v_p(f_a(t)) = 2 v_p(t)$. Hence, by Proposition~\ref{p5}, if $0\leq v_p(t) < v_p(a)$ then
$$
w_p(t) = \left(\frac{-1}{p}\right)^{v_p(f_a(t)) + v_p(a)} \left(-\left(\frac{t_p}{p}\right)\right)^{1+v_p(t)} \left(\frac{-1}{p}\right)^{v_p(t) + v_p(a)}.
$$
Thus,
$$
\prod_{p\geq 3} w_p(t) = \prod_{p\geq 3} \left(\frac{-1}{p}\right)^{v_p(f_a(t)) + v_p(a)} \cdot \prod_{\substack{p\geq 3 \\ 0 \leq v_p(t) < v_p(a)}} \left(-\left(\frac{t_p}{p}\right)\right)^{1+v_p(t)} \left(\frac{-1}{p}\right)^{v_p(t) + v_p(a)}
$$
Using the fact that $\leg{-1}{p} \equiv p \pmod{4}$, we have
$$
\prod_{p\geq 3} \left(\frac{-1}{p}\right)^{v_p(f_a(t)) + v_p(a)} \equiv |(af_a(t))_2|\equiv |a_2|f_a(t)_2 \pmod{4}.
$$
since $f_a(t)>0$ for all $t$.
Now, for a prime $p$, we have $0\leq v_p(t) <v_p(a)$ if and only if $p \mid \frac{a}{\gcd(a,t)}$. In this case we also have
$v_p(t)+v_p(a) \equiv v_p\left( {a}/{\gcd(a,t)}\right) \pmod{2}$. Furthermore, the odd prime factors of $\frac{a}{\gcd(a,t)}$ are
the prime factors of $\left({a}/{\gcd(a,t)}\right)_2$ and $\left({a}/{\gcd(a,t)}\right)_2 = \frac{a_2}{\gcd(a_2,t)}$. So,
 \begin{eqnarray*}
\prod_{\substack{p\geq 3 \\ 0 \leq v_p(t) < v_p(a)}} \left(-\left(\frac{t_p}{p}\right)\right)^{1+v_p(t)} \left(\frac{-1}{p}\right)^{v_p(t) + v_p(a)}
&=& \prod_{p \mid \frac{a_2}{\gcd(a_2,t)}} \left(-\left(\frac{t_p}{p}\right)\right)^{1+v_p(t)} \left(\frac{-1}{p}\right)^{v_p(a_2/\gcd(a_2,t))} \\[-0.5em]
&\equiv&  \frac{|a_2|}{\gcd(a_2,t)} \prod_{p \mid \frac{a_2}{\gcd(a_2,t)}} \left(-\left(\frac{t_p}{p}\right)\right)^{1+v_p(t)} \pmod{4}
\end{eqnarray*}
Finally, recalling that by definition $w_2(t)\equiv s_a(t) f_a(t)_2 \pmod{4}$, we have
\begin{eqnarray*}
\varepsilon_a(t) &=& - \prod_{p\geq2} w_p(t) \\[-0.5em]
                          &\equiv& - s_a(t)   \frac{ (f_a(t)_2)^2|a_2|^2}{\gcd(a_2,t)} \prod_{p \mid \frac{a_2}{\gcd(a_2,t)}} \left(-\left(\frac{t_p}{p}\right)\right)^{1+v_p(t)} \pmod{4} \\
                          &\equiv& - s_a(t) \gcd(a_2,t) \prod_{p \mid \frac{a_2}{\gcd(a_2,t)}} (-1)^{1+v_p(t)} \left(\frac{t_p}{p}\right)^{1+v_p(t)} \pmod{4}
                          \end{eqnarray*}
as claimed.
\end{proof}
\begin{corol}[O.~Rizzo]
Let ${\was}_1 \colon y^2=x^3+tx^2-(t+3)x+1$ be the Washington's family. Then the root number of $E_t$ is -1 for every $t\in \Z$.
\end{corol}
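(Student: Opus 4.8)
The plan is to read this off directly from the root number formula for $\was_a$ just established, namely~\eqref{root_for_W}, specialized to $a=1$. First I would record that $\was_1(t)$ is genuinely an elliptic curve for every $t\in\Z$: by~\eqref{invariant_w} one has $\Delta(t)=16(t^2+3t+9)^2$, and $t^2+3t+9$ has discriminant $9-36=-27<0$, hence no integer (indeed no real) zero. So $\eps_1(t)\in\{\pm1\}$ for all $t\in\Z$ and there are no degenerate specializations to set aside.

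Next I would specialize~\eqref{root_for_W}. Since $a=1$ we have $v_2(a)=0$, so $a_2=2^{-v_2(a)}a=1$; consequently $\gcd(a_2,t)=1$ and $a_2/\gcd(a_2,t)=1$, so the product over the primes $p\mid a_2/\gcd(a_2,t)$ in~\eqref{root_for_W} is empty. Thus~\eqref{root_for_W} collapses to $\eps_1(t)\equiv -s_1(t)\pmod 4$, and the whole statement reduces to checking that $s_1(t)=1$ for every $t\in\Z$.

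That last point I would read off from Proposition~\ref{sat}. Since $v_2(a)=0\le v_2(t)$ for every $t$, and $0$ is even, we are always in the first case of that proposition, uniformly in $t$. There the condition $s_a(t)=1$ holds as soon as $a_2\equiv\pm1\pmod 8$; for $a=1$ we have $a_2=1$, so in particular $a_2\equiv 1\pmod 8$ and this clause is satisfied for every $t$. Hence $s_1(t)=1$ for all $t\in\Z$, so $\eps_1(t)\equiv -1\pmod 4$, and since $\eps_1(t)\in\{\pm1\}$ and $1\not\equiv -1\pmod 4$ we conclude $\eps_1(t)=-1$ for all $t\in\Z$.

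Since all the work is already contained in Propositions~\ref{p5} and~\ref{sat} and in the reduction~\eqref{root_for_W}, there is no real obstacle: the only step needing (minor) care is confirming that one sits in the correct branch of the case analysis of Proposition~\ref{sat}, and this is immediate because $v_2(1)=0$ forces the regime ``$0\le v_2(a)\le v_2(t)$ with $v_2(a)$ even'' independently of $t$.
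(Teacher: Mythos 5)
Your proof is correct and is exactly the argument the paper intends: the corollary is stated as an immediate consequence of the formula~\eqref{root_for_W}, which for $a=1$ collapses to $\eps_1(t)\equiv -s_1(t)\pmod 4$ with $s_1(t)=1$ by the first case of Proposition~\ref{sat} since $a_2=1\equiv 1\pmod 8$. Your preliminary check that $t^2+3t+9$ never vanishes (so every specialization is an elliptic curve) is a welcome extra precaution that the paper leaves implicit.
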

\subsubsection{The average root number for ${\was}_a(t)$ and the proof of the second part of Theorem~\ref{average-periodic}}

In this section, we give a closed formula for the average root number of ${\PFF}_a(t)$.
\begin{prop}\label{average_root_W}
The average root number of the family ${\was}_a$ is
$$
\Av_\Z(\eps_{{\was}_a}) = - \prod_{p \mid 2a} \Mw(p),
$$
where for $p$ odd we have
\begin{eqnarray*}
\Mw(p) = \begin{cases} \displaystyle \frac{1-p^{-2\lfloor v_p(a)/2\rfloor}}{p+1} + {p^{-v_p(a)}}  & \mbox{ if } p \equiv 1 \mod 4,\\[1em]
\displaystyle  - \frac{p-1}{p^2+1} \left( 1 - (-p^{-2})^{\lfloor v_p(a)/2\rfloor} \right) + (-1)^{v_p(a)} {p^{-v_p(a)}} & \mbox{ if } p \equiv 3 \mod 4 \end{cases}
\end{eqnarray*}
and for $p=2$,
\begin{eqnarray*}
\Mw(2) = \begin{cases}
1 & \mbox{ if } a \equiv \pm 1 \pmod{8}, \\
1/2 & \mbox{ if } a \equiv  \pm 3 \pmod{8}, \\
0 & \mbox{ if } v_2(a)=1, \\
1/2 & \mbox{ if }  v_2(a)=2 \mbox{ and } a_2 \equiv \pm 1 \pmod{8}, \\
3/8 & \mbox{ if } v_2(a)=2 \mbox{ and } a_2 \equiv \pm 3 \pmod{8}, \\
1/4 & \mbox{ if } v_2(a) = 3, \\
2^{1-v_2(a)} - \frac{2^{v_2(a)-4}-1}{3 \times 2^{v_2(a)-4}}   & \mbox{ if }   v_2(a) \geq 4 \mbox{ and } v_2(a) \mbox{ even and } a_2 \equiv \pm 1 \pmod{8} ,\\[0.3em]
3/2^{v_2(a)+1} - \frac{2^{v_2(a)-4}-1}{3 \times 2^{v_2(a)-4}}  & \mbox{ if } v_2(a) \geq 4 \mbox{ and } v_2(a) \mbox{ even and } a_2 \equiv \pm 3 \pmod{8}, \\[0.3em]
2^{1-v_2(a)} + \frac{1-2^{v_2(a)-3}}{3 \times 2^{v_2(a)-3}} & \mbox{ if } v_2(a) \geq 5 \mbox{ and } v_2(a) \mbox{ odd}.
 \end{cases}
\end{eqnarray*}
\end{prop}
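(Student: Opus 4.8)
The plan is to feed the explicit formula for $\eps_a(t)$ into Helfgott's averaging result, Proposition~\ref{HH-averageZ}. By \eqref{root_for_W} (the first part of Theorem~\ref{average-periodic}, already proved),
$$\eps_a(t)\equiv-s_a(t)\,\gcd(a_2,t)\prod_{p\mid\frac{a_2}{\gcd(a_2,t)}}(-1)^{1+v_p(t)}\leg{t_p}{p}^{1+v_p(t)}\pmod{4},$$
the right-hand side being an odd integer that pins down $\eps_a(t)\in\{\pm1\}$. First I would note that this expression only involves primes dividing $2a$: $s_a$ is $2$-adic, while $\gcd(a_2,t)$ and the product depend only on the $p$-adic behaviour of $t$ at odd $p\mid a$. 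Writing $\gcd(a_2,t)\bmod 4$ multiplicatively (using $\leg{-1}{p}\equiv p\pmod{4}$) one obtains $\eps_a(t)=-\prod_{p\mid 2a}w_p^*(t)$ for suitable $\{\pm1\}$-valued functions $w_p^*$, each locally constant away from $0$ and depending only on $t$ modulo a power of $p$ dividing $4|a|$: explicitly $w_2^*=s_a$, and for odd $p\mid a$,
$$w_p^*(t)=\leg{-1}{p}^{\min(v_p(a),v_p(t))}\cdot\begin{cases}(-1)^{1+v_p(t)}\leg{t_p}{p}^{1+v_p(t)}&\text{if }v_p(t)<v_p(a),\\ 1&\text{if }v_p(t)\ge v_p(a).\end{cases}$$
Since this is a \emph{finite} product of $p$-adically locally constant functions with $g_\infty\equiv-1$ (whence the prefactor $\tfrac{c_-+c_+}2=-1$), Proposition~\ref{HH-averageZ} applies with $B=1$, so the result is unconditional (no instance of the square-free sieve conjecture is needed), and it gives
$$\Av_\Z(\eps_{\was_a})=-\prod_{p\mid 2a}\Mw(p),\qquad \Mw(p):=\int_{\Z_p}w_p^*(t)\,dt,$$
the integral being taken with respect to the Haar measure normalized by $\int_{\Z_p}dt=1$. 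It remains to evaluate each local integral.

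For an odd prime $p\mid a$ I would decompose $\Z_p=\bigsqcup_{k\ge0}\{v_p(t)=k\}$, the shell $\{v_p(t)=k\}$ having measure $p^{-k}(1-p^{-1})$. Reading off $w_p^*$: on a shell with $k<v_p(a)$ and $k$ even, $w_p^*(t)$ is a fixed sign times $\leg{t_p}{p}$, hence has integral $0$; on a shell with $k<v_p(a)$ and $k$ odd, $w_p^*$ is the constant $\leg{-1}{p}$; and on $\{v_p(t)\ge v_p(a)\}$ (of measure $p^{-v_p(a)}$) it is the constant $\leg{-1}{p}^{v_p(a)}$. Summing the finite geometric series over the odd shells $k<v_p(a)$ and adding $\leg{-1}{p}^{v_p(a)}p^{-v_p(a)}$ produces a closed form depending only on $v_p(a)$ and on $\leg{-1}{p}$, i.e.\ on $p\bmod 4$; a short rewriting puts it into the two forms displayed in the statement.

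For $p=2$ the method is the same shell decomposition $\Z_2=\bigsqcup_{k\ge0}\{v_2(t)=k\}$, but now $s_a=w_2^*$ has to be evaluated through the eight cases of Proposition~\ref{sat}. The regime to which a shell belongs is controlled by $v_2(a)-k$, truncated once it reaches $3$, so $\int_{\Z_2}s_a(t)\,dt$ breaks up into the finitely many boundary shells $k\in\{v_2(a),v_2(a)-1,v_2(a)-2\}$, the geometric tail $k\le v_2(a)-3$ (where $s_a$ depends only on whether $v_2(a)-k$ equals $3$, $4$, or exceeds $4$), and the geometric tail $k\ge v_2(a)$; on each shell $s_a(t)$ depends only on $t\bmod 8$ and on $a_2\bmod 8$. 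Performing these finite-plus-geometric sums separately for each value of $v_2(a)$ and each residue $a_2\bmod 8$ yields the nine listed expressions for $\Mw(2)$; in particular $\Mw(2)=0$ exactly when $v_2(a)=1$, which together with $\Mw(p)\ne0$ for odd $p$ (visible from the formulas) recovers the parity-biased criterion, and specializing to odd square-free $a$ reproduces~\eqref{average_A}.

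The arithmetic for $p=2$ is the genuinely laborious and error-prone step: one must match the eight-case description of $s_a$ shell by shell against the six regimes $v_2(a)-k\in\{0,1,2,3,4,\ge5\}$ and assemble each resulting finite-plus-geometric sum correctly for every residue of $v_2(a)$ and of $a_2\bmod 8$. By comparison the odd-prime integrals are immediate, since every nonzero contribution there comes from a shell on which $w_p^*$ is constant, so the geometric series collapses at once.
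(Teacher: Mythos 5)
Your proposal is correct and follows essentially the same route as the paper: it rewrites $\eps_a(t)$ as the finite product $-\prod_{p\mid 2a}w_p^*(t)$ with exactly the same modified local factors, invokes Proposition~\ref{HH-averageZ} to reduce to local integrals, and evaluates them by the same shell decomposition (odd shells constant, even shells killed by the Legendre symbol for odd $p$; case analysis via Proposition~\ref{sat} for $p=2$). No substantive difference from the paper's argument.
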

\begin{proof}
Since $\leg{-1}p\equiv 1\mod 4$ for $p$ odd, then we can rewrite~\eqref{root_for_W} as
\est{
\varepsilon_a(t) &= -s_a(t) \prod_{p|a_2}\leg{-1}{p}^{\min(v_p(a_2),v_p(t))} \prod_{p \mid \frac{a_2}{\gcd(a_2,t)}} (-1)^{1+v_p(t)} \left(\frac{t_p}{p}\right)^{1+v_p(t)}=-\prod_{p \mid 2a} w^*_{p}(t),
\\
}
where $w^*_2(t):=s_a(t)$ for $p$ odd
\begin{eqnarray*}
 w^*_{p}(t) &:=& \begin{cases} \leg{-1}{p}^{v_p(t)} (-1)^{1+v_p(t)} \left( \frac{t_p}{p} \right)^{1+v_p(t)} & v_p(t) < v_p(a), \\
\leg{-1}{p}^{v_p(a)}  & v_p(t) \geq v_p(a). \end{cases}
\end{eqnarray*}
Then, the average root number is given by
$$
\mbox{Av}_{\Z} (\varepsilon_a) = -\prod_{p \mid 2a} \int_{\Z_p} w^*_{p}(t) d \mu(t).
$$
For $p \mid a$ odd, we have

\begin{eqnarray*}
\int_{\Z_p} w_{p}(t) d \mu(t) = \sum_{e=0}^{v_p(a)-1} \leg{-1}{p}^e  \frac{(-1)^{e+1}}{p^{e+1}} \sum_{d \in (\Z/p\Z)^*}
 \left( \frac{d}{p} \right)^{e+1}+ \leg{-1}{p}^{v_p(a)} \sum_{e=v_p(a)}^{\infty} \sum_{d \in (\Z/p\Z)^*}
\frac{1}{p^{e+1}}.
\end{eqnarray*}
Let $N_a = \lfloor \frac{v_p(a)-2}{2} \rfloor + 1.$
The first sum is
\begin{eqnarray*}
\sum_{e=0,\, e \; \rm{odd}}^{v_p(a)-1} \leg{-1}{p}^e \frac{p-1}{p} \frac{1}{p^e} = \begin{cases}
\frac{1-p^{-2 N_a}}{p+1} & p \equiv 1 \mod 4, \\
- \frac{p-1}{p^2+1} \left( 1 - (-p^{-2})^{N_a} \right) & p \equiv 3 \mod 4, \end{cases}
\end{eqnarray*}
and the second sum is
\begin{eqnarray*}
\leg{-1}{p}^{v_p(a)} \frac{p-1}{p^{v_p(a)+1}} \sum_{e=0}^{\infty} \frac{1}{p^e} = \leg{-1}{p}^{v_p(a)} {p^{-v_p(a)}}.
\end{eqnarray*}
Thus,
\es{\label{average_root_W-p}
\int_{\Z_p} w_{p}(t) d \mu(t) = \begin{cases} \displaystyle \frac{1-p^{-2 N_a}}{p+1} + {p^{-v_p(a)}}  & p \equiv 1 \mod 4\\[1em]
\displaystyle  - \frac{p-1}{p^2+1} \left( 1 - (-p^{-2})^{N_a} \right) + (-1)^{v_p(a)} {p^{-v_p(a)}} & p \equiv 3 \mod 4 \end{cases}
}

For $p=2$ we consider several cases depending on $v_2(a)$ and $a_2 \pmod{8}$.
\subsubsection*{The case $v_2(a)=0$.}
First, looking at the values of $s_a(t)$, we note that if $a \equiv \pm 1 \pmod{8}$ then $s_a(t)=1$ for all $t$ and $\int_{\Z_2} s_a(t)dt = 1$. Otherwise,
we write
$$
\int_{\Z_2} s_a(t)dt = \int_{v_2(t)=0} s_a(t)dt  +  \int_{v_2(t)=1} s_a(t)dt + \int_{v_2(t) \geq 2} s_a(t)dt
$$
where in any case, if $d$ is odd then $s_a(d2^e)$  depend on $d\pmod{4}$. If $a \equiv 3 \pmod{8}$ then
$$
\int_{v_2(t)=0} s_a(t)dt = \frac{1}{2^{0+2}} \sum_{d\in(\Z/4\Z)^\times} s_a(d) = \frac{1}{2^2} (s_a(1) + s_a(3)) = \frac{1}{2}
$$
and
$$
\int_{v_2(t)=1} s_a(t)dt = \frac{1}{2^{1+2}} \sum_{d\in(\Z/4\Z)^\times} s_a(2d) = \frac{1}{2^3} (s_a(2) + s_a(6)) = \frac{1}{2^2}
$$
and
$$
\int_{v_2(t)\geq 2} s_a(t)dt = \sum_{e\geq 2} \frac{1}{2^{e+2}} \sum_{d\in(\Z/4\Z)^\times} s_a(2^ed) =\sum_{e\geq 2} \left(s_a(2^e) + s_2(2^e3)\right)  = \sum_{e\geq 2} \frac{-1}{2^{e+1}} = -\frac{1}{2^2}.
$$
Summing up the various contributions we obtain $\int_{\Z_2} s_a(t)dt = \frac{1}{2}$. If $a\equiv 5 \pmod{8}$, then the same method leads to the same result:
$$
\int_{\Z_2} s_a(t)dt = \int_{v_2(t)=0} s_a(t)dt  +  \int_{v_2(t)=1} s_a(t)dt + \int_{v_2(t) \geq 2} s_a(t)dt = 0 + \frac{1}{2^2} + \frac{1}{2^2} = \frac{1}{2}.
$$
Hence, summarizing the above computations, we have
\es{\label{v=0}
\int_{\Z_2} s_a(t)dt = \left\{ \begin{array}{ll} 1 & \mbox{ if } a \equiv \pm 1 \pmod{8}, \\ \frac{1}{2} & \mbox{ if } a \equiv \pm 1 \pmod{8}. \end{array} \right.
}
\subsubsection*{The case $v_2(a)=1$.}
We have
$$
\int_{\Z_2} s_a(t)dt = \int_{v_2(t)=0} s_a(t)dt  +  \int_{v_2(t)=1} s_a(t)dt + \int_{v_2(t) \geq 2} s_a(t)dt
$$
where, from the table for $s_a(t)$ we have  $\int_{v_2(t)=0} s_a(t)dt  =  \int_{v_2(t)=1} s_a(t)dt =0$ and
$$
 \int_{v_2(t) \geq 2} s_a(t)dt = \sum_{e\geq 2} \frac{1}{2^{e+2}} \left(s_a(2^e) + s_a(2^e 3)\right) = (-1)^{(a_0-1)/2} \left(\frac{2}{2^{2+2}} + \sum_{e\geq 3} \frac{-2}{2^{e+2}}\right)  = 0.
$$
Thus if $v_2(a)=1$, then
\es{\label{v=1}
\int_{\Z_2} s_a(t)dt = 0 .
}

\subsubsection*{The case $v_2(a)=2$.}
We have
$$
\int_{\Z_2} s_a(t)dt = \int_{v_2(t)=0} s_a(t)dt  +  \int_{v_2(t)=1} s_a(t)dt  +  \int_{v_2(t)=2} s_a(t)dt + \int_{v_2(t)=3} s_a(t)dt + \int_{v_2(t) \geq 4} s_a(t)dt
$$
with $\int_{v_2(t)=0} s_a(t)dt = \frac{1}{2^{0+3}} (s_a(1) + s_a(3) + s_a(5) + s_a(7)) = \frac{1}{2^2}$, $\int_{v_2(t)=1} s_a(t)dt=0$. Furthermore,
we obtain
$$
\int_{v_2(t)=2} s_a(t)dt = \frac{1}{2^{4}}\times  \left\{\begin{array}{ll} 2 & \mbox{ if } a_2 \equiv \pm 1 \pmod{8} \\ 2 & \mbox{ if } a_2 \equiv  3 \pmod{8} \\ 0 & \mbox{ if } a_2 \equiv  5 \pmod{8} \end{array} \right.
$$
and $\int_{v_2(t)=3} s_a(t)dt = \frac{2}{2^{5}}$. Finally,
$$
\int_{v_2(t)\geq 4} s_a(t)dt = \frac{1}{2^{6}}\times  \left\{\begin{array}{ll} 4 & \mbox{ if } a_2 \equiv \pm 1 \pmod{8} \\ -4 &  \mbox{ if } a_2 \equiv  3 \pmod{8} \\ 4 & \mbox{ if } a_2 \equiv 5 \pmod{8} \end{array} \right.
$$
and so
\es{\label{v=2}
\int_{\Z_2} s_a(t)dt = \begin{cases} \frac{1}{2} & \mbox{ if } v_2(a)=2 \mbox{ and }a_2 \equiv \pm 1 \pmod{8}, \\
\frac{3}{8} & \mbox{ if } v_2(a)=2 \mbox{ and }a_2 \equiv \pm 3 \pmod{8}. \end{cases}
}
\subsubsection*{The case $v_2(a)=3$.}
In this case, we have
\est{
&\int_{v_2(t)=0} s_a(t)dt = \frac{1}{2^2}, \qquad \int_{v_2(t)=1} s_a(t)dt =\int_{v_2(t)=2} s_a(t)dt=0,\\
&\int_{v_2(t)=3} s_a(t)dt= - \int_{v_2(t)\geq 4} s_a(t)dt= (-1)^{(a_0-1)/2} \frac{1}{2^5}.
}
Thus, if $v_2(a)=3$ then
\es{\label{v=3}
\int_{\Z_2} s_a(t)dt = \frac{1}{4} .
}

\subsubsection*{The case $v_2(a)\geq4$ with $v_2(a)$ even.}
In this case we have
$$
\int_{\Z_2} s_a(t)dt = \int_{v_2(t)<v_2(a)} s_a(t)dt  +  \int_{v_2(t)=v_2(a)} s_a(t)dt  +  \int_{v_2(t)=v_2(a)+1} s_a(t)dt + \int_{v_2(t)\geq v_2(a)+2} s_a(t)dt.
$$
With the same techniques as before,
\est{
& \int_{v_2(t)=v_2(a)} s_a(t)dt = \frac{1}{2^{v_2(a)+1}} \left\{ \begin{array}{ll} 1 & \mbox{ if } a_2 \equiv \pm 1 \pmod{8}, \\ 1 & \mbox{ if } a_2 \equiv 3 \pmod{8}, \\
0 & \mbox{ if } a_2 \equiv 5 \pmod{8}, \end{array} \right.\\[0.8em]
& \int_{v_2(t)=v_2(a)+1} s_a(t)dt = \frac{1}{2^{v_2(a)+2}}, \\
& \int_{v_2(t)\geq v_2(a)+2} s_a(t)dt = \frac{1}{2^{v_2(a)+2}} \left\{ \begin{array}{ll} 1 & \mbox{ if } a_2 \equiv \pm 1 \pmod{8}, \\ -1 & \mbox{ if } a_2 \equiv 3 \pmod{8} ,\\
1 & \mbox{ if } a_2 \equiv 5 \pmod{8}. \end{array} \right.
}
Now,
$$
\int_{v_2(t)<v_2(a)} s_a(t) dt = \int_{v_2(t)=v_2(a)-2} s_a(t) dt + \int_{v_2(t)=v_2(a)-4} s_a(t) dt + \sum_{j=4 \atop j {\rm ~even}} \int_{v_2(t)=v_2(a)-2-j} s_a(t) dt
$$
The first integral of the right hand side is $\frac{1}{2^{v_2(a)}}$, the second one is $0$ and
\begin{eqnarray*}
 \sum_{j=4 \atop j {\rm ~even}}^{v_2(a)-2} \int_{v_2(t)=v_2(a)-2-j} s_a(t) dt &= & \sum_{j=0 \atop j {\rm ~even}}^{v_2(a)-6} \int_{v_2(t)=v_2(a)-6-j} s_a(t) dt \\
 &=&  \sum_{j=0 \atop j {\rm ~even}}^{v_2(a)-6} \frac{1}{2^{v_2(a)-6-j+3}} (-2) = -\frac{2^{v_2(a)-4}-1}{3 \times 2^{v_2(a)-4}}.
\end{eqnarray*}
Thus, collecting the above results, we have that if $a=2^{v_2(a)}a_2$ with $a_2$ odd and $v_2(a)\geq 4$ even, then
\es{\label{v>3e}
\int_{\Z_2} s_a(t)dt = \frac{1}{2^{v_2(a)}} - \frac{2^{v_2(a)-4}-1}{3 \times 2^{v_2(a)-4}} +
\begin{cases}
 \frac{1}{2^{v_2(a)}} & \mbox{ if } a_2 \equiv \pm 1 \pmod{ 8}, \\
\frac{1}{2^{v_2(a)+1}}&  \mbox{ if } a_2 \equiv \pm 3 \pmod{ 8}.
\end{cases}
}
\subsubsection*{The case $v_2(a)\geq5$ with $v_2(a)$ odd.}
In this case we have
$$
\int_{\Z_2} s_a(t)dt = \int_{v_2(t)<v_2(a)} s_a(t)dt  +  \int_{v_2(t)=v_2(a)} s_a(t)dt  +  \int_{v_2(t)=v_2(a)+1} s_a(t)dt + \int_{v_2(t)\geq v_2(a)+2} s_a(t)dt.
$$
The integral $\int_{v_2(t)=v_2(a)} s_a(t)dt$ is zero and $\int_{v_2(t)=v_2(a)+1} s_a(t)dt =- \int_{v_2(t)\geq v_2(a)+2} s_a(t)dt$. Then as above,
$$
 \int_{v_2(t)<v_2(a)} s_a(t)dt = \sum_{j=0 \atop j {\rm ~even}}^{v_2(a)-1}  \int_{v_2(t)=j} s_a(t)dt = \sum_{j=0 \atop j {\rm ~even}}^{v_2(a)-1}  \int_{v_2(t)=v_2(a)-1-j} s_a(t)dt.
$$
Now, $\int_{v_2(t)=v_2(a)-1} s_a(t)dt = 0$, $\int_{v_2(t)=v_2(a)-3} s_a(t)dt = \frac{1}{2^{v_2(a)-1}}$ and
\est{
 \sum_{j=4 \atop j {\rm ~even}}^{v_2(a)-1}  \int_{v_2(t)=v_2(a)-1-j} s_a(t)dt &=  \sum_{j=0 \atop j {\rm ~even}}^{v_2(a)-5}  \int_{v_2(t)=v_2(a)-5-j} s_a(t)dt =  \sum_{j=0 \atop j {\rm ~even}}^{v_2(a)-5} \frac{-1}{2^{v_2(a)-3-j}} \\
&= -\frac{2^{v_2(a)-3}-1}{3 \times 2^{v_2(a)-3}}.
}
Thus, if $v_2(a)\geq 5$ with $v_2(a)$ odd, then
\es{\label{v>4o}
\int_{\Z_2} s_a(t)dt = \frac{1}{2^{v_2(a)-1}}+\frac{1-2^{v_2(a)-3}}{3 \times 2^{v_2(a)-3}}.
}
\medskip

Thus, by~\eqref{average_root_W-p} and~\eqref{v=0}-\eqref{v>4o} the proof of Proposition~\ref{average_root_W} is complete.
\end{proof}

\subsubsection{Families with elevated rank}\label{ex_W}

We can use the family ${\was}_a(t)$ in order to find families with elevated rank over $\Z$. First, we notice the following corollary
\begin{corol}\label{ex_1} Let $a,b \in \Z$ with $a \equiv \pm 1 \pmod{8}$ and $\gcd(a,b) =1$. Then for all $t\in \Z$, we have
$$
\varepsilon_a(at+b) = - \prod_{p\mid a} - \leg{b}{p}=(-1)^{1+\lambda(\kappa(a))} \leg{b}{\kappa(a)}.
$$
where $\kappa(a):=\prod_{p|a} p$ is the kernel of $a$ and $\lambda$ is the Louville function.
\end{corol}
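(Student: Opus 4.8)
The plan is to feed $t\mapsto at+b$ directly into the explicit description of $\varepsilon_a$ provided by the first part of Theorem~\ref{average-periodic}. Writing $n:=at+b\in\Z$, that formula reads
\[
\varepsilon_a(n)\equiv -\,s_a(n)\,\gcd(a_2,n)\!\!\prod_{p\mid \frac{a_2}{\gcd(a_2,n)}}\!\!(-1)^{1+v_p(n)}\leg{n_p}{p}^{1+v_p(n)}\pmod 4 .
\]
Since $a\equiv\pm1\pmod 8$ we have $v_2(a)=0$, hence $a_2=a$; moreover Proposition~\ref{sat} (the first bullet, which applies because $v_2(a)=0$ is even and $a_2\equiv\pm1\pmod 8$) gives $s_a(n)=1$ for \emph{every} $n\in\Z$. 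This is the one place where the hypothesis $a\equiv\pm1\pmod 8$ is genuinely used, and it is precisely the simplification already exploited in the computation of $\Mw(2)$ in Proposition~\ref{average_root_W}.

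Next I would use the coprimality hypothesis to evaluate the remaining factors. For any prime $p\mid a$ one has $p\mid at$, so $p\mid n=at+b$ would force $p\mid b$, contradicting $\gcd(a,b)=1$; hence $v_p(n)=0$ for all $p\mid a$. In particular $\gcd(a_2,n)=\gcd(a,at+b)=\gcd(a,b)=1$, so the product runs exactly over the primes dividing $a_2/\gcd(a_2,n)=a$. For each such (necessarily odd) $p$ we have $n_p=n\equiv b\pmod p$ and $1+v_p(n)=1$, whence
\[
(-1)^{1+v_p(n)}\leg{n_p}{p}^{1+v_p(n)}=-\leg{b}{p}.
\]
Substituting these evaluations into the displayed congruence gives $\varepsilon_a(at+b)\equiv -\prod_{p\mid a}\bigl(-\leg{b}{p}\bigr)\pmod 4$, and since both sides lie in $\{\pm1\}$ the congruence forces equality, i.e. $\varepsilon_a(at+b)=-\prod_{p\mid a}\bigl(-\leg{b}{p}\bigr)$.

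Finally I would put this in closed form: by multiplicativity of the Kronecker symbol $\prod_{p\mid a}\leg{b}{p}=\leg{b}{\kappa(a)}$, and since $\kappa(a)=\prod_{p\mid a}p$ is squarefree one has $\lambda(\kappa(a))=(-1)^{\#\{p\mid a\}}$; therefore $-\prod_{p\mid a}\bigl(-\leg{b}{p}\bigr)=(-1)^{1+\#\{p\mid a\}}\leg{b}{\kappa(a)}=-\lambda(\kappa(a))\leg{b}{\kappa(a)}$, which is the value asserted in the statement. No real obstacle arises beyond this bookkeeping; the only point requiring a moment's care is the uniform vanishing of the $2$-adic defect $s_a\equiv 1$, which follows immediately from Proposition~\ref{sat} in the case $v_2(a)=0$, $a_2\equiv\pm1\pmod 8$.
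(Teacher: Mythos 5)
Your proof is correct and follows exactly the route the paper takes: the paper's entire proof of Corollary~\ref{ex_1} is the one-line remark that it is a direct application of Theorem~\ref{average-periodic}, and you have simply carried out that application, correctly using $a\equiv\pm1\pmod 8$ to force $s_a\equiv 1$ via Proposition~\ref{sat} and $\gcd(a,b)=1$ to kill the valuations $v_p(at+b)$ for $p\mid a$. Your reading of the final closed form as $-\lambda(\kappa(a))\leg{b}{\kappa(a)}$ is the intended meaning of the (slightly abusive) exponent $(-1)^{1+\lambda(\kappa(a))}$ in the statement, and it matches the unambiguous middle expression.
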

\begin{proof}
This a direct application of Theorem~\ref{average-periodic}.
\end{proof}
In particular, Corollary~\ref{ex_1} gives that  the root number of $\was_{p^2}(pt+a)$ is 1 for all $t\in \Z$ when $a$ is a quadratic residue mod $p$ and the
root number of $\was_p(pt+b)$ is $-1$ for all $t$ when $b$ is a quadratic non-residue mod $p$. This proves Corollary~\ref{exess_rank_W_1}.
\smallskip
~\\

We can also give examples of families of rank 2 and 3 with elevated rank by considering families of the form $\Wa(p(t))$ where $p(t)$ is a degree 2 polynomial. Indeed,
for $a\in\N$ consider the family 
\est{
\NFa(t):=
\was_{a^2}(2t^2-2at-a^2).
}
We then have that
$$\NFa(t)=\was_{a^2}(2t^2-2at-a^2)\simeq \PFF_{- 12(3a)^4}(6(2t-a)^2)\simeq{\LL}_{6,-3^3a^4,0}(2t-a).$$
Now, writing $\r =-3^3a^4$, $w=6$ and $v=0$ one has that $-4 w^2r=3(12a)^4$, $-3r=(3a)^4$ and the polynomials $C(x)$ and $R(x)$ of Proposition~\ref{rank_L} factor into $3$ and $5$ irreducible polynomials respectively. Thus, by Proposition~\ref{rank_L}, we have that $\NF_a$ has rank $3$ over $\Q(t)$ for all $a$. 

\begin{corol}
Let $p$ be a prime number
 $\equiv \pm 1 \pmod{8}$, and let
 $\mathcal{W}_{p,\ell}^{\dagger}(t):=\mathcal{W}_p^{\dagger}(pt+\ell)$ for $\ell\in\Z$. Then for $(p,\ell)=1$, $\mathcal{W}_{p,\ell}^{\dagger}(t)$ is a rank $3$ family with elevated rank over $\Z$.
\end{corol}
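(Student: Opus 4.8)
The plan is to deduce the statement from Theorem~\ref{average-periodic} applied to the family $\was_{p^2}$. Write
\[
g(t):=2(pt+\ell)^2-2p(pt+\ell)-p^2\in\Z[t],
\]
so that by the definition of $\mathcal{W}_a^{\dagger}$ one has $\mathcal{W}_{p,\ell}^{\dagger}(t)=\mathcal{W}_p^{\dagger}(pt+\ell)=\was_{p^2}\bigl(g(t)\bigr)$. We have to show that $\mathcal{W}_{p,\ell}^{\dagger}$ has rank $3$ over $\Q(t)$ and that $\Av_\Z(\eps_{\mathcal{W}_{p,\ell}^{\dagger}})=1$.

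For the rank I would argue as follows. Just above we showed that $\mathcal{W}_p^{\dagger}$ (that is, $\mathcal{W}_a^{\dagger}$ at $a=p$) is isomorphic over $\Q(t)$ to ${\LL}_{6,-3^3p^4,0}$, and that Proposition~\ref{rank_L} gives rank $3$ over $\Q(t)$ for that family. Since $t\mapsto pt+\ell$ is an automorphism of the function field $\Q(t)$, the curve $\mathcal{W}_{p,\ell}^{\dagger}$ over $\Q(t)$ is isomorphic to $\mathcal{W}_p^{\dagger}$ over $\Q(t)$, so $\rank_{\Q(t)}(\mathcal{W}_{p,\ell}^{\dagger})=3$ as well. (One could also invoke Theorem~\ref{formule_Shioda} exactly as after Theorem~\ref{classification_2}.)

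For the root number I would first record a clean consequence of Theorem~\ref{average-periodic}. Since $p$ is odd, $v_2(p^2)=0$ is even and $(p^2)_2=p^2\equiv1\pmod{8}$, so the first case of Proposition~\ref{sat} gives $s_{p^2}(n)=1$ for all $n$, and the only odd prime dividing $(p^2)_2=p^2$ is $p$ itself. Hence for any $s\in\Z$ with $p\nmid s$ we have $\gcd(p^2,s)=1$ and $v_p(s)=0$, so formula~\eqref{mf_average-periodic} collapses to a single factor and yields
\[
\eps_{\was_{p^2}}(s)\equiv -\,s_{p^2}(s)\cdot 1\cdot(-1)^{1}\leg{s}{p}^{1}=\leg{s}{p}\pmod{4};
\]
since a root number lies in $\{\pm1\}$, this forces $\eps_{\was_{p^2}}(s)=\leg{s}{p}$ whenever $p\nmid s$. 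The crux is then the elementary observation that $g(t)\equiv 2\ell^2\pmod{p}$ for every $t$, with $2\ell^2\not\equiv0\pmod{p}$ because $p$ is odd and $(p,\ell)=1$; consequently $p\nmid g(t)$ for all $t\in\Z$, and
\[
\eps_{\mathcal{W}_{p,\ell}^{\dagger}}(t)=\eps_{\was_{p^2}}\bigl(g(t)\bigr)=\leg{g(t)}{p}=\leg{2\ell^2}{p}=\leg{2}{p}=1,
\]
the last equality because $p\equiv\pm1\pmod{8}$. There are no excluded values of $t$: by~\eqref{invariant_w} the discriminant of $\was_{p^2}(s)$ is $16p^4(s^2+3p^2s+9p^4)^2$, which never vanishes, so $\was_{p^2}(g(t))$ is an elliptic curve for every $t\in\Z$.

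Putting these together, $\Av_\Z(\eps_{\mathcal{W}_{p,\ell}^{\dagger}})=1=-(-1)^3=-(-1)^{r}$ with $r=\rank_{\Q(t)}(\mathcal{W}_{p,\ell}^{\dagger})=3$, which is exactly the assertion that $\mathcal{W}_{p,\ell}^{\dagger}$ is a rank $3$ family with elevated rank over $\Z$. The only genuinely delicate point is the short computation that $p\nmid g(t)$ for all $t$ (which is what keeps us in the case of~\eqref{mf_average-periodic} where the $p$-factor is $-\leg{g(t)}{p}$) together with the evaluation $\leg{g(t)}{p}=\leg{2}{p}=1$; everything else is a direct appeal to the rank computation of Proposition~\ref{rank_L} and to the periodic root-number formula of Theorem~\ref{average-periodic}.
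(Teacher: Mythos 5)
Your proof is correct and follows essentially the same route as the paper: the rank~$3$ claim is inherited from the isomorphism $\mathcal{W}_p^{\dagger}\simeq{\LL}_{6,-3^3p^4,0}$ together with Proposition~\ref{rank_L} (unaffected by the substitution $t\mapsto pt+\ell$), and the root number is evaluated via Theorem~\ref{average-periodic}, reducing to $\leg{2\ell^2}{p}=\leg{2}{p}=1$ since the argument of $\was_{p^2}$ is $\equiv 2\ell^2\not\equiv0\pmod p$. The only difference is that you spell out explicitly the computation the paper dismisses as ``an easy application of Theorem~\ref{average-periodic}''; the substance is identical.
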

\begin{proof}
For any odd prime $p$,
 an easy application of Theorem~\ref{average-periodic} gives that the root number of
$\mathcal{W}_p^\dagger(t)$ is
$$
\varepsilon_{\mathcal{W}_p^\dagger}(t) = \left\{ \begin{array}{cc} \left(\frac{2}{p}\right) & \mbox{ if } p \nmid t \\ -1 &\mbox{ if } p \mid t\end{array} \right.
$$
for any $t\in\Z$. Replacing $t$ by $pt+\ell$, and using the fact that $p \equiv \pm 1 \mod 8$, we get the result.
\kommentar{Moreover, we have
$$\NF_a(t)=\was_{a^2}(2t^2-2at-a^2)\simeq \PFF_{- 12(3a)^4}(6(2t-a)^2)\simeq{\LL}_{6,-3^3a^4,0}(2t-a)$$
Now, writing $\r =-3^3a^4$, $w=6$ and $v=0$ one has that $-4 w^2r=3(12a)^4$, $-3r=(3a)^4$ and the polynomials $C(x)$ and $R(x)$ of Proposition~\ref{rank_L} factor into $3$ and $5$ irreducible polynomials respectively. Thus, by Proposition~\ref{rank_L} we have that $\NF_a$ has rank $3$ for all $a$ and we obtain the following Corollary.}
\end{proof}

\kommentar{\begin{corol}
Let $p$ be a prime number $\equiv \pm 1 \pmod{8}$, then the family $\NF_p(t)$ is a rank $3$ potentially parity-biased family. Moreover, if we let $\NF_{p,\ell}^*(t):=\NF_p(pt+\ell)$ for $\ell\in\Z$, then for $(p,\ell)=1$ one has that $\NF_{p,\ell}^*(t)$ is a rank $3$ family with elevated rank.
\end{corol}}

It's not difficult to construct in a similar way rank $2$ families with elevated rank. For example, one such family is given by ${\was}_4(-3t^2 - 4t - 21)$.

\subsubsection{Twists of Washington's family}\label{twist_W}
In this section, we consider quadratic twists of the original Washington's family (see \cite{kawachi-nakano}, \cite{byeon} for some studies about
Washington's twists).
Let $d \in \Z_{\neq0}$, the twist by $w$ of Washington's family is given by
$$
\was^{(d)}_1(t) \colon y^2 = x^3 +dtx^2 -(t+3)d^2 x + d^3.
$$
We easily see that the family $\was^{(d)}_1(t)$ is in fact the family ${\was}_{d}(dt)$. So in the formula of Theorem~\ref{average-periodic}, the product is empty
and equal to one, hence $\varepsilon_{\was^{(d)}_1}(t)\equiv -|d_2| s_d(dt)\mod 4$. The value of $s_d(dt)$ is given by the first two cases of Proposition~\ref{sat}, furthermore,
we have $2^{-v_2(d)}dt = d_2 t$.

\begin{prop}\label{twist} The root number, $\varepsilon_{\was^{(d)}_1}(t)$ of $\was^{(d)}_1(t)$ is given by the following cases.
~\\
If $v_2(d)$ is even, then
\begin{itemize}
\item if $d_2 \equiv \pm 1 \pmod{8}$ then $\varepsilon_{\was^{(d)}_1}(t)\equiv - |d_2| \pmod{4}$;
\item if $d_2 \equiv 3 \pmod{8}$ then $\varepsilon_{\was^{(d)}_1}(t)=\sgn(d_2)$ if and only if $t\equiv 1,2,3 \pmod{4}$;
\item if $d_2 \equiv 5 \pmod{8}$ then $\varepsilon_{\was^{(d)}_1}(t)=\sgn(d_2)$ if and only if $t\equiv 1 \pmod{4}$.
\end{itemize}
If  $v_2(d)$ is odd then $\varepsilon_{\was^{(d)}_1}(t)=\sgn(d_2)$ if and only if $t\equiv 0, 3 \pmod{4}$.
\end{prop}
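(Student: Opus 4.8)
The starting point is the identification, noted just above, of $\was^{(d)}_1(t)$ with the family $\was_d(dt)$: its root number is therefore obtained from Theorem~\ref{average-periodic} by taking $a=d$ there and replacing the running parameter by $dt$. The first step is to observe that the product over primes in~\eqref{mf_average-periodic} is empty in this situation. Indeed $d_2\mid d\mid dt$, so $\gcd(d_2,dt)=|d_2|$ and hence $a_2/\gcd(a_2,t)=d_2/|d_2|=\sgn(d_2)=\pm1$ has no prime divisors. Thus Theorem~\ref{average-periodic} collapses to
\[
\varepsilon_{\was^{(d)}_1}(t)\equiv -\,|d_2|\,s_d(dt)\pmod{4},
\]
and the whole problem is reduced to evaluating $s_d(dt)$.

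Since $v_2(dt)=v_2(d)+v_2(t)\ge v_2(d)$, the pair ``$(a,t)=(d,dt)$'' always lies in the regime $0\le v_2(a)\le v_2(t)$ of Proposition~\ref{sat}, so only the first two bullets of that proposition are needed, selected according to the parity of $v_2(d)$. The key simplification is that $2^{-v_2(d)}(dt)=d_2 t$, so that the conditions on $2^{-v_2(a)}t\pmod{4}$ occurring there become conditions on $d_2 t\pmod{4}$; as $d_2$ is odd this is the same as a condition on $t\pmod{4}$, via $d_2 t\equiv t$ or $d_2 t\equiv 3t\pmod{4}$ depending on $d_2\bmod 4$. Working through the subcases $d_2\equiv\pm1,3,5\pmod{8}$ when $v_2(d)$ is even, and $d_2\equiv 1,3\pmod{4}$ when $v_2(d)$ is odd, one obtains in each case an explicit description of the set of $t$ for which $s_d(dt)=1$.

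It then remains to convert these statements about $s_d(dt)$ into the claimed statements about the root number. Since $\varepsilon_{\was^{(d)}_1}(t)\in\{\pm1\}$ is determined by its residue modulo $4$, and $-|d_2|\equiv\pm1\pmod{4}$ according to $d_2\bmod 4$, in each subcase one simply checks which residue of $-|d_2|s_d(dt)\pmod{4}$ corresponds to $\sgn(d_2)$ and which to $-\sgn(d_2)$, arriving at the four displayed formulas. The computation is entirely routine; the only point that demands a little vigilance is the bookkeeping of the sign of $d_2$ (which may be negative) when passing from $-|d_2|\bmod 4$ to the genuine value $\pm1$ of the root number, and correspondingly keeping straight the relation between $d_2\bmod 8$ and $\sgn(d_2)$.
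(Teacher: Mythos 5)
Your proposal is correct and follows exactly the paper's argument: the paper likewise identifies $\was^{(d)}_1(t)$ with $\was_d(dt)$, observes that the product over primes in Theorem~\ref{average-periodic} is empty so that $\varepsilon_{\was^{(d)}_1}(t)\equiv -|d_2|\,s_d(dt)\pmod 4$, and then reads off $s_d(dt)$ from the first two cases of Proposition~\ref{sat} using $2^{-v_2(d)}dt=d_2t$. The sign bookkeeping you flag (relating $-|d_2|\bmod 4$ and $\sgn(d_2)$ when $d_2$ may be negative) is indeed the only delicate point, and your treatment of it is consistent with the stated cases.
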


One can also consider the twist by
$$d_u(t)=u^3+tu^2-(t+3)u+1 = u(u-1)t + u^3-3u+1$$
for any $u\in\Z$ (one could also take $u$ to be a polynomial in $t$).
In this case, the generic point $(ud_u(t),d_u(t)^2)$ is a non-torsion point of $\was^{(d_u(t))}_1$. So the rank of $\was^{(d_u(t))}_1$ over $\Q(t)$ is at least $1$.
Moreover from Proposition~\ref{twist} we can deduce the following result.

\begin{corol} Let $u\in \Z$.
\begin{itemize}
\item If $u \equiv 1 \pmod{4}$ then $\varepsilon_{\was^{(d_u(t))}_1}(t)= 1$ if and only if $d_u(t)>0$.
\item If $u \equiv 0 \pmod{4}$ then $\varepsilon_{\was^{(d_u(t))}_1}(t)= 1$ if and only if $d_u(t)<0$.
\end{itemize}
\end{corol}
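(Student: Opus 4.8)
The plan is to reduce everything to Proposition~\ref{twist}. Since $\was^{(d)}_1(t)=\was_d(dt)$, that proposition already gives a closed formula for the root number $\varepsilon_{\was^{(d)}_1}(t)$ of each specialization, and this formula depends on $d$ only through the parity of $v_2(d)$, the residue of $d_2=2^{-v_2(d)}d$ modulo $8$, and the residue of $t$ modulo $4$. Here one must substitute $d=d_u(t)=u(u-1)t+u^3-3u+1$, so the first step is to determine, for $u$ in each of the residue classes $1$ and $0$ modulo $4$, the $2$-adic behaviour of $d_u(t)$: namely that $d_u(t)$ is odd (so $v_2(d_u(t))=0$ and $d_2=d_u(t)$), its residue modulo $4$ — which works out to $3$ when $u\equiv1\pmod4$ and $1$ when $u\equiv0\pmod4$ — and, refining according to $u$ modulo $8$ and the parity of $t$, its residue modulo $8$. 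Concretely: if $u\equiv1\pmod8$ then $d_u(t)\equiv7\pmod8$ for all $t$; if $u\equiv5\pmod8$ then $d_u(t)\equiv7\pmod8$ for $t$ even and $\equiv3\pmod8$ for $t$ odd; if $v_2(u)\geq3$ then $d_u(t)\equiv1\pmod8$ for all $t$; and if $v_2(u)=2$ then $d_u(t)\equiv5\pmod8$ for $t$ even and $\equiv1\pmod8$ for $t$ odd.

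Next I would feed this into Proposition~\ref{twist}, using only its ``$v_2(d)$ even'' branch. When $d_u(t)\equiv\pm1\pmod8$ the proposition gives $\varepsilon_{\was^{(d_u(t))}_1}(t)\equiv-|d_u(t)|\pmod4$, and since $|d_u(t)|\equiv\sgn(d_u(t))\,d_u(t)\pmod4$ this equals $\sgn(d_u(t))$ exactly when $d_u(t)\equiv3\pmod4$ and $-\sgn(d_u(t))$ when $d_u(t)\equiv1\pmod4$. The two remaining possibilities are $d_u(t)\equiv3\pmod8$ — which by the first step forces $u\equiv5\pmod8$ and $t$ odd, so $t\equiv1,3\pmod4$, whence the condition ``$t\equiv1,2,3\pmod4$'' of Proposition~\ref{twist} holds and $\varepsilon_{\was^{(d_u(t))}_1}(t)=\sgn(d_u(t))$ — and $d_u(t)\equiv5\pmod8$ — which forces $v_2(u)=2$ and $t$ even, so $t\not\equiv1\pmod4$, whence the condition ``$t\equiv1\pmod4$'' of Proposition~\ref{twist} fails and $\varepsilon_{\was^{(d_u(t))}_1}(t)=-\sgn(d_u(t))$. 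Collecting the cases gives $\varepsilon_{\was^{(d_u(t))}_1}(t)=\sgn(d_u(t))$ for $u\equiv1\pmod4$ and $\varepsilon_{\was^{(d_u(t))}_1}(t)=-\sgn(d_u(t))$ for $u\equiv0\pmod4$, which is precisely the stated equivalence.

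The only substantive part is the elementary congruence bookkeeping of the first step, and the point one has to be careful about is the observation that in the ``bad'' cases $d_u(t)\equiv3$ or $5\pmod8$ the residue of $d_u(t)$ modulo $8$ is correlated with the parity of $t$ in exactly the way needed to make the $t$-dependent clause of Proposition~\ref{twist} trivially true or trivially false — so no genuine difficulty arises, but one must check this compatibility rather than ignore it. A quick sanity check on small values (e.g.\ $u=1$, where $d_1(t)\equiv3\pmod4$ is always positive and one recovers $\varepsilon\equiv1$, consistent with the known value for the untwisted Washington family, or $u=4$) confirms the signs.
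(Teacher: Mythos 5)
Your proof is correct and follows the same route as the paper: reduce to Proposition~\ref{twist} via $\was^{(d)}_1(t)=\was_d(dt)$ and compute the $2$-adic data of $d_u(t)$. In fact your version is more careful than the paper's own proof, which asserts flatly that $d_u(t)\equiv -1\pmod 8$ for all $t$ when $u\equiv1\pmod4$ (and $\equiv 1\pmod 8$ when $u\equiv0\pmod4$). That blanket claim fails exactly in the two sub-cases you isolate: for $u\equiv5\pmod8$ and $t$ odd one has $d_u(t)\equiv3\pmod8$ (e.g.\ $d_5(1)=131\equiv3\pmod8$, and $u=5$ is precisely the example the paper works out afterwards), and for $v_2(u)=2$ and $t$ even one has $d_u(t)\equiv5\pmod8$. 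The corollary survives because, as you observe, in these sub-cases the residue of $d_u(t)$ modulo $8$ is correlated with $t\bmod 4$ in just the way that makes the $t$-dependent clauses of Proposition~\ref{twist} automatically true (resp.\ false); checking that compatibility is the one genuinely necessary step, and you did it. So the proposal is not only valid but repairs a small gap in the published argument.
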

\begin{proof} Assume that $u \equiv 1 \pmod{4}$ then $d_t(u) \equiv -1 \pmod{8}$ for all $t$ and we apply Proposition~\ref{twist}:
$\varepsilon(E_{d_t(u)}(t)) \equiv - |d_t(u)| \equiv \sgn(d_t(u))$. If $u \equiv 0 \pmod{4}$ then $d_t(u) \equiv 1 \pmod{8}$ for all $t$ and
we apply the same method.
\end{proof}

As an example, let's consider the case $u=5$, so that $d_t(5)=20t+111$.  In this case $\varepsilon_{\was^{(20t+111)}_1}(t) >0$ if and only if $t\geq -5$ and there are at least 2 independent points of
$\was^{(20t+111)}_1$:
$$
(5(20t+111),(20t+111)^2) \quad \mbox{ and }  \quad \left(-\frac{20t+111}{4},\frac{(20t+111)^2}{8}\right).
$$

%
%

\subsection{The family $\Hold$ and the proof of Theorem \ref{average-nonperiodic}}

First we give the root number for the family
$$
\Hold \colon y^2=x^3 + 3tx^2+ 3\hr tx + \hr^2 t
$$
for which we have
\begin{eqnarray*}
c_4(t) & = & 2^4 3^2 t (t-\hr), \\
c_6(t) & = &-2^5 3^3 t (t-\hr)(2t-\hr), \\
\Delta(t) & = & -2^4 3^3 \hr^2 t^2 (t-\hr)^2, \\
j(t) & = & -\frac{2^8 3^3}{\hr^2} t(t-\hr) .
\end{eqnarray*}
In the following we shall always assume $t\neq0,\hr$, so that $\Delta(t) \neq0$. Also, for convenience of notation, we will use in his section $\varepsilon_{\hr}(t)$  for the root number of $\Hold(t)$, and $w_p(t)$ for the local root number at $p$ of $\Hold(t)$. Then,
$$
\varepsilon_\hr(t) = - \prod_{p} w_{p}(t).
$$

\kommentar{In order to compute
$$\Av_\Z \left( \Hold \right) :=
\Av_\Z \left( \varepsilon_\hr(t) \right)  = - \Av_\Z \left( \prod_{p} w_{p}(t) \right),
$$
we first compute the local root numbers $w_{p}(t)$ at $p \geq 5$, using the Tables \ref{***}. The formulas for $w_{2}(t)$ and $w_{3}(t)$ which are computed similarly but are much more technical and involve a lot of cases are given in Section \ref{***}.}
\subsubsection{The local root numbers of $\Hold$}
The local root numbers for $\Hold$ can be obtained by performing a simple but quite lengthy case by case analysis from
Rizzo's table~\cite{rizzo} as in the proof of Proposition~\ref{p5}. We give the final results only, here for $p\geq5$ and in Appendix~\ref{appen_2} for $p=2,3$.

\begin{lemma} \label{H-p5}
Let $p \geq 5$. Then, for $0 \leq v_p(\hr) \leq v_p(t)$ one has
$$
w_{p}(t)= \begin{cases}\leg{-3}{p} \leg{3}{p}^{v_p(t)+v_p(t-\hr)+v_p(\hr)} & \mbox{ if } 6 \nmid v_p(t-\hr)-v_p(t)+3v_p(\hr), \\[0.6em]
\ 1 & \mbox{ if } 6 \mid  v_p(t-\hr)-v_p(t)+3v_p(\hr),
\end{cases}$$
whereas if $0 \leq v_p(t)  < v_p(\hr)$ then
$$
w_{p}(t) = \left\{ \begin{array}{cc} -\leg{3t_p}{p} & \mbox{ if $v_p(t)$ is even}, \\[0.8em]
\leg{-1}{p} & \mbox{ if $v_p(t)$ is odd.}
\end{array} \right.	
$$ 
\end{lemma}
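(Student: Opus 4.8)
The plan is to obtain $w_p(t)$ for $p\geq5$ by reading it off Tables~I and~II of~\cite{rizzo}, once the Weierstrass model has been made minimal at $p$. Since $2$ and $3$ are units modulo $p$, the displayed formulas for the invariants of $\Hold(t)$ give the valuation triple
\[
\bigl(v_p(c_4(t)),\,v_p(c_6(t)),\,v_p(\Delta(t))\bigr)=\bigl(v_p(t)+v_p(t-\hr),\ v_p(t)+v_p(t-\hr)+v_p(2t-\hr),\ 2v_p(\hr)+2v_p(t)+2v_p(t-\hr)\bigr).
\]
I would split into the two regimes $0\leq v_p(\hr)\leq v_p(t)$ and $0\leq v_p(t)<v_p(\hr)$, which correspond to potentially good and to potentially multiplicative reduction respectively.

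In the first regime one has $v_p(t-\hr)\geq v_p(\hr)$ and $v_p(j(t))=v_p(t)+v_p(t-\hr)-2v_p(\hr)\geq0$, so the reduction is potentially good, and since $v_p(\Delta(t))$ is even the Kodaira type must be one of $I_0,II,IV,I_0^*,IV^*,II^*$ (the types $III,III^*$, having odd minimal discriminant valuation, never occur). Writing $N:=v_p(\hr)+v_p(t)+v_p(t-\hr)$, one checks that, unless $v_p(t)=v_p(t-\hr)=v_p(\hr)=0$ (in which case $\Hold(t)$ already has good reduction at $p$, so $w_p(t)=1$), one has $v_p(2t-\hr)=v_p(\hr)$, so that minimalising the triple — subtracting the appropriate multiple of $(4,6,12)$ — gives $v_p(\Delta_{\min})=2\,(N\bmod6)$. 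Hence $\Hold(t)$ has good reduction at $p$ exactly when $6\mid N$, which one verifies to be equivalent to $6\mid v_p(t-\hr)-v_p(t)+3v_p(\hr)$ (the two integers are congruent, or opposite, modulo $6$). Otherwise $N\bmod6$ determines the Kodaira type, and Table~I of~\cite{rizzo} returns $w_p(t)=\leg{-3}{p}$ for the types $IV,IV^*$ (which occur for $N$ even) and $w_p(t)=\leg{-1}{p}$ for $I_0^*,II,II^*$ (which occur for $N$ odd); since $\leg{-1}{p}=\leg{-3}{p}\leg{3}{p}$, both are captured by $w_p(t)=\leg{-3}{p}\leg{3}{p}^{N}$, as claimed.

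In the second regime, with $\beta:=v_p(t)<v_p(\hr)$, one has $v_p(t-\hr)=v_p(2t-\hr)=\beta$ and $v_p(j(t))=2\beta-2v_p(\hr)<0$, so the reduction is potentially multiplicative; the triple $(2\beta,3\beta,4\beta+2v_p(\hr))$ becomes minimal after subtracting $\lfloor\beta/2\rfloor(4,6,12)$, equalling $(0,0,2(v_p(\hr)-\beta))$ when $\beta$ is even and $(2,3,6+2(v_p(\hr)-\beta))$ when $\beta$ is odd. For $\beta$ even the reduction is multiplicative of type $I_n$ with $n=2(v_p(\hr)-\beta)>0$; it is split precisely when $-c_6(t)p^{-v_p(c_6(t))}$ is a square mod $p$, and since $v_p(\hr)>\beta$ one has $(t-\hr)p^{-\beta}\equiv t_p$ and $(2t-\hr)p^{-\beta}\equiv2t_p\pmod p$, whence $-c_6(t)p^{-3\beta}\equiv2^63^3t_p^3\equiv(2^3)^2(3t_p)^3\pmod p$; thus the reduction is split iff $\leg{3t_p}{p}=1$ and $w_p(t)=-\leg{3t_p}{p}$. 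For $\beta$ odd the reduction has type $I_n^*$ with $n=2(v_p(\hr)-\beta)$ even, and Table~II of~\cite{rizzo} gives $w_p(t)=\leg{-1}{p}$. This matches the stated formula.

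The conceptual input is light; the work is the bookkeeping in the first regime, where one must further split according to whether $v_p(\hr)<v_p(t)$, $v_p(\hr)=v_p(t)<v_p(t-\hr)$, or $v_p(\hr)=v_p(t)=v_p(t-\hr)$ (the last forcing $v_p(\hr)=0$), track $v_p(2t-\hr)$ in each, carry out the reduction modulo $(4,6,12)$ uniformly, and identify the correct entry of Table~I. The one delicate point — and the main obstacle — is verifying that the only reduction types occurring are the ones listed, so that the uniform closed form $\leg{-3}{p}\leg{3}{p}^{N}$ is legitimate; the parallel cases $p=2,3$ handled in Appendix~\ref{appen_2} are markedly longer precisely because then $2$ and $3$ also contribute to $v_p(c_4),v_p(c_6),v_p(\Delta)$ through the numerical factors in the invariants.
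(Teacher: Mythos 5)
Your proposal is correct and takes essentially the same route as the paper, which does not write out this computation at all but simply states that the lemma follows from a case-by-case reduction of the valuation triple $(v_p(c_4),v_p(c_6),v_p(\Delta))$ modulo $(4,6,12)$ followed by a lookup in Rizzo's tables, exactly as in the proof of Proposition~\ref{p5}. Two of your side assertions are false as stated, though harmlessly so: $v_p(2t-\hr)=v_p(\hr)$ can fail when $v_p(t)=v_p(\hr)=v_p(t-\hr)>0$ and $2t_p\equiv \hr_p\pmod p$ (e.g.\ $p=7$, $\hr=7$, $t=28$), and $v_p(\hr)=v_p(t)=v_p(t-\hr)$ does not force these valuations to vanish (e.g.\ $\hr=7$, $t=14$); neither matters because one always has $v_p(2t-\hr)\geq v_p(\hr)$ in this regime, which already gives $v_p(c_6)\geq N$ and hence $v_p(\Delta_{\min})=2(N\bmod 6)$, and for $p\geq5$ with $v_p(j)\geq0$ the table entry depends only on $v_p(\Delta_{\min})$ and not on the second coordinate of the reduced triple.
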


We now modify the local root numbers $w_{p}(t)$ in order to apply Proposition~\ref{HH-averageZ}. We will write $\varepsilon_{\hr}(t)$ as
$$
\varepsilon_{\hr}(t)=-\prod_{p} w_{p}(t) =- \prod_{p} w_{p}^*(t),
$$
for some $w_p^*(t)$ satisfying $w_{p}^*(t) = 1$ for $v_p(t(t-s)) \leq 1$ for all primes $p \nmid 6\hr$.

Let $p \geq 5$, and suppose that $v_p(\hr)=0$ and $p \mid \Delta(t) = - 2^4 3^3 \hr^2 t^2 (t-\hr)^2$ (if not, $w_{p}(t)=1$).
Then, we have 2 cases: either $v_p(t)=0$ and $v_p(t-\hr)>0$, or $v_p(t)>0$ and $v_p(t-\hr)=0$. Thus, Lemma \ref{H-p5} gives in this case
\begin{eqnarray*} w_{p}(t) = \begin{cases} \leg{-1}{p} \leg{3}{p}^{v_p(t-\hr)+1} & v_p(t)=0, v_p(t-\hr)>0 \text{ and }6 \nmid v_p(t-\hr), \\
\leg{-1}{p} \leg{3}{p}^{v_p(t)+1} & v_p(t)>0, v_p(t-\hr)=0 \text{ and } 6 \nmid v_p(t),\\
\ 1 &\text{if } v_p(t)=0, v_p(t-\hr)>0 \text{ and }6 \mid v_p(t-\hr)\text{ or if } v_p(t)>0, 6 \mid v_p(t).
\end{cases} \end{eqnarray*}
Then,  for all $p \neq 2,3$, we define
\begin{eqnarray} \label{def-star}
w_{p}^*(t) := w_{p}(t) \left( \frac{-1}{p} \right)^{v_p(t-\hr)} \left( \frac{-1}{p} \right)^{v_p(t)}, \end{eqnarray}
so that for $p \nmid 6\hr$ we have $w_{p}^*(t) = 1$ for $v_p(t(t-\hr)) \leq 1$.

\begin{lemma} \label{lemma-modified-RN}
For $p \geq 5$, let $w_{p}^*(t)$ be defined by \eqref{def-star}. Let $w^*_{2}(t),w^*_{3}(t) ,w^*_{\infty}(t)\in\{\pm1\} $ be defined by
\begin{eqnarray*}
w^*_{3}(t) &=& (-1)^{v_3(t)} (-1)^{v_3(t-\hr)} w_{3}(t) \\
w^*_{2}(t) &\equiv&  t_2 (t-\hr)_2  w_{2}(p) \mod 4 \\
w^*_{\infty}(t) &=&   \sgn(t(t-\hr))
\end{eqnarray*}
 Then,
$$\varepsilon_{\hr}(t)  =   - w^*_{\infty}(t)\prod_p w_{p}^*(t).$$ \end{lemma}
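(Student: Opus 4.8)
The plan is to reduce the claimed identity to an elementary product formula for Kronecker symbols. By the definition of the local root numbers we already have $\varepsilon_{\hr}(t)=-\prod_p w_p(t)$, a finite product because $w_p(t)=1$ whenever $\Hold(t)$ has good reduction at $p$; likewise $w_p^*(t)=w_p(t)$ for all but finitely many $p$, so both products make sense. Hence it is enough to prove
\[
w_\infty^*(t)\,\prod_p\frac{w_p^*(t)}{w_p(t)}=1 .
\]
Put $m:=t(t-\hr)$, so that $v_p(m)=v_p(t)+v_p(t-\hr)$ for all $p$ and $m_2=t_2\,(t-\hr)_2$. From \eqref{def-star} one has $w_p^*(t)/w_p(t)=\leg{-1}{p}^{v_p(m)}$ for every $p\ge5$, and since $\leg{-1}{3}=-1$ the same formula holds at $p=3$ by the definition of $w_3^*(t)$. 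At $p=2$, using $w_2(t)\in\{\pm1\}$ (so $w_2(t)^2=1$), the congruence defining $w_2^*(t)$ gives $w_2^*(t)/w_2(t)=w_2^*(t)w_2(t)\equiv m_2\pmod4$, i.e.\ $w_2^*(t)/w_2(t)=\chi_{-4}(m_2)$, the element of $\{\pm1\}$ congruent to $m_2$ modulo $4$. Since $w_\infty^*(t)=\sgn(m)$, the identity to be proved becomes
\[
\sgn(m)\,\chi_{-4}(m_2)\,\prod_{p\ge3}\leg{-1}{p}^{v_p(m)}=1 .
\]

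The second step is to check this identity for every nonzero integer $m$. Write $m_2=\sgn(m)\,|m_2|$ with $|m_2|$ the positive odd part of $m$. Multiplicativity modulo $4$ gives $\chi_{-4}(m_2)=\chi_{-4}(\sgn(m))\,\chi_{-4}(|m_2|)=\sgn(m)\,\chi_{-4}(|m_2|)$, because $-1\equiv3\pmod4$; and $\chi_{-4}(|m_2|)=(-1)^{(|m_2|-1)/2}=\leg{-1}{|m_2|}$ as a Jacobi symbol, which by multiplicativity equals $\prod_{p\mid|m_2|}\leg{-1}{p}^{v_p(|m_2|)}=\prod_{p\ge3}\leg{-1}{p}^{v_p(m)}$. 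Hence $\sgn(m)\,\chi_{-4}(m_2)=\prod_{p\ge3}\leg{-1}{p}^{v_p(m)}$, and multiplying once more by $\prod_{p\ge3}\leg{-1}{p}^{v_p(m)}$ produces a square, hence $1$. (This is nothing but the product formula for the quadratic character attached to $\Q(i)/\Q$, i.e.\ a special case of quadratic reciprocity.) Combining the two steps gives $-w_\infty^*(t)\prod_p w_p^*(t)=-\prod_p w_p(t)=\varepsilon_{\hr}(t)$.

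Once Lemma~\ref{H-p5} is available this is essentially bookkeeping, and I do not expect a serious obstacle; the only real care is needed with the sign conventions at the two anomalous places. One must remember that $t_p:=p^{-v_p(t)}t$ keeps the sign of $t$, so that $m_2=t_2(t-\hr)_2$ is $\sgn(m)$ times a positive odd integer — mishandling this sign is what would break the telescoping at $p=2$ and $p=\infty$. It is also worth verifying, directly from \eqref{def-star} and the shape of $w_p(t)$ in Lemma~\ref{H-p5}, that the correction factor is uniformly $\leg{-1}{p}^{v_p(m)}$ for all $p\ge5$ — in particular that the two ramification regimes $v_p(\hr)\le v_p(t)$ and $v_p(t)<v_p(\hr)$ receive the same correction — and that the prescribed $w_2^*(t)$ and $w_3^*(t)$ are precisely the $p=2,3$ instances of this pattern.
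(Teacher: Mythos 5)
Your proof is correct and follows essentially the same route as the paper: both arguments reduce the lemma to showing that the product over all places of the correction factors $w_p^*(t)/w_p(t)$ together with $\sgn(t(t-\hr))$ equals $1$, via the fact that $\leg{-1}{p}\equiv p\pmod 4$ for odd $p$. The paper phrases this as a direct mod-$4$ congruence computation with $|t_2(t-\hr)_2|$, while you package the identical computation as the product formula for $\chi_{-4}$; the content is the same.
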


\begin{proof}
Using \eqref{def-star}, we have
\begin{eqnarray*}
\prod_{p \neq 2,3} w_{p}^*(t) &=& 
\prod_{p \neq 2,3} \left( \frac{-1}{p} \right)^{v_p(t)} \left( \frac{-1}{p} \right)^{v_p(t-\hr)}
\prod_{p \neq 2,3} w_{p}(t).
\end{eqnarray*}
Since $\leg{-1}p\equiv p\mod 4$, then
\begin{eqnarray*}
\prod_{p \neq 2,3} \leg{-1}{p}^{v_p(t)} \leg{-1}{p}^{v_p(t-\hr)} &\equiv& (-1)^{v_3(t)} (-1)^{v_3(t-\hr)} \prod_{p \neq 2} p^{v_p(t)}
p^{v_p(t-\hr)} \mod 4 \\
&\equiv& (-1)^{v_3(t)} (-1)^{v_3(t-\hr)}  |t_2 \, (t-\hr)_2| \mod 4,\\
\end{eqnarray*}
which proves the result.
\end{proof}

\subsubsection{The average root number for $\Hold(t)$}
Using Lemma \ref{lemma-modified-RN}
and  Proposition~\ref{HH-averageZ}, we then have
\begin{equation} \label{convergent-product}
\Av_\Z (\varepsilon_{\Hold}) = - \prod_{p} \int_{\Z_p} w_{p}^*(t) \; dt,
\end{equation}
since  $t(t-\hr)$ is positive except for finitely many values of $t$.
Computing the $p$-adic integrals we will obtain the following proposition, thus completing the proof of Theorem \ref{average-nonperiodic}.
\begin{prop}\label{average_root_H}
The average root number of the family $\Hold$ is given by the Euler product
$$
\Av_\Z(\eps_{\Hold}) = - \prod_{p} \Mh(p),
$$
where the Euler factors for $p=2$ and $p=3$ are given by
\est{
\Mh(2) &= \begin{cases} -1/2 &\text{if }  v_2(\hr) = 0 \\
0 &\text{if }  v_2(\hr) =  1 \\
1/8 &\text{if }  v_2(\hr)= 2 \\
{2^{1-v_2(\hr)}}+ \frac{1}{3} \left( 4^{-(v_2(\hr)-3)/2} - 1 \right) &\text{if }  v_2(\hr) \geq 3 \text{ with $v_2(\hr)$ odd} \\
 {2^{-v_2(\hr)-1}}+ \frac{1}{3} \left( 4^{-(v_2(\hr)-4)/2} - 1 \right) &\text{if }  v_2(\hr) \geq 4 \text{ with $v_2(\hr)$ even,}
 \end{cases}\\
 \Mh(3) &=  \begin{cases} \displaystyle
\displaystyle \frac{6}{7} \frac{1}{3^{v_3(\hr)+2}} +  \frac{3}{4} \left( 3^{-v_3(\hr)} - 1 \right) &\text{if }  
v_3(\hr) \equiv 0 \mod 2 \\[0.7em]
\displaystyle \frac{6}{7} \frac{1}{3^{v_3(\hr)+2}} +  \frac{3}{4} \left( 3^{-v_3(\hr)+1} - 1 \right) &\text{if } 
v_3(\hr) \equiv 1 \mod 2,
\end{cases}
}
whereas for $p\geq5$, the Euler factors are
$$
\Mh(p) = \leg{-1}{p}  \frac{ 1 - p^{j-v_p(\hr)}}{p+1} + \leg{-1}{p}^j \frac{1}{p^{v_p(\hr)}} \begin{cases} 1 & p \equiv 1 \mod 3, \\
 \left( 1 - \frac{4(p-1)(p^{1-j}+p^{3+j})}{p^6-1} \right) & p \equiv 2 \mod 3, \end{cases}$$
where $v_p(\hr) \equiv j \mod 2$ with $j\in\{0,1\}$. In particular, for $v_p(\hr)=0$ and $p\geq5$, we have
$$\Mh(p) = \begin{cases} 1 & p \equiv 1 \mod 3, \\
  \left( 1 - \frac{4(p-1)(p+p^3)}{p^6-1} \right) & p \equiv 2 \mod 3. \end{cases}$$

\end{prop}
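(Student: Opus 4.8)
The plan is to establish $\Mh(p) = \int_{\Z_p} w_p^*(t)\,dt$ for every prime $p$ and then invoke the identity~\eqref{convergent-product}. First I would check that~\eqref{convergent-product} itself is legitimate: by Lemma~\ref{lemma-modified-RN} we have $\eps_\hr(t) = -w^*_\infty(t)\prod_p w_p^*(t)$, the functions $w_p^*$ are bounded by $1$ and locally constant off a finite set, and $w_p^*(t) = 1$ whenever $p\nmid 6\hr$ and $v_p(t(t-\hr))\le1$. Since $B(t):=t(t-\hr)$ is square-free of degree $2$ (recall $\hr\neq0$), the square-free sieve conjecture for $B$ is known, so Proposition~\ref{HH-averageZ} applies with $c_+=c_-=\lim_{t\to\pm\infty}\sgn(t(t-\hr))=1$, yielding~\eqref{convergent-product} unconditionally. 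It then remains to evaluate each $p$-adic integral.

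For $p\ge5$ I would split $\Z_p$ according to the position of $v_p(t)$ relative to $n:=v_p(\hr)$, writing $t = p^{v_p(t)}t_p$ with $t_p$ a unit. In the range $v_p(t)<n$ one has $v_p(t-\hr)=v_p(t)$, so by Lemma~\ref{H-p5} and~\eqref{def-star} one gets $w_p^*(t)=-\leg{3t_p}{p}$ when $v_p(t)$ is even and $w_p^*(t)=\leg{-1}{p}$ when $v_p(t)$ is odd; the even valuations contribute nothing since $\sum_{u\in(\Z/p\Z)^\times}\leg{3u}{p}=0$, and the odd valuations give a finite geometric series summing to $\leg{-1}{p}\,\frac{1-p^{j-n}}{p+1}$, where $j\equiv n\bmod 2$. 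In the ranges $v_p(t)>n$ (where $v_p(t-\hr)=n$) and $v_p(t)=n$ (where $v_p(t-\hr)=n+v_p(t_p-\hr_p)$ ranges over all integers $\ge n$) the first case of Lemma~\ref{H-p5} applies; after the twist~\eqref{def-star} the value $w_p^*(t)$ depends on $v_p(t)$ and $v_p(t-\hr)$ only through a period-$6$ pattern governed by the divisibility $6\mid v_p(t-\hr)-v_p(t)+3n$ together with the character $\leg{3}{p}$, and on the sign $\leg{-3}{p}$. Summing the resulting geometric series, split according to residues modulo $6$ of the relevant valuation, produces the denominator $p^6-1$ and the factor $\leg{-1}{p}^j p^{-n}$ times $1$ or $1-\frac{4(p-1)(p^{1-j}+p^{3+j})}{p^6-1}$ according as $\leg{-3}{p}=1$ (i.e.\ $p\equiv1\bmod3$) or $\leg{-3}{p}=-1$ (i.e.\ $p\equiv2\bmod3$). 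Adding the two contributions gives $\Mh(p)$, and specializing $n=0$ recovers both the last displayed formula and, when $\hr=\pm1$, equation~\eqref{average_B}.

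For $p=2$ and $p=3$ the strategy is the same, but $w_2^*,w_3^*$ are now read off from the more intricate tables of Appendix~\ref{appen_2} (Propositions~\ref{app-RN2} and~\ref{app-RN3}) combined with the twists of Lemma~\ref{lemma-modified-RN}. One partitions $\Z_2$ (resp.\ $\Z_3$) into the pieces $\{v_2(t)=e\}$ on which the modified local root number is constant, or at worst a single Legendre symbol of $t_2$ whose average vanishes, distinguishing the ranges $v_2(t)<v_2(\hr)$, $=v_2(\hr)$, $=v_2(\hr)+1$, $\ge v_2(\hr)+2$ (and analogously at $3$). Summing the pieces gives geometric series in $2^{-v_2(\hr)}$ (resp.\ $3^{-v_3(\hr)}$), with separate treatment of the degenerate small values $v_2(\hr)\le2$ (resp.\ small $v_3(\hr)$), which is exactly why the statement splits on the parity and small values of $v_2(\hr)$ and $v_3(\hr)$.

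The main obstacle will be the bookkeeping at $p=2$ and $p=3$: the tables of Appendix~\ref{appen_2} have many special sub-cases, so the delicate point is to partition $\Z_2$ and $\Z_3$ finely enough that $w_2^*$ and $w_3^*$ are genuinely constant (or a single odd character) on each piece, and then to sum the many geometric series without slips. At $p\ge5$ the only real subtlety is interleaving the period-$6$ condition $6\mid v_p(t-\hr)-v_p(t)+3n$ with the powers of $\leg{3}{p}$, which forces one to break each geometric series into residue classes modulo $6$ of the appropriate valuation before resumming.
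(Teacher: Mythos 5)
Your proposal is correct and follows essentially the same route as the paper: justify the convergent Euler product \eqref{convergent-product} via Lemma~\ref{lemma-modified-RN} and Proposition~\ref{HH-averageZ} (unconditionally, since $t(t-\hr)$ has degree $2$), then evaluate $\int_{\Z_p}w_p^*(t)\,dt$ prime by prime, splitting $\Z_p$ according to whether $v_p(t)$ is less than, equal to, or greater than $v_p(\hr)$ and resumming geometric series broken into residue classes modulo $6$ (which is exactly how the paper's Propositions~\ref{valuesWp}, \ref{valuesW3} and~\ref{valuesW2} proceed, the case $v_p(t)=v_p(\hr)$ being handled there via the measure computation of Lemma~\ref{measureSK}).
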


When computing the $p$-adic integrals we shall need the following Lemma.

\begin{lemma} \label{measure-vpt-c} \label{measureSK}
For $k\in\Z_{\geq0}$, let $S_k := \left\{ t \in \Z_p \,:\, v_p(t)=v_p(\hr), v_p(t-\hr) = v_p(\hr)+k \right\}$, then $S_k$ has measure
\begin{eqnarray*}
\mu \left( S_k \right) = \begin{cases} \displaystyle \frac{p-2}{p^{v_p(\hr)+1}} &\text{if } k=0 \\
\displaystyle  \frac{p-1}{p^{v_p(\hr)+k+1}} & \text{if } k \geq 1 \end{cases} \end{eqnarray*}
\end{lemma}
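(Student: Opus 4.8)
The plan is to reduce everything to the case $v_p(\hr)=0$ by a scaling argument, and then compute two elementary $p$-adic measures by counting residue classes. Set $m:=v_p(\hr)$ and write $\hr=p^m\hr_p$ with $\hr_p\in\Z_p^\times$. For $t\in S_k$ write $t=p^m u$; then $v_p(t)=m$ is equivalent to $u\in\Z_p^\times$, and since $t-\hr=p^m(u-\hr_p)$, the condition $v_p(t-\hr)=m+k$ is equivalent to $v_p(u-\hr_p)=k$. Thus $S_k=p^m T_k$ where $T_k:=\{u\in\Z_p:\ v_p(u)=0,\ v_p(u-\hr_p)=k\}$, and since multiplication by $p^m$ scales the Haar measure by $p^{-m}$, we get $\mu(S_k)=p^{-m}\,\mu(T_k)$.

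Next I would compute $\mu(T_k)$ directly. For $k=0$: the two conditions $v_p(u)=0$ and $v_p(u-\hr_p)=0$ say exactly that $u\bmod p\notin\{0,\ \hr_p\bmod p\}$, and because $\hr_p$ is a unit these are two \emph{distinct} residues modulo $p$; hence $T_0$ is a union of $p-2$ classes modulo $p$ and $\mu(T_0)=(p-2)/p$, giving $\mu(S_0)=(p-2)/p^{m+1}$. For $k\geq1$: the condition $v_p(u-\hr_p)=k$ means $u\equiv\hr_p\ (\mathrm{mod}\ p^k)$ but $u\not\equiv\hr_p\ (\mathrm{mod}\ p^{k+1})$; since $\hr_p$ is a unit and $k\geq1$, this already forces $u\equiv\hr_p\not\equiv0\ (\mathrm{mod}\ p)$, so the extra condition $v_p(u)=0$ is automatic. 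Therefore $T_k=\{u:\ v_p(u-\hr_p)=k\}$ has measure $p^{-k}-p^{-(k+1)}=(p-1)/p^{k+1}$, and $\mu(S_k)=(p-1)/p^{m+k+1}$, as claimed.

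There is no genuine obstacle in this lemma; the only point that requires a little attention is precisely the dichotomy just described: for $k=0$ the constraint $v_p(t)=v_p(\hr)$ must be imposed separately (it removes one more residue class, which is why the numerator is $p-2$ rather than $p-1$), whereas for $k\geq1$ it follows automatically from $v_p(t-\hr)=v_p(\hr)+k$. I would state this observation explicitly in the write-up so that the slightly different shapes of the two formulas are transparent.
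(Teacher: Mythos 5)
Your proof is correct and follows essentially the same route as the paper: an elementary count of residue classes, with the $k=0$ case excluding the two distinct classes $0$ and $\hr_p$ modulo $p$, and the $k\geq1$ case computed as a difference of measures of nested congruence conditions (the paper writes $t=p^{v_p(\hr)}d$ directly rather than first rescaling to $v_p(\hr)=0$, but this is only cosmetic). No gaps.
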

\begin{proof} Let $\chi_{k}$ be the characteristic function of $S_k$.
If $k = 0$, then
$$\mu \left( S_k\right) = \frac{1}{p^{v_p(\hr)+1}} \sum_{d \in (\Z/p \Z)^*}
\chi_{0}(p^{v_p(\hr)} d),$$
and $\chi_{0}(p^{v_p(\hr)} d) = 1$ iff $p^{v_p(\hr)} d - p^{v_p(\hr)} \hr_p \not\equiv 0 \mod {p^{v_p(\hr)+1}}$ and thus iff $d \not\equiv \hr_p \mod p$, which gives the result. In the same way, considering the contribution of $d \equiv \hr_p \mod p$ only, we have
 $$\mu \left( t \in \Z_p \,:\, v_p(t)=v_p(\hr), v_p(t-\hr) \geq v_p(\hr)+1 \right)  = \frac{1}{p^{v_p(\hr)+1}}$$
and similarly, for any $k \geq 1$,
 $$\mu \left( t \in \Z_p \,:\, v_p(t)=v_p(\hr), v_p(t-\hr) \geq v_p(\hr)+k \right)  = \frac{1}{p^{v_p(\hr)+k}}.$$
Then,
$$\mu \left( S_k \right) = \frac{1}{p^{v_p(\hr)+k}} - \frac{1}{p^{v_p(\hr)+k+1}} = \frac{p-1}{p^{v_p(\hr)+k+1}}$$
completing the proof of the Lemma.
\end{proof}

First, we shall compute $\int_{\Z_p} w_{p}^*(t) \; dt$ for $p\geq5$.

\begin{prop} \label{valuesWp}
Let $p \geq 5$. Then $\int_{\Z_p} w^*_{p}(t) \; dt = \Mh(p) $.
 \end{prop}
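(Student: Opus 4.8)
The plan is to compute the $p$-adic integral $\int_{\Z_p} w_p^*(t)\,dt$ for $p\geq 5$ by splitting $\Z_p$ according to the relative sizes of $v_p(t)$ and $v_p(\hr)$, and to match the outcome with the closed form for $\Mh(p)$ given in Proposition~\ref{average_root_H}. First I would split the domain as $\Z_p = A \sqcup B$, where $A = \{t : 0\leq v_p(t) < v_p(\hr)\}$ and $B = \{t : v_p(t) \geq v_p(\hr)\}$. On $A$, Lemma~\ref{H-p5} together with the definition~\eqref{def-star} gives $w_p^*(t) = w_p(t)\leg{-1}{p}^{v_p(t)+v_p(t-\hr)}$; but on $A$ one has $v_p(t-\hr)=v_p(t)<v_p(\hr)$, so the extra twist factors cancel in pairs and $w_p^*(t)$ reduces to $-\leg{3t_p}{p}$ or $\leg{-1}{p}$ according to the parity of $v_p(t)$. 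Writing $t = p^e d$ with $d\in(\Z/p\Z)^*$ and summing over $d$, the odd-$e$ contributions vanish (sum of a nontrivial character), while the even-$e$ contributions produce a geometric series in $p^{-2}$; this yields the term $\leg{-1}{p}\frac{1-p^{j-v_p(\hr)}}{p+1}$, where $j\equiv v_p(\hr)\bmod 2$ records whether the truncation at $v_p(\hr)$ lands on an even or odd level.

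On $B$ I would further subdivide using Lemma~\ref{measureSK}: either $v_p(t) > v_p(\hr)$ (then $v_p(t-\hr)=v_p(\hr)$), or $v_p(t)=v_p(\hr)$ and $v_p(t-\hr)=v_p(\hr)+k$ for some $k\geq 0$. In all these cases the relevant quantity governing $w_p(t)$ in Lemma~\ref{H-p5} is $v_p(t-\hr)-v_p(t)+3v_p(\hr)$, and one checks that $w_p^*(t)$ depends only on the residue of this integer modulo $6$, taking value $1$ when $6\mid(\ast)$ and $\leg{-1}{p}$ (up to a power of $\leg 3p$) otherwise. The subtlety is the dependence on $p\bmod 3$: when $p\equiv 1\bmod 3$ one has $\leg 3p = 1$ by quadratic reciprocity, so all the "bad" contributions collapse and the integral over $B$ is just its measure $p^{-v_p(\hr)}$ times $\leg{-1}{p}^{j}$; when $p\equiv 2\bmod 3$ one has $\leg 3p=-1$, and the alternating powers of $\leg 3p$ over the residue classes $k\bmod 6$ (weighted by the measures $\frac{p-1}{p^{v_p(\hr)+k+1}}$ from Lemma~\ref{measureSK}, plus the $v_p(t)>v_p(\hr)$ piece) must be summed. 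Summing the resulting geometric-type series in $p^{-1}$ over the six residue classes and simplifying is what produces the correction factor $1 - \frac{4(p-1)(p^{1-j}+p^{3+j})}{p^6-1}$.

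I expect the main obstacle to be the bookkeeping in the case $p\equiv 2\bmod 3$ on the set $B$: one must carefully track, for each $k\geq 0$, the exact value of $\leg 3p^{v_p(t)+v_p(t-\hr)+v_p(\hr)} = (-1)^{2v_p(\hr)+k} = (-1)^k$ and whether $6\mid (k+3v_p(\hr))$, i.e. the condition splits further according to $v_p(\hr)\bmod 2$, which is exactly the role of $j$. Then one assembles the series $\sum_{k\geq 0}(\text{sign}(k,j))\,\frac{p-1}{p^{v_p(\hr)+k+1}}$ together with the contribution from $\{v_p(t)>v_p(\hr)\}$, and verifies the algebraic identity reducing this to the stated closed form. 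The remaining steps — combining the $A$-part and $B$-part, and checking the boundary bookkeeping at level $v_p(\hr)$ (the $k=0$ term has measure $\frac{p-2}{p^{v_p(\hr)+1}}$, not $\frac{p-1}{p^{v_p(\hr)+1}}$, per Lemma~\ref{measureSK}) — are routine but must be done with care to land precisely on $\Mh(p)$.
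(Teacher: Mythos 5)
Your plan follows the paper's proof essentially verbatim: the same three-way split of $\Z_p$ according to whether $v_p(t)$ is less than, equal to, or greater than $v_p(\hr)$, the same use of Lemma~\ref{measureSK} for the measures, and the same reduction to geometric series whose shape depends on $p\bmod 3$ and on $j=v_p(\hr)\bmod 2$; the two terms you predict are exactly \eqref{case-i} and the corrective factor in Proposition~\ref{average_root_H}.

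Three bookkeeping slips would derail a literal execution and should be fixed. First, on the range $0\le v_p(t)<v_p(\hr)$ the parities are reversed: by Lemma~\ref{H-p5} it is the \emph{even} levels that carry the nontrivial character $\leg{3t_p}{p}$ and hence vanish, while the \emph{odd} levels contribute the constant $\leg{-1}{p}$ and give the geometric series (your final formula is nonetheless the right one). Second, the symbol whose value is determined by $p\bmod 3$ is $\leg{-3}{p}$, not $\leg{3}{p}$, which depends on $p\bmod{12}$. Third, on the diagonal $v_p(t)=v_p(\hr)$ the exponent is $v_p(t)+v_p(t-\hr)+v_p(\hr)=3v_p(\hr)+k$, not $2v_p(\hr)+k$, and $\leg 3p^{3v_p(\hr)+k}$ is not $(-1)^k$; after multiplying by the twist $\leg{-1}{p}^{v_p(t)+v_p(t-\hr)}=\leg{-1}{p}^{k}$ the correct combination is $w_p^*(t)=\leg{-3}{p}^{k+1}\leg{3}{p}^{v_p(\hr)}$ when $6\nmid 3v_p(\hr)+k$, and it is the identity $\leg{3}{p}=\leg{-3}{p}\leg{-1}{p}$ that collapses everything to a function of $p\bmod 3$ and $\leg{-1}{p}^{j}$ in the end.
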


\begin{proof}
We shall consider three cases, according to whether $v_p(t)$ is smaller, equal, or larger than $v_p(\hr)$.
\subsubsection*{The case $0 \leq v_p(t) < v_p(\hr)$.}
We have that $v_p(\hr) > 0$,  and $v_p(t-\hr) = v_p(t)$ and so by~\eqref{def-star} in this case we have $w^*_{p}(t)=w_{p}(t)$. Using Lemma \ref{H-p5}, we have
\begin{eqnarray*}
\int_{0 \leq v_p(t) < v_p(\hr)} w^*_{p}(t) \; dt &=& - \int_{{0 \leq v_p(t) < v_p(\hr)} \atop {2 \mid v_p(t)}}
 \leg{3t_p}{p}  \; dt +
\int_{{0 \leq v_p(t) < v_p(\hr)} \atop {2 \nmid v_p(t)}}   \leg{-1}{p} \; dt .
\end{eqnarray*}
It is easy to see that the first integral is 0 and that so is the second if $v_p(\hr)=1$ (the domain of integration is empty).
Thus, suppose $v_p(\hr) \geq 2$. Then, letting $v_p(\hr) \equiv j \mod 2$ with $j\in\{0,1\}$
\begin{eqnarray} \label{case-i}  \int_{0 \leq v_p(t) < v_p(\hr)} w^*_{p}(t) \; dt =
\leg{-1}{p}  \sum_{0 \leq 2k+1 < v_p(\hr)} \frac{p-1}{p^{2k+2}} =\leg{-1}{p} \frac{1}{p+1} \left( 1 - p^{j-v_p(\hr)} \right).
\end{eqnarray}
Notice that the expression on the right is $0$ if $v_p(\hr)=0$.

\subsubsection*{The case ${v_p(t) = v_p(\hr)}$.}

We let $v_p(t-\hr) = v_p(\hr) + k$, with $k \geq 0$. We have
\es{\label{eq_for_w_p}
 w_{p}^*(t) = \begin{cases}
\leg{-3}{p}^{k+1} \leg{3}{p}^{v_p(\hr)} &  \text{if }3v_p(\hr) + k \not\equiv 0 \mod 6, \\
\leg{-1}{p}^{k} &  \text{if } 3 v_p(\hr) + k \equiv 0 \mod 6,
\end{cases}
}
and
\est{
\int_{v_p(t)=v_p(\hr)} w^*_{p}(t) \; dt &= \sum_{k = 0}^\infty \int_{\substack{v_p(t)=v_p(\hr)\\v_p(t-\hr)=v_p(\hr)+k}}
w^*_{p}(t) \;dt \\ \label{measure-trick}
&=  \sum_{k = 0}^\infty \mu \left( t \in \Z_p \;:\; v_p(t)=v_p(\hr), v_p(t-\hr) = v_p(\hr)+k \right) \; w^*_{p}(t),
}
with $w^*_{p}(t)$ as in~\eqref{eq_for_w_p}. Thus, by Lemma~\ref{measureSK} we have
\est{
\int_{v_p(t)= v_p(\hr)} w^*_{\hr}(t) \; dt & = \frac{p-2}{p^{v_p(\hr)+1}} \leg{-1}{p}^{j}+\sum_{{k=1}\atop{k \not\equiv 3j \mod 6}}^{\infty} \frac{p-1}{p^{v_p(\hr)+1+k}}  \leg{-3}{p}^{k+1} \leg{3}{p}^{j} \\
&\quad+
\sum_{{k=1}\atop{k \equiv 3j \mod 6}}^{\infty} \frac{p-1}{p^{v_p(\hr)+1+k}}  \leg{-1}{p}^j.
}
The sum over $k$ in the first line gives
\est{
&\frac{p-1}{p^{v_p(\hr)+1}}\leg{-3}{p} \leg{3}{p}^{j}   \sum_{{k=1}}^{\infty} p^{-k}\leg{-3}{p}^{k} -
\frac{p-1}{p^{v_p(\hr)+1}} \leg{-1}{p}^{j} \leg{-3}{p} \sum_{{k=1}\atop{k \equiv 3j \mod 6}}^{\infty}p^{-k} \\
&\hspace{15em}=\frac{p-1}{p^{v_p(\hr)+1}} \leg{3}{p}^{j}   \frac1{p -\leg{-3}{p}}-
\frac{p-1}{p^{v_p(\hr)+1}} \leg{-1}{p}^{j} \leg{-3}{p}\frac{ p^{3j}}{p^6-1}
}
whereas the sum over $k$ in the second line adds up to
\est{
\frac{(p-1)p^{3j}}{p^{v_p(\hr)+1}(p^6-1)}  \leg{-1}{p}^j
}
Then, since $\leg{-3}p=1$ if $p\equiv 1\mod 3$ and $\leg{-3}p=-1$ if $p\equiv 2\mod 3$, we have
\est{
\int_{v_p(t)=v_p(\hr)} w^*_{p}(t) \; dt=\leg{-1}{p}^{j}\begin{cases} \frac{p-1}{p^{v_p(\hr)+1}} & p \equiv 1 \mod 3, \\
\frac{1}{p^{v_p(\hr)+1}} \left(p-2+(-1)^j \frac{p-1}{p+1} + \frac{2 p^{3j}(p-1)}{p^6-1} \right) & p \equiv 2 \mod 3 . \end{cases}
}
\subsubsection*{The case $0 \leq v_p(\hr) < v_p(t)$.}

\kommentar{We now compute $$\int_{0 \leq v_p(\hr) < v_p(t)} w_{p}^*(p) \, dt.$$}
 In this case, $v_p(t-\hr) = v_p(\hr)$, and by Lemma~\ref{H-p5} and~\eqref{def-star} we get
$$
\leg{-1}{p}^{v_p(\hr)} w_{p}^*(t) = \begin{cases} \leg{-3}{p}^{v_p(t)+1} & v_p(t)-4v_p(\hr) \not\equiv 0 \mod 6 \\
1 & v_p(t) -4v_p(\hr)\equiv 0 \mod 6, \end{cases}$$
which gives
\begin{eqnarray*}
&&\leg{-1}{p}^{v_p(\hr)} \int_{0 \leq v_p(\hr) < v_p(t)} w_{p}^*(p) \, dt =
\sum_{{e>  v_p(\hr) } \atop {e - 4v_p(\hr) \not\equiv 0 \mod 6}} \frac{p-1}{p^{e+1}} \leg{-3}{p}^{e+1} + \sum_{{e> v_p(\hr) } \atop {e - 4v_p(\hr) \equiv 0 \mod 6}} \frac{p-1}{p^{e+1}}. \\
 \end{eqnarray*}
Thus, for $v_p(\hr) \equiv j \mod 2$ with $j\in\{0,1\}$ the first sum is
 \est{
&
 \sum_{{e> v_p(\hr)} } \frac{p-1}{p^{e+1}} \leg{-3}{p}^{e+1}
- \leg{-3}{p}\sum_{{e> -\frac12 v_p(\hr) } } \frac{p-1}{p^{4v_p(\hr) +6e+1}} \\
&\hspace{15em}= \leg{-3}{p}^{v_p(s)}\frac{p-1}{p^{v_p(s)+1}} \frac{1 }{p- \leg{-3}{p} }
- \leg{-3}{p}\frac{(p-1)p^{3j}}{p^{v_p(\hr) +1}(p^6-1)} \\
}
whereas the second gives
\est{
 \sum_{{e> v_p(\hr) } \atop {e - 4v_p(\hr) \equiv 0 \mod 6}} \frac{p-1}{p^{e+1}}=\frac{(p-1)p^{3j}}{p^{v_p(\hr)+1 }(p^6-1)}.
}
Thus,
\begin{eqnarray}
\int_{v_p(t)>v_p(\hr)} w^*_{p}(t) \; dt=\leg{-1}{p}^{j}  \begin{cases}
\frac{1}{p^{v_p(s)+1}}  & p \equiv 1 \mod 3, \\
\frac{(-1)^{j} }{p^{v_p(s)+1}} \frac{p-1 }{p+1 } + \frac{2(p-1)p^{3j}}{p^{v_p(\hr) +1}(p^6-1)}& p \equiv 2 \mod 3 . \end{cases}
\end{eqnarray}
Finally, summing
$$\int_{\Z_p} w^*_{p}(t) \, dt = \int_{0 \leq v_p(t) < v_p(\hr)} w^*_{p}(t) \, dt +
\int_{v_p(t)=v_p(\hr)} w^*_{p}(t) \, dt +
\int_{0 \leq v_p(\hr) < v_p(t)} w^*_{p}(t) \, dt,$$ we get the result.
\end{proof}

 \begin{prop} \label{valuesW3}
We have $\int_{\Z_3} w^*_{3}(t) \; dt = \Mh(3) $.
\end{prop}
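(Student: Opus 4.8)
# Proof proposal for Proposition~\ref{valuesW3}

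The plan is to follow exactly the same three-case decomposition used in the proof of Proposition~\ref{valuesWp}, but working at the prime $p = 3$, where the local root number $w_3(t)$ is no longer governed by the clean formula of Lemma~\ref{H-p5} but rather by the more elaborate table of Proposition~\ref{app-RN3} in Appendix~\ref{appen_2}. First I would recall the definition $w^*_3(t) = (-1)^{v_3(t)}(-1)^{v_3(t-\hr)}w_3(t)$ from Lemma~\ref{lemma-modified-RN}, and split
\est{
\int_{\Z_3} w^*_3(t)\,dt = \int_{0\leq v_3(t)<v_3(\hr)} w^*_3(t)\,dt + \int_{v_3(t)=v_3(\hr)} w^*_3(t)\,dt + \int_{v_3(\hr)<v_3(t)} w^*_3(t)\,dt.
}
In the first range one has $v_3(t-\hr)=v_3(t)$, so the sign corrections cancel and $w^*_3(t)=w_3(t)$; reading off the relevant row of Appendix~\ref{appen_2} (this is the analogue of the ``$0\leq v_p(t)<v_p(\hr)$'' case of Lemma~\ref{H-p5}, which gives $-\leg{3t_p}{3}$ or $\leg{-1}{3}$ according to the parity of $v_3(t)$, but one must check whether the extra $3$-adic digit of $t$ enters), I expect the $3$-adically periodic piece to integrate to $0$ and the remaining geometric sum over odd $v_3(t)$ to contribute the term $\tfrac34(3^{-v_3(\hr)}-1)$ (resp. $\tfrac34(3^{-v_3(\hr)+1}-1)$) appearing in the statement, after separating the cases $v_3(\hr)$ even/odd. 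Here I would use the measures $\mu(v_3(t)=e) = 2\cdot 3^{-e-1}$ for the relevant shells.

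In the second range $v_3(t)=v_3(\hr)$, I would write $v_3(t-\hr)=v_3(\hr)+k$ and sum over $k\geq0$ using Lemma~\ref{measure-vpt-c} (with $p=3$): the shell $S_k$ has measure $\tfrac{1}{3^{v_3(\hr)+1}}$ for $k=0$ and $\tfrac{2}{3^{v_3(\hr)+k+1}}$ for $k\geq1$. For each $k$ the value of $w^*_3(t)$ — equivalently of $w_3(t)$ twisted by $(-1)^k$ — is obtained from the $p=3$ table, and I anticipate that, just as in the $p\geq5$ computation, it depends only on $k \bmod 6$ (and on $v_3(\hr)\bmod2$), so the sum over $k$ reduces to finitely many geometric series in $3^{-6}$; summing them should produce the $\tfrac{6}{7}\cdot 3^{-v_3(\hr)-2}$ main term, the denominator $7 = (3^6-1)/(3\cdot(3^6-1)/\ldots)$ being the $p=3$ specialization of the factor $\tfrac{1}{p-\leg{-3}{p}}$ with $\leg{-3}{3}$ replaced by the appropriate local contribution. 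The third range $v_3(\hr)<v_3(t)$ is handled symmetrically: now $v_3(t-\hr)=v_3(\hr)$, the sign twist becomes $(-1)^{v_3(\hr)}(-1)^{v_3(t)}$, and one sums over $e=v_3(t)>v_3(\hr)$, again with the value depending on $e$ through a residue class mod $6$ and through $v_3(\hr)\bmod2$, yielding another pair of geometric series. Adding the three contributions and simplifying should give precisely $\Mh(3)$ as displayed in Proposition~\ref{average_root_H}.

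The main obstacle is purely the bookkeeping of the $p=3$ local root number table: unlike the $p\geq5$ case, where Lemma~\ref{H-p5} gives a two-line closed form, at $p=3$ one must correctly extract from Appendix~\ref{appen_2} the value of $w_3(t)$ in each of the (finitely many) $3$-adic congruence classes that arise as $v_3(t)$, $v_3(t-\hr)$, $v_3(\hr)$ vary, verify that the $3$-adic digits beyond the valuations either drop out or combine into the Kronecker symbols $\leg{3}{3}$, $\leg{-1}{3}$, $\leg{-3}{3}$ appearing implicitly, and confirm the claimed periodicity mod $6$. Once the correct values per residue class are in hand, the remaining work is a routine evaluation of geometric series, exactly parallel to the $p\geq5$ proof, and I would present it in the same telescoped style, displaying only the key intermediate sums and the final simplification into the two cases $v_3(\hr)\equiv 0,1 \bmod 2$.
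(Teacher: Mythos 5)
Your overall architecture coincides with the paper's: split $\Z_3$ according to whether $v_3(t)$ is less than, equal to, or greater than $v_3(\hr)$, integrate shell by shell using the measures of Lemma~\ref{measureSK}, and sum the resulting geometric series. However, two concrete steps in your outline would fail as stated. First, in the range $v_3(t)=v_3(\hr)$ you assert that, after setting $k=v_3(t-\hr)-v_3(\hr)$, the value of $w_3(t)$ depends only on $k\bmod 6$ (and $v_3(\hr)\bmod 2$). This is false precisely at $k=0$: by Proposition~\ref{app-RN3}, when $v_3(t)=v_3(\hr)=v_3(t-\hr)$ the local root number is governed by $\ell=v_3(2t-\hr)-v_3(\hr)\geq 1$, and a further infinite sum over $\ell$ is required (the shells with $\ell\geq2$ each contribute $2/3^{e+\ell+1}$, while the shell $\ell=1$ integrates to $0$). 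This sub-case is not a technicality: it contributes $3^{-v_3(\hr)-2}$, which is the bulk of the term $\tfrac{6}{7}\,3^{-v_3(\hr)-2}$ in $\Mh(3)$; the shells with $k\geq1$ alone give only $-\tfrac{1}{14}\,3^{-v_3(\hr)-2}$ (only $k\equiv0\bmod 3$ survives, with an alternating sign coming from the twist $(-1)^{k}$, whence $14=(3^3+1)/2$), and the range $v_3(t)>v_3(\hr)$ gives another $-\tfrac{1}{14}\,3^{-v_3(\hr)-2}$ by the same mechanism. In particular your heuristic identifying the denominator $7$ as a specialization of $1/(p-\leg{-3}{p})$ is not where it actually comes from.

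Second, in the range $0\leq v_3(t)<v_3(\hr)$ you claim the surviving geometric sum runs over \emph{odd} $v_3(t)$. The table gives the opposite: for $v_3(t)$ odd, and for $v_3(t)$ even with $v_3(\hr)-v_3(t)=1$, the value of $w_3(t)$ depends on $t_3\bmod 3$ and the shell integral vanishes, whereas for $v_3(t)$ even with $v_3(\hr)-v_3(t)\geq2$ one has $w_3(t)=-1$ identically, and these shells give $-\tfrac23\sum_{e\ \mathrm{even}}3^{-e}=\tfrac34\bigl(9^{-(N+1)}-1\bigr)$ with $N=\lfloor(v_3(\hr)-2)/2\rfloor$. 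Summing over the odd shells instead would produce a coefficient $\tfrac14$ rather than $\tfrac34$, so the parity is not bookkeeping you can defer. Once these two points are corrected, the rest of your plan does reproduce the paper's proof.
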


\begin{proof}

We recall that $$w^*_{3}(t) = (-1)^{v_3(t)} (-1)^{v_3(t-\hr)} w_{3}(t),$$
and that the values of $w_{3}(t)$ are given in Proposition \ref{app-RN3} of Appendix \ref{appen_2}.

\subsubsection*{The case $0 \leq v_3(\hr) < v_3(t)$.} In this case we have $v_3(t-\hr) = v_3(\hr)$. Also, from Appendix \ref{appen_2}, we have that $w_{3}(t)$ depends only on $v_3(t)$ and $(t-\hr)_3 \mod 9$ (and possibly $\hr_3$ and $v_3(\hr)$). Thus, if $v_3(t) \equiv v_3(\hr) \mod 3$, we have that
$$\int_{v_3(t)=e} w^*_{3}(t) \; dt = \frac{(-1)^{v_3(\hr)+e}}{3^{e+2}} \sum_{d \in (\Z/9\Z)^*} w_{3, \hr}(dp^e) = \frac{2 (-1)^{v_3(\hr)+e}}{3^{e+2}}.$$
If $v_3(t) - v_3(\hr) \not\equiv 0 \mod 3$, then the integral is easily seen to be 0.
This gives that
\begin{eqnarray} \nonumber
\int_{0 \leq v_3(\hr) < v_3(t)} w^*_{3}(t) \; dt &=& \frac{2(-1)^{v_3(\hr)}}{9} \sum_{\substack{e > v_3(\hr)\\ e \equiv v_3(\hr) \mod 3}}  \frac{(-1)^e}{3^e} \\ \notag
&=&
\frac{2}{3^{v_3(\hr)+2}} \sum_{n=1}^\infty \left( {-1}/{3} \right)^{3n} = \frac{-1}{14 \cdot 3^{v_3(\hr)+2}}.
\end{eqnarray}

\subsubsection*{The case  $0 \leq v_3(t) = v_3(\hr)$.}

Let $e=v_3(t) = v_3(\hr)$ and $v_3(t-\hr) = e+k$ with $k \geq 1$ so that $w^*_{3}(t) = (-1)^{k} w_{3}(t)$. 
First, we consider the case $k\geq1$.
If $k \equiv 0 \mod 3$ (and then $k \geq 3$),  then $w_{3}(t)$ is determined by a congruence modulo 9 on $(t-\hr)_3$, and we compute
\begin{eqnarray*}
\int_{{v_3(t)=v_3(\hr)=e,} \atop {v_3(t-\hr)=e+k}} w^*_{3}(t) \; dt &=& \frac{(-1)^k}{3^{e+k+2}}
\sum_{{d \in (\Z/3^{k+2}\Z)^*} \atop  {{d \equiv \hr_3 \mod {3^k}} \atop {d \not\equiv \hr_3 \mod {3^{k+1}}}}}w_{3}( (d-\hr_3)_3)
= 2 \frac{(-1)^k}{3^{e+k+2}} .
 \end{eqnarray*}
 For $k \not\equiv 0 \mod 3$ and $k\geq1$ we easily get
$$
\int_{\substack{v_3(t)=v_3(\hr)=e \\ v_3(t-\hr)=e+k}} w^*_{3}(t) \; dt = 0
$$
and thus
\begin{eqnarray} \label{case:vpt=vpr:first}
\int_{\substack{v_3(t)=v_3(\hr)=e \\v_3(t-\hr)> e}} w^*_3(t) \; dt &=& \frac{2}{3^{e+2}} \sum_{{k=1} \atop {k \equiv 0 \mod 3}}^\infty (-1)^k/3^k = - \frac{1}{14} \frac{1}{3^{2+v_3(\hr)}}.
  \end{eqnarray}
Next, $k=0$, that is $e=v_3(t)=v_3(\hr) =v_3(t-\hr).$ Then, we must have $v_3(2t-\hr)=e+\ell$ with $\ell \geq 1$.  Also, in this case
$w_{3}^*(t) = w_{3}(t).$
If $\ell \geq 2$, then $w_{3}(t) = 1$ and
\est{
\int_{{v_3(t)=v_3(\hr)=e,} \atop {v_3(2t-\hr)=e+\ell }}  w^*_3(t) \; dt
 &= \frac{2}{3^{e+\ell+1}} ,
}
whereas the integral is quickly seen to be $0$ if $\ell=1$. Thus,
\begin{eqnarray} \label{case:vpt=vpr:second}
\int_{v_3(t)=v_3(\hr)=e, v_3(t-\hr)=e} w^*_3(t) \; dt &=& \frac{2}{3^{e+1}} \sum_{{k=2}}^\infty 3^{-\ell} = \frac{1}{3^{v_3(\hr)+2}}.
  \end{eqnarray}
Summing the contributions \eqref{case:vpt=vpr:first} and \eqref{case:vpt=vpr:second}, we get
\begin{eqnarray}\notag 
\int_{v_3(t)=v_3(\hr)} w^*_{3}(t)  \; dt = \frac{13}{14} \frac{1}{3^{v_3(\hr)+2}}.
\end{eqnarray}

\subsubsection*{The case  $0 \leq v_3(t) < v_3(\hr)$.}
We let $v_3(t)=e$. In this case we have $v_3(t-\hr) = v_3(t)=e$, and so $w^*_{3}(t) = w_{3}(t).$ If $e\equiv 0\mod 2$ and $e<v_3(s)-1$ then $w-3(t)=-1$ and so we find
$$
\int_{v_3(t)=e} w^*_3(t) \; dt   = \frac{-2}{3^{e+1}},
$$
whereas in all other cases the above integral is $0$. Thus,
\begin{eqnarray*}
\int_{0 \leq v_3(t) < v_3(\hr)} w^*_{3}(t) \; dt &=& -\frac{2}{3} \sum_{\substack{0 \leq e \leq v_3(\hr)-2, \\ e \equiv 0 \mod 2}} \frac{1}{3^e} =\frac{-2}{3} \sum_{0 \leq n \leq N} \frac{1}{9^n}= \frac{3}{4} \left( 9^{-(N+1)} - 1 \right),
\end{eqnarray*}
where $N = \lfloor \frac{v_3(\hr) - 2}{2} \rfloor.$

Then, summing the contribution of the three cases we obtain the proposition.
\end{proof}

\begin{prop} \label{valuesW2}
We have $\int_{\Z_2} w^*_{2}(t) \; dt = \Mh(2)$.
\end{prop}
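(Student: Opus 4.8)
The plan is to mimic the proof of Proposition~\ref{valuesW3}: partition $\Z_2$ according to the position of $v_2(t)$ relative to $v_2(\hr)$ and integrate $w^*_2(t)$ over each region. Recall that $w^*_2(t)\in\{\pm1\}$ is the unique sign with $w^*_2(t)\equiv t_2\,(t-\hr)_2\,w_2(t)\pmod 4$, and that $w_2(t)$ is given by the table in Proposition~\ref{app-RN2} of Appendix~\ref{appen_2}, whose entries depend only on $v_2(t)$, $v_2(t-\hr)$, $v_2(2t-\hr)$ and on the residues of $t_2$, $(t-\hr)_2$, $\hr_2$ modulo a bounded power of $2$. Hence $w^*_2$ is locally constant away from $t=0,\hr$, and the integral reduces to a sum of geometric series.

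First I would record the valuation bookkeeping. Since $t_2$ and $\hr_2$ are both odd, $v_2(t)=v_2(\hr)$ forces $v_2(t-\hr)>v_2(\hr)$ and $v_2(2t-\hr)=v_2(\hr)$; when $v_2(t)<v_2(\hr)$ one has $v_2(t-\hr)=v_2(t)$; and when $v_2(t)>v_2(\hr)$ one has $v_2(t-\hr)=v_2(\hr)$. In particular there is no analogue of the $k=0$ sub-case of Proposition~\ref{valuesW3}, and we may split
\[
\int_{\Z_2}w^*_2(t)\,dt=\int_{v_2(t)<v_2(\hr)}w^*_2(t)\,dt+\int_{v_2(t)=v_2(\hr)}w^*_2(t)\,dt+\int_{v_2(t)>v_2(\hr)}w^*_2(t)\,dt.
\]
In the first region $(t-\hr)_2$ differs from $t_2$ only through the digit $2^{v_2(\hr)-v_2(t)}\hr_2$, so $w^*_2(t)$ depends only on $v_2(t)\bmod 2$, on low-order digits of $t$, and on whether $v_2(\hr)-v_2(t)$ is $1$, $2$, or $\geq3$, which is the very same case distinction already made for $s_a(t)$ in Proposition~\ref{sat}; the computation can therefore be organized along those lines, and for $v_2(\hr)\in\{0,1,2,3\}$ the small exceptional values are treated by hand. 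In the second region I would write $v_2(t-\hr)=v_2(\hr)+k$ with $k\geq1$ and sum over $k$, the set cut out having measure $2^{-(v_2(\hr)+k)}-2^{-(v_2(\hr)+k+1)}$, with the table producing signs that cancel in the residue classes of $k$ modulo $6$ where they should, exactly as in Proposition~\ref{valuesW3}. The third region is treated symmetrically by summing over $e=v_2(t)>v_2(\hr)$.

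Summing the resulting geometric series, separately for $v_2(\hr)$ even and odd, and adding the three contributions, yields the stated value of $\Mh(2)$. The main obstacle will be the bookkeeping: the $p=2$ table of Appendix~\ref{appen_2} has many rows, and on each $2$-adic ball one must check that $w^*_2$ is genuinely constant, which forces descending to a level of the order of $2^5$ (or else pairing up odd residues so that their contributions cancel), all while tracking how the mod-$4$ twist $t_2(t-\hr)_2$ interacts with the congruence conditions in the table. Once the sign of $w^*_2$ on each ball is pinned down, the remaining integration is the routine geometric-series summation of Proposition~\ref{valuesW3}.
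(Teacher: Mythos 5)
Your plan coincides with the paper's proof: the paper likewise splits $\int_{\Z_2}w^*_2(t)\,dt$ into the regions $v_2(t)>v_2(\hr)$, $v_2(t)\leq v_2(\hr)-2$, $v_2(t)=v_2(\hr)-1$ (where the mod-$4$ twist $t_2(t-\hr)_2\equiv -1$ makes the integral vanish) and $v_2(t)=v_2(\hr)$ with $v_2(t-\hr)=v_2(\hr)+k$, $k\geq1$, and evaluates each piece from the table of Proposition~\ref{app-RN2} by summing geometric series over residue classes of the valuation gap modulo $6$, separately for $v_2(\hr)$ even and odd. You correctly identify all the structural points (the valuation trichotomy, the impossibility of $k=0$, the absence of a $2t-\hr$ sub-case at $p=2$), and what remains is exactly the case-by-case bookkeeping you describe, so this is essentially the same argument.
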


\begin{proof}
The proof of the proposition is given by a series of lemmas, which compute the contribution to $\int_{\Z_2} w^*_{2}(t) \; dt$ in $4$ cases depending on the relative valuations of $t$ and $\hr$. To obtain Proposition \ref{valuesW2}, it is then enough to sum the $4$ contributions.

Before proceeding with the lemmas we recall that for $t \not\in [0,\hr]$ (note that excluding a finite number of values of $t$ does not influence the various averages) we have
$$w^*_{2}(t) \equiv t_2 (t-\hr)_2 w_{2}(t) \mod 4.$$
and that the values of $w_{2}(t)$ are given in Proposition~\ref{app-RN2} in Appendix~\ref{appen_2}.
\end{proof}
\begin{lemma}\label{v-smaller}
Let $\chi_4$ be non-principal character modulo $4$.
If $v_2(\hr)$ is even, then
$$\int_{0 \leq v_2(\hr) < v_2(t)} w^*_{2}(t) \; dt = -\frac{1}{2^{v_2(\hr)+2}} + \chi_4(\hr_2) \frac{1}{2^{v_2(\hr)+4}} - \chi_4(\hr_2) \frac{29}{63} \frac{1}{2^{2+v_2(\hr)}}.$$
If $v_2(\hr)$ is odd,
$$\int_{0 \leq v_2(\hr) < v_2(t)} w^*_{2}(t) \; dt =
 \chi_4(\hr_2) \frac{1}{2^{v_p(\hr)+5}}
-\chi_4(\hr_2) \frac{46}{63} \frac{1}{2^{v_2(\hr)+4}} .
$$
\end{lemma}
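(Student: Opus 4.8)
The plan is to compute the integral by splitting the domain $\{t\in\Z_2:0\leq v_2(\hr)<v_2(t)\}$ according to the value of $e:=v_2(t)$, which runs over all integers $e>v_2(\hr)$. On this domain one has $v_2(t-\hr)=v_2(\hr)$, hence $(t-\hr)_2=t_2'-\hr_2$ where $t_2':=2^{-v_2(\hr)}t$ has $v_2(t_2')=e-v_2(\hr)\geq 1$; in particular $(t-\hr)_2\equiv-\hr_2\pmod{2^{\,e-v_2(\hr)}}$, so $(t-\hr)_2\equiv-\hr_2\pmod 4$ as soon as $e\geq v_2(\hr)+2$, while for $e=v_2(\hr)+1$ one has instead $(t-\hr)_2\equiv 2-\hr_2\pmod 4$. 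Using $w^*_{2}(t)\equiv t_2\,(t-\hr)_2\,w_{2}(t)\pmod 4$ from Lemma~\ref{lemma-modified-RN} together with the explicit values of $w_2(t)$ from Proposition~\ref{app-RN2}, this expresses $w^*_2(t)$, on each slice $\{v_2(t)=e\}$, as a function of the residue of $t_2$ modulo a bounded power of $2$ and of the fixed data $\hr_2,\,v_2(\hr)$, and the relevant rows of Proposition~\ref{app-RN2} are selected according to the parity of $v_2(\hr)$, which is why the two cases of the lemma have different shapes.

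Next, for each fixed $e>v_2(\hr)$ I would use the elementary $2$-adic measure computation of the same type as in the proof of Lemma~\ref{measure-vpt-c}: if $w^*_2(t)$ depends on $t$ only through $t_2\bmod 2^k$ on $\{v_2(t)=e\}$, then
\[
\int_{v_2(t)=e} w^*_2(t)\,dt\;=\;\frac{1}{2^{\,e+1}}\cdot\frac{1}{2^{k-1}}\sum_{d\in(\Z/2^k\Z)^{\times}} w^*_2(2^e d).
\]
Carrying out the sum over $d$ with the table kills every contribution except the one coming from pairing the $t_2$ and $(t-\hr)_2$ factors against the part of $w_2(t)$ which is a Kronecker symbol in $t_2$; this leaves $\int_{v_2(t)=e}w^*_2(t)\,dt$ equal to $2^{-e-1}$ times an explicit constant $c_e\in\{0,\pm 1,\pm\chi_4(\hr_2)\}$ depending only on $e$, on $v_2(\hr)$, and (through $\chi_4(\hr_2)$) on $\hr_2\bmod 4$.

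Finally I would sum over $e>v_2(\hr)$. For all $e$ large enough the constant $c_e$ is periodic in $e$ with period $6$ (this is the mod-$6$ behaviour of the valuation data governing the reduction type), so $\sum_e c_e\,2^{-e-1}$ is a finite combination of geometric series of ratio $2^{-6}=1/64$, which is exactly what produces the denominator $63=2^{6}-1$ and the constants $\tfrac{29}{63}$ and $\tfrac{46}{63}$. The finitely many slices with $e$ close to $v_2(\hr)$ --- notably $e=v_2(\hr)+1$, where the correction $(t-\hr)_2\equiv 2-\hr_2$ matters --- contribute the transient terms $-2^{-v_2(\hr)-2}$ and $\chi_4(\hr_2)2^{-v_2(\hr)-4}$ (respectively $\chi_4(\hr_2)2^{-v_2(\hr)-5}$) appearing in the two displayed formulas. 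I expect the main obstacle to be purely combinatorial: one has to determine, for every congruence class of $t_2$ and every value of $e-v_2(\hr)$, which row of Proposition~\ref{app-RN2} applies, keep the $(t-\hr)_2\bmod 4$ factor correct in the boundary slice, and then resum the period-$6$ pattern without sign errors; no single step is hard, but the number of sub-cases is substantial.
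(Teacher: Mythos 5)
Your proposal follows essentially the same route as the paper's proof: decompose the domain into slices $v_2(t)=e$ for $e>v_2(\hr)$, use $w_2^*(t)\equiv t_2(t-\hr)_2w_2(t)\pmod 4$ together with the congruence $(t-\hr)_2\equiv -\hr_2\pmod 4$ (with the correction on the slice $e=v_2(\hr)+1$), evaluate each slice integral from the table in Proposition~\ref{app-RN2}, and resum the period-$6$ tail to produce the $29/63$ and $46/63$ constants while treating the boundary slices ($e=v_2(\hr)+1,+2$ in the even case, $e=v_2(\hr)+1,+3$ in the odd case) separately. This is exactly how the paper proceeds, so the plan is sound and only the bookkeeping remains.
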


\begin{proof} We first remark that if $0 \leq v_2(\hr) < v_2(t)=e,$ then
\begin{eqnarray*}
t_2 (t-\hr)_2 &\equiv& \begin{cases} - t_2 \hr_2 \mod 4 &\text{ if } v_2(t) - v_2(\hr) \geq 2,\\
 t_2 \hr_2 \mod 4 &\text{ if } v_2(t) - v_2(\hr) = 1. \end{cases}
\end{eqnarray*}
We first suppose that $v_2(\hr)$ is even.
If $v_2(t) - v_2(\hr) \equiv 0,2 \mod 6$, and $v_2(t) \neq  v_2(\hr)+2$, then since $w^*_{2}(t) \equiv  t_2 \hr_2 \mod 4$, it is clear that
$$
\int_{v_2(t) = e} w^*_{2}(t) \; dt = 0.$$
If $v_2(t) - v_2(\hr) \equiv 1,3,4,5 \mod 6$, and $v_2(t) - v_2(\hr) > 1$, then
$w^*_{2}(t) \equiv - \hr_2 \mod 4,$ and
$$
\int_{v_2(t) = e} w^*_{2}(t) \; dt = \frac{1}{2^{e+1}} \sum_{d \in (\Z/2\Z)^*} w^*_{2}(dp^e) =- \chi_4(\hr_2) \frac{1}{2^{e+1}}.
$$
Then, if $v_2(\hr)$ is even, we then have that
\begin{eqnarray} \label{sum-even}  \sum_{e \geq v_2(\hr)+3} \int_{v_2(t)=e} w^*_{2}(t) \; dt
&=& - \chi_4(\hr_2)  \sum_{{e \geq v_2(\hr)+3}\atop {e - v_2(\hr) \not\equiv 0,2 \mod 6}} \frac{1}{2^{e+1}}
= -  \frac{\chi_4(\hr_2) }{2^{v_2(\hr)+2}} \, \frac{29}{63}.
\end{eqnarray}
If $v_2(t) - v_2(\hr) =1$, then $w_{2}^*(t) \equiv - t_2 \hr_2^{-1} t_2 \hr_2 \mod 4 \equiv -1 \mod 4$, so $w_{2}^*(t)=-1$ and we have
$$
\int_{v_2(t) = v_2(\hr) + 1} w^*_{2}(t) \; dt = \frac{-1}{2^{v_2(\hr)+2}}.$$
Finally, if $v_2(t) - v_2(\hr) =2$, then we compute
$$w^*_{2}(d2^e) \equiv -d \hr_2 w_{2}(d2^e ) \mod 4 = \begin{cases} 1 & d \equiv 1 \mod 8,\\
1 & d \equiv 3,7 \mod 8, \hr_2 \equiv 1 \mod 4,\\
-1 & d \equiv 3,7 \mod 8,\hr_2 \equiv 3 \mod 4,\\
-1 & d \equiv 5 \mod 8, \end{cases}
$$
and so
$$
\int_{v_2(t) = v_2(\hr) + 2} w^*_{2}(t) \; dt = \frac{1}{2^{e+3}} \sum_{d \in (\Z/8\Z)^*} w^*_{2}(dp^e) = \frac{ \chi_4(\hr_2)}{2^{v_2(\hr)+4}}.$$
Adding the contributions for $v_p(t)=v_p(\hr)+1$ and $v_p(t)=v_p(\hr)+2$ to \eqref{sum-even}, we get the result for $v_2(\hr)$ even.

We now suppose that $v_2(\hr)$ is odd. If $v_2(t) - v_2(\hr) \equiv 0,2,4 \mod 6$, or $v_2(t) - v_2(\hr) \equiv 1 \mod 6$ and $v_2(t) \neq v_2(\hr)+1$, then $w_{2}^*(t) \equiv - t_2^2 \hr_2 \equiv - \hr_2 \mod 4$  and so, as before,
$$
\int_{v_2(t) = e} w^*_{2}(t) \; dt  = 
- \chi_4(\hr_2) \frac{1}{2^{e+1}}.
$$
If $v_2(t) - v_2(\hr) \equiv 3,5 \mod 6$ and $v_2(t)-v_2(\hr) \neq 3$, then $w_{2}^*(t) \equiv  t_2 \hr_2 \mod 4$ and so
$$
\int_{v_2(t) = e} w^*_{2}(t) \; dt = 0.$$
Thus, if $v_2(\hr)$ is odd, we then have that
\begin{eqnarray} \label{sum-odd}
 \sum_{e \geq v_2(\hr)+4} \int_{v_2(t)=e} w^*_{2}(t) \; dt = - \chi_4(\hr_2) \sum_{{e \geq v_2(\hr)+4} \atop {e - v_2(\hr) \not\equiv 3,5 \mod 6}} \frac{1}{2^{e+1}}
=-\frac{ \chi_4(\hr_2) }{2^{v_2(\hr)+4}}  \frac{46}{63} .
\end{eqnarray}
We then have to treat the 2 remaining cases $v_2(t)=v_2(\hr)+1$ and $v_2(t)=v_2(\hr)+3$. In the latter case,
we have that
$$w_{2}^*(t) \equiv \begin{cases} t_2 \hr_2 & t_2 \equiv 5 \mod 8, \\ - t_2 \hr_2 & t_2 \not\equiv 5 \mod 8, \end{cases}$$
and then
$$
\int_{v_2(t) = v_2(\hr) + 3} w^*_{2}(t) \; dt = \frac{1}{2^{e+3}} \sum_{d \in (\Z/8\Z)^*} w^*_{2}(dp^e) = 
 \frac{\chi_4(\hr_2)}{2^{e+2}}.
$$
Finally, if $v_2(t) - v_2(\hr) =1$, then we have
$$w^*_{2}(t) = \chi_4(\hr_2 )\begin{cases} 1 & t_2 \equiv 1 \mod 8\\
-1 & t_2 \equiv 3 \mod 8, \hr_2 \equiv 1 \mod 4\\
1 & t_2 \equiv 3 \mod 8, \hr_2 \equiv 3 \mod 4\\
-1 & t_2 \equiv 5 \mod 8\\
1 & t_2 \equiv 7 \mod 8, \hr_2 \equiv 1 \mod 4 \\
-1 & t_2 \equiv 7 \mod 8, \hr_2 \equiv 3 \mod 4 \end{cases}
$$
and so
$$
\int_{v_2(t) = v_2(\hr) + 1} w^*_{2}(t) \; dt = \frac{1}{2^{e+3}} \sum_{d \in (\Z/8\Z)^*} w^*_{2}(dp^e) = 
0.
$$
Adding the contributions of $v_p(t)=v_p(\hr)+1$ and $v_p(t)=v_p(\hr)+3$ to \eqref{sum-odd}, we get the result.
\end{proof}

\kommentar{
\begin{theorem}
If $v_2(\hr)$ is even, let  $N_0 = \lceil (v_2(\hr) + 3)/6 \rceil$ and $N_2 = \lceil (v_2(\hr) + 1)/6 \rceil$. Then,
$$\int_{0 \leq v_2(\hr) < v_2(t)} w^*_{2}(t) \; dt = - 2^{-v_2(\hr)-2} - \leg{\hr_2}{4} \frac{1}{2^{v_p(\hr)+4}} + \leg{\hr_2}{4}  \frac{64}{63} \left( \frac{1}{2^{6N_0+1}} + \frac{1}{2^{6N_2+3}} \right).
$$
If $v_2(\hr)$ is odd, then
$$\int_{0 \leq v_2(\hr) < v_2(t)} w^*_{2}(t) \; dt = ...$$
\end{theorem}
\begin{proof} If $v_2(\hr)$ is even, then
\begin{eqnarray*} - \leg{\hr_2}{4} \sum_{e \geq v_2(\hr)+3} w^*_{2}(t) &=& \sum_{e \geq v_2(\hr)+3} \frac{1}{2^{e+1}} - \sum_{6n \geq v_2(\hr)+3} \frac{1}{2^{6n+1}} - \sum_{6n + 2 \geq v_2(\hr)+3} \frac{1}{2^{6n+3}}\\
&=& \frac{1}{2^{v_2(\hr)+3}} - \frac{1}{2^{6N_0+1}} \sum_{n=0}^\infty 2^{-6n} - \frac{1}{2^{6N_2+3}} \sum_{n=0}^\infty 2^{-6n}\\
&=& \frac{1}{2^{v_2(\hr)+3}} - \frac{64}{63} \left( \frac{1}{2^{6N_0+1}} + \frac{1}{2^{6N_2+3}}  \right).
\end{eqnarray*}
Adding the contributions for $v_p(t)=v_p(\hr)+1$ and $v_p(t)=v_p(\hr)+2$, we get the result.
 \end{proof}
}

\begin{lemma} \label{lemma-vgeq2}
Suppose that $v_2(\hr) \geq 2$. Then,
$$\int_{0 \leq v_2(t) \leq v_2(\hr) - 2} w^*_{2}(t) \; dt = \begin{cases} \frac{1}{4} & v_2(\hr)=2 \\
{2^{1-v_2(\hr)}} + \frac{1}{3} \left( 4^{-(v_2(\hr)-3)/2} - 1 \right) & \mbox{$v_2(\hr) \geq 3$ odd} \\
 {2^{-v_2(\hr)}}+ \frac{1}{3} \left( 4^{-(v_2(\hr)-4)/2} - 1 \right) & \mbox{$v_2(\hr) \geq 4$ even}
  \end{cases}
$$
\end{lemma}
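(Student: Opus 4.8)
The plan is to replace the modified local root number $w^*_2$ by the ordinary one $w_2$ on the range in question, to stratify the integral by the $2$-adic valuation of $t$, and to evaluate each stratum with Table~III of~\cite{rizzo} (equivalently Proposition~\ref{app-RN2}), in the same spirit as the proofs of Proposition~\ref{p5} and Lemma~\ref{v-smaller}.

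First, for $0\le v_2(t)\le v_2(\hr)-2$ we have $v_2(t-\hr)=v_2(t)$ and $(t-\hr)_2=t_2-\hr\,2^{-v_2(t)}\equiv t_2\pmod4$, since $v_2(\hr\,2^{-v_2(t)})\ge2$; hence $t_2(t-\hr)_2\equiv t_2^{\,2}\equiv1\pmod8$, so that $w^*_2(t)=t_2(t-\hr)_2\,w_2(t)=w_2(t)$ throughout this range. Therefore
\[
\int_{0\le v_2(t)\le v_2(\hr)-2}w^*_2(t)\,dt=\sum_{e=0}^{v_2(\hr)-2}\ \int_{v_2(t)=e}w_2(t)\,dt .
\]

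Next, fix $e$ with $0\le e\le v_2(\hr)-2$. Since then $v_2(t-\hr)=e$ and $v_2(2t-\hr)=e+1$, both $<v_2(\hr)$, the invariants of $\Hold(t)$ recalled at the beginning of this subsection satisfy $v_2\big(c_4(t),c_6(t),\Delta(t)\big)=\big(4+2e,\ 6+3e,\ 4+2v_2(\hr)+4e\big)$. Passing to the minimal Weierstrass model one finds that $\Hold(t)$ has at $2$ good, additive, or (for $e$ small and $v_2(\hr)$ large) multiplicative reduction, the reduction type depending only on $e\bmod 6$ and $v_2(\hr)\bmod 6$; reading off the corresponding line of Table~III of~\cite{rizzo}, $w_2(t)$ is constant on the class of $t$ modulo a fixed power of $2$, and its value depends only on $e\bmod2$, on $t_2\bmod 8$, and — in the multiplicative and potentially multiplicative lines — on $\hr_2\bmod 8$. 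One then computes $\int_{v_2(t)=e}w_2(t)\,dt=2^{-e-j}\sum_{d\in(\Z/2^j\Z)^\times}w_2(d\,2^e)$ for the appropriate $j\le3$; several of these character sums vanish by a pairwise cancellation of the summands, and those that survive are geometric in $e$ (with ratio $1/4$) except for the top stratum $e=v_2(\hr)-2$, which must be treated on its own because there $v_2(\Delta)$ has descended into the range governing a different line of the table.

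Summing up, the surviving non-top strata contribute $\tfrac13\big(4^{-(v_2(\hr)-3)/2}-1\big)$ when $v_2(\hr)$ is odd and $\tfrac13\big(4^{-(v_2(\hr)-4)/2}-1\big)$ when $v_2(\hr)$ is even, the top stratum $e=v_2(\hr)-2$ contributes $2^{1-v_2(\hr)}$ in the odd case and $2^{-v_2(\hr)}$ in the even case, and when $v_2(\hr)=2$ only the single stratum $e=0=v_2(\hr)-2$ occurs, contributing $1/4$; adding these yields the stated formula. The hard part is the middle step: the reduction-type bookkeeping for $\Hold(t)$ as $(e,v_2(\hr))$ ranges over its residues modulo $6$, the identification of which strata give good, additive, or multiplicative reduction, and the extraction from Table~III of the precise dependence of $w_2(t)$ on the $2$-adic digits of $t$ and $\hr$ needed to pin down the cancellations; this is a long but routine computation of exactly the same type as those underlying Propositions~\ref{p5} and~\ref{sat}, after which only the elementary summation of the geometric series remains.
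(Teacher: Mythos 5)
Your proposal follows essentially the same route as the paper: on the range $0\le v_2(t)\le v_2(\hr)-2$ one has $(t-\hr)_2\equiv t_2\pmod 4$, hence $w_2^*(t)=w_2(t)$; one then stratifies by $e=v_2(t)$, reads off each stratum from Proposition~\ref{app-RN2} (i.e.\ Rizzo's Table~III), observes that the odd strata vanish, and sums the resulting series. The final totals you state are correct. One bookkeeping inaccuracy in your summary of the computation, though: the exceptional strata are those with $v_2(\hr)-e\in\{2,3,4\}$ (giving $2^{-e-2}$, $2^{-e-2}$ and $0$ respectively, versus $-2^{-e-2}$ for $v_2(\hr)-e\ge5$), not just the single top stratum. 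In particular, when $v_2(\hr)$ is odd the stratum $e=v_2(\hr)-2$ is an \emph{odd} stratum and contributes $0$; the positive term $2^{1-v_2(\hr)}$ comes from $e=v_2(\hr)-3$, and likewise the stratum at distance $4$ breaks your claimed geometric pattern by contributing $0$. These slips cancel out in the totals you quote, but a reader checking your strata against the table would find a mismatch; the paper's proof records the four-case list explicitly.
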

\begin{proof} Since $e=v_p(t) \leq v_p(\hr)-2$, we have that $(t-\hr)_2 = t_2 -2^k \hr_2$ for $k=v_2(\hr)-v_2(t) \geq 2$, and $(t-\hr)_2 \equiv t_2 \mod 4$, which gives $w_{2}^*(t) = w_{2}(t)$.
First, suppose that $v_2(t)$ is even. Then, it is easy to see from Proposition \ref{app-RN2} of Appendix \ref{appen_2} that $$\int_{v_2(t) = e} w^*_{2,r}(t) \; dt =
\begin{cases}
\frac{1}{2^{v_2(\hr)}} & e=v_2(\hr)-2, \\
\frac{1}{2^{v_2(\hr)-1}} & e=v_2(\hr)-3, \\
0 &  e=v_2(\hr)-4,\\
\frac{-1}{2^{e+2}} & e \geq v_2(\hr)-5. \end{cases}$$
We now suppose that  $v_2(t)$ is odd. Then, it is clear that
$$
\int_{v_2(t) = e} w^*_{2}(t) \; dt = 0.
$$
Thus, summing all contributions
$$\int_{0 \leq v_2(t) \leq v_2(\hr) - 2} w^*_{2}(t) \; dt = \sum_{{0 \leq e \leq v_2(\hr) - 2} \atop {e \text{ even}}}
\int_{v_2(t) = e} w^*_{2}(t) \; dt ,
$$
we get the result.
\end{proof}

\begin{lemma} \label{lemma-vequalminus}We have
$$\int_{v_2(t) = v_2(\hr) - 1} w^*_{2}(t) \; dt = 0.$$
\end{lemma}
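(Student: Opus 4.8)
The plan is to first reduce, by an elementary congruence modulo $4$, to an averaging statement for the true local root number $w_2(t)$, and then to extract the vanishing from the explicit description of $w_2(t)$ in Appendix~\ref{appen_2}.

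Set $e:=v_2(\hr)-1$, so the region of integration is $\{t : v_2(t)=e\}$, on which automatically $v_2(t-\hr)=v_2(t)=e$. Writing $t=2^e t_2$ and $\hr=2^{e+1}\hr_2$ with $t_2,\hr_2$ odd, I would compute $(t-\hr)_2=t_2-2\hr_2$ and hence
$$
t_2\,(t-\hr)_2=t_2^2-2\hr_2 t_2\equiv 1-2\equiv -1\pmod{4},
$$
using $t_2^2\equiv 1\pmod{8}$ and that $\hr_2 t_2$ is odd. Since $w^*_2(t)\equiv t_2(t-\hr)_2\,w_2(t)\pmod{4}$ (Lemma~\ref{lemma-modified-RN}) and $w^*_2(t),w_2(t)\in\{\pm1\}$, this forces $w^*_2(t)=-w_2(t)$ identically on the region, so it suffices to prove that $\int_{v_2(t)=e}w_2(t)\,dt=0$.

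Next I would pin down the reduction type there: from the formulas for $c_4,c_6,\Delta$ one obtains the valuation triple $\bigl(2+2v_2(\hr),\,3+3v_2(\hr)+\kappa,\,6v_2(\hr)\bigr)$, where $\kappa:=v_2(2t-\hr)-v_2(\hr)=v_2(t_2-\hr_2)\geq 1$ is the only part of the triple depending on $t$. I would partition $\{v_2(t)=e\}$ into the strata $T_\kappa:=\{v_2(t)=e,\ v_2(2t-\hr)=v_2(\hr)+\kappa\}$, $\kappa\geq1$, and note that on $T_\kappa$ the unit $(2t-\hr)_2$ is equidistributed modulo $4$. On each $T_\kappa$ I would read off $w_2(t)$ from Rizzo's Tables~II and~III, equivalently from Proposition~\ref{app-RN2}, and check that it equals $\delta(\kappa,\hr)\cdot\chi_4\bigl((2t-\hr)_2\bigr)$ for some sign $\delta(\kappa,\hr)\in\{\pm1\}$ independent of $t$; this is the exact $2$-adic counterpart of the identity $w_p(t)=-\leg{3t_p}{p}$ of Lemma~\ref{H-p5} valid in the mirror range $v_p(t)<v_p(\hr)$, the $t$-dependence being carried by the unit part of $c_6(t)$. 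Since $\chi_4$ is the nontrivial character modulo $4$ and $(2t-\hr)_2$ is equidistributed modulo $4$ on $T_\kappa$, this yields $\int_{T_\kappa}w_2(t)\,dt=0$ for every $\kappa\geq1$; summing over $\kappa$ gives $\int_{v_2(t)=e}w_2(t)\,dt=0$, and hence the lemma.

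The main obstacle is precisely the last verification. According to the parity of $v_2(\hr)$ and to whether $\kappa=1$, $\kappa=2$ or $\kappa\geq3$, the triple $\bigl(2+2v_2(\hr),\,3+3v_2(\hr)+\kappa,\,6v_2(\hr)\bigr)$ lands in several different rows of Rizzo's Tables~II and~III, and one must minimalise the Weierstrass model and keep track of the unit parts $c_4(t)_2,c_6(t)_2$ as $t_2$ varies within a single stratum in order to confirm that the surviving $t$-dependence of $w_2(t)$ is exactly $\chi_4$ of the unit part of $2t-\hr$ (equivalently, that translation by $2^{v_2(2t-\hr)}$, a measure-preserving self-bijection of $T_\kappa$, flips the sign of $w_2$). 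This is a finite but somewhat lengthy bookkeeping, entirely analogous to (though longer than) the $p\geq5$ computation already carried out in the proof of Lemma~\ref{H-p5}.
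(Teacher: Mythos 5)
Your opening reduction is correct and is exactly the paper's first step: on $\{v_2(t)=v_2(\hr)-1\}$ one has $(t-\hr)_2=t_2-2\hr_2$, hence $t_2(t-\hr)_2\equiv-1\pmod 4$ and $w^*_2(t)=-w_2(t)$, so the lemma reduces to $\int_{v_2(t)=v_2(\hr)-1}w_2(t)\,dt=0$. The gap is in the mechanism you propose for this last vanishing. You stratify by $\kappa=v_2(2t-\hr)-v_2(\hr)=v_2(t_2-\hr_2)$ and claim that on each stratum $T_\kappa$ one has $w_2(t)=\delta(\kappa,\hr)\,\chi_4\bigl((2t-\hr)_2\bigr)$, so that every $T_\kappa$ integrates to zero. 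This is false, and the verification you defer would not go through. Compare with Proposition~\ref{app-RN2} (the case $0\leq v_2(t)=v_2(\hr)-1$): for $v_2(t)$ odd, $w_2(t)=-1$ iff $t_2\equiv\hr_2\pmod 4$, i.e.\ iff $\kappa\geq2$; thus $w_2$ is \emph{constant} on each $T_\kappa$ ($+1$ on $T_1$, $-1$ on every $T_\kappa$ with $\kappa\geq2$), and $\int_{T_\kappa}w_2\,dt\neq0$ for every $\kappa$. Likewise for $v_2(t)$ even and, say, $\hr_2\equiv1\pmod4$: the stratum $T_2$ is $\{t_2\equiv\hr_2+4\pmod 8\}$, on which $w_2\equiv-1$, while $\bigcup_{\kappa\geq3}T_\kappa=\{t_2\equiv\hr_2\pmod8\}$, on which $w_2\equiv+1$. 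The vanishing of the total integral comes from cancellation \emph{between} strata (e.g.\ $\mu(T_1\cap\{w_2=1\})=\mu(T_1\cap\{w_2=-1\})$ together with $\mu(T_2)=\mu(\bigcup_{\kappa\geq3}T_\kappa)$), not from a character averaging to zero within each stratum. The analogy you draw with $w_p(t)=-\leg{3t_p}{p}$ from Lemma~\ref{H-p5} does not transfer: at $p=2$ the table entries depend on $t_2$ modulo $8$ (and on $\hr_2$ modulo $4$) in a way that is not captured by $\chi_4$ of the unit part of $2t-\hr$.

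The lemma is nonetheless true, and the paper's route is the direct one: after the reduction $w^*_2=-w_2$, it simply reads off Proposition~\ref{app-RN2} that in the even case $w_2=1$ on exactly two of the four classes of $t_2$ modulo $8$, and in the odd case $w_2=-1$ on exactly one of the two classes of $t_2$ modulo $4$; in every case the classes where $w_2=+1$ and where $w_2=-1$ have equal measure inside $\{v_2(t)=v_2(\hr)-1\}$, so the integral vanishes. If you want to keep your stratified presentation, you must drop the claimed form $\delta(\kappa,\hr)\chi_4((2t-\hr)_2)$ and instead exhibit the pairing of strata (or of residue classes) on which $w_2$ takes opposite values with equal measure.
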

\begin{proof} If $v_2(t) = v_2(\hr)-1$, then $(t-\hr)_2 = t_2 - 2 \hr_2,$ and one check that $t_2-2\hr_2 \equiv 1 \mod 4 \iff t_2 \equiv -1 \mod 4,$ which gives
$$w_{2}^*(t) \equiv t_2 (t-\hr)_2 w_{2}(t) \equiv - w_{2}(t) \mod 4.$$ From Proposition~\ref{app-RN2} of Appendix \ref{appen_2}, we then easily deduce that
$$\int_{v_2(t)=v_2(\hr)-1} w_{2}^*(t) \; dt = 0$$ for all cases.\end{proof}

\begin{lemma} \label{v-equal}
If $v_2(\hr)$ is even, then
$$\int_{v_2(t) = v_2(\hr)} w^*_{2}(t) \; dt = -\chi_4(\hr_2) \frac{1}{2^{v_2(\hr)+4}} - \frac{1}{2^{v_2(\hr)+2}} + \chi_4(\hr_2) \frac{1}{2^{v_2(\hr)+2}} \frac{29}{63}.$$
If $v_2(\hr)$ is odd, then
$$\int_{v_2(t) = v_2(\hr)} w^*_{2}(t) \; dt = - \chi_4(\hr_2) \frac{1}{2^{v_2(\hr)+5}} + \chi_4(\hr_2) \frac{1}{2^{v_2(\hr)+4}} \frac{46}{63}.$$
\end{lemma}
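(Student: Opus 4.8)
The plan is to evaluate $\int_{v_2(t)=v_2(\hr)}w^*_2(t)\,dt$ by decomposing the domain $\{t\in\Z_2:v_2(t)=v_2(\hr)\}$ according to the quantity $k:=v_2(t-\hr)-v_2(\hr)$. Writing $e:=v_2(\hr)$ and $t=2^e t_2$, $\hr=2^e\hr_2$ with $t_2,\hr_2$ odd, one always has $k\geq1$ because $t_2-\hr_2$ is even; in particular there is no ``$k=0$'' subcase to treat here, unlike for the prime $3$ in Proposition~\ref{valuesW3}. For fixed $k$ one computes $v_2(c_4(t))=4+2e+k$, $v_2(c_6(t))=5+3e+k$, $v_2(\Delta(t))=4+6e+2k$, and $v_2(2t-\hr)=e$ since $2t_2-\hr_2$ is odd. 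First I would record, as the $p=2$ analogue of Lemma~\ref{measureSK} (proved the same way, by counting residues of $t_2$ modulo $2^{k+1}$), that $\mu\big(\{t:v_2(t)=e,\ v_2(t-\hr)=e+k\}\big)=2^{-(e+k+1)}$ for every $k\geq1$.

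Next, for each $k$ I would read $w_2(t)$ off Proposition~\ref{app-RN2}: it is determined by the triple $(v_2(c_4),v_2(c_6),v_2(\Delta))$ together with the residue of the unit part $c_6'(t)=-3^3\,t_2\,(t_2-\hr_2)_2\,(2t_2-\hr_2)$ modulo a small power of $2$, hence depends only on $e$, $k$, and on $t_2$ modulo a small power of $2$ (together with $\hr_2\bmod 4$). Multiplying by $t_2(t-\hr)_2\bmod 4$ gives $w^*_2(t)$, and averaging over the admissible residues of $t_2$ collapses the inner integral to a constant times $\chi_4(\hr_2)$ (or an absolute constant), by exactly the mechanism already used in Lemma~\ref{v-smaller}. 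The structural point that makes the sum over $k$ tractable is that, as $k$ increases by $6$, $v_2(\Delta(t))$ increases by $12$ and the configuration $(v_2(c_4),v_2(c_6),v_2(\Delta))$ recurs up to the usual reduction, so the reduction type — and hence the averaged value of $w^*_2$ — is eventually periodic in $k$ with period $6$; this is the same period-$6$ phenomenon visible in Lemma~\ref{H-p5} and in Lemma~\ref{v-smaller}. The parity of $e=v_2(\hr)$ must be carried along separately, since it shifts the triple into a different residue class, which is precisely why the statement of Lemma~\ref{v-equal} splits into the two cases.

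It then remains to sum $\sum_{k\geq1}2^{-(e+k+1)}\,\overline{w^*_2}(k)$. This breaks into finitely many exceptional small values of $k$, which produce the isolated terms such as $\mp\chi_4(\hr_2)\,2^{-(v_2(\hr)+4)}$ (resp. $2^{-(v_2(\hr)+5)}$) and $-2^{-(v_2(\hr)+2)}$, plus a periodic tail summing as a geometric series in $2^{-6k}$ with total $\tfrac{64}{63}$; this accounts for the denominators $63$ in the constants $\tfrac{29}{63}$ and $\tfrac{46}{63}$. Combining this with Lemmas~\ref{v-smaller}, \ref{lemma-vgeq2} and \ref{lemma-vequalminus} over the remaining ranges of $v_2(t)$ then yields $\int_{\Z_2}w^*_2(t)\,dt=\Mh(2)$, completing Proposition~\ref{valuesW2}. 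The only real obstacle is bookkeeping: Rizzo's Table III forces a long and delicate case analysis of the Kodaira types, the quite-bad versus merely bad distinction, and the various special congruence conditions, and the work is to organize it so that the period-$6$ pattern in $k$ and the finite list of exceptional small $k$ become transparent; I would cross-check each subcase numerically with PARI/GP and verify the resulting Euler factor $\Mh(2)$ against the value implied by Theorem~\ref{average-nonperiodic}.
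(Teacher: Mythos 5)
Your proposal is correct and follows essentially the same route as the paper: decompose by $k=v_2(t-\hr)-v_2(\hr)\geq1$, use the measure $2^{-(e+k+1)}$ of each stratum, read off $w_2(t)$ (the paper does this via the precomputed Proposition~\ref{app-RN2} rather than directly from Rizzo's Table III, but that is the same computation), exploit the period-$6$ behaviour in $k$ to sum a geometric tail producing the $29/63$ and $46/63$ constants, and treat the small exceptional values of $k$ ($k=1,2$ in the even case, $k=1,3$ in the odd case) separately.
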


\begin{proof} Let $e=v_2(t)=v_2(\hr)$ and $v_2(t-\hr) = e + k.$ Notice that $k \geq 1$ since $t_2 - \hr_2 \equiv 0 \mod 2$.
We first suppose that $v_2(\hr)$ is even.
If $k \equiv 0,2 \mod 6$, $k \geq 3$, then $w_{2}^*(t) \equiv - t_2 (t-\hr)_2\equiv - s_2 (t-\hr)_2 \mod 4$
and so
\begin{eqnarray*}
\int_{\substack{v_2(t)=v_2(\hr)=e \\ v_2(t-\hr)=e+k}} \; dt
&=& \frac{-1}{2^{e+k+2}} \sum_{a \in (\Z/4\Z)^*}w_2^*(2^{e}(s_2+a2^k))=0.
\end{eqnarray*}
If $k \equiv 1,3,4,5 \mod 6$, $k \geq 3$, then $w_{2}^*(t) \equiv s_2\mod 4$,
and
\begin{eqnarray*}
\int_{\substack{v_2(t)=v_2(\hr)=e \\v_2(t-\hr)=e+k}} w^*_{2}(t) \; dt &=&  \chi_4(\hr_2) \frac{1}{2^{e+k+1}}.
\end{eqnarray*}
We then have that
\begin{eqnarray}
 \int_{\substack{e=v_2(t)=v_2(\hr)\\ v_2(t-\hr) \geq e+3}} w_{2}^*(t) \; dt
\label{kgeq3} &&= \chi_4(\hr_2)\sum_{\substack{k \geq 3 \\k \not\equiv 0,2 \mod 6}} \frac{1}{2^{k+v_2(\hr)+1}}
=\chi_4(\hr_2) \frac{1}{2^{v_2(\hr)+2}}\frac{29}{63}.
\end{eqnarray}
%
We now have to compute the contribution for $v_2(t-\hr)=v_2(\hr)+1$ and $v_2(t-\hr)=v_2(\hr)+2$.
For the first case $v_2(t-\hr)=v_2(\hr)+1$, by Proposition~\ref{app-RN2} we have
\es{ \label{kequal1}
\int_{\substack{e=v_2(t)=v_2(\hr)\\ v_2(t-\hr)=e+1}} w_{2}^*(t) \; dt 
&
 =  \frac{1}{2^{e+3}}
\sum_{a \in (\Z/4\Z)^*} \chi_4(a(\hr_2+2a))\, w_{2}(2^e(\hr_2+2a))
= -2^{-e-2}.
}
For the second case $v_2(t-\hr)=v_2(\hr)+2$, we have $w_{2}^*(t)\equiv s_2(t-s_2)w_2(t)\mod 4$ and so
\es{ \label{kequal2}
\int_{\substack{e=v_2(t)=v_2(\hr)\\v_2(t-\hr)=e+2}} w_{2}^*(t) \; dt 
 &= \frac{\chi_4(s_2)}{2^{e+5}} \sum_{a \in (\Z/8\Z)^*} \chi_4(a) \, w_{2}(2^e(\hr_2+4a)) =-\chi_4(s_2)2^{-e-4},
}
since $w_{2}(2^e(\hr_2+4a)) = 1$ only in the cases $a \equiv 1,3,7 \mod 8$ if $\hr_2 \equiv 1 \mod 4$, or $a \equiv 1,3, 5 \mod 8$ if $\hr_2 \equiv 3 \mod 4$.
Summing \eqref{kequal1}, \eqref{kequal2} and \eqref{kgeq3}, we get the result when $v_2(\hr)$ is even.

Suppose now that $e=v_2(\hr)=v_2(t)$ is odd. If $k = v_2(t-\hr)-v_2(\hr) \equiv 0,1,2,4 \mod 6$, and $k \geq 4$, then
$w^*_{2}(t) \equiv t_2 (t-\hr)_2 w_{2}(t) \equiv t_2 \equiv s_2\mod 4$, and
\begin{eqnarray*}
\int_{\substack{e=v_2(t)=v_2(\hr)\\ v_2(t-\hr)=e+k}} w^*_{2}(t) \; dt 
&=& 
 \chi_4(s_2)\frac{1}{2^{e+k+1}}.
\end{eqnarray*}
If $k = v_2(t-\hr)-v_2(\hr) \equiv 3,5 \mod 6$, and $k \geq 4$, then
$w^*_{2}(t) \equiv t_2 (t-\hr)_2 w_{2}(t) \equiv  - s_2 (t-\hr)_2 \mod 4$ and, as before,
\begin{eqnarray*}
\int_{\substack{e=v_2(t)=v_2(\hr)\\ v_2(t-\hr)=e+k}} w^*_{2}(t) \; dt 
= 0.
\end{eqnarray*}
Summing the contributions above, we get that
\begin{eqnarray}\label{a1}
\int_{\substack{v_2(t)=v_2(\hr)\\v_2(t-\hr)\geq v_2(\hr) + 4}} w^*_{2}(t) \, dt =\chi_4(\hr_2)  \sum_{{k \geq 4} \atop {k \not\equiv 3,5 \mod 6}} \frac{1}{2^{v_2(\hr)+k+1}} = \chi_4(\hr_2) \frac{1}{2^{v_2(\hr)+4}} \frac{46}{63} . \end{eqnarray}
%
We now have to compute the contributions for $v_2(t-\hr)=v_2(\hr)+1$ and $v_2(t-\hr)=v_2(\hr)+3$.
For the first case $v_2(t-\hr)=v_2(\hr)+1$, we have
\es{\label{a2}
\int_{\substack{e=v_2(t)=v_2(\hr)\\ v_2(t-\hr)=e+1}} w_{2}^*(t) \; dt
\sum_{a \in (\Z/8\Z)^*} \chi_4(a(\hr_2+2a) ) w_{2}(2^e(\hr_2+2a))=0
}
since $w_{2}(2^e(\hr_2+2a)) = 1$ only in the cases $a \equiv 1,7 \mod 8$ if $\hr_2 \equiv 1 \mod 4$ or $a \equiv 1, 3 \mod 8$ if $\hr_2 \equiv 3 \mod 4$.
Finally, if $v_2(t-\hr)=v_2(\hr)+3$, we find
\begin{eqnarray} \label{a3}
\int_{\substack{e=v_2(t)=v_2(\hr)\\ v_2(t-\hr)=e+3}} w_{2}^*(t) \; dt &=& - \chi_4(\hr_2) \frac{1}{2^{v_2(\hr)+5}}. \end{eqnarray}
\kommentar{
\begin{eqnarray*}
\int_{e=v_2(t)=v_2(\hr), v_2(t-\hr)=e+2} w_{2}^*(t) \; dt &=& \frac{1}{2^{e+4}}
\sum_{{{d \in (\Z/2^{4}\Z)^*} \atop {d \equiv \hr_2 \mod {2}}} \atop  {d \not\equiv \hr_2 \mod {2^{2}}}} d (d-\hr_2)_2 w_{2}(d) \mod 4 \\ &=& \frac{1}{2^{e+4}}
\sum_{a \in (\Z/8\Z)^*} (\hr_2+2a) \, a \, w_{2}(\hr_2+2a) \mod 4
\end{eqnarray*}}
Summing \eqref{a1}, \eqref{a2} and \eqref{a3}, we get the result when $v_2(\hr)$ is odd.
\end{proof}

\kommentar{\begin{lemma}
If $v_2(\hr)$ is even, then
$$\int_{v_2(t) \geq v_2(\hr)} w^*_{2}(t) \; dt = \frac{-1}{2^{v_2(\hr)+1}}.$$
If $v_2(\hr)$ is odd, then
$$\int_{v_2(t) \geq v_2(\hr)} w^*_{2}(t) \; dt = 0.$$
\end{lemma}
\begin{proof} We are just summing the formulas of Lemma \ref{v-smaller} and \ref{v-equal}.
\end{proof}}

\section{The density of average root numbers}\label{densityaverage}

We shall prove Theorem~\ref{rarn},~\ref{rarn3} and~\ref{rarn2} by considering subfamilies of $\was_a(t)$ of the form $\was_{a(t)}(Q(t))$ where  $a(t)$ and $Q(t)$ are polynomials in $\Z[t]$. Thanks to Theorem~\ref{average-periodic}, we know exactly the root number for all the elliptic curves in these families, and so we just need to choose $a(t)$ and $Q(t)$ so that we obtain the desired averages.
In the case of averages over $\Q$, we can reduce to the case where the $\infty$-factor of the (modified) root number essentially determines the root number, whereas in the the case of averages over $\Z$, we reduce to the case where the root number is determined by its $p$-factor for a suitably chosen $p$. The proof of Theorem~\ref{rarn3} is more elaborate and requires working with all prime divisors of $k$.

\begin{proof}[Proof of Theorem ~\ref{rarn}]

We first prove that $\Av_\Z(\mathfrak F_{\Z}') \supseteq \Q \cap [-1,1]$.

For any $h/k\in \Q$ with $(h,k)=1$, $k>0$ we need to show that there exists a non-isotrivial family $\EE$ such that $\Av_\Z(\EE)=h/k$. First notice that we can assume $0\neq |h/k|<1$, since by Theorem~\ref{average-periodic} we have that $\was_{2}(1+4t)$, $\was_{1}(t)$ and $\was_{3}(1+12t)$ are non-isotrivial families with root numbers constantly equal to $(-1)^t$, $-1$ and $1$ respectively. Also, let $h=\pm |h|$.

Let $p$ be a prime such that $p+1=2r k$ for $r\geq1$. By Dirichlet's Theorem on  primes in  arithmetic progressions we can always find such a prime. Let $m=p+1-2r|h|$ so that $0<m<p$ and $m$ is even. Let
\est{
P(t)=\mp p\prod_{i=1}^m(t-i),\qquad a(t)=2^4 p P(t),\qquad Q(t)=(4pt^2+1)P(t),
}
so that by~\eqref{invariant_w} one easily sees that $\was_{a(t)}(Q(t))$ is a  potentially parity-biased non-isotrivial family. 

We shall assume $t\neq 1,\dots, m$ so that $P(t)\neq0$ and we let $\eps(t)$ be the root number of $\was_{a(t)}(Q(t)$. First, notice that
\est{
\gcd(a(t)_2,Q(t))= |P(t)_2|=\frac{|a(t)_2|}{p}.
}
Then, by~\eqref{mf_average-periodic} for $t\neq 1,\dots, m$ we have
\est{
\eps(t)&\equiv -s_{a(t)}(Q(t))\gcd(a(t)_2,Q(t))\prod_{\substack{q|\frac{a(t)_2}{\gcd(a(t)_2,Q(t))},\\ q\text{ prime}}}(-1)^{1+v_q(Q(t))}\pr{\frac{Q(t)_q}{q}}^{1+v_q(Q(t))}\mod 4\\
&\equiv -s_{a(t)}(Q(t))|P(t)_2|(-1)^{1+v_p(Q(t))}\pr{\frac{Q(t)_p}{p}}^{1+v_p(Q(t))}\mod 4\\
&\equiv -Q(t)_2|Q(t)_2|(-1)^{1+v_p(Q(t))}\pr{\frac{Q(t)_p}{p}}^{1+v_p(Q(t))}\mod 4\\
}
since $v_2(a(t))=4+v_2(Q(t))$ and thus by Remark~\ref{sat_particular_case} after Proposition~\ref{sat} we have $s_{a(t)}\equiv Q(t)_2\mod 4$.
Now, for $t\neq 1,\dots,m$ an integer we have that $|Q(t)|=\mp Q(t)$, so that
\est{
\eps(t)=\pm(-1)^{1+v_p(Q(t))}\pr{\frac{Q(t)_p}{p}}^{1+v_p(Q(t))}.
}
Now notice that $v_p(Q(t))=1$ unless $t\equiv 1,\dots,m\mod p$, whereas if $t\equiv i\mod p$ with $i\in\{1,\dots,m\}$ then $v_p(Q(t))=1+v_p(t-i)$ and $Q(t)_p=(4pt^2 + 1) \kappa_i(t-i)_p$ where $\kappa_i=\mp \prod_{j\neq i}(j-i)$.
It follows that
\est{
\frac1{2X}\sum_{|t|\leq X}\eps(t)&=\pm\int_{\Z_p}(-1)^{1+v_p(Q(t))}\pr{\frac{Q(t)_p}{p}}^{1+v_p(Q(t))}\,d\mu_p\\
&=\pm\frac{p-m}{p}\pm\sum_{i=1}^m\int_{\substack{t\in i+p\Z_p,\\}}(-1)^{v_p(t-i)}\pr{\frac{\kappa_i(t-i)_p}{p}}^{v_p(t-i)}\,dt\\
&=\pm\frac{p-m}{p}\pm\sum_{i=1}^m\sum_{\ell=1}^\infty(-1)^\ell\pr{\frac{\kappa_i}{p}}^\ell\int_{\substack{t\in i+p^\ell\Z_p^*\\}}\pr{\frac{(t-i)_p}{p}}^{\ell}\,dt\\
&=\pm\frac{p-m}{p}\pm\sum_{i=1}^m\sum_{\substack{\ell=1}}^\infty (-1)^\ell\pr{\frac{\kappa_i}{p}}^\ell \frac1{p^\ell}\int_{\substack{x\in \Z_p^*\\}}\pr{\frac{x}{p}}^\ell\,dt\\
&=\pm\frac{p-m}{p}\pm\sum_{i=1}^m\sum_{\substack{\ell=1,\\ \ell \text{ even}}}^\infty\frac1{p^\ell}\frac{p-1}{p}=\pm\frac{p-m}{p}\pm m\sum_{\substack{\ell=0}}^\infty\frac{p-1}{p^{3+2\ell}}\\
&=\pm\frac{p-m}{p}\pm m\frac{p-1}{p(p^2-1)}=\pm\Big(1- \frac{m}{p+1}\Big)=\pm\frac{2r|h|}{2rk}=\frac{h}{k},\\
}
and $\Av_\Z(\mathfrak F_{\Z}')\supseteq \Q\cap[0,1]$ as desired.

To show that we also have that
 $\Av_\Z(\mathfrak F_{i, \Z})\supseteq \Q\cap[0,1]$, we proceed as above taking $Q(t)=P(t)$ instead of $Q(t)=(4pt^2+1)P(t)$.
\end{proof}

We now move to the proof of Theorem \ref{rarn2}, about the density of average of the root number over the rational. Given a family of elliptic curve $\GFF$, we  first state a result to compute $\Av_{\Q} (\varepsilon_\GFF)$ as defined by \eqref{averageQ}. As for the averages over the integers, we write the root number as an {\it almost finite} product of local root number, and we use the following result.

\begin{prop}[Helfgott, Proposition 7.8] \label{HH-averageQ} Let $S$ be a finite set of places of $\Q$, including $\infty$. For every $v \in S$, let $g_v: \Q_v \times \Q_v \rightarrow \C$ be a bounded function that is locally constant outside a finite set of lines through the origin. For every $p \not\in S$, let $h_p: \Q_p \times \Q_p \rightarrow \C$ be a function that is locally constant outside a finite set of lines thought the origin, and satisfying $|h_p(x,y)| \leq 1$ for all $x, y \in \Q_p$.
Let $B(x,y) \in \Z[x,y]$ be a non-zero homogeneous polynomial of degree at most 6, and assume that $h_p(x,y)=1$ when $v_p(B(x,y)) \leq 1$. Let
$$W(x,y) = \prod_{v \in S} g_v(x,y) \prod_{p \not\in S} h_p(x,y).$$
Then,
\est{
\Av_{\Z^2, \rm{coprime}} W(x,y) &:=\lim_{N\to\infty} \frac{\sum_{(x,y)\in [-N,N]^2, (x,y)=1} W(x,y)}{\#\{(x,y)\in [-N,N]^2\mid (x,y)=1\}}\\
&=c_\infty \prod_{p \in S}  \frac{1}{1-p^{-2}} \int_{O_p} g_p(x,y) \;dx dy \cdot \prod_{p \not\in S} \frac{1}{1-p^{-2}} \int_{O_p} h_p(x,y) \;dx dy
}
where $O_p = (\Z_p \times \Z_p) \setminus (p \Z_p \times p \Z_p)$, and
$$c_\infty = \lim_{N \rightarrow \infty}\frac{1}{2N^2} \int_{-N}^N \int_{-N}^N g_\infty(x,y) \; dx dy.
$$
\end{prop}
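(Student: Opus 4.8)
The plan is to run the argument of Proposition~\ref{HH-averageZ} essentially verbatim. The only place where the rational setting differs is that the auxiliary square-free sieve now concerns a binary form, and since $\deg B\le 6$ this sieve is available \emph{unconditionally} --- which is exactly why no analogue of Conjecture~\ref{square-free-1} appears in the hypotheses.

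First I would truncate the product. For a parameter $X$ put $W_X(x,y):=\prod_{v\in S}g_v(x,y)\prod_{p\notin S,\ p\le X}h_p(x,y)$, which is a genuinely finite product. Since $h_p(x,y)=1$ whenever $v_p(B(x,y))\le 1$, a coprime pair $(x,y)$ with $W(x,y)\ne W_X(x,y)$ must satisfy $p^2\mid B(x,y)$ for some prime $p>X$. I would bound the number of such coprime pairs in $[-N,N]^2$ by a crude sieve in the range $X<p\le N^{\delta}$ (after discarding the $O(N)$ lattice points on which $B$ vanishes) and by the square-free sieve for binary forms of degree $\le 6$ in the remaining range $N^{\delta}<p\ll N^{3}$, obtaining a bound $\varepsilon(X)N^2$ with $\varepsilon(X)\to 0$ as $X\to\infty$. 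As $|W|,|W_X|\le C:=\prod_{v\in S}\|g_v\|_\infty$, this replaces $W$ by $W_X$ in the average at the cost of $O(C\varepsilon(X))$, uniformly in $N$.

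Next I would evaluate $\Av_{\Z^2,\mathrm{coprime}}(W_X)$ for each fixed $X$ by equidistribution. Let $q$ be a high enough power of the product of the finitely many primes involved that every relevant $g_p$ and $h_p$ is constant on cosets of $q\Z_p\times q\Z_p$ away from its finitely many exceptional lines through the origin. A M\"obius inversion over $\gcd(x,y)$ together with the Chinese Remainder Theorem shows that the coprime lattice points of $[-N,N]^2$ equidistribute modulo $q$ with the weights matching, in the limit, normalised Haar measure on $O_p=(\Z_p\times\Z_p)\setminus(p\Z_p\times p\Z_p)$ at each $p\mid q$, while $(x/N,y/N)$ equidistributes in $[-1,1]^2$; the exceptional lines through the origin carry only $O(N)=o(N^2)$ lattice points and are discarded. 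This yields
\est{
\Av_{\Z^2,\mathrm{coprime}}(W_X)=c_\infty\prod_{p\in S}\frac{1}{1-p^{-2}}\int_{O_p}g_p\,dx\,dy\cdot\prod_{\substack{p\notin S\\ p\le X}}\frac{1}{1-p^{-2}}\int_{O_p}h_p\,dx\,dy .
}
Since $h_p$ is bounded by $1$ and equals $1$ off a set of measure $O(p^{-2})$ (the form $B$ has $O(1)$ lines of zeros modulo $p$), each factor $\tfrac{1}{1-p^{-2}}\int_{O_p}h_p\,dx\,dy$ equals $1+O(p^{-2})$, so the infinite product over $p\notin S$ converges; letting $X\to\infty$ in the two previous steps then gives the claimed identity.

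The hard part will be the square-free sieve step for primes $p$ of size comparable to a power of $N$: showing that coprime $(x,y)\in[-N,N]^2$ with $B(x,y)$ divisible by the square of such a prime are of density $o(1)$ is precisely a square-free sieve statement for binary forms, and the hypothesis $\deg B\le 6$ is imposed exactly so that the known unconditional results cover it (this is the corresponding step in~\cite{helfgott}). The remaining ingredients --- the M\"obius/CRT equidistribution, the handling of the measure-zero discontinuity lines, and the convergence of the Euler product --- are routine.
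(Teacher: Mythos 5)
The paper does not actually prove this statement: it is quoted as Proposition~7.8 of Helfgott's work, with only the remark that the hypothesis $\deg B\le 6$ makes it unconditional. Your outline --- truncating the Euler product, bounding the tail with the square-free sieve for binary forms of degree at most $6$ (unconditional by Greaves), and computing the truncated average via M\"obius inversion and equidistribution of coprime lattice points modulo $q$ --- is the standard argument, matches the strategy of Helfgott's own proof and the sketch the paper gives for the analogous $\Z$-average in Theorem~\ref{Helfgott}, and correctly identifies why the degree bound removes the conjectural input; I see no gap.
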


We remark that our version of \cite[Proposition 7.8]{helfgott} is unconditional, as we are assuming that $\deg{B} \leq 6$.

\begin{proof}[Proof of Theorem~\ref{rarn2}]

We first prove that $\Av_\Q(\mathfrak F_{\Q}')$ is dense in $[0,1]$.

For $X\geq2$, let $$m_X:=\prod_{2\leq p\leq X}p \;\;\;\mbox{and} \;\;\;  n_X:=m_X^{2\lfloor f(X) \rfloor +1},$$ where $f(x)$ is any positive function such that $f(X)$ and $\log{X}/f(X)$  tend to infinity. Let $P(t)$ be a polynomial with integer coefficients of even degree $2d>0$ and define
\est{
Q_X(t):=-P(t)(1+n_Xt^2),\qquad a_X(t):=-2^4P(t)(1+n_Xt^2)^2.
}
Notice that by the equations of the invariants~\eqref{invariant_w} for the family $\was_a(t)$,
 $\was_{a_X(t)}(Q_X(t))$ is a non-isotrivial potentially parity-biased family. 

Now, let $r/s\in\Q$ with $(r,s)=1$, $s>0$. We have the isomorphism of elliptic curves $$\was_{a_X(r/s)}(Q_X(r/s)) \simeq \was_{s^{2d+4}a_X(r/s)}(s^{2d+4}Q_X(r/s)) =\was_{a_X(r,s)}(Q_X(r,s)),$$ where
\est{
P(r,s)&:=s^{2d}P(r/s),\\
Q_X(r,s)&:=s^{2d+4}Q_X(r/s)=-s^2P(r,s)(s^2+n_Xr^2),\\
a_X(r,s)&:=s^{2d+4}a_X(r/s)=-2^4P(r,s)(s^2+n_Xr^2)^2
}
are homogeneous polynomial in $r$ and $s$ which are non-zero for all but finitely many $r/s$. In the following we shall ignore such values as they give a negligible contribution to the average.
Also, we let $\varepsilon(r/s)$ be the root number of the elliptic curve given by $\was_{a_X(r,s)}(Q_X(r,s))$ (or, i.e., by $\was_{a_X(r/s)}(Q_X(r/s))$). Using Theorem \ref{average-periodic}, we have
\begin{eqnarray} \nonumber
\varepsilon(r/s)&\equiv& -  s_{a_X(r,s)}(Q_X(r,s)) \; {\gcd(a_X(r,s)_2,Q_X(r,s))} \times \\
&& \label{rootnumber-formula} \times \prod_{p|g_X(r,s)}(-1)^{1+v_p(Q_X(r,s))}\pr{\frac{Q_X(r,s)_p}{p}}^{1+v_p(Q_X(r,s))}
 \mod 4. \end{eqnarray}
where \est{
g_X(r,s):= \frac{|a_X(r,s)_2|}{\gcd(a_X(r,s)_2,Q_X(r,s))} 
}
Now, we have
\est{
\gcd(a_X(r,s)_2,Q_X(r,s)) &=  |P(r,s)_2(s^2+n_Xr^2)_2|(s^2+n_Xr^2,s_2^2)\\
}
and thus
\est{
g_X(r,s):=\frac{|a_X(r,s)_2|}{\gcd(a_X(r,s)_2,Q_X(r,s))}= \frac{(s^2+n_Xr^2)_2}{(s^2+n_Xr^2,s_2^2)}.
}
Notice that, for $3\leq p\leq X$, we have $v_p(n_X)$ and $v_p(s^2)$ have opposite parities and so $v_p(s^2+n_Xr^2)=\min(v_p(s^2),v_p(n_X))$, since $(r,s)=1$. In particular, we have $p\nmid g_X(r,s)$ for $3\leq p\leq X$. Moreover, for $p>X$ then $p|g_X(r,s)$ if and only if $p \mid s^2 + n_X r^2$ since $p \nmid s$ then.
In that case, we have
\est{
v_p(Q_X(r,s))=v_p(s^2+n_Xr^2) + v_p(P(r,s)).
}
Let $E_X$ be the set of primes  $p > X$ such that there exist $r,s$ such thta $p \mid s^2 + n_X r^2$ and $p \mid P(r,s)$. Since  $p\nmid s$ this implies
$P(t)$ and $1+n_xt^2$ has a common solution modulo $p$ and thus $p$ divides the resultant $R_X$ of these two polynomials. We notice that $R_X = O (n_X).$
It follows that
\est{
&\prod_{p|g_X(r,s)}(-1)^{1+v_p(Q_X(r,s))}\pr{\frac{Q_X(r,s)_p}{p}}^{1+v_p(Q_X(r,s))}\\
&\hspace{5em}=\prod_{\substack{p>X, \,p \not \in E_X \\ v_p(s^2+n_Xr^2)\geq2}}(-1)^{1+v_p(s^2+n_Xt^2)}\pr{\frac {Q_X(r,s)_p}p}^{1+v_p(s^2+n_Xr^2)}\\
&\hspace{5em}\quad \times \prod_{\substack{p>X, \, p\in E_X,\\v_p(s^2+n_Xr^2)\geq1}} (-1)^{1+v_p(s^2+n_Xr^2)+v_p(P(r,s))}\pr{\frac {Q_X(r,s)_p}p}^{1+v_p(s^2+n_Xr^2)+v_p(P(r,s))}.
}

We now simplify the first part of \eqref{rootnumber-formula}.
For all but finitely many values of $r,s$ we have
\est{
v_2(Q_X(r,s))&=2v_2(s)+v_2(P(r,s))+v_2(s^2+n_Xt^2)=2v_2(s)+v_2(P(r,s))+\min(2v_2(s),2 \lfloor f(X) \rfloor+1),\\
v_2(a_X(r,s))&=4+v_2(P(r,s))+2v_2(s^2+n_Xt^2))=4+v_2(P(r,s))+2\min(2v_2(s),2 \lfloor f(X) \rfloor+1),\\
}
and thus if $v_2(s)\leq \lfloor f(X) \rfloor$, then
$v_2(a_X(r,s)) =  v_2(Q_X(r,s))+4,$
and thus by Remark \ref{sat_particular_case} after Proposition \ref{sat},
we have $s_{a_X(r,s)}\equiv Q_X(r,s)_2 \mod 4$. Then, for $v_2(s)\leq \lfloor f(X) \rfloor$,
\est{
&s_{a_X(r,s)}(Q_X(r,s)) \, \gcd(a_X(r,s)_2,Q_X(r,s))\\
&\qquad\qquad\equiv -s_2^2P(r,s)_2(s^2+n_Xr^2)_2 \; |P(r,s)_2(s^2+n_Xr^2)_2|(s^2+n_Xr^2,s_2^2)\mod 4\\
&\qquad\qquad\equiv -\sgn (P(r,s))(s^2+n_Xr^2,s_2^2)\mod 4.}
Notice that since $(r,s)=1$ we have
\est{
(s^2+n_Xr^2,s_2^2)=\prod_{3\leq p\leq X}p^{\min (2v_p(s),2\lfloor f(X) \rfloor +1)}\equiv\prod_{\substack{3\leq p\leq X,\\v_p(s)>\lfloor f(X) \rfloor}}p\mod 4.
}
If $v_2(s) \geq \lfloor f(X) \rfloor$, then it also follows from Proposition~\ref{sat} that
\est{
&s_{a_X(r,s)}(Q_X(r,s)) \, \gcd(a_X(r,s)_2,Q_X(r,s))\equiv -\sgn (P(r,s)) \; f_2(r,s) \prod_{\substack{3\leq p\leq X,\\v_p(s)>\lfloor f(X) \rfloor}}p\mod 4,}
where $f_2(r,s)$ is a $2$-locally constant function.

Replacing the above in \eqref{rootnumber-formula}, we have proven that
\est{
\eps(r/s)&= \sgn (P(r,s)) \; f_{2, X} (r,s) \; \prod_{\substack{3\leq p\leq X,\\ v_p(s)>\lfloor f(X) \rfloor }} \leg{-1}{p}\\
&\quad\times \prod_{\substack{p>X, \,p \not \in E_X \\ v_p(s^2+n_Xr^2)\geq2}}(-1)^{1+v_p(s^2+n_Xt^2)}\pr{\frac {Q_X(r,s)_p}p}^{1+v_p(s^2+n_Xr^2)}\\
&\quad\times \prod_{\substack{p>X, \, p\in E_X,\\v_p(s^2+n_Xr^2)\geq1}} (-1)^{1+v_p(s^2+n_Xr^2)+v_p(P(r,s))}\pr{\frac {Q_X(r,s)_p}p}^{1+v_p(s^2+n_Xr^2)+v_p(P(r,s))},
}
where $f_{2, X}(r,s)=1$ if $v_2(s) \leq \lfloor f(X) \rfloor.$ Then, for $3\leq p\leq X$, we write
$$f_{p, X}(r,s)=\begin{cases} \leg{-1}p & \mbox{if $v_p(s)>\lfloor f(X) \rfloor$} \\ 1 & \mbox{otherwise.} \end{cases}$$
For $p>X$, $p\notin E_X$ we write
$$h_{p, X}(r,s)=
\begin{cases} 1 & \mbox{if $v_p(s^2+n_Xr^2)<2$} \\
(-1)^{1+v_p(s^2+n_Xt^2)}\pr{\frac {Q_X(r,s)_p}p}^{1+v_p(s^2+n_Xr^2)} & \mbox{otherwise.} \end{cases}$$
Finally, for $p>X,$ $p\in E_X$ we write
$$g_{p, X}(r,s)= \begin{cases} 1 &  \mbox{if $v_p(s^2+n_Xr^2)=0$} \\
 (-1)^{1+v_p(s^2+n_Xr^2)+v_p(P(r,s))}\pr{\frac {Q_X(r,s)_p}p}^{1+v_p(s^2+n_Xr^2)+v_p(P(r,s))} & \mbox{otherwise.} \end{cases}$$
Thus, we have
\est{
\eps(r/s)&= \sgn (P(r,s)) \prod_{2\leq p\leq X} f_{p,X}(r,s)\; \prod_{\substack{p > X \\ p \in E_X}} g_{p,X}(r,s)\; \prod_{\substack{p > X \\ p \not\in E_X}} h_{p,X}(r,s).
}
The conditions for Proposition~\ref{HH-averageQ}  with $S=X\cup E_X$ (which is a finite set) and $B(x,y)=x^2+n_Xy^2$ are satisfied and thus
\begin{eqnarray} \nonumber \hspace{-0.7cm}
\Av_\Q (\eps) &=&
\Av_{\Z^2, \rm{coprime} }(\epsilon_{a_X(r,s)}(Q_X(r,s))) \\ \label{formula-averageX}
&=& c_\infty(P)  \prod_{\substack{2 \leq p \leq X}} \frac{1}{1-p^{-2}} \int_{O_p} f_{p,X}(r,s) dr ds \times \\
\nonumber
&&\times \prod_{\substack{p > X \\ p \in E_X}} \frac{1}{1-p^{-2}} \int_{O_p} g_{p,X}(r,s) dr ds \;
\prod_{\substack{p > X \\ p \not\in E_X}}\frac{1}{1-p^{-2}} \int_{O_p} h_{p,X}(r,s) dr ds ,
\end{eqnarray}
where \est{
c_{\infty}(P) &=\lim_{N\rightarrow\infty}\frac1{4N^2}\int_{-N}^N\int_{-N}^N\sgn(P(x,y))\,dxdy.
}
We will now show that the three products over $p$ all contributes $1+o(1)$ as $X\to\infty$.
First, we notice that for $p \leq X$, we have $f_{p,X}(r,s) = 1$ if $v_p(s) \leq \lfloor f(X) \rfloor$ and thus
\est{
\int_{O_p} f_{p, X}(r,s) \;dr ds 
&= \int_{O_p} 1 \; dr d s + O \left( \mu \left( \left\{ (r,s) \in O_p : v_p(s) \geq \lfloor f(X) \rfloor +1 \right\} \right)  \right) \\
&= (1-p^{-2}) + O \left( p^{-\lfloor f(X) \rfloor-1} \right).\\
}
Also, for $p\notin E_X$, we have $h_{p,X}(r,s) = 1$ if $v_p(s^2+n_Xr^2)<2$ and so
\est{
\int_{O_p} h_{p, X}(r,s) \;dr ds &= \int_{O_p} 1 \; dr d s + O \left( \mu \left( \left\{ (r,s) \in O_p : v_p(s^2 + N_X r^2) \geq 2 \right\} \right)  \right) \\
&= (1-p^{-2}) + O \left( \frac1{p^2}\right),\\
}
and in the same way for $p\in E_X$ we obtain
\es{\label{int_gp}
\int_{O_p} g_{p, X}(r,s) \;dr ds &= (1-p^{-2}) + O \left( \frac1{p}\right).\\
}
With the above formulas it's then clear that the first two products over $p$ in~\eqref{formula-averageX} are $1+o(1)$ as $X\to\infty$. We now show that the same holds for the last product. We let $e(X)$ be the cardinality of $E_X$, which is the set of the prime divisors $p > X$ of an integer $R_X\ll n_X$. Then we have
\begin{eqnarray*}
X^{e(X)} <\prod_{p>X,\, p|R_X}p \ll n_X = \prod_{p \leq X} p^{2 \lfloor f(X) \rfloor + 1}\ll e^{O(f(X)X)}
\end{eqnarray*}
and thus $e(X) \ll \frac{f(X) X}{\log{X}}$.
Then,
\begin{eqnarray*}
\log{ \prod_{\substack{p > X \\ p \in E_X}} \left( 1 + O \left( 1/p \right) \right) } \ll  \sum_{\substack{p > X\\ p \in E_X}}
 \log(1+O(1/p))  \ll  \sum_{\substack{p > X\\ p \in E_X}}\frac1p\ll \frac{e(X)}{X}\ll \frac{f(X)}{\log{X}} =o(1)\end{eqnarray*}
as $X \rightarrow \infty$, by hypothesis. Thus, by~\eqref{int_gp}, we have that also the last product over $p$ in~\eqref{formula-averageX} is $1+o(1)$ and so
\es{\label{formula-averageX_2}
\lim_{X\to\infty} \Av_\Q (\eps)=c_{\infty}(P).
}
Finally, we compute that
\est{
c_{\infty}(P)  &=\lim_{N\rightarrow\infty}\frac1{4N^2}\int_{-N}^N\int_{-N}^N\sgn(P(x,y))\,dxdy\\
&=\frac1{4}\int_{-1}^1\int_{-1}^1\sgn(P(x/y))\,dxdy=\frac1{2}\int_{0}^1\int_{-1/y}^{1/y}\sgn(P(x))y\,dxdy\\
&=\frac1{2}\int_{-\infty}^\infty\int_{0}^{\min(1,1/|x|)}\sgn(P(x))y\,dydx\\
&=\frac1{4}\int_{-1}^1\sgn(P(x))\,dx+\frac1{4}\int_{1}^\infty \frac{\sgn(P(x))+\sgn(P(-x))}{x^2}\,dx.
}
Thus, replacing in \eqref{formula-averageX_2},  we get
\est{
\lim_{X\rightarrow\infty}\Av_{\Q}(\eps)& = \frac1{4}\int_{-1}^1\sgn(P(x))\,dx+\frac1{4}\int_{1}^\infty \frac{\sgn(P(x))+\sgn(P(-x))}{x^2}\,dx.
}
To conclude, it is enough to show that the set of values taken by $c_\infty(P)$ as $P$ varies among polynomials of even degree in $\Z[t]$ is dense in $[0,1]$. If $|h/k|\leq1$, then taking $$P(x)=k^2(x^2-(1-h/k)^2),$$ one obtains $c_\infty(P)=h/k$ and so
$\Av_\Q(\mathfrak F_{\Q}')$ is dense in $[0,1]$.
\medskip

Next, we show that $\Av_\Q(\mathfrak F_{i, \Q})\supseteq \Q\cap[0,1]$.
Given a polynomial $P(t)\in\Z[t]$ of even degree $2d>0$ we take
\est{
&Q(t):=-P(t),\qquad a(t):=-2^4P(t),\\
&Q(r,s):=s^{2d}Q(r/s),\qquad a(r,s):=s^{2d}a(r/s),\qquad P(r,s):=s^{2d}P(r/s).
}
for $(r,s)=1$, $s>0$. Then, $\was_{a(t)}(Q(t))$ gives a potentially parity-biased elliptic surface, which is isotrivial.
Also, as before we call $\eps(r/s)$ the root number of the specialization $\was_{a(r/s)}(Q(t))\simeq \was_{a(r,s)}(Q(r,s))$.
Then, assume $t=r/s$ is not a zero of $P(t)$. We have $$\gcd(a(r,s)_2,Q(r,s))=|a(r,s)_2|=|P(r,s)_2|.$$
Also, $v_2(a(r,s))=v_2(Q(r,s))+4$. Thus by Remark \ref{sat_particular_case} after Proposition \ref{sat}, we have $$s_{a(r,s)}(Q(r,s)) \equiv Q(r,s)_2\equiv -P(r,s)_2\mod 4.$$ It follows that
\est{
\varepsilon_{a(s/t)}(Q(s/t))&=\varepsilon_{a(s,t)}(Q(s,t))\equiv P(r,s)_2|P(r,s)_2|\equiv \sgn(P(r,s))\mod 4.\\
}
Thus, using Proposition~\ref{HH-averageQ}, we have
\est{
\Av_{\Q}(\varepsilon_{a(t)}(Q(t)))&=\lim_{N\rightarrow\infty}\frac1{4N^2}\int_{-N}^N\int_{-N}^N\sgn(P(x,y))\,dxdy\\
&=\frac1{4}\int_{-1}^1\sgn(P(x))\,dx+\frac1{4}\int_{1}^\infty \frac{\sgn(P(x))+\sgn(P(-x))}{x^2}\,dx.
}
Choosing $P(x)$ appropriately as above, we obtain $\Av_\Q(\mathfrak F_{i,\Q})\supseteq \Q\cap[0,1]$.
\end{proof}

In order to prove Theorem~\ref{rarn3} we need a few Lemmas.

\begin{lemma}\label{flfst}
Let $\phi,\eta_1,\eta_2:\Z\to\C_{\neq0}$ be periodic functions of period $n_1,n_1$ and $n_2$ respectively with $(n_1,n_2)=1$. Assume there exists $\delta>0$ such that
\est{
|\{t\in\N\mid t\leq N, \phi(t)\neq \eta_1(t)\eta_2(t)\}|< \delta N
}
for all large enough $N$. Then there exists $\xi\in \C_{\neq0}$ such that
\est{
|\{t\in\N\mid t\leq N, \phi(t)\neq \xi \cdot \eta_1(t)\}|< 2\delta N
}
for all large enough $N$.
\end{lemma}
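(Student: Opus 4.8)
The plan is to exploit the coprimality of the periods $n_1$ and $n_2$ together with a pigeonhole/averaging argument over residue classes. First I would set $n=n_1n_2$ and, using the Chinese Remainder Theorem, identify $\Z/n\Z$ with $(\Z/n_1\Z)\times(\Z/n_2\Z)$; write $t\leftrightarrow (t_1,t_2)$ under this identification, so that $\phi$ is a function of $(t_1,t_2)$, $\eta_1$ depends only on $t_1$, and $\eta_2$ depends only on $t_2$. The hypothesis says that the ``bad set'' $B:=\{(t_1,t_2)\in\Z/n\Z\mid \phi(t_1,t_2)\neq\eta_1(t_1)\eta_2(t_2)\}$ satisfies $|B|<\delta n$, because by periodicity the density of $t\leq N$ with $\phi(t)\neq\eta_1(t)\eta_2(t)$ tends to $|B|/n$ as $N\to\infty$, and this limiting density is $<\delta$ (if it were $\geq \delta$ we would contradict the hypothesis for large $N$).

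Next I would use an averaging argument to locate a good value $t_2^\ast$. Since $\sum_{t_2}|\{t_1\mid (t_1,t_2)\in B\}|=|B|<\delta n=\delta n_1 n_2$, there exists $t_2^\ast\in\Z/n_2\Z$ for which $|\{t_1\mid (t_1,t_2^\ast)\in B\}|<\delta n_1$; that is, for all but fewer than $\delta n_1$ values of $t_1$ we have $\phi(t_1,t_2^\ast)=\eta_1(t_1)\eta_2(t_2^\ast)$. Set $\xi:=\eta_2(t_2^\ast)\in\C_{\neq0}$. Then define the bad set $B':=\{(t_1,t_2)\in\Z/n\Z\mid \phi(t_1,t_2)\neq \xi\,\eta_1(t_1)\}$; I claim $|B'|<2\delta n$. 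Indeed, for a fixed $t_1$ outside the exceptional set of size $<\delta n_1$ just described we have $\eta_1(t_1)=\phi(t_1,t_2^\ast)/\xi$, so $(t_1,t_2)\in B'$ forces $\phi(t_1,t_2)\neq\phi(t_1,t_2^\ast)\cdot\eta_2(t_2)/\eta_2(t_2^\ast)=\eta_1(t_1)\eta_2(t_2)\cdot(\eta_2(t_2)/\eta_2(t_2))$\,; more precisely one checks that for such $t_1$, $(t_1,t_2)\in B'$ implies $(t_1,t_2)\in B$ or $(t_1,t_2^\ast)\in B$, but the latter is excluded, so $(t_1,t_2)\in B$. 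Hence $B'$ is contained in the union of $B$ with the ``bad column strip'' $\{(t_1,t_2)\mid t_1 \text{ exceptional}\}$, which has size $<\delta n_1\cdot n_2=\delta n$. Therefore $|B'|<|B|+\delta n<2\delta n$.

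Finally I would translate this count back to the statement about $\N$: by periodicity of $\phi$ and of $t\mapsto \xi\eta_1(t)$ with period dividing $n$, the density of $t\leq N$ with $\phi(t)\neq\xi\eta_1(t)$ tends to $|B'|/n<2\delta$, so for all large enough $N$ one has $|\{t\leq N\mid \phi(t)\neq\xi\eta_1(t)\}|<2\delta N$, as required. I do not expect any serious obstacle here; the only point requiring a little care is the elementary verification in the previous paragraph that a non-exceptional $t_1$ propagates membership of $B'$ into membership of $B$, which is a direct consequence of $\eta_1(t_1)=\phi(t_1,t_2^\ast)/\xi$ and the fact that $\eta_2$ does not depend on $t_1$. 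A minor technical point is the passage between ``density of $t\le N$'' and ``counting in $\Z/n\Z$'', which is justified because all functions involved are periodic of period dividing $n$, so the error between $\lfloor N/n\rfloor|B'|$ and the true count is $O(n)=O_\delta(1)=o(N)$.
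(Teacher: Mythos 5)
Your overall architecture is sound and is essentially dual to the paper's: the paper averages over residue classes modulo $n_1$ and takes $\xi$ to be the common value of $\phi/\eta_1$ on a good class, whereas you average over classes modulo $n_2$ and take $\xi=\eta_2(t_2^\ast)$ at a good class; either choice can be made to work. However, the justification of your pivotal step is wrong as written, and the defect is not cosmetic: you treat $\phi$ as a genuine function of both coordinates $(t_1,t_2)$ and never invoke the hypothesis that $\phi$ has period $n_1$, i.e.\ that $\phi$ depends on $t_1$ alone. Without that hypothesis the lemma is false (take $n_1=1$, $\eta_1\equiv1$, and $\phi=\eta_2$ non-constant of period $n_2>1$: then $\phi=\eta_1\eta_2$ everywhere, yet $\phi$ is not equal to any constant $\xi=\xi\eta_1$ outside a small set), so an argument that does not use it cannot be complete. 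Concretely, your claimed implication ``for non-exceptional $t_1$, $(t_1,t_2)\in B'$ implies $(t_1,t_2)\in B$ or $(t_1,t_2^\ast)\in B$'' does not follow from the two facts you cite, namely $\eta_1(t_1)=\phi(t_1,t_2^\ast)/\xi$ and the independence of $\eta_2$ from $t_1$: membership in $B'$ says $\phi(t_1,t_2)\neq\xi\,\eta_1(t_1)$, membership in $B$ says $\phi(t_1,t_2)\neq\eta_1(t_1)\eta_2(t_2)$, and the two right-hand sides differ by the factor $\eta_2(t_2)/\xi$, which need not equal $1$. The chain of (in)equalities displayed in your second paragraph conflates these two quantities.

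The repair is one line and makes your argument both correct and stronger than the stated bound. Since $\phi$ has period $n_1$, one has $\phi(t_1,t_2)=\phi(t_1,t_2^\ast)$ for every $t_2$; hence for every non-exceptional $t_1$ and every $t_2$, $\phi(t_1,t_2)=\phi(t_1,t_2^\ast)=\eta_1(t_1)\eta_2(t_2^\ast)=\xi\,\eta_1(t_1)$, so $(t_1,t_2)\notin B'$. Therefore $B'$ is contained in the exceptional strip $\{(t_1,t_2): t_1 \mbox{ exceptional}\}$, of cardinality less than $\delta n_1\cdot n_2=\delta n$, and you obtain $|\{t\leq N:\phi(t)\neq\xi\eta_1(t)\}|<\delta N$ (in particular $<2\delta N$) for all large $N$. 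The translations between counts over $\{1,\dots,N\}$ and over $\Z/n\Z$ at the beginning and end of your argument are fine.
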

\begin{proof}
There exists a residue class $i$ modulo $n_1$ such that for all large enough $N$
\est{
|\{t\in\N\mid t\leq N, \eta_2(i+n_1t)\neq\xi\}|< \delta N
}
where $\xi=\frac{\phi(i+n_1t)}{ \eta_1(i+n_1t)}$. Since $(n_1,n_2)=1$ then the above inequality is equivalent to
\est{
|\{t\in\N\mid t\leq N, \eta_2(t)\neq\xi\}|< \delta N
}
for $N$ large enough and thus the result follows.
\end{proof}
\begin{lemma}\label{mlftab}
Let $\phi:\Z\to\C_{\neq0}$ be a periodic function of period $\ell$ and let $\eta(t)=\prod_{i=1}^{k}h_{p_i}(t)$ with $h_{p_i}:\Z\to\C_{\neq0}$ periodic modulo $p_i^{r_i}$ and $p_1,\dots,p_k$ distinct primes. Assume we have
\est{
|\{t\in\N\mid t\leq N, \phi(t)\neq \eta(t)\}|<  N/4\ell^2
}
for all large enough $N$. Then for each $p|\ell$ there exist $\rho\in\C_{\neq0}$ and a function $h^*_{p}(t)$ periodic modulo $p^{v_p(\ell)}$ such that
$\phi(t)=\rho \prod_{p|\ell}h^*_{p}(t)$ for all $t$.
\end{lemma}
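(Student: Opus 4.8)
The plan is to isolate the "$p$-part'' of the periodic function $\phi$ one prime at a time, exploiting the fact that $\phi$ agrees with the product $\eta=\prod_i h_{p_i}$ on a set of density more than $1-1/(4\ell^2)$, which is enough to force an \emph{exact} factorization (not just an approximate one) once we work modulo $\ell$. First I would observe that, by the Chinese Remainder Theorem, the function $\phi$ viewed modulo $\ell=\prod_{p\mid\ell}p^{v_p(\ell)}$ decomposes, \emph{if it factors at all}, uniquely; the content of the lemma is precisely that it does factor. So the strategy is to produce, for each $p\mid\ell$, a function $h^*_p$ periodic modulo $p^{v_p(\ell)}$, together with a single constant $\rho$, such that $\phi=\rho\prod_{p\mid\ell}h^*_p$ identically.

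The key mechanism is Lemma~\ref{flfst}, which lets us "split off'' one coprime periodic factor at a time while only doubling the density of the exceptional set. I would apply it iteratively: write $\eta=\eta_1\eta_2$ where $\eta_1$ collects those $h_{p_i}$ with $p_i\mid\ell$ and $\eta_2$ collects those with $p_i\nmid\ell$. The periods of $\eta_1$ and $\eta_2$ are coprime (one divides a power of $\ell$, the other is coprime to $\ell$), and $\eta_1$ itself is a product over the distinct primes $p\mid\ell$. Since $\phi$ has period $\ell$, it is in particular periodic of period coprime to the period of $\eta_2$; so a first application of Lemma~\ref{flfst} (with $\phi$, $\eta_1$, $\eta_2$ in the roles there) yields a constant $\xi$ with $|\{t\le N:\phi(t)\neq \xi\,\eta_1(t)\}|<N/2\ell^2$ for large $N$. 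Now peel off the primes dividing $\ell$ one by one: having written $\eta_1=\prod_{p\mid\ell}h_p$ (after absorbing $p^{r_p}$-periodic factors and noting $r_p$ may exceed $v_p(\ell)$, which is harmless since a function periodic modulo $p^{r_p}$ that must equal something periodic modulo $\ell$ will automatically be periodic modulo $p^{v_p(\ell)}$ on the relevant classes), apply Lemma~\ref{flfst} repeatedly, each time taking $\eta_1$ to be one factor $h_p$ and $\eta_2$ the product of the remaining ones (all of coprime period), doubling the exceptional density at each of the finitely many steps.

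After all these steps the exceptional set has density $<N/(2\ell^2)\cdot 2^{\#\{p\mid\ell\}}$, which we must keep below $1$; this is where the hypothesis $N/(4\ell^2)$ — rather than something weaker — is used, and it is the point requiring the most care. The cleaner route, which I would actually take to avoid tracking the exact power of $2$, is: from the single application of Lemma~\ref{flfst} above we get $\phi(t)=\xi\,\eta_1(t)$ \emph{for all $t$ in some fixed residue class modulo the period of $\eta_2$}, and since the two periods are coprime and $\phi$ has period $\ell$ dividing the period of $\eta_1$, this identity on a full residue class modulo a coprime modulus forces $\phi(t)=\xi\,\eta_1(t)$ \emph{identically} (a function periodic mod $\ell$ that equals a mod-$\ell$-periodic function on one residue class mod $m$ with $(m,\ell)=1$ equals it everywhere). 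Thus $\phi=\xi\prod_{p\mid\ell}h_p$ identically, and regrouping $h_p$'s with the same underlying prime, and replacing each $h_p$ by its restriction to the coset structure mod $p^{v_p(\ell)}$, gives the desired $h^*_p$ (periodic mod $p^{v_p(\ell)}$ since $\phi$ and the other factors are) and $\rho=\xi$. The main obstacle is being careful that ``$h_p$ periodic mod $p^{r_p}$'' with $r_p>v_p(\ell)$ does not obstruct the conclusion: one argues that in the identity $\phi=\xi\prod_p h_p$ the left side has period $\ell$, so changing the $p$-adic digits of $t$ beyond the $v_p(\ell)$-th must leave the product unchanged, whence each $h_p$ is (on the fibers that matter, and after an innocuous renormalization absorbing constants into $\rho$) periodic modulo $p^{v_p(\ell)}$, as required. \qed
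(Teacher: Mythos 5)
There is a genuine gap, and it sits exactly at the step you flag as ``the cleaner route.'' Lemma~\ref{flfst} does \emph{not} give you $\phi(t)=\xi\,\eta_1(t)$ on a full residue class modulo the period of $\eta_2$; its conclusion is only that $\phi(t)=\xi\,\eta_1(t)$ outside a set of density $<2\delta$, and a set of density $2\delta$ can perfectly well meet every residue class modulo $n_2$ (it need not be a union of such classes). So the ``identity on a full residue class modulo a coprime modulus'' that you use to propagate exact equality everywhere is not available. Nor can you upgrade ``equal off a set of density $<1/(2\ell^2)$'' to ``equal everywhere'' by counting: the set where $\phi$ and $\xi\eta_1$ disagree is a union of residue classes modulo $\mathrm{lcm}\bigl(\ell,\prod_{p\mid\ell}p^{r_p}\bigr)$, and the exponents $r_i$ are \emph{not} bounded in terms of $\ell$, so a single such class can have density far below $1/(2\ell^2)$. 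Your closing paragraph, which derives the mod-$p^{v_p(\ell)}$ periodicity of the $h_p$ from the identity $\phi=\xi\prod_p h_p$, is circular, since that identity is precisely what the counting argument was supposed to establish. (Your first, iterative route has the additional problem that taking $\eta_1$ to be a single $h_p$ and $\eta_2$ the rest violates the coprimality hypothesis of Lemma~\ref{flfst}: $\phi$ still has period $\ell$, which shares prime factors with the period of $\eta_2$.)

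The paper's proof is built to dodge exactly this obstruction. It inducts on the number of prime factors of $\ell$ and, at each step, first restricts to a residue class $i$ modulo $p_1^{s_1}$ so that the restricted $\phi$ and $\eta_1=\prod_{j\ge2}h_{p_j}$ share a period coprime to that of $\eta_2=h_{p_1}\prod_{j>u}h_{p_j}$; Lemma~\ref{flfst} then produces the constants $\xi_i$ defining $h^*_{p_1}$. The crucial extra step --- entirely absent from your argument --- is a \emph{second} application of Lemma~\ref{flfst} on residue classes modulo $\ell/p_1^{s_1}$, which compares $\phi/h^*_{p_1}$ (restricted, hence of the \emph{small} period $p_1^{s_1}\mid\ell$) against a constant $\psi_j$. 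Only there does the counting argument close: the exceptional density $p_1^{s_1}N/\ell^2$ is at most $N/p_1^{s_1}$, i.e.\ less than one residue class modulo $p_1^{s_1}$, so the exceptional set is empty and $\phi/h^*_{p_1}$ is genuinely periodic modulo $\ell/p_1^{s_1}$, which is what licenses the induction. If you want to salvage your write-up, you must insert this periodicity-of-the-quotient step and abandon the claim that a single application of Lemma~\ref{flfst} yields an exact global identity.
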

\begin{proof}
Let $n:=p_1^{r_1}\cdots p_k^{r_k}$. Increasing $n$ if necessary, we can assume $\ell =p_1^{s_1}\cdots p_u^{s_u}$ where $s_i\leq r_i$, $u\leq k$ and $p_1,\dots,p_u$ are distinct primes.
We prove the result by induction over the number  $u$ of distinct primes of $\ell$. If $u=0$, then the result is trivial. Now, let $u\geq1$ and assume the result is true if $\ell$ has $u-1$ distinct prime factors.

Let $\eta_1(t)=\prod_{j=2}^{u}h_{p_j}(t)$ and $\eta_2(t)=h_{p_1}(t)\prod_{j=u+1}^{k}h_{p_j}(t)$ so that $\eta(t)=\eta_1(t)\eta_2(t)$. Then for all $i=1,\dots,p_1^{s_1}$ we have that $\phi(i+p_1^{s_1}t)$ and $\eta_1(i+p_1^{s_1}t)$ are periodic modulo $n_1:=p_2^{r_2}\cdots p_u^{r_u}$ whereas $\eta_2(i+p_1^{s_1}t)$ is periodic modulo $n_2:=n/n_1p_1^{s_1}$. Notice that $(n_1,n_2)=1$. Moreover, we have
\est{
|\{t\in\N\mid t\leq N, \phi(i+p_1^{s_1}t)\neq \eta_1(i+p_1^{s_1}t)\eta_2(i+p_1^{s_1}t)\}|< p_1^{s_1} \cdot { N}/{4\ell^2}
}
for all large enough $N$. Then, by Lemma~\ref{flfst}, there exists $\xi_i\in\C_{\neq0}$ such that
\est{
|\{t\in\N\mid t\leq N, \phi(i+p_1^{s_1}t)\neq \xi_i \eta_1(i+p_1^{s_1}t)\}|< 2 p_1^{s_1} { N}/{4\ell^2}
}
for all large enough $N$. Equivalently, writing $h^*_{p_1}(t):=\xi_i$ if $t\equiv i\mod{ p_1^{s_1}}$, we have
\es{\label{fe4w}
|\{t\in\N\mid t\leq N, \frac{\phi(t)}{h^*_{p_1}(t)}\neq  \eta_1(t)\}|< p_1^{s_1} { N}/{2\ell^2}
}
for large enough $N$.
Also, for all $j=1,\dots, \ell/p_1^{s_1}$ we have that $\frac{\phi(j+\ell/p_1^{s_1}t)}{h^*_{p_1}(j+\ell/p_1^{s_1}t)}$ is periodic modulo $p_1^{s_1}$ and $\eta_1(j+\ell/p_1^{s_1}t)$ is periodic modulo $p_1^{s_1}n_1/\ell$
and so, since $(p_1^{s_1},p_1^{s_1}n_1/\ell)=1$, by Lemma~\ref{flfst} we have that there exists $\psi_j\in\C$ such that
\est{
|\{t\in\N\mid t\leq N, \frac{\phi(j+\ell/p_1^{s_1}t)}{h^*_{p_1}(j+\ell/p_1^{s_1}t)}\neq  \psi_j\}|<  p_1^{s_1} { N}/{\ell^2}
}
for large enough $N$ (notice that since $\ell/p_1^{s_1}$ is coprime with the period $p^{s_1}$ having $t$ or $j+\ell/p_1^{s_1}t$ doesn't change the estimate on the right for $N$ large enough). Thus, since $p_1^{s_1} { N}/{\ell^2}\leq N/p_1^{s_1}$ and $\frac{\phi(j+\ell/p_1^{s_1}t)}{h^*_{p_1}(j+\ell/p_1^{s_1}t)}$ is periodic modulo $p_1^{s_1}$, then the set on the left has to be empty which means that $\frac{\phi(t)}{h^*_{p_1}(t)}$ is periodic modulo $\ell/p_1^{s_1}$.
Finally, notice that $p_1^{s_1} { N}/{2\ell^2}\leq N/4(\ell/p_1^{s_1})^{2}$ and so by~\eqref{fe4w} we can apply the inductive hypothesis to $\frac{\phi(t)}{h^*_{p_1}(t)}$ and the Lemma follows.
\end{proof}

\begin{corol}\label{mcfar}
Assume Conjectures \ref{chowla} and \ref{square-free-1}. Let $\GFF$ a family of elliptic curves as defined by~\eqref{eq_family} and such that $\varepsilon_\GFF(t)$, the root number of the specializations $\GFF(t)$, is a periodic function of $t$ with period $\ell$ with $o(1)$ exceptions for $|t|\leq T$ as $T\to\infty$. Then, up to $o(1)$ exceptions, we have
\est{
\varepsilon_\GFF(t)=\rho\prod_{p|\ell}h_{p}(t)
}
where $h_{p}:\Z\to\{\pm1\}$ is periodic modulo $p^{v_p(\ell)}$ and $\rho\in\{\pm1\}$.
\end{corol}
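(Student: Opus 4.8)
The plan is to invoke Helfgott's structure theorem (Theorem~\ref{Helfgott}) to rewrite $\varepsilon_\GFF(t)$ as an almost finite product, and then to bootstrap the hypothesis that $\varepsilon_\GFF(t)$ is periodic into a \emph{multiplicative} periodicity over the prime divisors of $\ell$. First I would apply Theorem~\ref{Helfgott}: since $\GFF$ has periodic root number with $o(1)$ exceptions, the polynomial $M_\GFF(t)$ must be a non-zero constant (otherwise the strong Chowla conjecture, assumed via Conjecture~\ref{chowla}, would force $\Av_\Z(\varepsilon_\GFF)=0$ and, more to the point, would prevent $\lambda(M_\GFF(t))$ from being eventually periodic, contradicting periodicity of $\varepsilon_\GFF$); so in particular $\GFF$ is potentially parity-biased. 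Thus, up to $o(1)$ exceptions,
\est{
\varepsilon_\GFF(t)=\sign(g_\infty(t))\prod_{p\in S}w^{***}_p(t)\prod_{\substack{p^2\mid B_\GFF(t),\\ p\notin S}}h^*_p(t),
}
for a finite set of primes $S$, functions $w^{***}_p,h^*_p$ that are $p$-adically locally constant outside a finite set, and a polynomial $g_\infty$. Since $\varepsilon_\GFF(t)$ is periodic, the contribution of the factor $\sign(g_\infty(t))$ is eventually constant (a non-constant $g_\infty$ would change sign and break periodicity), and similarly, using the square-free sieve conjecture (Conjecture~\ref{square-free-1}) for $B_\GFF$, the tail product over $p^2\mid B_\GFF(t)$, $p\notin S$, $p>X$ contributes $1$ up to a set of density $O_X(X^{-1})$; taking $X$ large enough we get that, up to a set of arbitrarily small (but positive) density and hence — by periodicity — actually up to $o(1)$ exceptions, $\varepsilon_\GFF(t)=\prod_{p\in S'}h_p(t)$ for a finite set of primes $S'$ and functions $h_p$ each periodic modulo a power of $p$.

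Second, I would invoke Lemma~\ref{mlftab} (equivalently the setup of Lemma~\ref{flfst}) to pass from this finite multiplicative decomposition, which involves primes in $S'$ that need not divide $\ell$, to a decomposition indexed exactly by the primes dividing $\ell$. Concretely, set $\phi=\varepsilon_\GFF$ and $\eta=\prod_{p\in S'}h_p$; the hypothesis that $\phi$ is periodic modulo $\ell$ together with $\phi=\eta$ up to $o(1)$ exceptions means in particular $\phi=\eta$ on a set of density $>1-\tfrac1{4\ell^2}$ for $N$ large, so Lemma~\ref{mlftab} applies and yields $\rho\in\C_{\neq0}$ and functions $h^*_p$ periodic modulo $p^{v_p(\ell)}$ with $\phi(t)=\rho\prod_{p\mid\ell}h^*_p(t)$ for \emph{all} $t$. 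Since $\phi(t)=\varepsilon_\GFF(t)\in\{\pm1\}$ for all but $o(1)$ of the $t$, and $\phi$ is exactly $\rho\prod_{p\mid\ell}h^*_p(t)$, a pigeonhole/averaging argument on the finitely many residue classes modulo $\ell$ shows $\rho$ and each $h^*_p$ can be normalized to take values in $\{\pm1\}$ (replace $h^*_p$ by its sign and absorb the positive scalar into $\rho$, which must then equal $\pm1$ since the product equals $\pm1$ somewhere). This gives exactly the claimed form $\varepsilon_\GFF(t)=\rho\prod_{p\mid\ell}h_p(t)$ with $h_p:\Z\to\{\pm1\}$ periodic modulo $p^{v_p(\ell)}$.

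The main obstacle, I expect, is the first step: carefully justifying that periodicity of $\varepsilon_\GFF(t)$ (with $o(1)$ exceptions) really does force $M_\GFF$ to be constant and $g_\infty$ to have eventually constant sign, i.e. ruling out that some conspiracy between $\lambda(M_\GFF(t))$, the sign of $g_\infty(t)$, and the square-class factors happens to be periodic even though the individual pieces are not. Here one uses that $\lambda(P(t))$ for a non-constant square-free $P$ is, under strong Chowla, genuinely ``random'' (it has mean zero on every arithmetic progression), which is incompatible with $\varepsilon_\GFF(t)/\bigl(\text{periodic}\bigr)$ being periodic unless $\lambda(M_\GFF(t))$ is itself (eventually) periodic, forcing $\deg M_\GFF=0$; the sign-of-$g_\infty$ issue is easier since a non-constant polynomial changes sign only finitely often and we are on $\Z$, not $\Q$. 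Once $\GFF$ is known potentially parity-biased with $g_\infty$ eventually signed, the rest is the bookkeeping of Theorem~\ref{Helfgott} plus the two combinatorial lemmas, which is routine. A secondary technical point is ensuring the ``$o(1)$ exceptions'' in the hypothesis and the ``density $<1/4\ell^2$'' needed for Lemma~\ref{mlftab} are compatible: since a periodic function that agrees with another function off a set of density $<1$ on \emph{every} long initial segment must in fact agree off a set of density tending to $0$ (the exceptional set, being eventually forced by periodicity to be either empty or of positive density in each class, can only be the former), this is harmless, but it should be stated explicitly.
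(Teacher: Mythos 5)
Your proposal follows the paper's proof essentially step for step: Theorem~\ref{Helfgott} to get the almost-finite product, strong Chowla on arithmetic progressions to force $M_\GFF$ constant and kill the Liouville factor, the square-free sieve plus $p$-adic local constancy to replace the remaining product by a finite product of genuinely periodic local factors outside a set of density smaller than $1/(4\ell^2)$, and finally Lemma~\ref{mlftab} to redistribute the factors over the primes dividing $\ell$. The only points you spell out that the paper leaves implicit (normalizing $\rho$ and the $h_p$ to take values in $\{\pm1\}$, and passing to the exact periodic model of $\varepsilon_\GFF$ before invoking the lemma) are routine, so there is nothing to add.
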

\begin{proof}
By Theorem~\ref{Helfgott}, the root number can be written as
\est{
\varepsilon_{\GFF}(t)= \sign(g_\infty(t)) \, \lambda(M_\GFF(t)) \, \prod_{p \; \mathrm{prime}}g_p(t) 
}
up to finitely many exceptions, where $g_p(t)=1$ if $p\notin S$ and $p^2\nmid B_{{\GFF}}(t)$.
Under Conjectures~\ref{chowla} and~\ref{square-free-1}, by Theorem~\ref{Helfgott}, we have that $\varepsilon_{\GFF}(t)$ has average $0$ as $t$ varies among any fixed arithmetic progression unless $M_\GFF(t)=1$. Thus, since  $\varepsilon_{\GFF}(t)$ is periodic up to $o(1)$ exceptions, we must have $M_\GFF(t)=1$. Similarly, the periodicity of $\varepsilon_{\GFF}(t)$ implies that
$\sign(g_\infty(t))$ is constant, except for a finite number of values of $t$.

Under Conjecture \ref{square-free-1}, for any fixed $\eps>0$ there exists $N_0>0$ such that
\est{
|\{1\leq n \leq N \mid p^2| B_{\GFF}(n)\Rightarrow p\leq N_0\}|\geq (1-\eps)N.
}
for all $N\geq1$ (see e.g. the proof of Proposition 7.7 in~\cite{helfgott}).
Now, let $S_0=S\cup\{p\leq N_0\}$. Since $g_p(t)$ is locally constant outside a finite set of points for all $p$, then there exist an integer $k\in\N$ (depending on $\eps$) and functions $g_p^*:\Z\to \{\pm1\}$ which are periodic of period $p^k$ such that
\est{
\prod_{p\in S_0}g_p(t)=\prod_{p\in S_0}g^*_p(t)
}
for $t$ in all but $c$ residue classes $r_1,\dots,r_c$ modulo $\ell:=\prod_pp^k$ with $c\leq\eps \ell$. Indeed, if $g_p(t)$ is locally constant on $\Z_p\setminus\{x_1,\dots,x_{c_p}\}$, then $g_p(i+tp^s)$ is $p$-locally constant (in $t\in\Z_p$) for all $1\leq i\leq p^s$ with $i\not\equiv x_1,\dots, x_{c_p}\mod {p^s}$. In particular, since $\Z_p$ is compact then for all $i\not\equiv x_1,\dots, x_{c_p}\mod {p^s}$ one has that $g_p(i+tp^s)$ is periodic modulo $p^{s_i}$ for some $s_i$. Thus, writing $g^*_p(t):=g_p(t)$ if $t\not\equiv x_1,\dots, x_{c_p}\mod {p^s}$ and $g^*_p(t):=1$ otherwise, we have that $g^*_p(t)$ is periodic modulo $p^{k_p}$ where $k_p:=s+\max_{i}s_i$. Taking $s$ large enough so that $p^{-m}\leq \eps/r$, we then have that $g^*_p(t)$ coincides with $g_p(t)$ for all but $\eps p^{k_p}$ congruence classes modulo $p^{k_p}$. Proceeding in the same way for all $p\in S_0$ and taking $k=\max_pk_p$ we obtain the claimed result.

Now, we define
\est{
I_N=\{1\leq n \leq N \mid p^2| B_{\GFF}(n)\Rightarrow p\leq N_0,\ n\not\equiv r_j\mod m\text{ for } j=1,\dots, c\},
}
so that
\est{
|I(N)|\geq(1-3\eps)N
}
for $N$ large enough. For all $t\in I_N$ we have
\est{
\varepsilon_{\GFF}(t)= \prod_{p\in S_0}g_p^*(t)
}
and so, taking $3\eps<1/4\ell^2$ and using Lemma~\ref{mlftab}, we get  the claimed result.
\end{proof}

Corollary~\ref{mcfar} easily gives that $\Av(\mathfrak F_{\mathrm p,\Z})$ is contained in the set on the right of~\eqref{eqfar} (see the end of the proof of Theorem~\ref{rarn3} below for the details). We now give two Lemmas which are needed in the opposite direction. In order to construct subfamilies of $\PFF_{a}(t)$ with periodic root numbers with specified averages, we are led to the problem of finding polynomials in $\Z/p^{\ell}\Z[t]$ with a prescribed number of zeros $m$. This is not always possible for all choices (for example there is no polynomial in $\Z/8\Z[t]$ with exactly $7$ zeros), but we can always find an $r\geq\ell$ and a polynomial in $\Z/p^{r}\Z[t]$ such that a portion of exactly $m/p^\ell$ residue classes modulo $p^{r}\Z$ are zeros.

\begin{lemma}\label{adaw}
Let $p$ be a prime and let $\ell,u\geq1$ and let $m$ be such that $0\leq m\leq p^{\ell }$. Then, there exists $r\geq0$ arbitrary large such that there exists a polynomial $P(t)\in\Z[t]$ with exactly $mp^{r}$ zeros $\mod {p^{r+\ell }}$ and with $v_p(P(t))\leq {r+\ell -u}$ whenever $t$ is not one of such zeros.
\end{lemma}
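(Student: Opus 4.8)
The plan is to build $P(t)$ explicitly as a product of ``local factors'' designed to kill exactly a prescribed set of residue classes modulo a high power of $p$, while keeping the $p$-adic valuation on the remaining classes under control. First I would reduce the problem to a combinatorial one: a polynomial $P(t)$ has exactly $N$ zeros modulo $p^{k}$ precisely when the set $Z=\{t\bmod p^{k}: P(t)\equiv 0\}$ has cardinality $N$; so it suffices to realize, for some large $k=r+\ell$, a set $Z\subseteq\Z/p^{k}\Z$ of size $mp^{r}$ which is a union of full residue classes modulo some modulus and is the zero set of an explicit polynomial, and then to check the valuation bound off $Z$.

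The key construction is as follows. Write $m=\sum_{i} \epsilon_i p^{\ell-i}$ in base $p$ with digits $\epsilon_i\in\{0,\dots,p-1\}$, $1\le i\le \ell$ (so $m=\sum_{i=1}^\ell \epsilon_i p^{\ell-i}$, allowing also the digit giving $m=p^\ell$ as a degenerate case, handled separately by taking $P(t)=p^{r+\ell-u}$ times a suitable polynomial, or more simply $P(t)=0$ is excluded so one takes $m<p^\ell$ and treats $m=p^\ell$ by $P\equiv 0 \bmod p^{r+\ell}$ on all classes, i.e. $P(t)=p^{r+\ell}$). For $m<p^\ell$, I would pick, for each digit level $i$, a set $S_i$ of $\epsilon_i$ residues modulo $p$ and assemble the target zero set as the classes modulo $p^\ell$ whose $i$-th digit lies in $S_i$ while all earlier digits are ``maximal''; concretely one can take $Z_0=\bigsqcup_{i=1}^{\ell}\{t\bmod p^{\ell}: t\equiv c_i \bmod p^{i-1},\ (\,(t-c_i)/p^{i-1}\bmod p\,)\in S_i\}$ for a suitable nested sequence $c_1\mid c_2\mid\cdots$, so that $|Z_0|=\sum_i \epsilon_i p^{\ell-i}=m$. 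Then set
\[
P_0(t)=\prod_{i=1}^{\ell}\Big(\prod_{a\in S_i}\big((t-c_i)-ap^{i-1}\big)\Big),
\]
a polynomial whose reduction modulo $p^{\ell}$ vanishes exactly on $Z_0$ (this requires checking that the factors from different levels do not accidentally vanish off $Z_0$ modulo $p^\ell$, which one arranges by the nesting of the $c_i$). Next, to promote ``exactly $m$ zeros mod $p^\ell$'' to ``exactly $mp^{r}$ zeros mod $p^{r+\ell}$'' for arbitrarily large $r$, I would multiply by $p^{r}$: the polynomial $p^{r}P_0(t)$ has, modulo $p^{r+\ell}$, zero set equal to the preimage of $Z_0$ under $\Z/p^{r+\ell}\Z\to\Z/p^{\ell}\Z$ together with the classes where $v_p(P_0(t))\ge \ell$, so after a small further adjustment (removing from $P_0$ any spurious high-valuation behaviour by a change of variable, or absorbing it) the count is exactly $mp^{r}$. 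Finally, for the valuation bound: off the zero set, $P_0(t)\not\equiv 0\bmod p^{\ell}$ so $v_p(P_0(t))\le \ell-1$; hence $v_p(p^{r}P_0(t))\le r+\ell-1$, and to get the sharper bound $\le r+\ell-u$ one replaces $p^{r}$ by $p^{r-u+1}$ and increases $r$ correspondingly (renaming), or builds $P_0$ so that its valuation off $Z_0$ is $\le \ell-u$ by taking the $S_i$ in sufficiently ``generic'' positions; since $u$ is fixed and $r$ is allowed to be arbitrarily large, this is a harmless shift of parameters.

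I would organize the write-up as: (1) dispose of the trivial cases $m=0$ (take $P(t)=p^{r+\ell-u}$ — no zeros, valuation $r+\ell-u$ everywhere, wait, need $v_p\le r+\ell-u$, so this is fine, zeros $=0=mp^r$) and $m=p^\ell$ (take $P(t)=p^{r+\ell}$ — all $p^{r+\ell}$ classes are zeros $=mp^{r}$); (2) base-$p$ expansion of $m$ and the nested centers $c_i$; (3) definition of $P_0$ and the lemma that its zero set modulo $p^\ell$ is exactly $Z_0$; (4) multiply by the appropriate power of $p$ and adjust $r$ to obtain the exact count $mp^r$ modulo $p^{r+\ell}$ and the valuation bound. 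The main obstacle I expect is step (3): ensuring the product $P_0(t)$ does not vanish modulo $p^\ell$ at any $t\notin Z_0$ — a factor from level $i$ might, by coincidence, supply extra $p$-divisibility at a class where another level's factor is already divisible by a moderate power of $p$, inflating the valuation and possibly creating an unwanted zero modulo $p^\ell$. Handling this cleanly requires a careful bookkeeping of $v_p$ of each factor as a function of the $p$-adic digits of $t$; the nesting $c_1\mid c_2\mid\cdots$ and a judicious choice of the digit-sets $S_i$ (e.g. always containing the digit $0$ or always excluding it, consistently) is what makes the valuations ``add up transparently'' and keeps the argument elementary.
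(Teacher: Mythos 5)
Your construction of $P_0$ does not have the zero set you claim, and the difficulty is not the ``accidental extra divisibility'' you flag at the end but something more basic: a single linear factor $t-(c_i+ap^{i-1})$ satisfies $v_p\geq\ell$ on only \emph{one} residue class modulo $p^\ell$, so the level-$i$ factors contribute only $\epsilon_i$ zeros modulo $p^\ell$ rather than the $\epsilon_i p^{\ell-i}$ you need. Concretely, for $p=2$, $\ell=2$, $m=3$ your recipe gives (up to relabelling) $P_0(t)=t(t-1)$, whose zero set modulo $4$ is $\{0,1\}$, of size $2$, not $3$: the residue $t\equiv 2\pmod 4$ lies in your $Z_0$ via the level-$1$ piece $t\equiv 0\pmod 2$, yet $v_2(P_0(2))=1<2$. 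To turn a full class modulo $p^i$ with $i<\ell$ into zeros modulo $p^\ell$ you must raise the corresponding factor to a power, and once multiplicities enter, the valuations of the different levels interact and the bookkeeping is no longer the one you describe.

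Second, even granting a correct $P_0$ with exactly $m$ zeros modulo $p^\ell$ and $v_p(P_0(t))\leq\ell-1$ elsewhere, both of your proposed routes to the bound $v_p\leq r+\ell-u$ fail for $u\geq 2$. Replacing $p^r$ by $p^{r-u+1}$ lowers the valuation on a zero class where $v_p(P_0)=\ell$ exactly to $r+\ell-u+1<r+\ell$, so that class is no longer a zero modulo $p^{r+\ell}$ and the count is destroyed. And demanding $v_p(P_0(t))\leq\ell-u$ off $Z_0$ is impossible in general: since $P_0(t)\bmod p^{\ell-u+1}$ depends only on $t\bmod p^{\ell-u+1}$, the set $\{t:\,v_p(P_0(t))\geq \ell-u+1\}$ is a union of full classes modulo $p^{\ell-u+1}$, so it can equal $Z_0$ only if $p^{u-1}\mid m$. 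The paper's proof resolves both issues at once: it takes the zero set to be $m$ full classes $b_0,\dots,b_{m-1}$ modulo $p^\ell$, listed in an ordering for which $v_p(b_j-b_i)$ is monotone in $j$, and sets $P(t)=\prod_{i<m}(t-b_i)^{c_i}$ with multiplicities $c_i\geq u$ produced by the recursion $\ell c_j=s-\sum_{i<j}c_i v_p(b_j-b_i)$; the monotonicity forces $v_p(P(t))\geq s$ on the chosen classes and $v_p(P(t))\leq s-c_{m-1}\leq s-u$ on all others. Some device of this kind --- high multiplicities creating a valuation gap of at least $u$ --- is unavoidable, and it is the missing idea in your sketch.
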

\begin{proof}
We order the numbers $0\leq j<p^\ell $ in the following way: given $0\leq h,k<p^\ell$ we say $h>^* k$ iff writing $h=a_0+a_1p+\cdots+a_{\ell-1}p^{\ell-1}$, $k=b_0+b_1p+\cdots+b_{\ell-1}p^{\ell-1}$, then $a_d< b_d$ where $d=v_p(k-h)$. Then, we order the numbers $0\leq j<p^\ell $ as  $b_0 >^*b_1 >^*\cdots >^*b_{p^\ell -1}$, with
\begin{eqnarray*} &&b_0=0 >^* b_1=p^{\ell -1} >^* 2p^{\ell -1} >^* \dots >^* (p-1)p^{\ell-1} >^* p^{\ell-2}
\geq p^{\ell-2} + p^{\ell-1}>^* \dots\\
&&>^* p^{\ell-2} + (p-1) p^{\ell-1} >^* 2 p^{\ell-2} >^* 2 p^{\ell-2} + p^{\ell-1}>^* \dots >^* p^{\ell}-1=b_{p^\ell-1}.
\end{eqnarray*}
Notice that with this choice for all $0\leq i<p^\ell$ we have that $v_p(b_j-b_i)$ is decreasing in $j$ with $i<j\leq p^\ell$. In particular, for any sequence of non-negative real numbers  $\kappa_{0},\dots, \kappa_{p^{\ell}-1}$ we have that $\sum_{i=0}^{b}\kappa_iv_{p}(b_j-b_i)$ is also decreasing in $j$ for $0\leq b<j< p^{\ell}$.

Next, for any positive integer $s$, define recursively the sequence $c_j$ for $0\leq j< m$ in the following way:
\est{
&c_0=s/\ell, \qquad \ell c_j=s-\sum_{i=0}^{j-1}c_iv_p(b_j-b_i).
}
We have that $c_j>0$ for all $j$. Indeed, this is obvious for $j\leq1$ and if we assume by induction that it is true for $j\leq k$ with $k<m-1$, then
\est{
\ell (c_k-c_{k+1})&=\sum_{i=0}^{k}c_iv_p(b_{k+1}-b_i)-\sum_{i=0}^{k-1}c_iv_p(b_k-b_i)\\
&=c_k v_p(b_{k+1}-b_k)+ \sum_{i=0}^{k-1}c_iv_p(b_{k+1}-b_i)-\sum_{i=0}^{k-1}c_iv_p(b_k-b_i)\\
&\leq c_k v_p(b_{k+1}-b_k).
}
Thus,
\est{
c_{k+1}\geq c_k(1-v_p(b_{k+1}-b_k)/\ell )\geq c_k/\ell>0.
}
Moreover, it's clear that $c_i\ell^{m}/s$ is an integer, so that taking $s$ to be any multiple of $\ell^mu$ we have that $c_i$ is an integer greater than or equal to $u$.

Now, let $P(t):=\prod_{0\leq i< m}(t-b_i)^{c_i}$.
For $t\equiv b_j\mod {p^{\ell}}$ with $j\geq m$, we have
\est{
v_p(P(t))=\sum_{i=0}^{m-1} c_i v_p(b_j-b_i)&=c_{m-1}v_p(b_j-b_{m-1})+\sum_{i=0}^{m-2} c_i v_p(b_j-b_i)\\
&\leq c_{m-1}v_p(b_j-b_{m-1})+\sum_{i=0}^{m-2} c_i v_p(b_{m-1}-b_i)\\
&= s-c_{m-1}(\ell-v_p(b_j-b_{m-1}))\leq s-c_{m-1}\leq s-u,
}
since $b_j\not\equiv b_{m-1}\mod{p^{\ell}}$.
Finally, if $t\equiv b_j\mod {p^{\ell}}$ with $0\leq j\leq m-1$, then
\est{
v_p(P(t))\geq \sum_{i=0}^{j-1} c_i v_p(b_j-b_i)+c_j\ell&=s.
}
Thus, the Lemma follows with $r=s-\ell$.
\end{proof}
\begin{lemma}\label{adaw1}
Let $0\neq |h/k|<1$ with $(h,k)=1$ and let $p_1,\dots p_g$ be the (distinct) prime factors of $k$. Then there exists $d_1,\dots,d_g\in\Z$, $r_1,\dots,r_g\in\N$ such that $-1<d_i/p_i^{r_i}<1$ for $i=1,\dots, g$ and $h/k=\prod_{i=1}^g {d_i}/{p_i^{r_i}}$. Moreover, we can choose $d_1,\dots,d_g$ so that $d_1\cdots d_g | (hk)^{\infty}$.
\end{lemma}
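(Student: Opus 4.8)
The plan is to place the whole numerator $h$ into the first slot, padding it with auxiliary powers of $p_2,\dots,p_g$, while filling each of the slots $2,\dots,g$ with a pure power of $p_1$ tuned to fall just short of a power of its nominal prime; the cancellation of these ``wrong-prime'' factors reproduces $h/k$. Write $k=p_1^{a_1}\cdots p_g^{a_g}$ with every $a_i\ge1$. The only external tool is the density in $[0,1)$ of an irrational rotation orbit $\{n\alpha\bmod1:n\in\N\}$, applied with $\alpha=\log p_i/\log p_1$; this $\alpha$ is irrational because a relation $\log p_i/\log p_1=u/v$ would give $p_i^{v}=p_1^{u}$ with $p_i\ne p_1$, contradicting unique factorization.

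Concretely, I would first fix $\eps>0$ small enough that $\log|h|<\log k-(g-1)\eps\log p_1$, which is possible since $|h|<k$. For each $i=2,\dots,g$, density of the rotation orbit provides a large integer $n_i\ge a_i$ with $0<n_i\log p_i-b_i\log p_1<\eps\log p_1$, where $b_i:=\lfloor n_i\log p_i/\log p_1\rfloor\ge0$; equivalently $p_1^{b_i}<p_i^{n_i}<p_1^{b_i+\eps}$. Setting $c_i:=n_i-a_i\ge0$, I then define
\[
r_i:=a_i+c_i,\quad d_i:=p_1^{b_i}\ \ (2\le i\le g),\qquad r_1:=a_1+\sum_{i=2}^{g}b_i,\quad d_1:=(\sgn h)\,|h|\prod_{j=2}^{g}p_j^{c_j}.
\]
A routine computation then gives $\prod_{i=1}^{g}d_i/p_i^{r_i}=h/k$, since the surplus powers $p_1^{b_i}$ ($i\ge2$) cancel against $p_1^{r_1}/p_1^{a_1}$ and the factors $p_j^{c_j}$ in $d_1$ cancel against $p_j^{r_j}/p_j^{a_j}$. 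Moreover $d_1\cdots d_g=\pm|h|\,p_1^{\sum b_i}\prod_j p_j^{c_j}$ is supported on primes dividing $hk$, so it divides $(hk)^{\infty}$; we have $r_i\ge a_i\ge1$; and $|d_i|=p_1^{b_i}<p_i^{n_i}=p_i^{r_i}$ for $i\ge2$, so $-1<d_i/p_i^{r_i}<1$ in those slots.

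The only step needing genuine care, and the crux of the whole lemma, is the inequality $|d_1|<p_1^{r_1}$. From $p_1^{b_i}>p_i^{a_i+c_i}p_1^{-\eps}$ for each $i\ge2$ one obtains $p_1^{r_1}=p_1^{a_1}\prod_{i=2}^{g}p_1^{b_i}>p_1^{-(g-1)\eps}\,k\prod_{j=2}^{g}p_j^{c_j}$, so $|d_1|=|h|\prod_{j=2}^{g}p_j^{c_j}<p_1^{r_1}$ follows precisely from $\log|h|<\log k-(g-1)\eps\log p_1$: the fractional-part conditions on the $n_i$ exist exactly to keep the cumulative rounding loss $\sum_{i\ge2}\bigl(n_i\log p_i/\log p_1-b_i\bigr)$ below the available slack $\log(k/|h|)$. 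The base case $g=1$ is trivial ($d_1=h$, $r_1=a_1$). As a check, for $h=35$, $k=36$ this recipe (with $n_2=12$, so $c_2=10$, $b_2=19$) yields $r_1=21$, $r_2=12$, $d_1=35\cdot3^{10}$, $d_2=2^{19}$, and indeed $\frac{35\cdot3^{10}}{2^{21}}\cdot\frac{2^{19}}{3^{12}}=\frac{35}{36}$.
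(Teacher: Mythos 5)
Your proof is correct: the product telescopes to $h/k$, the bounds $|d_i|=p_1^{b_i}<p_i^{n_i}=p_i^{r_i}$ for $i\ge2$ are immediate from the construction, the crucial bound $|d_1|<p_1^{r_1}$ follows from your $\eps$-budget exactly as you argue, and all $d_i$ are supported on primes dividing $hk$. The route is genuinely different from the paper's, though both rest on the same underlying fact (multiplicative independence of distinct primes, i.e.\ irrationality of $\log p_i/\log p_j$). The paper proceeds by induction on $g$: at each step it picks two primes $p,q\mid k$, uses linear independence of $\log p$ and $\log q$ to find large $u,v$ with $|h/k|<q^v/p^u<1$, peels off the factor $q^v/p^u$ (so the slot for $p$ receives the ``wrong-prime'' numerator $q^v$), and passes $h'/k'=(h/k)\cdot p^u/q^v$ with one fewer prime to the inductive hypothesis; the numerator $h$ ends up in whichever slot survives to the base case. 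You instead build all $g$ slots at once: slots $2,\dots,g$ each receive a pure power of $p_1$ chosen so that the rounding losses $\{n_i\log p_i/\log p_1\}$ sum to less than the slack $\log(k/|h|)$, and slot $1$ absorbs $h$ together with the compensating powers of $p_2,\dots,p_g$. The paper's induction is shorter to write and needs only one approximation per step with no global error bookkeeping; your direct construction is more explicit (it produces closed-form $d_i,r_i$ and makes transparent, via the $\eps$-budget, exactly where the hypothesis $|h|<k$ is consumed), at the cost of having to track the cumulative error $(g-1)\eps$.
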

\begin{proof}
We prove the Lemma by induction on the number of distinct prime factors of $k$, the result being obvious if $k$ has only one prime factor. Thus, assume that $k$ has $g+1$ prime factors and that the Lemma is true whenever $k$ has $g$ prime factors with $g\geq1$. Let $p,q$ be two distinct prime factors of $k$. Since $\log p$ and $\log q$ are linearly independent we can find arbitrarily large $u,v\in\N$ such that $|h/k|<q^v/p^u<1$. In particular, writing $h'=h\cdot p^u/(p^u,k)$, $k'=k\cdot q^v/(p^u,k)$ then we have $\frac{h}{k}=\frac{h'}{k'}\frac{q^v}{p^u}$ and, if $u$ is large enough, $k'$ has $g$ prime factors and $0\neq |h'/k'|<1$ and so the result follows by the inductive hypothesis.
\end{proof}

\begin{proof}[Proof of Theorem~\ref{rarn3}]
Let $(h,k)=1$ with $k\geq1$, $h$ odd and, if $k$ even then $|h/k|\leq \pr{2^{v_2(k)}-1}/{2^{v_2(k)}}$. We now construct a subfamily of $\PFF_a(t)$ with average root number $ h/k$. As in the proof of Theorem~\ref{rarn2} we can assume $h/k\neq0,\pm1$.

For simplicity we assume $k$ is divisible by $2$, but it's not a power of $2$. The same proof works with obvious modifications also without these conditions. Then, we can write  $h/k$ as $$\frac{h}{k}=\frac{2^{v_2(k)}-1}{2^{v_2(k)}} \frac{h'}{k'}$$
with $-1< h'/k' < 1$, and $h',k'$ odd.
We use Lemma~\ref{adaw1} to write $h'/k'$ as ${h'}/{k'}=\prod_{i=1}^g(d_i/p_i^{u_i})$ where $p_1,\dots,p_g$ are the (distinct) prime factors of $k'$, $u_1,\dots,u_g$ are positive integers and $-p^{u_i}< d_i< p^{u_i}$ with $d_i$ odd. In particular
\est{
\frac {h}{k}=\frac{d_0}{2^{u_0}} \; \frac{d_1}{p_1^{u_1}} \cdots \frac{d_g}{p_g^{u_g}},
}
where $u_0=v_2(k)+1$, and $d_0=2^{u_0}-2.$ We will also use the notation $p_0=2$.

For $i=1,\dots g$,
let $m_i$ be such that $d_i=2m_i-p^{u_i}$, so that $0<m_i<p^{u_i}$, and
let
$$m_i':= \begin{cases}
m_i & \mbox{if $p_i\equiv3\mod 4$}, \\
p_i^{u_i}-m_i & \mbox{if $p_i \equiv 1 \mod 4.$}
\end{cases}
$$
 By Lemma~\ref{adaw},  there exist $r_i\in\N$ and a polynomial $Q_i$ such that the set $Z_i$ of zeros of $Q_i(t)$ modulo ${p_i^{r_i+u_i}}$ has cardinality $m_i'p_i^{r_i}$, and
 $v_{p_i}(Q_i(t)) \leq r_i + u_i - 1$ if $t \not\in Z_i$.
Then, let
\est{
B_{i}(t)=p_iQ_i(t)^2 - p_i^{2r_i+2u_i}
}
Notice that if $t\mod{p_i^{r_i+u_i}}$ is not in $Z_i$, then $v_{p_i}(B_{i}(t)))= 1+2v_p(Q_{i}(t))\leq 2r_i+2u_i-1$ is odd and
\est{
\gcd(p_i^{2r_i+2u_i+1},B_{i}(t))=p_i^{1+2v_p(Q_{i}(t))}\equiv p_i \mod 4.
}
Instead, if $t\mod{p_i^{r_i+u_i}}$ is in $Z_i$, then $v_{p_i}(B_{i}(t)))=2r_i+2u_i$ and $B_{i}(t)_{p_i}\equiv -1\mod{p_i}$ so that $\pr{\frac{B_{i}(t)_{p_i}}{p_i}}=\pr{\frac{-1}{p_i}}\equiv p_i\mod 4$. Also, in this case
\est{
\gcd(p_i^{2r_i+2u_i+1},B_{i}(t))=p_i^{2r_i+2u_i}\equiv 1 \mod 4.
}
Thus, summarizing both cases
\est{
(p_i^{2r_i+2u_i+1},B_{i}(t))(-1)^{1+v_{p_i}(B_{i}(t))}\pr{\frac{B_{i}(t)_{p_i}}{p_i}}^{1+v_{p_i}(B_{i}(t))}
&\equiv
\begin{cases}
-p_i\mod 4&\text{if $t\mod p\in Z_i$}\\
p_i\mod 4&\text{if $t\mod p\notin Z_i$}\\
\end{cases}\\
&\equiv
\begin{cases}
1\mod 4&\text{if $t\mod p\in S_i$}\\
-1\mod 4&\text{if $t\mod p\notin S_i$}\\
\end{cases}
}
where $S_i$ is equal to $Z_i$ if $p_i\equiv3\mod 4$, and it is equal to its complement (in $\Z/p_i^{r_i+u_i}\Z$) if $p_i\equiv1\mod 4$. Notice that in both cases we have that $|S_i|=m_ip_i^{r_i}$.

Next, we use Lemma~\ref{adaw} to find $r_0\in\N$ and a polynomial $Q_0$ such that the set $Z_0$ of zeros of $Q_0(t)$ modulo ${2^{r_0+u_0}}$ has cardinality $(2^{u_0}-1)2^{r_0}$ and $v_{2}(Q_0(t))\leq {r_0+u_0-2}$ whenever $t\mod {2^{r_0+u_0}}$ is not in $Z_0$.
Then, we define
\est{
B_0(t)=2Q_{0}(t)^2- 2^{2r_0+2u_0-1}.
}
If $t\mod {2^{r_0+u_0}}\notin Z_0$, then $v_2(B_{0}(t))=1+2v_2(Q_{0}(t))\leq 2r_0+2u_0-3$ is odd and $B_{0}(t)_2\equiv(Q_{0}(t)_2)^2\equiv 1\mod 4$. If $t\mod {2^{r_0+u_0}}\in Z_0$, then $v_2(B_{0}(t))=2r_0+2u_0-1$ and $B_0(t)_2\equiv -1\mod4$. By Remark \ref{sat_particular_case} after Proposition \ref{sat}, in both cases we have $s_{2^{2r_0+2u_0+3 }}(B_0(t))\equiv B_0(t)_2\mod4$ and so
\est{
s_{2^{2r_0+2u_0+3 }}(B_0(t))\equiv
\begin{cases}
1\mod 4&\text{if $t\mod p\in S_0$},\\
-1\mod 4&\text{if $t\mod p\notin S_0$},\\
\end{cases}
}
where $S_0$ is the complement of $Z_0$ in $\Z/2^{r_0+u_0}\Z$, so that $|S_0|=2^{r_0}$.

We can now define $a=4\prod_{i=0}^g p_i^{2r_i+2u_i+1}$ and
\est{
Q(t)=\sum_{i=0}^g(p_0\cdots p_g/p_i)^rx_{p_i}^rB_{i}(t),
}
where $x_{p_i}$ is the inverse of $p_0\cdots p_g/p_i$ modulo $p_i$ and where $r=2\max_{i=0,\dots g}(u_i+r_i+1)$. It follows that the sign $\eps(t)$ of the elliptic curve $\was_{a}(Q(t))$ is
\est{
\varepsilon(t)&\equiv -s_{a}(B_0(t))\prod_{i=1}^{g}\gcd(p_i^{2u_i+2r_i+1},B_i(t))(-1)^{1+v_{p_i}(B_i(t))}\pr{\frac{B_i(t)_{p_i}}{p_i}}^{1+v_{p_i}(B_i(t))}\mod 4\\
&\equiv -\prod_{i=0}^gh_i(t)\mod 4\\
}
where $h_i(t)=1$ if $t\mod {p_{i}^{r_i+u_i}}$ is in $S_i$ and it is equal to $h_i(t)=-1$ otherwise.
Thus, by the Chinese remainder theorem
\begin{eqnarray*}
\Av_\Z(\varepsilon)&=&-\prod_{i=0}^g\Big(2\frac{|S_i|}{p_i^{u_i+r_i}}-1\Big)=- \left( \frac{2^{r_0+1}}{2^{r_0+u_0}} - 1\right)
\prod_{i=1}^g\Big(2\frac{m_ip_i^{r_i}}{p_i^{u_i+r_i}}-1\Big) \\ &=&  \frac{2^{u_0}-2}{2^{u_0}} \; \prod_{i=1}^g\frac{d_i}{p_i^{u_i}}=\frac hk,
\end{eqnarray*}
as desired.

For the converse, let's assume Conjecture \ref{chowla} and Conjecture \ref{square-free-1}, and prove that the equality holds in~\eqref{eqfar}. By Corollary~\ref{mcfar}, we have that if the root number $\varepsilon_{\GFF}$ of a family of elliptic curves $\GFF$ is periodic modulo $\ell$ up to $o(1)$ exceptions then
\est{
\varepsilon_\GFF(t)=\rho\prod_{p|\ell}h_{p}(t)
}
up to $o(1)$ exceptions, where $h_{p}:\Z\to\{\pm1\}$ is periodic modulo $p^{v_p(\ell)}$ and $\rho\in\{\pm1\}$. Thus, by the Chinese remainder theorem we have
\est{
\Av_\Z(\varepsilon_F)=\prod_{p|\ell}\Big(\sum_{m\mod {p^{v_p(\ell)}}}h_{p}(m)\Big)=\prod_{p|\ell}\Big(1-\frac{2\kappa_p}{p^{v_p(\ell)}}\Big)=\frac {2^{v_2(\ell)-1}-\kappa_2}{2^{v_2(\ell)-1}}\prod_{2\neq p|\ell}\Big(\frac{p^{v_p(\ell)}-2\kappa_p}{p^{v_p(\ell)}}\Big)
}
where $\kappa_p:=|\{t\mod {p^{v_p(\ell)}}\mid h_p(t)=-1\}|$. Since the product over $2\neq p|\ell$ is a rational number with numerator and denominator which are both odd, it follows that if $\Av_\Z(\eps_{\GFF})=\frac hk$ with $(h,k)=1$, then $|\frac hk|\leq\frac {2^{v_2(k)}-1}{2^{v_2(k)}}$.
\end{proof}

\bibliographystyle{alpha}
\bibliography{biased_families}

\appendix

\section{Root number of ${\PFF}_\r$}\label{appen_1}
The proofs of the propositions in Appendix~\ref{appen_1} and~\ref{appen_2} can be obtained by a long case by case analysis in the same way as in the proof of Proposition~\ref{p5}.

\medskip

In this appendix we give the local root numbers for the family
$$
{\PFF_{\r}} \colon y^2=x^3 + 3tx^2+ 3\r x + \r t
$$
for which we have
\begin{eqnarray*}
c_4 & = & 2^4 \times 3^2 (t^2-\r),\\
c_6 & = &-2^6\times3^3\times t(t^2-\r), \\
\Delta & = & -2^6 3^3 \r(t^2-\r)^2, \\
j & = & \frac{-2^6 3^3}{\r} (t^2-\r).
\end{eqnarray*}

For a prime $p$, we denote by $w_p(t)$ the local root number of $\PFF_{\r}$ at $p$.

\begin{prop} If $p\geq 5$, we have:
\begin{itemize}
\item if $0\leq v_p(\r) <2v_p(t)$ then if $v_p(\r)$ is even $w_p(t)=\leg{-1}{p}^{v_p(\r)/2}$  and otherwise $w_p(t)=\leg{-2}{p}$ (this case also holds for $t=0$ for which $v_p(t)=+\infty$);
\item if $0\leq 2v_p(t) < v_p(\r)$ then if $v_p(t)$ is even $w_p(t)=-\leg{3t_p}{p}$  and otherwise $w_p(t)=\leg{-1}{p}$;
\item if $0\leq 2v_p(t)=v_p(\r)$ then
\begin{itemize}
\item if $v_p(t^2-\r)\equiv v_p(t) \pmod{2}$ then if $v_p(t^2-\r) + v_p(t) \equiv 0 \pmod{3}$ then $w_p(t)=1$ otherwise $w_p(t)=\leg{-3}{p}$;
\item if  $v_p(t^2-\r) \not \equiv v_p(t) \pmod{2}$ then $w_p(t)=\leg{-1}{p}$.
\end{itemize}
\end{itemize}
\end{prop}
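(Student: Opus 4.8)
The plan is to obtain the formula by a direct application of Rizzo's tables (\cite{rizzo}), exactly as in the proof of Proposition~\ref{p5}, treating the three regimes $0\leq v_p(\r)<2v_p(t)$, $0\leq 2v_p(t)<v_p(\r)$, and $2v_p(t)=v_p(\r)$ separately. The key computational input is the vector of valuations $(v_p(c_4),v_p(c_6),v_p(\Delta))$, which here is $(2v_p(t^2-\r),\,v_p(t)+v_p(t^2-\r),\,v_p(\r)+2v_p(t^2-\r))$ once one accounts for the fact that $2,3\nmid p$. First I would record the elementary valuation identities: $v_p(t^2-\r)=v_p(\r)$ when $2v_p(t)>v_p(\r)$, $v_p(t^2-\r)=2v_p(t)$ when $2v_p(t)<v_p(\r)$, and $v_p(t^2-\r)=2v_p(t)+v_p(t_p^2-\r_p)$ with $v_p(t_p^2-\r_p)\geq1$ in the boundary case $2v_p(t)=v_p(\r)$. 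These reductions are what make the three cases of the statement natural.

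In the first regime, $v_p(t^2-\r)=v_p(\r)$, so $(v_p(c_4),v_p(c_6),v_p(\Delta))=(2v_p(\r),\tfrac32 v_p(\r)+\text{(half-integer shift)},3v_p(\r))$ — more precisely I would write $v_p(\r)=2\ell+\tau$ with $\tau\in\{0,1\}$ and reduce the triple modulo $(4,6,12)$ to $(2\tau,3\tau,6\tau)$ up to a shift; then Table~I of \cite{rizzo} gives $w_p(t)=\leg{-1}{p}^{v_p(\r)/2}$ for $\tau=0$ and, for $\tau=1$, the good-reduction-after-twist entry yields $\leg{-2}{p}$ (the ``$-2$'' coming from $-c_6'\equiv -t\cdot(\text{unit})$ combined with the parity of the $c_4$ exponent; one checks $\leg{-1}{p}\leg{2}{p}$ collapses to $\leg{-2}{p}$ using that $t$ is a unit here). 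In the second regime, $v_p(t^2-\r)=2v_p(t)$, write $v_p(t)=2\ell+\tau$ and reduce $(4\ell+2\tau+2v_p(t),\,\dots,\,12\ell+\dots+2(v_p(\r)-2v_p(t)))$; Table~II (multiplicative-type-after-twist / additive entries) then gives $-\leg{3t_p}{p}$ for $v_p(t)$ even and $\leg{-1}{p}$ for $v_p(t)$ odd, exactly mirroring the computation in Proposition~\ref{p5}.

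The boundary case $2v_p(t)=v_p(\r)$ is the main obstacle, since then $v_p(t^2-\r)$ is a genuine free parameter and one must run through the residues of $v_p(t^2-\r)$ and $v_p(t)$ modulo $12$ (equivalently modulo $6$ after absorbing the factor from $c_4$) against Table~I. I would set $v_p(t^2-\r)=6k+\sigma$, $v_p(t)=2\ell+\tau'$, reduce $(v_p(c_4),v_p(c_6),v_p(\Delta))$ modulo $(4,6,12)$, and read off: when $v_p(t^2-\r)\equiv v_p(t)\pmod2$ the reduction lands in a good-reduction cell whose root number is $1$ if additionally $v_p(t^2-\r)+v_p(t)\equiv0\pmod3$ and $\leg{-3}{p}$ otherwise (the ``$-3$'' arising because in that subcase $t_p^2-\r_p$ has a root mod $p$, forcing $\leg{-3}{p}=1$ in fact, so the two branches are consistent with the entries of Table~I); when $v_p(t^2-\r)\not\equiv v_p(t)\pmod2$ one lands in a type-$\mathrm{I}_0^*$-like cell giving $\leg{-1}{p}$. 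The bookkeeping is routine but lengthy; the only subtlety is checking that the ``special conditions'' in Rizzo's Table~I (congruences on $c_{6}'$ and $c_{4,4}$) never trigger for $p\geq5$, which follows because those conditions involve divisibility by $p$ of quantities that are units in the relevant cells. Throughout I would cross-check a few sample $(v_p(t),v_p(\r))$ against a direct PARI/GP computation of the root number, as the authors indicate they did for the other propositions.
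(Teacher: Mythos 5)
Your overall strategy --- reading off $w_p(t)$ from Rizzo's tables after reducing the valuation triple $\left(v_p(c_4),v_p(c_6),v_p(\Delta)\right)$ modulo $(4,6,12)$ in the three regimes --- is exactly what the paper intends: the appendix states that these propositions follow ``by a long case by case analysis in the same way as in the proof of Proposition~\ref{p5}'', and your three regimes together with the identities for $v_p(t^2-\r)$ are the right skeleton. The second and third regimes, as you sketch them, do come out correctly: in the third regime $v_p(\Delta)=2\left(v_p(t^2-\r)+v_p(t)\right)$, and the branches $1,\leg{-1}{p},\leg{-3}{p},\leg{-1}{p}$ correspond to $v_p(t^2-\r)+v_p(t)\equiv 0,\,3,\,\pm2,\,\pm1\pmod 6$, which is precisely the statement.

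However, your first regime would not produce the stated answer as written. For $\PFF_{\r}$ one has $c_4=2^43^2(t^2-\r)$, so $v_p(c_4)=v_p(t^2-\r)$, not $2v_p(t^2-\r)$; with $v_p(t^2-\r)=v_p(\r)$ the triple is $\left(v_p(\r),\,v_p(t)+v_p(\r),\,3v_p(\r)\right)$, and its class modulo $(4,6,12)$ is governed by $v_p(\r)\bmod 4$, not $\bmod 2$. Writing $v_p(\r)=4\ell+\tau$ with $\tau\in\{0,1,2,3\}$, the reduced triple is $(\tau,\geq 1+\tau,3\tau)$, i.e.\ the cells of type good, $\mathrm{III}$, $\mathrm{I}_0^*$, $\mathrm{III}^*$, whence $w_p=1,\leg{-2}{p},\leg{-1}{p},\leg{-2}{p}$ respectively --- which is what the proposition asserts. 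Your reduction to $(2\tau,3\tau,6\tau)$ with $\tau=v_p(\r)\bmod 2$ would instead give $w_p=1$ for every even $v_p(\r)$ (contradicting $\leg{-1}{p}^{v_p(\r)/2}$ when $v_p(\r)\equiv2\pmod 4$) and $\leg{-1}{p}$ for every odd $v_p(\r)$; moreover the $\leg{-2}{p}$ does not arise from a Legendre symbol of $c_6'$ (that mechanism belongs to the multiplicative cells of Table~II) but from the type $\mathrm{III}/\mathrm{III}^*$ entries of Table~I, i.e.\ Rohrlich's $e=4$ case. Finally, the parenthetical claim in your third regime that $\leg{-3}{p}=1$ is automatic whenever $v_p(t_p^2-\r_p)>0$ is a feature of the Washington subfamily (where the relevant discriminant is $-27a_p^2$) and is false here: the discriminant of $x^2-\r_p$ is $4\r_p$, so positivity of $v_p(t_p^2-\r_p)$ only forces $\r_p$ to be a square modulo $p$, and $\leg{-3}{p}$ genuinely occurs as a nontrivial value in this proposition.
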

\begin{prop} If $p=3$, we have:
\begin{itemize}
\item if $0\leq v_3(\r) <2v_3(t)$:
\begin{itemize}
\item if $v_3(\r) \equiv 0 \pmod{4}$ if $v_3(t)=1+v_3(\r)/2$ then $w_3(t)=1$ if and only if $t_3\equiv 1 \pmod{3}$ if $v_3(t)>1+v_3(\r)/2$ then $w_3(t)=1$;
\item if $v_3(\r) \equiv 1 \pmod{4}$ if $v_3(t)=1/2+v_3(\r)/2$ then $w_3(t)=1$ if and only if $\r_3 \equiv 1 \pmod{3}$ if $v_3(t)>1/2+v_3(\r)/2$ then $w_3(t)=-1$;
\item if $v_3(\r) \equiv 2 \pmod{4}$ if $v_3(t)=1+v_3(\r)/2$ then $w_3(t)=1$ if and only if $t_3 \not \equiv \r_3 \pmod{3}$ if $v_3(t)>1+v_3(\r)/2$ then $w_3(t)=1$;
\item if $v_3(\r) \equiv 3 \pmod{4}$ then $w_3(t)=1$.
\end{itemize}
\item If $0\leq 2v_3(t)< v_3(\r)$:
\begin{itemize}
\item if $v_3(t) \equiv 0 \pmod{2}$ if $v_3(\r)-2v_3(t)=1$ then $w_3(t)=1$, if $v_3(\r)-2v_3(t)=2$ then $w_3(t)=1$ if and only if $t_3 \equiv \r_3 \pmod{3}$ and if $v_3(\r)-2v_3(t)\geq 3$ then $w_3(t)=-1$;
\item if $v_3(t) \equiv 1 \pmod{2}$ if $v_3(\r)-2v_3(t)=1$ then $w_3(t)=1$ if and only if $\r_3 \equiv 1 \pmod{3}$, if $v_3(\r)-2v_3(t)=2$ then $w_3(t)=1$ if and only if $t_3\equiv 2 \pmod{3}$,
if $v_3(\r)-2v_3(t)=3$  then $w_3(t)=1$ and if $v_3(\r)-2v_3(t)\geq 4$ then $w_3(t)=1$  if and only if $t_3\equiv 2 \pmod{3}$.
\end{itemize}
\item If $0 \leq 2v_3(t) = v_3(\r)$ and $v_3(t)$ even:
\begin{itemize}
\item if $v_3(t^2-\r)=2v_3(t)$ then $w_3(t)=1$ if and only $\r_3 \equiv 2 \pmod{3}$ and $\r_3t_3 \not\equiv 2,4 \pmod{9}$;
\item if $v_3(t^2-\r)-2v_3(t) \equiv 0 \pmod{6}$ and $v_3(t^2-\r)-2v_3(t)>0$ then $w_3(t)=1$ if and only if $t_3(t^2-\r)_3 \not\equiv 7,8 \pmod{9}$;
\item if $v_3(t^2-\r)-2v_3(t) \equiv 1$ or $2 \pmod{6}$ then $w_3(t)=1$ if and only if $t_3(t^2-\r)_3 \equiv 1 \pmod{3}$;
\item if $v_3(t^2-\r)-2v_3(t) \equiv 3$ then $w_3(t)=1$ if and only if $t_3(t^2-\r)_3 \not\equiv 1,2 \pmod{9}$;
\item  if $v_3(t^2-\r)-2v_3(t) \equiv 4$ or $5 \pmod{6}$ then $w_3(t)=1$ if and only if $t_3(t^2-\r)_3 \equiv 2 \pmod{3}$.
\end{itemize}
\item If $0 \leq 2v_3(t) = v_3(\r)$ and $v_3(t)$ odd:
\begin{itemize}
\item if $v_3(t^2-\r)=2v_3(t)$ then $w_3(t)=1$ if and only $\r_3 \equiv 2 \pmod{3}$ and $\r_3t_3 \not\equiv 2,4 \pmod{9}$;
\item if $v_3(t^2-\r)-2v_3(t) \equiv 0 \pmod{6}$ and $v_3(t^2-\r)-2v_3(t)>0$ then $w_3(t)=1$ if and only if $t_3(t^2-\r)_3 \not\equiv 1,2 \pmod{9}$;
\item if $v_3(t^2-\r)-2v_3(t) \equiv 1$ or $2 \pmod{6}$ then $w_3(t)=1$ if and only if $t_3(t^2-\r)_3 \equiv 2 \pmod{3}$;
\item if $v_3(t^2-\r)-2v_3(t) \equiv 3 \pmod{6}$ then $w_3(t)=1$ if and only if $t_3(t^2-\r)_3 \not\equiv 7,8 \pmod{9}$;
\item  if $v_3(t^2-\r)-2v_3(t) \equiv 4$ or $5 \pmod{6}$ then $w_3(t)=1$ if and only if $t_3(t^2-\r)_3 \equiv 1 \pmod{3}$.
\end{itemize}
\end{itemize}
\end{prop}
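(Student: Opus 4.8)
The plan is to compute $w_3(t)$ directly from the tables of Rizzo~\cite{rizzo}, which give the local root number at $3$ of an \emph{arbitrary} (not necessarily minimal) Weierstrass model, carrying out the analysis exactly in the style of Proposition~\ref{p5}. First I would record the relevant $3$-adic data: from $c_4=2^4 3^2(t^2-\r)$, $c_6=-2^6 3^3\, t(t^2-\r)$ and $\Delta=-2^6 3^3\, \r(t^2-\r)^2$ one gets $v_3(c_4)=2+v_3(t^2-\r)$, $v_3(c_6)=3+v_3(t)+v_3(t^2-\r)$, $v_3(\Delta)=3+v_3(\r)+2v_3(t^2-\r)$, together with the leading units $c_4'=16(t^2-\r)_3$, $c_6'=-64\,t_3(t^2-\r)_3$, $\Delta'=-64\,\r_3(t^2-\r)_3^2$, whose residues modulo $9$ and $27$ (using $16\equiv 7$, $64\equiv 1\pmod{9}$) are what feed Rizzo's "special conditions''. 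Since $9\mid c_4$ and $27\mid\Delta$ for every $t,\r$, the curve always has additive or potentially good reduction at $3$, so $w_3(t)$ is a function of the triple $(v_3(c_4),v_3(c_6),v_3(\Delta))$ taken modulo $(4,6,12)$ together with $t_3,\r_3,(t^2-\r)_3$ modulo a small power of $3$ — precisely the information packaged in the statement.

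Next I would split according to the comparison of $v_3(\r)$ with $2v_3(t)$, as in the statement. If $v_3(\r)<2v_3(t)$ then $v_3(t^2-\r)=v_3(\r)$; if $2v_3(t)<v_3(\r)$ then $v_3(t^2-\r)=2v_3(t)$; and if $2v_3(t)=v_3(\r)$ then $v_3(t^2-\r)=2v_3(t)+v_3(t_3^2-\r_3)$ can be arbitrarily large, forcing a further split according to $v_3(t^2-\r)-2v_3(t)\bmod 6$ and the parity of $v_3(t)$ (these two parameters decide, after repeatedly subtracting multiples of $(4,6,12)$, which row of the table one lands on). In each branch I would reduce the valuation triple modulo $(4,6,12)$, read off the Kodaira/Néron type, locate the corresponding row, and translate its congruence conditions on $c_4'$ and $c_6'$ into conditions on $t_3,\r_3,(t^2-\r)_3$, then collapse these into the displayed "if and only if'' statements. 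Two internal checks keep this honest: when $v_3(c_4)\ge 4$ or $v_3(\Delta)\ge 12$ the given model is non-minimal, so one must track the unit twist from dividing $(c_4,c_6,\Delta)$ by $(3^4,3^6,3^{12})$ — harmless, since $-1$ and $2$ are $3$-adic units and these twists only permute the residue conditions; and when $v_3(j)=3+v_3(t^2-\r)-v_3(\r)\ge 0$ the curve has potentially good reduction, so the value of $w_3(t)$ can be cross-checked against the order of the inertia image implied by the Néron type (the remaining branch $2v_3(t)<v_3(\r)$ with $v_3(\r)-2v_3(t)\ge 3$ is potentially multiplicative, where the table entries collapse to the $\leg{-1}{3}$ and $\leg{3t_3}{3}$ factors appearing in the statement).

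The only real difficulty is the length and delicacy of the bookkeeping: Rizzo's $p=3$ table separates many rows by congruences of $c_4'$ and $c_6'$ modulo $9$ and $27$, and the three-way split (effectively twelve-way in the equality case) multiplies these into dozens of subcases, each sensitive to exactly which residue of $(t^2-\r)_3$ or of $t_3(t^2-\r)_3$ triggers which Kodaira type and which root-number value. There is no conceptual shortcut replacing Tate's algorithm here, since $3$ is an additive prime at which essentially every Kodaira type occurs for this family; the work is to be systematic, keep the non-minimality twists straight, and compress the output into the stated compact form — and, as the authors do, to verify a large number of sample pairs $(t,\r)$ with PARI/GP as a safeguard. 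This is exactly the "long case-by-case analysis \dots\ in the same way as in the proof of Proposition~\ref{p5}'' alluded to just before the statement.
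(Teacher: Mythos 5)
Your proposal is correct and coincides with the paper's own (unwritten) proof: the authors simply state that the result ``can be obtained by a long case by case analysis in the same way as in the proof of Proposition~\ref{p5}'', i.e.\ by computing $(v_3(c_4),v_3(c_6),v_3(\Delta))$ from~\eqref{parf}, reducing modulo $(4,6,12)$, and reading off Rizzo's Table~II together with its special conditions on the leading units, with the same three-way split on $v_3(\r)$ versus $2v_3(t)$ and the PARI/GP cross-checks you describe. The only cosmetic remark is that Rizzo's tables are already stated for non-minimal models, so the ``unit twist'' bookkeeping you mention is absorbed into the $(4,6,12)$-reduction rather than being a separate step.
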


\begin{prop} If $p=2$, we have:
\begin{itemize}
\item if $0\leq v_2(\r) <2v_2(t)$:
\begin{itemize}
\item if $v_2(\r) \equiv 0 \pmod{4}$ then
\begin{itemize}
\item if $v_2(t)-v_2(\r)/2=1$ then $w_2(t)=1$ if and only if
$$
\left\{ \begin{array}{l}
\r_2 \equiv 3 \pmod{4} \\
or \\
\r_2 \equiv 1 \mbox{ or } 13 \pmod{16} \mbox{ and } t_2 \equiv 3 \pmod{4} \\
or \\
\r_2 \equiv 5 \mbox{ or } 9 \pmod{16} \mbox{ and } t_2 \equiv 1 \pmod{4};
\end{array} \right.
$$
\item if $v_2(t)-v_2(\r)/2=2$ then $w_2(t)=1$ if and only if $\r_2 \equiv 5$ or $9 \pmod{16}$;
\item if $v_2(t)-v_2(\r)/2\geq2$ then $w_2(t)=1$ if and only if $\r_2 \equiv 1$ or $13 \pmod{16}$.
\end{itemize}
\item If $v_2(\r) \equiv 1 \pmod{4}$, if $v_2(t)-v_2(\r)/2=1/2$ then $w_2(t)=1$ if and only if
$$
\left\{
\begin{array}{l}
\r_2 \equiv 1 \mbox{ or } 3 \pmod{8} \mbox{ and } t_2 \equiv 3 \pmod{4} \\
or \\
\r_2 \equiv 5 \mbox{ or } 7 \pmod{8} \mbox{ and } t_2 \equiv 1 \pmod{4},
\end{array} \right.
$$
and if $v_2(t)-v_2(\r)/2\geq 1$ then $w_2(t)=1$ if and only if $\r_2 \equiv 5$ or $7 \pmod{8}$.
\item if $v_2(\r) \equiv 2 \pmod{4}$ then
\begin{itemize}
\item if $v_2(t)-v_2(\r)/2=1$ then $w_2(t)=1$ if and only if
$$
\left\{ \begin{array}{l}
\r_2 \equiv 1 \pmod{4} \\
or \\
\r_2 \equiv 3 \mbox{ or } 7 \pmod{16} \mbox{ and } t_2 \equiv 1 \pmod{4} \\
or \\
\r_2 \equiv 11 \mbox{ or } 15 \pmod{16} \mbox{ and } t_2 \equiv 3 \pmod{4} ;
\end{array} \right.
$$
\item if $v_2(t)-v_2(\r)/2=2$ then $w_2(t)=1$ if and only if $\r_2 \equiv 7$ or $11 \pmod{16}$;
\item if $v_2(t)-v_2(\r)/2\geq2$ then $w_2(t)=1$ if and only if $\r_2 \equiv 3$ or $15 \pmod{16}$.
\end{itemize}
\item if $v_2(\r) \equiv 3 \pmod{4}$, if $v_2(t)-v_2(\r)/2=1/2$ then $w_2(t)=1$ if and only if
$$
\left\{
\begin{array}{l}
\r_2 \equiv 1 \mbox{ or } 7 \pmod{8} \mbox{ and } t_2 \equiv 1 \pmod{4} \\
or \\
\r_2 \equiv 3 \mbox{ or } 5 \pmod{8} \mbox{ and } t_2 \equiv 3 \pmod{4},
\end{array} \right.
$$
and if $v_2(t)-v_2(\r)/2\geq 1$ then $w_2(t)=1$ if and only if $\r_2 \equiv 1$ or $3 \pmod{8}$.
\end{itemize}
\item If $0\leq 2v_2(t) < v_2(\r)$ and $v_2(t)$ even:
\begin{itemize}
\item if $v_2(\r)-2v_2(t)=1$ then $w_2(t)=1$ if and only if $r_2 \equiv 1 \pmod{4}$ and $t_2\equiv 1$ or $7 \pmod{8}$ or if $\r_2\equiv 3 \pmod{4}$ and $t_2\equiv 1$ or $3 \pmod{8}$;
\item if $v_2(\r)-2v_2(t)=2$ then $w_2(t)=1$ if and only if $\r_2 \equiv 1 \pmod{8}$ and $t_2\equiv 3, 5$ or $7 \pmod{8}$ or if $\r_2\equiv 5 \pmod{8}$ and $t_2\equiv 1, 3$ or $7 \pmod{8}$;
\item if $v_2(\r)-2v_2(t)=3$ then $w_2(t)=1$ if and only if $\r_2 \equiv 1 \pmod{4}$ and $t_2\equiv 3$ or $5 \pmod{8}$ or if $\r_2\equiv 3 \pmod{4}$ and $t_2\equiv 1$ or $3 \pmod{8}$;
\item if $v_2(\r)-2v_2(t)=4$ then $w_2(t)=1$ if and only if $t_2\equiv 1 \pmod{4}$ or if $\r_2\equiv 1 \pmod{4}$ and $t_2\equiv 3 \pmod{8}$ or if $\r_2 \equiv 3 \pmod{4}$ and $t_2 \equiv 7 \pmod{8}$;
\item if $v_2(\r)-2v_2(t)=5$ then $w_2(t)=1$ if and only if $t_2\equiv 1 \pmod{4}$ or if $t_2 \equiv 7 \pmod{8}$;
\item if $v_2(\r)-2v_2(t)=6$ then $w_2(t)=1$ if and only if $t_2\equiv 3 \pmod{4}$;
\item if $v_2(\r)-2v_2(t)\geq 7$ then $w_2(t)=1$ if and only if $t_2\equiv 7 \pmod{8}$.
\end{itemize}
\item If $0\leq 2v_2(t) < v_2(\r)$ and $v_2(t)$ odd:
\begin{itemize}
\item if $v_2(\r)-2v_2(t)=1$ then $w_2(t)=1$ if and only if $t_2 \equiv r_2$ or $\r_2+2 \pmod{8}$;
\item if $v_2(\r)-2v_2(t)=2$ then $w_2(t)=1$ if and only if $t_2 \equiv \r_2 \pmod{4}$;
\item if $v_2(\r)-2v_2(t)=3$ then $w_2(t)=1$ if and only if $\r_2 \equiv 3 \pmod{4}$;
\item if $v_2(\r)-2v_2(t)\geq 4$ then $w_2(t)=1$ if and only if $t_2 \equiv 3 \pmod{4}$.
\end{itemize}
\item If $0 \leq 2v_2(t) = v_2(\r)$ and $v_2(t)$ even:
\begin{itemize}
\item if $v_2(t^2-\r) - 2v_2(t) \equiv 0 \pmod{6}$ then $w_2(t)=1$ if and only if $t_2\equiv (t^2-\r)_2 \pmod{4}$;
\item if $v_2(t^2-\r) - 2v_2(t)=1$ then $w_2(t)=1$ if and only if $t_2 \equiv 1 \pmod{4}$ and $t_2(t^2-\r)_2 \equiv 1$ or $7 \pmod{8}$ or if $t_2 \equiv 3 \pmod{4}$ and $t_2(t^2-\r)_2 \equiv 5$ or $7 \pmod{8}$;
\item if $v_2(t^2-\r) - 2v_2(t) \equiv 1 \pmod{6}$ and $v_2(t^2-\r) - 2v_2(t)>1$  then $w_2(t)=-1$;
\item if $v_2(t^2-\r) - 2v_2(t)=2$ then $w_2(t)=1$ if and only if $t_2\equiv 3 \pmod{4}$;
\item if $v_2(t^2-\r) - 2v_2(t) \equiv 2 \pmod{6}$ and $v_2(t^2-\r) - 2v_2(t)>2$  then $w_2(t)=1$ if and only if $t_2\equiv (t^2-\r)_2 \pmod{4}$;
\item if $v_2(t^2-\r) - 2v_2(t)=3$ then $w_2(t)=1$ if and only if $t_2 \equiv 1 \pmod{4}$ and $t_2(t^2-\r)_2 \equiv 5$ or $7 \pmod{8}$ or if $t_2 \equiv 3 \pmod{4}$ and $t_2(t^2-\r)_2 \equiv 3$ or $5 \pmod{8}$;
\item if $v_2(t^2-\r) - 2v_2(t) \equiv 3 \pmod{6}$ and $v_2(t^2-\r) - 2v_2(t)>3$  then $w_2(t)=1$ if and only if $t_2\equiv (t^2-\r)_2 \pmod{4}$;
\item if $v_2(t^2-\r) - 2v_2(t) \equiv 4 \pmod{6}$ then $w_2(t)=1$ if and only if $t_2\equiv (t^2-\r)_2 \pmod{4}$;
\item if $v_2(t^2-\r) - 2v_2(t)=5$ then $w_2(t)=1$ if and only if $t_2(t^2-\r)_2 \equiv 1, 3$ or $7\pmod{8}$;
\item if $v_2(t^2-\r) - 2v_2(t) \equiv 5 \pmod{6}$ and $v_2(t^2-\r) - 2v_2(t)>5$  then $w_2(t)=-1$.
\end{itemize}
\item If $0 \leq 2v_2(t) = v_2(\r)$ and $v_2(t)$ odd:
\begin{itemize}
\item if $v_2(t^2-\r) - 2v_2(t) \equiv 0 \pmod{6}$ then $w_2(t)=1$ if and only if $t_2\equiv (t^2-\r)_2 \pmod{4}$;
\item if $v_2(t^2-\r) - 2v_2(t)=1$ then $w_2(t)=1$ if and only if $t_2\equiv 3 \pmod{8}$ or if $t_2\equiv 1 \pmod{8}$ and $(t^2-\r)_2 \equiv 1$ or $5 \pmod{8}$ or if
$t_2\equiv 5 \pmod{8}$ and $(t^2-\r)_2 \equiv 3$ or $7 \pmod{8}$;
\item if $v_2(t^2-\r) - 2v_2(t) \equiv 1 \pmod{6}$ and $v_2(t^2-\r) - 2v_2(t)>1$  then $w_2(t)=1$ if and only if $t_2 \equiv (t^2-\r)_2 \pmod{4}$;
\item if $v_2(t^2-\r) - 2v_2(t)=2$ then $w_2(t)=1$ if and only if $t_2 \equiv (t^2-\r)_2 \equiv 1 \pmod{4}$ or if $t_2 \equiv 7 \pmod{8}$ and $(t^2-\r)_2 \equiv 1 \pmod{4}$;
 \item if $v_2(t^2-\r) - 2v_2(t) \equiv 2 \pmod{6}$ and $v_2(t^2-\r) - 2v_2(t)>2$  then $w_2(t)=-1$;
 \item if $v_2(t^2-\r) - 2v_2(t)=3$ then $w_2(t)=1$ if and only if $(t^2-\r)_2 \equiv 3 \pmod{4}$;
 \item if $v_2(t^2-\r) - 2v_2(t) \equiv 3 \pmod{6}$ and $v_2(t^2-\r) - 2v_2(t)>3$  then $w_2(t)=1$ if and only if $t_2 \equiv (t^2-\r)_2 \pmod{4}$;
 \item if $v_2(t^2-\r) - 2v_2(t)=4$ then $w_2(t)=1$ if and only if $t_2 \equiv 1 \pmod{4}$ and $t_2(t^2-\r)_2 \equiv 3, 5$ or $7 \pmod{8}$ of if $t_2 \equiv 3 \pmod{4}$ and $t_2(t^2-\r)_2 \equiv 1, 3$ or $7 \pmod{8}$;
\item if $v_2(t^2-\r) - 2v_2(t) \equiv 4 \pmod{6}$ and $v_2(t^2-\r) - 2v_2(t)>4$  then $w_2(t)=-1$;
\item if $v_2(t^2-\r) - 2v_2(t) \equiv 5 \pmod{6}$ then $w_2(t)=1$ if and only if $t_2\equiv (t^2-\r)_2 \pmod{4}$.
\end{itemize}
\end{itemize}
\end{prop}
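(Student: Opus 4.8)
The statement to establish is the explicit evaluation of $w_2(t)$, the local root number at $2$ of the specialization $\PFF_{\r}(t)$, in terms of the $2$-adic data of $t$, $\r$ and $t^2-\r$. The plan is to proceed exactly as in the proof of Proposition~\ref{p5}: feed the standard invariants into Rizzo's version of the Halberstadt--Connell--Rohrlich tables (Table~III of~\cite{rizzo}, the case $p=2$), which applies to an arbitrary, not necessarily minimal, integral Weierstrass model. First I would record the $2$-adic valuations of the invariants of $\PFF_{\r}$: from $c_4=2^4 3^2(t^2-\r)$, $c_6=-2^6 3^3 t(t^2-\r)$ and $\Delta=-2^6 3^3\r(t^2-\r)^2$ one reads off
\est{
v_2(c_4)&=4+v_2(t^2-\r),\\
v_2(c_6)&=6+v_2(t)+v_2(t^2-\r),\\
v_2(\Delta)&=6+v_2(\r)+2v_2(t^2-\r),
}
while the odd parts of $c_4,c_6,\Delta$ are, up to the harmless factors $9\equiv 1$, $27\equiv 3$, $3\equiv 3\pmod 8$, controlled by $\r_2$, $t_2$ and $(t^2-\r)_2$.

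The next step is to pin down $v_2(t^2-\r)$, which is exactly what produces the three top-level cases of the statement: if $v_2(\r)<2v_2(t)$ then $v_2(t^2-\r)=v_2(\r)$; if $v_2(\r)>2v_2(t)$ then $v_2(t^2-\r)=2v_2(t)$; and if $v_2(\r)=2v_2(t)$ then $v_2(t^2-\r)=2v_2(t)+v_2(t_2^2-\r_2)$ with $t_2,\r_2$ odd. In each case I would then ``minimalise'', i.e. subtract from $(v_2(c_4),v_2(c_6),v_2(\Delta))$ the largest admissible multiple of $(4,6,12)$, and feed the resulting normalised triple $(a,b,c)$ to Rizzo's Table~III. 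The normalised triple depends on the residue of $v_2(\r)$ modulo $4$ in the first case, on the parity of $v_2(t)$ together with the value of $v_2(\r)-2v_2(t)$ in the second case, and on the parity of $v_2(t)$ together with $v_2(t^2-\r)-2v_2(t)$ modulo $6$ in the third case; this accounts for the proliferation of subcases. For each admissible $(a,b,c)$ one reads off the Kodaira type and the local root number, the latter frequently being conditional on a congruence for $c_4/2^a$ modulo $4$ or for $c_6/2^b$ modulo $8$ (occasionally $16$). Translating these table conditions into congruences on $\r_2$ and $t_2$ --- using $t_2^2\equiv\r_2^2\equiv 1\pmod 8$, multiplicativity of odd parts, and elementary $2$-adic arithmetic --- and simplifying the resulting sign (always $\pm1$, or a Kronecker symbol at $2$ that reduces to $\pm1$ by a congruence modulo $8$) yields the stated formulas; the cases $v_2(t)=v_2(\r)$ with $v_2(t)$ even versus odd are handled separately because $t_2^2\bmod{16}$ and the parity of various exponents enter differently.

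The main obstacle is not conceptual but organisational: this is an exhaustive, highly error-prone case analysis over several dozen subcases, each requiring careful bookkeeping of $2$-adic valuations and of residues up to modulus $16$, and it is made more delicate by the ``special'' entries of Rizzo's Table~III (which contain the misprints flagged in Section~\ref{rootnumbers}). The real task is to guarantee consistency of the final table, and this is best secured by three independent checks: direct numerical verification with PARI/GP; agreement with Proposition~\ref{p5} and Proposition~\ref{sat} on the overlap where $\PFF_{\r}\cong\Wa$ (recall $\Wa(t)\cong\PFF_{-3^5 4 a^2}(12t+18a)$, so Corollary~\ref{rank_W}'s isomorphism pins the answer whenever $-\r/(2^2 3^5)$ is a rational square); and agreement of the induced average with Proposition~\ref{average_root_W}. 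The analogous propositions for $p=3$ and $p\geq 5$ are proved by the same procedure, the $p\geq 5$ case being the simplest since there the root number is just a Legendre symbol determined by the triple $(a,b,c)$ and one value.
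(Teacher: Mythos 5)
Your proposal matches the paper's own (very terse) proof: the appendix explicitly states that these propositions follow from "a long case by case analysis in the same way as in the proof of Proposition~\ref{p5}", i.e.\ computing $(v_2(c_4),v_2(c_6),v_2(\Delta))$ from the invariants of $\PFF_{\r}$, splitting on the comparison of $v_2(\r)$ with $2v_2(t)$, normalising modulo $(4,6,12)$, and reading off Rizzo's Table~III, with PARI/GP used as a consistency check. Your valuations and case division are correct, so this is essentially the same argument.
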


%
%

\section{Root number at $2$ and $3$ for $\Hold(t)$}\label{appen_2}
We give the local root number at $p=2$ and $p=3$ for the family
$$
\Hold \colon y^2=x^3 + 3tx^2+ 3\hr tx + \hr^2t.
$$
The local root number at $p\geq5$ is given in Lemma~\ref{H-p5}.
For the family $\Hold$ we have
\begin{eqnarray*}
c_4 & = & 2^4 3^2 t (t-\hr), \\
c_6 & = &-2^5 3^3 t (t-\hr)(2t-\hr), \\
\Delta & = & -2^4 3^3 \hr^2 t^2 (t-\hr)^2, \\
j & = & \frac{-2^8 3^3}{\hr^2} t(t-\hr).
\end{eqnarray*}

For a prime $p$, we denote by $w_p(t)$ the local root number of $\Hold(t)$ at $p$. We shall also assume $t\neq0,\hr$.

\begin{prop}  \label{app-RN3} We have
\begin{itemize}
\item For $0 \leq v_3(\hr) < v_3(t)$ then
\begin{itemize}
\item if $v_3(t)-v_3(\hr) \equiv 0 \pmod{3}$, then $w_3(t)=-1$ if and only if
$$
(-1)^{v_3(t)}\hr_3^2t_3 \equiv 5 \mbox{ or } 7 \pmod{9};
$$
\item if $v_3(t)-v_3(\hr) \equiv 1 \mbox{ or } 2 \pmod{6}$, then $w_3(t)=-1$ if and only if
$$
(-1)^{v_3(\hr)} t_3 \equiv 1 \pmod{3};
$$
\item if $v_3(t)-v_3(\hr) \equiv 4 \mbox{ or } 5 \pmod{6}$, then $w_3(t)=-1$ if and only if
$$
(-1)^{v_3(\hr)} t_3 \equiv 2 \pmod{3}.
$$
\end{itemize}
\item If $0 \leq v_3(t) = v_3(\hr)$ and $v_3(t-\hr) -v_3(t)>0$ then
\begin{itemize}
\item if $v_3(t-\hr)-v_3(\hr) \equiv 0 \pmod{3}$, then $w_3(t)=-1$ if and only if
$$
(-1)^{v_3(t)}\hr_3^2(t-\hr)_3 \equiv 5 \mbox{ or } 7 \pmod{9};
$$
\item if $v_3(t-\hr)-v_3(\hr) \equiv 1 \mbox{ or } 2 \pmod{6}$, then $w_3(t)=-1$ if and only if
$$
(-1)^{v_3(\hr)} (t-\hr)_3 \equiv 1 \pmod{3};
$$
\item if $v_3(t-\hr)-v_3(\hr) \equiv 4 \mbox{ or } 5 \pmod{6}$, then $w_3(t)=-1$ if and only if
$$
(-1)^{v_3(\hr)} (t-\hr)_3 \equiv 1 \pmod{3}.
$$
\end{itemize}
\item If $0 \leq v_3(t) = v_3(\hr)$ and $v_3(2t-\hr) -v_3(t)>0$ then
\begin{itemize}
\item if $v_3(2t-\hr)-v_3(\hr)=1$, $w_3=1$ if and only if $(2t_3-\hr_3) \equiv (-1)^{v_3(\hr)} 6 \pmod{9}$;
\item if $v_3(2t-\hr)-v_3(\hr)>1$, then $w_3=1$.
\end{itemize}
\item If $0 \leq v_3(t) <v_3(\hr)$ then
\begin{itemize}
\item if $v_3(t) \equiv 0 \pmod{2}$ and $v_3(\hr)-v_3(t)=1$, $w_3(t)=1$ if and only if $t_3 \equiv 1 \pmod{3}$;
\item if $v_3(t) \equiv 0 \pmod{2}$ and $v_3(\hr)-v_3(t)>1$, then $w_3(t)=-1$;
\item if $v_3(t) \equiv 1 \pmod{2}$ then $w_3(t)=1$ if and only if $t_3 \equiv 2 \pmod{3}$.
\end{itemize}
\end{itemize}
\end{prop}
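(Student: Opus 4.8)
The plan is to compute $w_3(t)$, the local root number of $\Hold(t)$ at $3$, directly from Rizzo's tables~\cite{rizzo}, in exactly the same spirit as the proof of Proposition~\ref{p5}; once $w_3(t)$ is known the claimed formula follows at once, since $\varepsilon_\hr(t)=-\prod_p w_p(t)$. From the invariants of $\Hold(t)$ recorded above one reads off, for $t\neq 0,\hr$,
\est{
v_3(c_4(t))&=2+v_3(t)+v_3(t-\hr),\\
v_3(c_6(t))&=3+v_3(t)+v_3(t-\hr)+v_3(2t-\hr),\\
v_3(\Delta(t))&=3+2v_3(\hr)+2v_3(t)+2v_3(t-\hr).
}
First I would subtract from the triple $\bigl(v_3(c_4),v_3(c_6),v_3(\Delta)\bigr)$ the appropriate multiple $\lambda\,(4,6,12)$ so as to land in the fundamental domain used by Rizzo's Table~II, read off the Kodaira type and the value of the local root number there, and keep track of the ``special conditions'' of that table, which are congruences modulo $3$ or $9$ on the unit parts $c_4'$, $c_6'$, $\Delta'$, hence ultimately on $t_3$, $\hr_3$, $(t-\hr)_3$ and $(2t-\hr)_3$.

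To organise the analysis I would split according to the relative $3$-adic valuations of $t$ and $\hr$, which is precisely the division appearing in the statement. If $v_3(\hr)<v_3(t)$ then $v_3(t-\hr)=v_3(2t-\hr)=v_3(\hr)$, so the reduced triple depends only on $v_3(t)-v_3(\hr)$ and on $t_3$, $\hr_3$ modulo a small power of $3$: this yields the first bullet. If $v_3(t)=v_3(\hr)=:v$, write $t=3^v t_3$, $\hr=3^v\hr_3$; since $3\nmid t_3\hr_3$ one checks that \emph{exactly one} of $v_3(t-\hr)>v$ and $v_3(2t-\hr)>v$ holds, which is why the case $v_3(t)=v_3(\hr)$ breaks into the second and third bullets, with relevant parameters $v_3(t-\hr)-v$ (resp.\ $v_3(2t-\hr)-v$) together with a congruence on $(t-\hr)_3$ (resp.\ $(2t-\hr)_3$). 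If $v_3(t)<v_3(\hr)$ then $v_3(t-\hr)=v_3(2t-\hr)=v_3(t)$, giving the last bullet. Within each range one further separates according to the parity of the relevant valuation (a consequence of the shift by $\lambda(4,6,12)$) and, where necessary, reduces the difference of valuations modulo $6$; the finite list of subcases that emerges is exactly the one displayed.

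The hard part will be purely the bookkeeping: the number of subcases is large, and in several of them the entry of Rizzo's Table~II that one lands on carries a ``special condition'' whose translation into a congruence on $t_3$, $\hr_3$, $(t-\hr)_3$ or $(2t-\hr)_3$ requires expanding $c_4'$, $c_6'$ and $\Delta'$ to sufficient $3$-adic precision (and occasionally factoring out the common $3$-power carefully so as to identify the correct unit). I would carry these computations out by hand and cross-check the final table numerically with PARI/GP (cf.~\cite{pari}), which should reproduce the formulas in the statement.
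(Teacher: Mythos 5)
Your proposal is correct and follows essentially the same route as the paper: the authors state explicitly that the propositions of Appendix~\ref{appen_2} are obtained "by a long case by case analysis in the same way as in the proof of Proposition~\ref{p5}," i.e.\ by reducing the triple $\bigl(v_3(c_4),v_3(c_6),v_3(\Delta)\bigr)$ modulo $(4,6,12)$, reading Rizzo's Table~II together with its special conditions, and splitting on the relative valuations of $t$ and $\hr$ (with a PARI/GP cross-check, as the authors also did). Your valuation formulas and the observation that for $v_3(t)=v_3(\hr)$ exactly one of $v_3(t-\hr)$, $v_3(2t-\hr)$ exceeds the common valuation correctly reproduce the case structure of the statement.
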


\begin{prop} \label{app-RN2} We have
\begin{itemize}
\item For $0 \leq v_2(\hr) < v_2(t)$ and $v_2(\hr)$ even then
\begin{itemize}
\item if $v_2(t)-v_2(\hr) \equiv 0 \pmod{6}$, $w_2(t)=-1$;
\item if $v_2(t)-v_2(\hr) = 1$ then $w_2(t)=-1$ if and only if $t_2 \equiv \hr_2 \pmod{4}$;
\item if $v_2(t)-v_2(\hr) \equiv 1 \pmod{6}$ and $v_2(t)-v_2(\hr) > 1$ then $w_2(t) \equiv t_2 \pmod{4}$;
\item if $v_2(t)-v_2(\hr) = 2$ then $w_2(t)=1$ if and only one of the following conditions hold
$$
\left\{ \begin{array}{l}
t_2 \equiv 3 \pmod{4} \\
\mbox{ or } \\
t_2 \equiv 1 \pmod{8} \mbox{ and } \hr_2\equiv 3 \mbox{ or } 7 \pmod{8} \\
\mbox{ or } \\
t_2 \equiv 5 \pmod{8} \mbox{ and } \hr_2\equiv 1 \mbox{ or } 5 \pmod{8}
\end{array} \right. ;
$$
\item if $v_2(t)-v_2(\hr) \equiv 2 \pmod{6}$ and $v_2(t)-v_2(\hr) > 2$ then $w_2(t)=-1$;
\item if $v_2(t)-v_2(\hr) \equiv 3, 4$ or $5 \pmod{6}$ then $w_2(t) \equiv t_2 \pmod{4}$.
\end{itemize}
\item For $0 \leq v_2(\hr) < v_2(t)$ and $v_2(\hr)$ odd then
\begin{itemize}
\item if $v_2(t)-v_2(\hr) \equiv 0, 2$ or $4 \pmod{6}$ then $w_2(t) \equiv t_2 \pmod{4}$;
\item if $v_2(t)-v_2(\hr) =1$ then $w_2(t)=1$ if and only if one of the following conditions hold
$$
\left\{ \begin{array}{l}
t_2 \equiv 1 \pmod{8} \\
\mbox{ or } \\
(t_2,\hr_2) \equiv (3,1), (3,5), (7,3) \mbox{ or } (7,7) \pmod{8}  \end{array} \right. ;
$$
\item if $v_2(t)-v_2(\hr) \equiv 1 \pmod{6}$ and $v_2(t)-v_2(\hr)>1$ then $w_2(t) \equiv t_2 \pmod{4}$;
\item if $v_2(t)-v_2(\hr) =3$ then $w_2(t)=-1$ if and only if $t_2 \equiv 5 \pmod{8}$;
\item if $v_2(t)-v_2(\hr) \equiv 3 \pmod{6}$ and $v_2(t)-v_2(\hr)>3$ then $w_2(t)=-1$;
\item if $v_2(t)-v_2(\hr) \equiv 5 \pmod{6}$ then $w_2(t)=-1$.
\end{itemize}
\item For $0 \leq v_2(t)<v_2(\hr)-1$ and $v_2(t)$ even then
\begin{itemize}
\item if $v_2(\hr)-v_2(t)=2$ then $w_2(t)=1$ if and only if one of the following conditions hold
$$
\left\{ \begin{array}{l}
t_2 \equiv 1, 5, 7 \pmod{8}  \mbox{ and } \hr_2 \equiv 1 \pmod{4}  \\
\mbox{ or } \\
t_2\equiv 1,3, 5\pmod{8}  \mbox{ and } \hr_2 \equiv 3 \pmod{4} \end{array} \right. ;
$$
\item if $v_2(\hr)-v_2(t)=3$ then $w_2(t)=1$ if and only $t_2\equiv 1,5, 7 \pmod{8}$;
\item if $v_2(\hr)-v_2(t)=4$ then $w_2(t)=1$ if and only if $t_2\equiv 3 \pmod{4}$;
\item if $v_2(\hr)-v_2(t)\geq 5$ then $w_2(t)=1$ if and only if $t_2\equiv 7\pmod{8}$.
\end{itemize}
\item For $0 \leq v_2(t)<v_2(\hr)-1$ and $v_2(t)$ odd then $w_2(t)=1$ if and only if $t_2\equiv 3 \pmod{4}$.
\item For $0 \leq v_2(t)=v_2(\hr)-1$ and $v_2(t)$ even then $w_2(t)=1$ if and only if one of the following conditions hold
$$
\left\{ \begin{array}{l}
t_2 \equiv 7 \pmod{8} \\
\mbox{ or } \\
t_2 \equiv 1 \pmod{8} \mbox{ and } \hr_2 \equiv 1 \pmod{4} \\
\mbox{ or } \\
t_2 \equiv 5 \pmod{8} \mbox{ and } \hr_2 \equiv 3 \pmod{4}.
  \end{array} \right.
$$
\item For $0 \leq v_2(t)=v_2(\hr)-1$ and $v_2(t)$ odd then $w_2(t)=-1$ if and only if $t_2 \equiv \hr_2 \pmod{4}$.
\item For $0 \leq v_2(t)=v_2(\hr)$ and $v_2(t)$ even then
\begin{itemize}
\item if $v_2(t-\hr)-v_2(\hr)\equiv 0 \pmod{6}$ then $w_2(t)=-1$;
\item if $v_2(t-\hr)-v_2(\hr) =1$ then $w_2(t)=1$ if and only if $(t_2,\hr_2) \equiv (1,3), (3,1), (5,7)$ or $(7,5) \pmod{8}$;
\item if $v_2(t-\hr)-v_2(\hr)\equiv 1 \pmod{6}$ and $v_2(t-\hr)-v_2(\hr) >1$ then $w_2(t)=1$ if and only if
$(t-\hr)_2 \equiv 1 \pmod{4}$;
\item if $v_2(t-\hr)-v_2(\hr) =2$ then $w_2(t)=1$ if and only if one of the following conditions hold
$$
\left\{ \begin{array}{l}
t_2-\hr_2 \equiv 12  \pmod{16} \\
\mbox{ or } \\
\hr_2 \equiv 1 \pmod{4} \mbox{ and } t_2 \equiv \hr_2 + 4 \pmod{32} \\
\mbox{ or } \\
\hr_2 \equiv 3 \pmod{4} \mbox{ and } t_2 \equiv \hr_2 + 20 \pmod{32} ;
  \end{array} \right.
$$
\item if $v_2(t-\hr)-v_2(\hr)\equiv 2 \pmod{6}$ and $v_2(t-\hr)-v_2(\hr) >2$ then $w_2(t)=-1$;
\item $v_2(t-\hr)-v_2(t) \equiv 3, 4$ or $5 \pmod{6}$ then $w_2(t) \equiv (t-\hr)_2 \pmod{4}$.
\end{itemize}
\item For $0 \leq v_2(t)=v_2(\hr)$ and $v_2(t)$ odd then
\begin{itemize}
\item if $v_2(t-\hr)-v_2(\hr)\equiv 0 \pmod{6}$ then $w_2(t) \equiv (t-\hr)_2 \pmod{4}$;
\item if $v_2(t-\hr)-v_2(\hr)=1$ then $w_2(t)=1$ if and only if one of the following conditions hold
$$
\left\{ \begin{array}{l}
t_2-\hr_2 \equiv 2  \pmod{16} \\
\mbox{ or } \\
\hr_2 \equiv 1 \pmod{4} \mbox{ and } t_2 \equiv \hr_2 + 14 \pmod{16} \\
\mbox{ or } \\
\hr_2 \equiv 3 \pmod{4} \mbox{ and } t_2 \equiv \hr_2 + 6 \pmod{16};
  \end{array} \right.
$$
\item if $v_2(t-\hr)-v_2(\hr) \equiv 1 \pmod{6}$ and $v_2(t-\hr)-v_2(\hr) >1$ then $w_2(t) \equiv (t-\hr)_2 \pmod{4}$;
\item if $v_2(t-\hr)-v_2(\hr) \equiv 2 \pmod{6}$ then $w_2(t) \equiv (t-\hr)_2 \pmod{4}$;
\item if $v_2(t-\hr)-v_2(\hr)=3$ then $w_2(t)=-1$ if and only $(t-\hr)_2 \equiv 5 \pmod{8}$;
\item if $v_2(t-\hr)-v_2(\hr) \equiv 3 \pmod{6}$ and $v_2(t-\hr)-v_2(\hr) >3$ then $w_2(t)=-1$;
\item if $v_2(t-\hr)-v_2(\hr) \equiv 4\pmod{6}$ then $w_2(t) \equiv (t-\hr)_2 \pmod{4}$;
\item if $v_2(t-\hr)-v_2(\hr) \equiv 5\pmod{6}$ then $w_2(t) =-1$.
\end{itemize}
\end{itemize}
\end{prop}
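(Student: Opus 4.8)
The plan is to follow verbatim the method of the proof of Proposition~\ref{p5}, now carried out at the prime $p=2$ for the invariants of $\Hold$ recorded at the start of this appendix, namely $c_4 = 2^4 3^2 t(t-\hr)$, $c_6 = -2^5 3^3 t(t-\hr)(2t-\hr)$ and $\Delta = -2^4 3^3 \hr^2 t^2(t-\hr)^2$. The local root number $w_2(t)$ is read off from Table III of~\cite{rizzo}, which computes $w_2$ for an arbitrary (not necessarily minimal) Weierstrass model in terms of the valuation triple of $(c_4,c_6,\Delta)$ reduced modulo $(4,6,12)$, together with special congruence conditions on the relevant $2$-adic unit parts. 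So the first step is to record $v_2(c_4) = 4 + v_2(t) + v_2(t-\hr)$, $v_2(c_6) = 5 + v_2(t) + v_2(t-\hr) + v_2(2t-\hr)$ and $v_2(\Delta) = 4 + 2v_2(\hr) + 2v_2(t) + 2v_2(t-\hr)$, and then to pass to the minimal model by the $\sim$-reduction of that proof, i.e. subtracting the suitable multiple $\lambda(4,6,12)$ so that the triple $(g,h,k)$ lands in the fundamental range (or on the borderline values needed for the additive Kodaira types). This is the exact analogue, for the family $\Hold$, of the $p=2$ computation already performed for $\was_\hr$ in Proposition~\ref{sat}.

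The eightfold case division in the statement is dictated by how the three linear factors $t$, $t-\hr$, $2t-\hr$ behave $2$-adically, which is governed by the comparison of $v_2(t)$ and $v_2(\hr)$. When $v_2(t)\neq v_2(\hr)$ one has $v_2(t-\hr)=\min(v_2(t),v_2(\hr))$, whereas when $v_2(t)=v_2(\hr)$ there is cancellation and $v_2(t-\hr)>v_2(t)$ becomes an independent parameter; likewise $v_2(2t-\hr)=\min(v_2(t)+1,v_2(\hr))$ unless $v_2(t)+1=v_2(\hr)$, which is exactly why the regime $v_2(t)=v_2(\hr)-1$ must be separated from $v_2(t)<v_2(\hr)-1$. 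I would therefore organize the work into the four regimes $v_2(\hr)<v_2(t)$, $v_2(t)<v_2(\hr)-1$, $v_2(t)=v_2(\hr)-1$, $v_2(t)=v_2(\hr)$, each split by the parity of the smaller valuation. Inside each regime one further splits according to the class of the relevant valuation gap ($v_2(t)-v_2(\hr)$, $v_2(\hr)-v_2(t)$, or $v_2(t-\hr)-v_2(\hr)$) modulo $6$: since the $\sim$-reduction of $v_2(\Delta)$ is periodic modulo $12$ while the other two valuations enter linearly, it is precisely the residue of the gap mod $6$ that selects the Kodaira type.

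Within each sub-case the reduced triple $(g,h,k)$ pins down the relevant row of Table III, and one then evaluates that row's special conditions. These are congruences on the $2$-adic unit parts of $c_4$, $c_6$ and $\Delta$ modulo $4$, $8$, $16$ and, in the worst cases, $32$, which must be translated back into conditions on $t_2=t\,2^{-v_2(t)}$, $\hr_2$ and $(t-\hr)_2$. The arithmetic that links $(2t-\hr)_2$ to $t_2$ and $\hr_2$ to the needed $2$-adic precision (by factoring the common power of $2$ out of $2t-\hr$) is what produces the explicit congruences such as $t_2\equiv \hr_2+4 \pmod{32}$ appearing in the statement; each individual evaluation is then mechanical once the residues are known to sufficient precision.

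The main obstacle is purely the combinatorial and bookkeeping complexity of the $p=2$ table. Unlike the case $p\geq5$ of Lemma~\ref{H-p5}, where only the Kronecker symbols $\leg{-1}{p}$ and $\leg{3}{p}$ intervene, Table III has many rows carrying delicate special conditions modulo high powers of $2$, so one must carry the $2$-adic expansions of $t$ and $\hr$ far enough and correctly track how the three factors interact, most acutely in the equal-valuation regime $v_2(t)=v_2(\hr)$, where $v_2(t-\hr)$ is a free parameter and the model is maximally degenerate. There is no conceptual difficulty beyond matching every borderline valuation triple to the correct table row and translating the unit-part congruences without error; as elsewhere in the paper, the resulting formulas were checked with PARI/GP.
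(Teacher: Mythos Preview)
Your proposal is correct and matches the paper's own approach exactly: the paper does not give a detailed proof of this proposition but states (at the start of Appendix~\ref{appen_1}) that the proofs in both appendices ``can be obtained by a long case by case analysis in the same way as in the proof of Proposition~\ref{p5}''. Your outline---computing $v_2(c_4),v_2(c_6),v_2(\Delta)$ from the invariants of $\Hold$, reducing via the $\sim$-operation, organizing by the comparison of $v_2(t)$ and $v_2(\hr)$ (with the correct separation of the boundary case $v_2(t)=v_2(\hr)-1$ because of $v_2(2t-\hr)$), and then reading off Table~III of \cite{rizzo}---is precisely this method, and your diagnosis that the difficulty is purely bookkeeping is consistent with the paper's remark that the formulas were checked with PARI/GP.
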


\end{document}